\newcommand{\vertiii}[1]{{\left\vert\kern-0.25ex\left\vert\kern-0.25ex\left\vert #1 \right\vert\kern-0.25ex\right\vert\kern-0.25ex\right\vert}}
\newtheorem{theorem}{Theorem}[section]
\newtheorem{corollary}{Corollary}[section]
\newtheorem{lemma}{Lemma}[section]
\newtheorem{proposition}{Proposition}[section]
\newtheorem{assumption}{Assumption}[section]
\theoremstyle{definition}
\DeclareMathOperator{\Var}{Var}
\DeclareMathOperator{\E}{E}
\DeclareMathOperator{\Prob}{P}
\newcommand{\R}{\mathbb{R}}
\renewcommand{\tilde}{\widetilde}
\renewcommand{\hat}{\widehat}
\begin{document}

\title[Series regression for spatial data]{Series ridge regression for spatial data on $\mathbb{R}^d$}
\thanks{D. Kurisu is partially supported by JSPS KAKENHI Grant Number 23K12456. We thank Peter Robinson, Taisuke Otsu, and Imma Valentina Curato for their helpful comments and discussion.
} 

\author[D. Kurisu]{Daisuke Kurisu}
\author[Y. Matsuda]{Yasumasa Matsuda}

\date{First version: November 9, 2023. This version: \today}

\address[D. Kurisu]{Center for Spatial Information Science, The University of Tokyo\\
5-1-5, Kashiwanoha, Kashiwa-shi, Chiba 277-8568, Japan.
}
\email{daisukekurisu@csis.u-tokyo.ac.jp}
\address[Y. Matsuda]{Graduate School of Economics and Management, Tohoku University\\
Sendai 980-8576, Japan.
}
\email{yasumasa.matsuda.a4@tohoku.ac.jp}

\begin{abstract}
This paper develops a general asymptotic theory of series estimators for spatial data collected at irregularly spaced locations within a sampling region $R_n \subset \mathbb{R}^d$. We employ a stochastic sampling design that can flexibly generate irregularly spaced sampling sites, encompassing both pure increasing and mixed increasing domain frameworks. Specifically, we focus on a spatial trend regression model and a nonparametric regression model with spatially dependent covariates. For these models, we investigate $L^2$-penalized series estimation of the trend and regression functions. We establish uniform and $L^2$ convergence rates and multivariate central limit theorems for general series estimators as main results. Additionally, we show that spline and wavelet series estimators achieve optimal uniform and $L^2$ convergence rates and propose methods for constructing confidence intervals for these estimators. Finally, we demonstrate that our dependence structure conditions on the underlying spatial processes cover a broad class of random fields, including L\'evy-driven continuous autoregressive and moving average random fields.
\end{abstract}

\keywords{Irregularly spaced spatial data, L\'evy-driven moving average random field, series ridge estimator, spatial regression model\\ \indent
\textit{MSC2020 subject classifications}: 62G08, 62G20, 62M30}

\maketitle

\section{Introduction}

Spatial data analysis plays an important role in many research fields, such as climate studies, ecology, hydrology, and seismology. There are many textbooks and monographs devoted to modeling and inference of spatial data, see, e.g. \cite{St99}, \cite{GaGu10}, \cite{BaCaGe14}, and \cite{Cr15}, among others. 

The goal of this paper is to develop a general asymptotic theory for series estimators with a ridge penalty (i.e., an $L^2$-penalty) under spatial dependence. Specifically, we consider two nonparametric regression models. The first model is a spatial trend regression. In this model, we contemplate a situation where a spatial process $\bm{Y}=\{Y(\bm{s}):\bm{s} \in R_n\}$ is discretely observed at irregularly spaced sampling sites $\{\bm{s}_{n,i}\}_{i=1}^n$ over the sampling region $R_n \subset \mathbb{R}^d$ and investigate nonparametric estimation of the spatial trend function using observations $\{Y(\bm{s}_{n,i}),\bm{s}_{n,i}\}_{i=1}^n$. The second model is a spatial regression model with heteroscedastic error terms. Specifically, we consider a scenario where a spatial process $\bm{Y}$ and a $p$-variate spatial process $\bm{X}=\{\bm{X}(\bm{s})=(X_1(\bm{s}),\dots, X_p(\bm{s})):\bm{s} \in R_n\}$ are discretely observed at irregularly spaced sampling sites. Using the observations $\{Y(\bm{s}_{n,i}),\bm{X}(\bm{s}_{n,i}),\bm{s}_{n,i}\}_{i=1}^n$, we consider nonparametric estimation of the spatial regression function of $\bm{Y}$, where the regression function depends on both the location and the covariate process $\bm{X}$. We will provide detailed definitions for the first model in Section \ref{Sec:reg-trend} and for the second model in Section \ref{Sec:reg-covariate} of the supplementary material, respectively. 

For these two models, we establish uniform and $L^2$ rates and multivariate central limit theorems (CLTs) for general series ridge estimators. Additionally, we establish that spline and wavelet series ridge estimators achieve optimal uniform and $L^2$ rates in \cite{St82} over H\"older spaces. These results can be viewed as an extension of previous works on non-penalized series estimation (e.g. \cite{Ne97} and \cite{de02} for i.i.d. data, and \cite{ChCh15} for time series, to name a few) to series ridge regression for spatial data. We also provide consistent estimators of the asymptotic variances that appear in the multivariate CLTs. 

For nonparametric regression of spatial data, there are a large number of papers addressing the theoretical analysis of kernel estimators, namely, \textit{local} nonparametric estimators. As a part of recent contributions to the literature, we refer to \cite{MaSt10}, \cite{ChZhYa22}, and \cite{KuMa23} for spatial trend regression, and \cite{HaLuTr04}, \cite{LuCh04}, \cite{HaLuYu09}, \cite{Ro11}, \cite{Je12}, \cite{MaSeOu17}, and \cite{Ku19b, Ku22a} for regression with spatially dependent covariates. On the other hand, in spatial data analysis, it is common to have a large number of sampling sites, ranging from thousands to tens of thousands. In such cases, when researchers are interested in the shape of the \textit{global} spatial trend, utilizing local nonparametric methods like kernel smoothing can lead to significant computational costs due to the repetition of local trend estimation. 

In practice, there is often a significant interest in global spatial trend functions or spatial regression functions in fields such as economics, sociology, epidemiology, and meteorology. See for example \cite{Sh07} for climate change and \cite{BaWaAn21} for species distribution, and see also Chapter 4 of \cite{Cr15}. Despite the evident demand for global nonparametric regression on spatial data, there appears to be a lack of theoretical results on nonparametric regression for spatial data using general series estimators, except for the work by \cite{LeRo16}. The authors provide $L^2$ and uniform rates for non-penalized series estimators of a spatial regression model, but their uniform rates are slower than the optimal rate in \cite{St82}. Further, our spatial regression model allows regression functions that can change smoothly over the sampling region, and hence our model can be seen as a nonlinear extension of geographically weighted regression models introduced in \cite{BrFoCh96} (see, e.g. \cite{LuChHaSt14} and \cite{GoLuChBrHa15} for a review). 
Most existing papers investigate the theoretical properties of non-penalized series estimators. In contrast, for spatial data with substantial sample sizes, penalized series estimators are often employed to numerically stabilize the estimation of regression functions. Therefore, this paper aims to examine the properties of series estimators incorporating a ridge penalty.

Additionally, series estimation of spatial trend functions plays a crucial role in the analysis of spatio-temporal data. In the field of functional data analysis, spatio-temporal data is often modeled as a surface time series when it can be treated as random surfaces observed at each time point (see, e.g. \cite{MaGe20} for a survey). However, in many spatio-temporal data, it is common that only discrete observations at irregularly spaced locations for each time point are available and hence researchers normally perform (penalized) series estimation to construct the random surfaces (cf. \cite{GrKoRe17}). Our results can provide building blocks for the theoretical analysis of such a procedure.

The details of our theoretical contributions are as follows. 
First, to cope with the irregular spatial spacing, we shall adopt the stochastic sampling design of \cite{La03a}, which allows the sampling sites to have a nonuniform density across the sampling region and enables the number of sampling sites $n$ to grow at a different rate compared with the volume of the sampling region $A_n$. In many scientific fields, such as ecology, geology, meteorology, and seismology, spatial samples are often collected over irregularly spaced points from continuous random fields because of physical constraints. The stochastic sampling scheme accommodates both the pure increasing domain case ($\lim_{n \to \infty}A_n/n = \kappa \in (0,\infty)$) and the mixed increasing domain case ($\lim_{n \to \infty}A_n/n = 0$). From a theoretical viewpoint, this scheme covers all possible asymptotic regimes since it is well-known that the sample mean is not consistent under the infill asymptotics (cf. \cite{La96}). See \cite{La03b}, \cite{LaZh06}, \cite{BaLaNo15}, \cite{MaYa18}, and \cite{KuKaSh24} for discussion on the stochastic spatial sampling design.

Second, for both spatial trend regression and spatial regression models, we employ a blocking argument to prove the uniform rates of series ridge estimators. However, unlike series estimation for temporally dependent data, the absence of a \textit{direction} in the observation of spatial data necessitates a different blocking approach. Additionally, due to stochastic sampling sites, the number of data points in each block becomes random, requiring consideration of its impact. To address this difficulty, we extend the blocking technique developed in \cite{Yu94} on $\beta$-mixing time series to our framework. 

Third, for the spatial trend regression model, we establish that the uniform rates and the convergence rate of a multivariate CLT for series ridge estimators depend not directly on the sample size (i.e., the number of sampling sites) but on the expansion rate of the volume of the sampling region by extending the arguments in \cite{LaZh06} and \cite{KuMa23}. This is because, to capture the spatial dependence of the error terms, the sampling region needs to expand rather than rely on an increase in sample size. Additionally, we have shown that the asymptotic variance of series ridge estimators depends on spatial long-run variance, and the multivariate CLTs hold for $\alpha$-mixing random fields. We also propose an estimator of the spatial long-run variance inspired by a heteroskedasticity-autocorrelation (HAC) robust estimator, which is commonly used in time series analysis for estimating the long-run variance, and establish its consistency. 

Fourth, we explore in detail examples of spatial processes that satisfy our dependence conditions. Specifically, we show that a broad class of L\'evy-driven moving average (MA) random fields, which include continuous autoregressive moving average (CARMA) random fields (cf. \cite{BrMa17}), satisfies our assumptions. The CARMA random fields are known as a rich class of models for spatial data that can represent non-Gaussian random fields by introducing non-Gaussian L\'evy random measures (cf. \cite{BrMa17} and \cite{Ku22a}). Verifying our regularity conditions to L\'evy-driven MA fields is indeed non-trivial and relies on several probabilistic techniques from L\'evy process theory and theory of infinitely divisible random measures (cf. \cite{Be96}, \cite{Sa99}, and \cite{RaRo89}).

The rest of the paper is organized as follows. 
In Section \ref{Sec:reg-trend}, we introduce the spatial trend regression model, a stochastic sampling design for irregularly spaced sampling sites, and the dependence structure of spatial processes. Additionally, we provide the uniform and $L^2$ convergence rates, a multivariate CLT, and an estimator of the asymptotic variance for series ridge estimators. In Section \ref{Appendix:example}, we provide examples of spatial processes that satisfy our dependence conditions. In Section \ref{Sec:real-data}, we analyze the human population in Tokyo to illustrate the usefulness of our methods. Section \ref{Sec:conclusion} concludes and discusses possible extensions. Proofs for some results in Section \ref{Sec:reg-trend} are included in Appendix. The supplementary material includes the introduction of the spatial regression model with the uniform and $L^2$ convergence rates, a multivariate CLT, and an estimator of the asymptotic variance for series ridge estimators (Section \ref{Sec:reg-covariate}), proofs for Section \ref{Sec:reg-trend} (Section \ref{Appendix:sec2}), proofs for Section \ref{Appendix:example} (Section \ref{Appendix:ex-proof}), proofs for Section \ref{Sec:reg-covariate} (Section \ref{Appendix:sec3}), and auxiliary lemmas (Section \ref{Appendix:lemmas}).

\subsection{Notation}
For any vector $\bm{x} = (x_{1},\dots, x_{q})' \in \R^{q}$, let $|\bm{x}| =\sum_{j=1}^{q}|x_{j}|$, $\|\bm{x}\| =\sqrt{\sum_{j=1}^{q}x_j^2}$, and $\|\bm{x}\|_\infty = \max_{1 \leq j \leq q}|x_j|$ denote the $\ell^{1}$, $\ell^{2}$, and $\ell^\infty$-norms of $\bm{x}$, respectively. For any set $T_1,T_2 \subset \mathbb{R}^d$, let $d(T_1,T_2) = \inf\{|\bm{x} - \bm{y}| : \bm{x} \in T_1, \bm{y} \in T_2\}$. For any $p \times p$ matrix $A$, let $A^-$ denote the Moore-Penrose generalized inverse, let $\text{tr}(A)$ denote the trace of $A$, $\|A\|_F = \sqrt{\text{tr}(A'A)}$ denote the Frobenius norm of $A$, and $\lambda_{\text{min}}(A)$ and $\lambda_{\text{max}}(A)$ denote the minimum and the maximum eigenvalue of $A$. For any $p \times q$ matrix $B$, let $\|B\| = \sqrt{\lambda_{\text{max}}(B'B)}$ denote the spectral norm of $B$. For any set $A \subset \R^{d}$ and any vector $\bm{a}=(a_1,\dots, a_d)' \in (0,\infty)^d$, let $|A|$ denote the Lebesgue measure of $A$, let $[\![A]\!]$ denote the number of elements in $A$, and let $\bm{a}A = \{(a_1x_1,\dots a_dx_d): \bm{x} = (x_1,\dots,x_d) \in A\}$. 
For any positive sequences $a_{n}, b_{n}$, we write $a_{n} \lesssim b_{n}$ if there is a constant $C >0$ independent of $n$ such that $a_{n} \leq Cb_{n}$ for all $n$,  $a_{n} \sim b_{n}$ if $a_{n} \lesssim b_{n}$ and $b_{n} \lesssim a_{n}$. For any non-empty set $D$ and any real-valued functions $f$ and $h$ on $D$, let $\|f\|_\infty = \sup_{x \in D}|f(x)|$, $\|f\|_{L^2}=(\int_Df^2(x)dx)^{1/2}$, and $\|f\|_{L^2(h)} = (\int_D f^2(x)h(x)dx)^{1/2}$. Let $(F, \vertiii{\cdot})$ be a subset of a normed space. For $\delta>0$, a finite set $G \subset F$ is called a $\delta$-covering of $F$ with respect to the norm $\vertiii{\cdot}$ if for any $f \in F$ there exists $g \in G$ such that $\vertiii{f-g} < \delta$. The minimum cardinality of a $\delta$-covering of $F$ with respect to $\vertiii{\cdot}$ is called the covering number of $F$ with respect to $\vertiii{\cdot}$ and denoted by $N(F, \vertiii{\cdot},\delta)$.

\section{Series trend estimators}\label{Sec:reg-trend}

In this section, we establish asymptotic properties of series ridge estimators of a spatial trend regression model. To this end, we first discuss mathematical settings of our model (Section \ref{Subsec:model}), sampling design (Section \ref{Subsec:sampling}), and spatial dependence structure (Section \ref{Subsec:dependence}). Then we provide uniform and $L^2$ convergence rates and a multivariate CLT for general series estimators (Sections \ref{Subsec:STR-unif-rate}, \ref{Subsec:STR-L2}, and \ref{Subsec:STR-AN}), and establish that spline and wavelet estimators attain the optimal uniform and $L^2$ convergence rates  (Sections \ref{Subsec:STR-opt-unif-rate} and \ref{Subsec:STR-L2}). 

\subsection{Spatial trend regression model}\label{Subsec:model}

Let $R_0=[-1/2,1/2]^d$. Consider the nonparametric regression model
\begin{align}\label{eq:Spatial-Trend-Reg}
    Y(\bm{s}_{n,i}) &= m_0\left({\bm{s}_{n,i} \over A_n}\right) + \eta\left({\bm{s}_{n,i} \over A_n}\right)e(\bm{s}_{n,i}) + \sigma_{\varepsilon}\left({\bm{s}_{n,i} \over A_n}\right)\varepsilon_{i}, \nonumber \\ 
    &:= m_0\left({\bm{s}_{n,i} \over A_n}\right) + e_{n,i} + \varepsilon_{n,i},\ \bm{s}_{n,i} = (s_{ni,1},\dots, s_{ni,d})' \in R_n,\ i=1,\dots,n,
\end{align}
where $R_n=\prod_{j=1}^d[-A_{n,j}/2,A_{n,j}/2]$, $A_n = \prod_{j=1}^d A_{n,j}$, ${\bm{s}_{n,i} \over A_n}=\left({s_{ni,1} \over A_{n,1}},\dots,{s_{ni,d} \over A_{n,d}}\right)'$ with $A_{n,j} \to \infty$ as $n \to \infty$, $m_0:R_0 \to \mathbb{R}$ is the spatial trend function, $\bm{e} = \{e(\bm{x}): \bm{x} \in \mathbb{R}^d\}$ is a stationary random field defined on $\mathbb{R}^d$ with $\E[e(\bm{x})] = 0$ and $\E[e^2(\bm{x})]=1$ for any $\bm{x} \in \mathbb{R}^d$, $\eta:R_0 \to (0,\infty)$ is the variance function of spatially dependent random variables $\{e_{n,i}\}$, $\{\varepsilon_{i}\}$ is a sequence of i.i.d. random variables such that $\E[\varepsilon_{i}] = 0$ and $\E[\varepsilon_{i}^2] = 1$, and $\sigma_{\varepsilon}: R_0 \to (0,\infty)$ is the variance function of random variables $\{\varepsilon_{n,i}\}$. Note that $A_{n,j}$ can grow at different rates with respect to the sample size $n$ for each $j$. The model (\ref{eq:Spatial-Trend-Reg}) is also considered in \cite{KuMa23}, who investigate the local polynomial estimation of the spatial trend function. See Remark 2.1 of \cite{KuMa23} for a discussion of the model. 

\subsection{Sampling design}\label{Subsec:sampling}

We assume that the sampling sites $\bm{s}_{n,1},\hdots, \bm{s}_{n,n}$ are obtained from the realizations of random vectors $\bm{S}_{n,1},\hdots, \bm{S}_{n,n}$. To simplify the notation, we will write $\bm{s}_{n,i}$ and $\bm{S}_{n,i}$ as $\bm{s}_i=(s_{i,1},\dots, s_{i,d})'$ and $\bm{S}_i=(S_{i,1},\dots,S_{i,d})'$, respectively. We assume the following conditions on our stochastic sampling design.

\begin{assumption}\label{Ass:sample}
Let $g$ be a probability density function with support $R_0=[-1/2,1/2]^d$. 
\begin{itemize}
\item[(i)] $A_n/n \to \kappa \in [0,\infty)$ as $n \to \infty$. 
\item[(ii)] $\{\bm{S}_{i}=(S_{i,1},\dots,S_{i,d})'\}_{i = 1}^{n}$ is a sequence of i.i.d. random vectors with density $A_n^{-1}g(\cdot/A_n)$. 
\item[(iii)] $\{\bm{S}_i\}_{i=1}^{n}$, $\bm{e} = \{e(\bm{x}): \bm{x} \in \mathbb{R}^{d}\}$, and $\{\varepsilon_i\}_{i=1}^{n}$ are mutually independent. 
\end{itemize}
\end{assumption}

Condition (i) implies that our sampling design allows both the pure increasing domain case ($\lim_{n \to \infty}A_{n}/n = \kappa \in (0,\infty)$) and the mixed increasing domain case ($\lim_{n \to \infty}A_n/n = 0$). Our scheme covers all possible asymptotic regimes that would validate asymptotic inference for spatial data. Although the infill asymptotics, which assume that the volume of $R_n$ is bounded but the number of sampling sites increases, are excluded from our regime, our sampling design is general enough as it is known that the infill asymptotics does not work even for the estimation of sample means (cf. \cite{La96}). Condition (ii) implies that the sampling density can be nonuniformly distributed over the sampling region $R_n$. 

As an alternative way to cope with irregularly spaced observations, one may work with the domain expanding and infill (DEI) asymptotics (cf. \cite{LuTj14}). In the DEI asymptotics, it is assumed from the outset that the sampling sites are countably infinite on $\mathbb{R}^d$, whereas in the pure/mixed increasing domain asymptotics, a finite number of points within a finite observation region $R_n$ are assumed to be obtained as sampling sites, which may be more realistic in some applications than the DEI asymptotics. We leave the thoretical analysis under the DEI asymptotics as future studies. 

\subsection{Dependence structure}\label{Subsec:dependence}

Now we discuss the dependence structure of the random field $\bm{e}$. We assume that random field $\bm{e}$ satisfies some mixing conditions. First, we define the $\alpha$- and $\beta$-mixing coefficients for the random field $\bm{e}$. Let $\mathcal{F}_{\bm{e}}(T) = \sigma(\{e(\bm{x}): \bm{x} \in T\})$ be the $\sigma$-field generated by the variables $\{e(\bm{x}):\bm{x} \in T\}$, $T \subset \mathbb{R}^d$. For any two subsets $T_1$ and $T_2$ of $\mathbb{R}^d$, let 
$\bar{\alpha}(T_1,T_2) = \sup\{|\Prob(A \cap B) - \Prob(A)\Prob(B)|: A \in \mathcal{F}_{\bm{e}}(T_1), B \in \mathcal{F}_{\bm{e}}(T_2)\}$, $\bar{\beta}(T_1,T_2) = \sup {1 \over 2}\sum_{j=1}^{J}\sum_{k=1}^{K}|\Prob(A_{j}\cap B_{k}) - \Prob(A_{j})\Prob(B_{k})|$ where the supremum for $\bar{\beta}(T_1,T_2)$ is taken over all pairs of (finite) partitions $\{A_{1},\hdots,A_{J}\}$ and $\{B_{1},\hdots, B_{K}\}$ of $\mathbb{R}^{d}$ such that $A_{j} \in \mathcal{F}_{\bm{e}}(T_1)$ and $B_{k} \in \mathcal{F}_{\bm{e}}(T_2)$.
The $\alpha$- and $\beta$-mixing coefficients of the random field $\bm{e}$ are defined as $\alpha(a;b) = \sup\{\bar{\alpha}(T_1,T_2): d(T_1,T_2) \geq a, T_1,T_2 \in \mathcal{R}(b)\}$, $\beta(a;b) = \sup\{\bar{\beta}(T_1,T_2): d(T_1,T_2) \geq a, T_1,T_2 \in \mathcal{R}(b)\}$ where $a,b>0$ and $\mathcal{R}(b)$ is the collection of all the finite disjoint unions of cubes in $\mathbb{R}^d$ with a total volume not exceeding $b$. This restriction is important for $d \geq 2$, as discussed in \cite{Br89} and \cite{La03a}. 

To establish uniform convergence rates of series ridge estimators, we assume that the random field $\bm{e}$ is $\beta$-mixing. Additionally, to establish CLTs of series estimators for spatial trend, we assume that it is $\alpha$-mixing. We refer to \cite{Br93} and \cite{Do94} for more details on mixing coefficients for random fields.

\begin{assumption}\label{Ass:model}
For $j=1,\dots,d$, let $\{A_{n1,j}\}_{n \geq 1}$ and $\{A_{n2,j}\}_{n \geq 1}$ be sequences of positive numbers such that $\min\{A_{n2,j}, {A_{n1,j} \over A_{n2,j}}\} \to \infty$ as $n \to \infty$. Define $A_n^{(1)} = \prod_{j=1}^{d}A_{n1,j}$ and $\underbar{A}_{n2} = \min_{1 \leq j \leq d}A_{n2,j}$. Let $q>2$ be some integer. 
\begin{itemize}
\item[(i)] The random field $\bm{e}=\{e(\bm{x}):\bm{x} \in \mathbb{R}^d\}$ is strictly stationary such that $\E[e(\bm{0})]=0$, $\E[e(\bm{0})^2]=1$, and $\E[|e(\bm{0})|^q]<\infty$. Moreover, $\int_{\mathbb{R}^d}|\sigma_{\bm{e}}(\bm{x})|d\bm{x}<\infty$ where $\sigma_{\bm{e}}(\bm{x}) = \E[e(\bm{0})e(\bm{x})]$.  
\item[(ii)] The random field $\bm{e}$ is $\beta$-mixing with mixing coefficients $\beta(a;b)$ such that as $n \to \infty$, 
\begin{align}\label{lim:beta-coef}
n^{d\eta_1}A_n(A_n^{(1)})^{-1}\beta(\underbar{A}_{n2};A_n) \to 0
\end{align}
for some $\eta_1>0$. 
\item[(iii)] $\{\varepsilon_i\}_{i=1}^n$ is a sequence of i.i.d. random variables such that $\E[\varepsilon_1]=0$, $\E[\varepsilon_1^2]=1$, and $\E[|\varepsilon_1|^q]<\infty$.
\item[(iv)] The functions $\eta$ and $\sigma_\varepsilon$ are continuous on $R_0$. 
\end{itemize}
\end{assumption}

Condition (\ref{lim:beta-coef}) is concerned with a large-block-small-block argument for $\beta$-mixing sequences. In order to derive uniform convergence rates of series estimators, more careful arguments on the effects of non-equidistant sampling sites are necessary than those for proving asymptotic normality. Specifically, we extend the blocking technique in \cite{Yu94} (Corollary 2.7) for $\beta$-mixing time series to random fields observed at regularly spaced sampling sites. In Section \ref{Appendix:example}, we will show that a wide class of random fields satisfies our $\beta$-mixing conditions.

\cite{Ku22a} imposes similar assumptions on the sampling design (Conditions (S1)-(S3) in Assumption 2.1). However, (S1) is a condition required for evaluating the bias of the kernel estimator considered in his paper and is not necessary when considering series estimators. For further details on this point, see comments after Corollary 2.1 below. Conditions (S2) and (S3) specify conditions on $A_n$, $A_{1,n}$ and $A_{2,n}$ in his notation, which correspond to $A_{n,j}$, $A_{n1,j}$ and $A_{n2,j}$ in our notation, concerning the sample size $n$. These conditions serve as sufficient conditions for the assumptions on 
$A_{n,j}$, $A_{n1,j}$ and $A_{n2,j}$ in our paper. In particular, the assumptions required for the theoretical results presented in Section 2 are typically satisfied by setting $A_{n,j} \sim n^{\mathfrak{a}_0/d}$, $A_{n1,j}/A_{n,j} + A_{n2,j}/A_{n1,j} \sim n^{-\mathfrak{a}_1}$ for some $\mathfrak{a}_0 \in(0,1]$ and $\mathfrak{a}_1>0$. Note that the condition $A_{n,j} \sim n^{\mathfrak{a}_0/d}$ allows both the pure and mixed increasing domain asymptotics. See also Proposition \ref{prp:LevyMA-ex} below on this point.

\subsection{Uniform convergence rates}\label{Subsec:STR-unif-rate}

Before introducing our estimator of $m_0$, we briefly review the concepts of linear sieves and sieve basis functions. Let $\Theta$ be an infinite-dimensional parameter space endowed with a (pseudo) metric $d$ and let $\Theta_n$ be a sequence of approximating spaces, which are less complex but dense in $\Theta$. The approximating spaces are called sieves by \cite{Gr81}. Popular sieves are typically compact, nondecreasing ($\Theta_n \subset \Theta_{n+1} \subset \dots \subset \Theta$) and are such that for any $\theta \in \Theta$ there exists an element $\pi_n\theta$ in $\Theta_n$ satisfying $d(\theta,\pi_n\theta) \to 0$ as $n \to \infty$, where the notation $\pi_n$ can be regarded as a projection mapping from $\Theta$ to $\Theta_n$. A sieve is called a (finite-dimensional) linear sieve if it is a linear span of finitely many known (sieve) basis functions. Linear sieves, including splines and wavelets as special cases, form a large class of sieves useful to approximate a class of smooth real-valued functions on $R_0$. See Section \ref{Subsec:STR-opt-unif-rate} or \cite{Ch07} for references and explicit construction of sieve basis functions, including spline and wavelet basis functions.

We estimate the spatial trend function $m_0$ by the following series ridge estimator. 
\begin{align*}
\hat{m}(\bm{z}) &= \hat{m}_n(\bm{z}):= \psi_J(\bm{z})'\left({\Psi'_{J,n}\Psi_{J,n} \over n} + \varsigma_{J,n} I_J\right)^{-1}{\Psi'_{J,n} \bm{Y} \over n},\ \bm{z}= (z_1,\dots,z_d)' \in R_0,
\end{align*}
where $I_J$ is the $J \times J$ identity matrix, $\varsigma_{J,n}$ is a sequence of positive constants with $\varsigma_{J,n} \to 0$ as $n, J \to \infty$, $\bm{Y} = (Y(\bm{S}_1),\dots,Y(\bm{S}_n))'$, $\psi_{J,1},\dots, \psi_{J,J}$ are a collection of $J$ sieve basis functions, $\psi_J(\bm{z}) = (\psi_{J,1}(\bm{z}),\dots,\psi_{J,J}(\bm{z}))'$, and $\Psi_{J,n} = (\psi_J(\bm{S}_1/A_n),\dots,\psi_J(\bm{S}_n/A_n))'$. 

In Section 4, we estimate the trend surface of the nighttime human population in the Tokyo central 23 districts as an application of the theoretical results provided in Section \ref{Subsec:STR-opt-unif-rate}. Additionally, as a method for uncertainty quantification, we present the standard error surface and 95\% confidence surfaces, using the multivariate central limit theorem for general series estimators provided in Section \ref{Subsec:STR-AN}.

Define $\zeta_J = \sup_{\bm{z} \in R_0}\|\psi_J(\bm{z})\|$, 
\begin{align*}
\lambda_J &= \lambda_{\text{min}}\left(\E\left[\psi_J(\bm{S}_j/A_n)\psi_J(\bm{S}_j/A_n)'\right]\right)^{-1/2}\\ 
&=\lambda_{\text{min}}\left(\E\left[\psi_J(\bm{S}_1/A_n)\psi_J(\bm{S}_1/A_n)'\right]\right)^{-1/2},\ j=1,\dots, n. 
\end{align*}
We assume the following conditions on the sieve basis functions. 

\begin{assumption}\label{Ass:sieve1}
Let $\nabla \psi_J(\bm{z}) = \left(\partial \psi_{J,j}(\bm{z})/\partial z_k\right)_{1 \leq j \leq J, 1 \leq k \leq d} \in \mathbb{R}^{J \times d}$. 
\begin{itemize}
\item[(i)] There exists $\omega \geq 0$ such that $\sup_{\bm{z} \in R_0}\|\nabla \psi_J(\bm{z})\| \lesssim J^\omega$.
\item[(ii)] There exists $\varpi \geq 0$ such that $\zeta_J \lesssim J^{\varpi}$.
\item[(iii)] $\lambda_{\rm{min}}\left(\E\left[\psi_J(\bm{S}_1/A_n)\psi_J(\bm{S}_1/A_n)'\right]\right)>0$ for each $J$ and $n$. 
\end{itemize}
\end{assumption}

Assumption \ref{Ass:sieve1} imposes a normalization on the functions approximated by a sieve space. Similar conditions are also considered in Assumption 4 of \cite{ChCh15}. Condition (iii) restricts the magnitude of the series terms. This condition is useful for controlling the convergence in probability of the sample second moment matrix of the approximating functions to its expectation. Note that Assumption \ref{Ass:sieve1} is a mild condition on the sieve basis functions. We can see that this assumption is satisfied by the widely used linear sieve bases. For example, we can see $\lambda_J \lesssim 1$ and $\zeta_J \lesssim \sqrt{J}$ for tensor-products of univariate polynomial spline, trigonometric polynomial or wavelet bases. We refer to \cite{Hu98} and \cite{Ch07} for more details. The same comments apply to Assumption \ref{Ass:sieve3} in Section \ref{Sec:reg-covariate}. 

Let $\tilde{\psi}_J(\bm{z})$ denote the orthonormalized vector of basis functions, that is, 
\[
\tilde{\psi}_J(\bm{z}) = \E\left[\psi_J(\bm{S}_1/A_n)\psi_J(\bm{S}_1/A_n)'\right]^{-1/2}\psi_J(\bm{z}),
\]
and $\tilde{\Psi}_{J,n} = (\tilde{\psi}_J(\bm{S}_1/A_n),\dots,\tilde{\psi}_J(\bm{S}_n/A_n))'$.

Define $L^2(R_0)$ as the space of functions for which $\|f\|_{L^2}<\infty$, and let $\Psi_J$ denote the closed linear span of $\{\psi_{J,1},\dots,\psi_{J,J}\}$ in $L^2(R_0)$.
Let $P_{J,n}$ be the empirical projection operator onto $\Psi_J$, namely,
\begin{align*}
P_{J,n}m(\bm{z}) &= \psi_J(\bm{z})'\left({\Psi'_{J,n}\Psi_{J,n} \over n}\right)^- {1 \over n}\sum_{i=1}^n \psi_J\left(\bm{S}_i \over A_n\right)m\left(\bm{S}_i \over A_n\right) = \tilde{\psi}_J(\bm{z})(\tilde{\Psi}'_{J,n}\tilde{\Psi}_{J,n})^- \tilde{\Psi}_{J,n}M
\end{align*}
where $M = M_n= (m(\bm{S}_1/A_n),\dots,m(\bm{S}_n/A_n))'$. The operator $P_{J,n}$ is well defined operator: if $L_n^2(R_0)$ denotes the space of functions with norm $\|\cdot\|_{L^2,n}$ where $\|f\|_{L^2,n}^2 = {1 \over n}\sum_{i=1}^{n}f(\bm{S}_i/A_n)^2$, then $P_{J,n}: L_n^2(R_0) \to L_n^2(R_0)$ is an orthogonal projection onto $\Psi_J$ whenever $\Psi'_{J,n}\Psi_{J,n}$ is invertible.

Let $\tilde{m}$ denote the projection of $m_0$ onto $\Psi_J$ under the empirical measure, namely, 
\begin{align*}
\tilde{m}(\bm{z}) &= P_{J,n}m_0(\bm{z})
= \tilde{\psi}_J(\bm{z})'(\tilde{\Psi}'_{J,n}\tilde{\Psi}_{J,n})^- \tilde{\Psi}'_{J,n} M_0
\end{align*}
where $M_0 = M_{0,n} = (m_0(\bm{S}_1/A_n),\dots,m_0(\bm{S}_n/A_n))'$. 

To establish a sharp uniform convergence rate of the variance
term for an arbitrary linear sieve space, we assume the following conditions.
\begin{assumption}\label{Ass:ze-lam}
Define $\overline{A}_{n1} = \max_{1 \leq j \leq d}A_{n1,j}$. Let $q$ be the integer in Assumption \ref{Ass:model} which is greater than $2$ and let $\iota>0$ be a constant. As $n, J \to \infty$, 
\begin{itemize}
\item[(i)] $(\zeta_J\lambda_J)^{{q \over q-2}} \lesssim \sqrt{{A_n \over \log n}}$,
\item[(ii)] $\zeta_J\lambda_J \lesssim A_n^{-{1 \over 2}}n^{1-{1 \over q}}(\log n)^{{1 \over 2} + (q-1)\iota}$,
\item[(iii)] $\zeta_J\lambda_J(\overline{A}_{n1})^d n^{{1 \over q}}(\log n)^{\iota + {1 \over 2}} \lesssim A_n^{{1 \over 2}}$.
\end{itemize}
\end{assumption}

Define $\bar{m}(\bm{z}) = \psi_J(\bm{z})'(\Psi'_{J,n}\Psi_{J,n} + \varsigma_{J,n} nI_J)^{-1} \Psi'_{J,n} M_0$. Note that $\hat{m}(\bm{z}) - \tilde{m}(\bm{z}) = \{\hat{m}(\bm{z}) - \bar{m}(\bm{z})\} + \{\bar{m}(\bm{z}) - \tilde{m}(\bm{z})\} =: \check{m}(\bm{z}) + \dot{m}(\bm{z})$. Therefore, to establish uniform convergence rates of general series estimators (see Proposition \ref{prp:variance} below), it is sufficient to show (a) $\sup_{\bm{z} \in R_0}|\check{m}(\bm{z})| = O_p(\zeta_J\lambda_J\sqrt{\log n/A_n})$ and (b) $\sup_{\bm{z} \in R_0}|\dot{m}(\bm{z})| = O_p(\varsigma_{J,n}\zeta_J^2)$. Assumption \ref{Ass:ze-lam} (i) implies $\zeta_J \lambda_J\sqrt{\log n/A_n} \to 0$ as $n,J \to \infty$. Assumptions \ref{Ass:ze-lam} (ii) and (iii) are mainly utilized in the proof of (a). Let $\mathcal{S}_n$ be a set with finitely many points in $R_0$ that satisfies certain additional conditions. For any $\bm{z} \in R_0$, we denote by $\bm{z}_n(\bm{z})$ the point in $\mathcal{S}_n$ that is the closest to $\bm{z}$ in terms of the Euclidean distance. Under these settings, we have $\sup_{\bm{z} \in R_0}|\check{m}(\bm{z})| \leq \sup_{\bm{z} \in R_0}|\check{m}(\bm{z}) - \check{m}(\bm{z}_n(\bm{z}))|+\sup_{\bm{z}_n \in \mathcal{S}_n}|\check{m}(\bm{z}_n)|$. We use Assumption \ref{Ass:sieve1} (i) to evaluate both $\sup_{\bm{z} \in R_0}|\check{m}(\bm{z}) - \check{m}(\bm{z}_n(\bm{z}))|$ and (b). To assess the convergence rate of $\sup_{\bm{z}_n \in S_n}|\check{m}(\bm{z}_n)|$, it is necessary to consider the spatial correlation of the observations. For this purpose, we divide the sampling region $R_n$ into subregions and apply a large-block-small-block argument. However, under the stochastic sampling design, the number of observations within each subregion is random, which poses challenges in the theoretical analysis. This requires a different proof approach compared to existing analyses for i.i.d. data, equally spaced time series, or spatial data on a regular grid. Our main technical contribution lies in providing a rigorous theoretical analysis by addressing this issue using a method of constructing independent blocks for $\beta$-mixing random fields that consider the randomness of the observation points. Furthermore, to show (b), it is necessary to obtain an upper bound of $\|(\tilde{\Psi}'_{J,n}\tilde{\Psi}_{J,n}/n + \varsigma_{J,n} \check{\Psi}_J^{-1})^{-1} - (\tilde{\Psi}'_{J,n}\tilde{\Psi}_{J,n}/n)^{-1}\|$. For this purpose, we establish a new inequality (Lemma \ref{lem:matrix-inv-norm} in the supplementary material) concerning the spectral norm of the difference between two invertible matrices. To evaluate the corresponding term in the spatial regression model introduced in Appendix \ref{Sec:reg-covariate}, we additionally need to extend \cite{Tr12}'s matrix exponential inequality for sums of independent random matrices to the case of spatially dependent data (Lemma \ref{lem:B} in the supplementary material). To evaluate the term corresponding to (b) in the spatial regression model, one could use general perturbation theory for linear operators (e.g., \cite{Ka95}) as an alternative to Lemma \ref{lem:matrix-inv-norm}. Additionally, Lemma \ref{lem:B} can be shown by establishing results corresponding to Theorem 4.2 and Corollary 4.2 of \cite{ChCh15} for spatially dependent data. See the proof of Proposition \ref{prp:variance} for a more detailed discussion. 

In Section \ref{Appendix:example}, we will see that Assumptions \ref{Ass:model} (i), (ii), and \ref{Ass:ze-lam} are satisfied by polynomial spline, trigonometric polynomial or wavelet bases when the random field $\bm{e}$ is in a class of L\'evy-driven moving average random fields, which can represent a wide class of spatial processes on $\mathbb{R}^d$, including both Gaussian and non-Gaussian random fields. We refer to \cite{BrMa17} and \cite{Ku22a} for examples and detailed properties on L\'evy-driven moving average random fields. 

\begin{proposition}\label{prp:variance}
Suppose that Assumptions \ref{Ass:sample}, \ref{Ass:model}, \ref{Ass:sieve1}, and \ref{Ass:ze-lam} hold. Additionally, assume that $\|m_0\|_\infty<\infty$, $\varsigma_{J,n}  \lesssim \zeta_J\lambda_J^{-1}\sqrt{\log n/n}$, and $\zeta_J^2 \lambda_J^{-1}(\zeta_J + \lambda_J^{-1})\sqrt{\log n/n} \to 0$ as $n,J \to \infty$.
Then
\begin{align}\label{eq:var-unif-rate}
\|\hat{m} - \tilde{m}\|_\infty &= O_p\left(\zeta_J\lambda_J\sqrt{{\log n \over A_n}} + \varsigma_{J,n} \zeta_J^2 \right)\ \text{as $n,J \to \infty$}.
\end{align}

\end{proposition}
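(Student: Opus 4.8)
The plan is to follow the reduction recorded just before the statement. Writing $\hat m-\tilde m=\check m+\dot m$ with $\check m=\hat m-\bar m$ and $\dot m=\bar m-\tilde m$, it suffices to establish (a) $\sup_{\bm z\in R_0}|\check m(\bm z)|=O_p(\zeta_J\lambda_J\sqrt{\log n/A_n})$ and (b) $\sup_{\bm z\in R_0}|\dot m(\bm z)|=O_p(\varsigma_{J,n}\zeta_J^2)$. Throughout I would pass to the orthonormalized basis $\tilde\psi_J=\check\Psi_J^{-1/2}\psi_J$, where $\check\Psi_J=\E[\psi_J(\bm S_1/A_n)\psi_J(\bm S_1/A_n)']$, so that, with $\tilde Q_J=\tilde\Psi'_{J,n}\tilde\Psi_{J,n}/n$, one has
\[
\check m(\bm z)=\tilde\psi_J(\bm z)'\bigl(\tilde Q_J+\varsigma_{J,n}\check\Psi_J^{-1}\bigr)^{-1}\frac{\tilde\Psi'_{J,n}(\bm Y-M_0)}{n},
\]
and
\[
\dot m(\bm z)=\tilde\psi_J(\bm z)'\Bigl[\bigl(\tilde Q_J+\varsigma_{J,n}\check\Psi_J^{-1}\bigr)^{-1}-\tilde Q_J^{-}\Bigr]\frac{\tilde\Psi'_{J,n}M_0}{n}.
\]
A preliminary step used in both parts is a design concentration bound: since $\{\bm S_i\}$ are i.i.d.\ and independent of the errors (Assumption \ref{Ass:sample}), $\E[\tilde\psi_J(\bm S_1/A_n)\tilde\psi_J(\bm S_1/A_n)']=I_J$, and $\sup_{\bm z}\|\tilde\psi_J(\bm z)\|\le\zeta_J\lambda_J$, a matrix concentration inequality under the rate conditions imposed in the proposition gives $\|\tilde Q_J-I_J\|=o_p(1)$; together with $\varsigma_{J,n}\|\check\Psi_J^{-1}\|=\varsigma_{J,n}\lambda_J^2=o(1)$, the regularized matrices $\tilde Q_J+\varsigma_{J,n}\check\Psi_J^{-1}$ are, on an event of probability tending to one, invertible with $\|(\tilde Q_J+\varsigma_{J,n}\check\Psi_J^{-1})^{-1}\|=O_p(1)$, and on this event $\tilde Q_J^{-}=\tilde Q_J^{-1}$.

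For (a), observe that $\bm Y-M_0=(e_{n,i}+\varepsilon_{n,i})_{i=1}^n$, so $\check m$ is the stochastic term driven by the errors, and conditionally on the design $\{\bm S_i\}$ one has $\check m(\bm z)=n^{-1}\sum_i a_i(\bm z)(e_{n,i}+\varepsilon_{n,i})$ with nonrandom weights $a_i(\bm z)=\tilde\psi_J(\bm z)'(\tilde Q_J+\varsigma_{J,n}\check\Psi_J^{-1})^{-1}\tilde\psi_J(\bm S_i/A_n)$. Using the decomposition $\sup_{\bm z}|\check m(\bm z)|\le\sup_{\bm z}|\check m(\bm z)-\check m(\bm z_n(\bm z))|+\sup_{\bm z_n\in\mathcal S_n}|\check m(\bm z_n)|$, I would control the first (discretization) piece through Assumption \ref{Ass:sieve1}(i): the bound $\sup_{\bm z}\|\nabla\psi_J(\bm z)\|\lesssim J^\omega$ makes $\check m$ Lipschitz with a polynomial constant, so a grid $\mathcal S_n$ of polynomial cardinality renders this term negligible relative to the target rate. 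The core is the grid maximum. For a fixed $\bm z_n$, the i.i.d.\ part $n^{-1}\sum_i a_i(\bm z_n)\sigma_\varepsilon(\bm S_i/A_n)\varepsilon_i$ is handled by a standard Bernstein inequality given the design; for the spatially dependent part $n^{-1}\sum_i a_i(\bm z_n)\eta(\bm S_i/A_n)e(\bm S_i)$ I would partition $R_n$ into large and small blocks, invoke the $\beta$-mixing decay \eqref{lim:beta-coef} (Assumption \ref{Ass:model}(ii)) to couple the large blocks with independent copies in the spirit of \cite{Yu94}, and apply a Bernstein bound to the resulting independent blocks. Because $\int|\sigma_{\bm e}|<\infty$ (Assumption \ref{Ass:model}(i)) and the sampling density is spread over a region of volume $A_n$, the conditional variance is of order $A_n^{-1}(\zeta_J\lambda_J)^2$ rather than $n^{-1}(\zeta_J\lambda_J)^2$, which is exactly what turns the sample size into $A_n$ in the rate; the moment conditions $\E|e(\bm 0)|^q<\infty$ and $\E|\varepsilon_1|^q<\infty$ with Assumptions \ref{Ass:ze-lam}(ii)--(iii) govern the truncation/large-deviation regime, and a union bound over the $O(n^c)$ grid points supplies the $\sqrt{\log n}$ factor, yielding $\sup_{\bm z_n}|\check m(\bm z_n)|=O_p(\zeta_J\lambda_J\sqrt{\log n/A_n})$.

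For (b), $\dot m$ is a pure regularization-bias term depending only on $M_0$. By Cauchy--Schwarz and $\|\tilde Q_J\|=O_p(1)$, $\|\tilde\Psi'_{J,n}M_0/n\|\le\|\tilde Q_J\|^{1/2}\|m_0\|_\infty=O_p(1)$ since $\|m_0\|_\infty<\infty$. It remains to bound the middle matrix factor $\|(\tilde Q_J+\varsigma_{J,n}\check\Psi_J^{-1})^{-1}-\tilde Q_J^{-1}\|$. The naive resolvent identity $(\tilde Q_J+\varsigma_{J,n}\check\Psi_J^{-1})^{-1}-\tilde Q_J^{-1}=-\varsigma_{J,n}(\tilde Q_J+\varsigma_{J,n}\check\Psi_J^{-1})^{-1}\check\Psi_J^{-1}\tilde Q_J^{-1}$ loses factors of $\lambda_J$ and is too crude to reach the claimed order; instead I would apply Lemma \ref{lem:matrix-inv-norm}, which is designed to bound the spectral norm of the difference of these two invertible matrices sharply. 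Combining that bound with $\sup_{\bm z}\|\psi_J(\bm z)\|=\zeta_J$ and $\|\tilde\Psi'_{J,n}M_0/n\|=O_p(1)$, and using the rate restriction $\varsigma_{J,n}\lesssim\zeta_J\lambda_J^{-1}\sqrt{\log n/n}$ together with the design concentration, yields $\sup_{\bm z}|\dot m(\bm z)|=O_p(\varsigma_{J,n}\zeta_J^2)$. Putting (a) and (b) together gives \eqref{eq:var-unif-rate}.

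The main obstacle is the grid maximum in part (a). Because the sampling sites are stochastic, the number of observations in each block is random, so the classical \cite{Yu94} coupling for $\beta$-mixing sequences --- which presupposes a fixed block structure along a one-dimensional index --- does not apply verbatim. The heart of the argument is a blocking construction for $\beta$-mixing random fields that simultaneously copes with the lack of a natural ordering of $\R^d$ and with the randomness of the block occupancies, while preserving the $A_n^{-1}$ variance scaling uniformly over the discretization grid; this is the step that demands genuinely new probabilistic bookkeeping compared with the i.i.d., regular-grid, or equally spaced time-series cases.
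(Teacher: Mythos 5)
Your overall architecture matches the paper's: the decomposition $\hat m-\tilde m=\check m+\dot m$, the grid/Lipschitz reduction via Assumption \ref{Ass:sieve1}(i), the Yu-type coupling for $\beta$-mixing random fields with random block occupancy (the paper's Lemmas \ref{lem:indep_block} and \ref{lem:number-summands}) followed by Bernstein's inequality, and, for part (b), the use of Lemma \ref{lem:matrix-inv-norm} together with the observation that the naive resolvent identity is too crude; your Cauchy--Schwarz bound $\|\tilde\Psi'_{J,n}M_0/n\|\leq\|\tilde\Psi'_{J,n}\tilde\Psi_{J,n}/n\|^{1/2}\|m_0\|_\infty$ is also valid. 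The genuine gap is in part (a): you keep the design-global matrix $(\tilde\Psi'_{J,n}\tilde\Psi_{J,n}/n+\varsigma_{J,n}\check\Psi_J^{-1})^{-1}$ inside the weights $a_i(\bm z)$ and then assert that the conditional (given the design) variance of the blocked sums is of order $A_n^{-1}(\zeta_J\lambda_J)^2$. That assertion is the crux, and it does not follow from the tools you set up. The sharp variance rests on the orthonormality identity $\E[\tilde\psi_J(\bm S_1/A_n)\tilde\psi_J(\bm S_1/A_n)']=I_J$ combined with $\int|\sigma_{\bm e}(\bm x)|d\bm x<\infty$, and these can only be exploited by integrating over the design, i.e.\ in \emph{unconditional} moment computations with weights that depend on the design only through the single site $\bm S_i$. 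Once $a_i(\bm z)$ contains the inverse, which is a function of all of $\bm S_1,\dots,\bm S_n$, the only available controls are operator-norm bounds $|a_i(\bm z)|\lesssim(\zeta_J\lambda_J)^2$ on the event where the inverse has bounded norm, and the variance proxy they yield is of order $(\zeta_J\lambda_J)^4/A_n$ (a trace argument improves this only to $(\zeta_J\lambda_J)^2 J/A_n$), not $(\zeta_J\lambda_J)^2/A_n$. In the pure increasing domain case $A_n\sim\kappa n$ this loss is not absorbed anywhere: Bernstein then gives $O_p\bigl((\zeta_J\lambda_J)^2\sqrt{\log n/A_n}\bigr)$, exceeding the claimed rate by the (generally divergent) factor $\zeta_J\lambda_J$; the same loss already affects your ``i.i.d.\ part handled by standard Bernstein given the design.''

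The missing idea --- and the paper's actual first move --- is to peel the inverse off \emph{before} blocking: write $(\tilde\Psi'_{J,n}\tilde\Psi_{J,n}/n+\varsigma_{J,n}\check\Psi_J^{-1})^{-1}=I_J+\bigl\{(\tilde\Psi'_{J,n}\tilde\Psi_{J,n}/n+\varsigma_{J,n}\check\Psi_J^{-1})^{-1}-I_J\bigr\}$, so that $\check m(\bm z_n)$ splits into the clean term $\tilde\psi_J(\bm z_n)'\tilde\Psi'_{J,n}v_n/n$, whose weights $\tilde\psi_J(\bm z_n)'\tilde\psi_J(\bm S_i/A_n)$ are local in the design and whose truncated, blocked moments can be computed unconditionally using orthonormality (this is where your coupling-plus-Bernstein program delivers $O_p(\zeta_J\lambda_J\sqrt{\log n/A_n})$, and where the $n^{d\eta_1}$ factor in (\ref{lim:beta-coef}) pays for the union bound over the grid), plus a remainder bounded by Cauchy--Schwarz as $\zeta_J\lambda_J\,\|(\tilde\Psi'_{J,n}\tilde\Psi_{J,n}/n+\varsigma_{J,n}\check\Psi_J^{-1})^{-1}-I_J\|\,\|\tilde\Psi'_{J,n}v_n/n\|$. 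Here $\|\tilde\Psi'_{J,n}v_n/n\|=O_p(\zeta_J\lambda_J/\sqrt{A_n})$ by a direct second-moment computation (again via orthonormality and $\int|\sigma_{\bm e}|<\infty$), and $\|(\tilde\Psi'_{J,n}\tilde\Psi_{J,n}/n+\varsigma_{J,n}\check\Psi_J^{-1})^{-1}-I_J\|=O_p\bigl(\zeta_J\lambda_J\sqrt{\log J/n}+\varsigma_{J,n}\lambda_J^2\bigr)$ by matrix Bernstein (Lemma \ref{lem:psi}); under the proposition's rate conditions this remainder stays within $O_p(\zeta_J\lambda_J\sqrt{\log n/A_n})$. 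With this split, the rest of your argument goes through essentially verbatim and coincides with the paper's Steps 1--3.
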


In Proposition \ref{prp:variance}, the first term of the convergence rate corresponds to the variance term of $\hat{m}$, while the second term corresponds to the effect of the ridge penalty. Specifically, the first term implies that the convergence rate of the variance term of $\hat{m}$ differs from the case of i.i.d. data, depending not on the sample size $n$ but on the volume $A_n$ of the sampling region $R_n$. This is due to the spatial correlation of the error terms, meaning that even if the number of sampling sites within the sampling region increases, consistent estimation of the trend function cannot be achieved unless the sampling region expands.

Let $L^\infty(R_0)$ denote the space of functions of which $\|f\|_\infty<\infty$ and let
\begin{align*}
\|P_{J,n}\|_\infty &= \sup_{m \in L^\infty(R_0), \|m\|_\infty \neq 0}{\|P_{J,n}m\|_\infty \over \|m\|_\infty}
\end{align*}
denote the operator norm of $P_{J,n}$. 

The following result provides uniform convergence rates of general series estimators. Building upon the result, we will see in Section \ref{Subsec:STR-opt-unif-rate} that the spline and wavelet estimators achieve the optimal convergence rate when the trend function $m_0$ belongs to a H\"older space.  
\begin{corollary}\label{cor:unif-rate1}
Suppose the assumptions in Proposition \ref{prp:variance} hold. Then
\begin{align*}
\|\hat{m} - m_0\|_\infty &= O_p\left(\zeta_J\lambda_J\sqrt{{\log n \over A_n}}+ \varsigma_{J,n} \zeta_J^2  \right) + (1 + \|P_{J,n}\|_\infty)\inf_{m \in \Psi_J}\|m_0 - m\|_\infty.
\end{align*}
Further, if the linear sieve satisfies $\zeta_J\lesssim \sqrt{J}$, $\lambda_J \lesssim 1$, and $\|P_{J,n}\|_{\infty} = O_p(1)$, then
\[
\|\hat{m} - m_0\|_\infty = O_p\left(\sqrt{{J\log n \over A_n}} + \varsigma_{J,n} J  + \inf_{m \in \Psi_J}\|m_0 - m\|_\infty\right).
\]
\end{corollary}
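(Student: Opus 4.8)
The plan is to split the error by the triangle inequality into a variance part and an approximation (bias) part,
\begin{align*}
\|\hat{m}-m_0\|_\infty \leq \|\hat{m}-\tilde{m}\|_\infty + \|\tilde{m}-m_0\|_\infty,
\end{align*}
where $\tilde{m}=P_{J,n}m_0$ is the empirical projection introduced above. Since the hypotheses of Proposition \ref{prp:variance} are assumed, the first term is immediately $O_p\!\left(\zeta_J\lambda_J\sqrt{\log n/A_n}+\varsigma_{J,n}\zeta_J^2\right)$, so all the remaining work goes into the second term. I would carry this out on the event that $\Psi'_{J,n}\Psi_{J,n}$ is invertible, which holds with probability tending to one under the stated assumptions (and is already the regime in which Proposition \ref{prp:variance} operates), so that $P_{J,n}$ is genuinely an orthogonal projection onto $\Psi_J$.

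The heart of the bias bound is the reproducing property $P_{J,n}m=m$ for every $m\in\Psi_J$, holding pointwise on all of $R_0$: writing $m=\psi_J(\cdot)'c$ and using ${1\over n}\sum_i\psi_J(\bm{S}_i/A_n)\psi_J(\bm{S}_i/A_n)'=\Psi'_{J,n}\Psi_{J,n}/n$, the definition of $P_{J,n}$ collapses to $\psi_J(\cdot)'(\Psi'_{J,n}\Psi_{J,n}/n)^{-1}(\Psi'_{J,n}\Psi_{J,n}/n)c=m$ on the invertibility event. Fixing an arbitrary $m\in\Psi_J$ and inserting it via $P_{J,n}m=m$ gives
\begin{align*}
\tilde{m}-m_0 = P_{J,n}m_0-m_0 = P_{J,n}(m_0-m)-(m_0-m),
\end{align*}
whence, by the definition of the operator norm $\|P_{J,n}\|_\infty$,
\begin{align*}
\|\tilde{m}-m_0\|_\infty \leq \|P_{J,n}(m_0-m)\|_\infty + \|m_0-m\|_\infty \leq (1+\|P_{J,n}\|_\infty)\,\|m_0-m\|_\infty.
\end{align*}
Taking the infimum over $m\in\Psi_J$ and combining with the variance bound yields the first display of the corollary.

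For the second display I would simply substitute the simplifying sieve conditions: $\zeta_J\lesssim\sqrt{J}$ and $\lambda_J\lesssim1$ turn $\zeta_J\lambda_J\sqrt{\log n/A_n}$ into $\sqrt{J\log n/A_n}$ and $\varsigma_{J,n}\zeta_J^2$ into $\varsigma_{J,n}J$, while $\|P_{J,n}\|_\infty=O_p(1)$ makes $1+\|P_{J,n}\|_\infty=O_p(1)$; since $\inf_{m\in\Psi_J}\|m_0-m\|_\infty$ is a nonrandom sequence ($\Psi_J$ and $m_0$ being deterministic), multiplying it by an $O_p(1)$ factor keeps it $O_p$ of the same order, and merging the three nonnegative $O_p$ terms into one gives the stated rate. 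I do not anticipate a genuine obstacle here, as the analytic difficulty was already absorbed into Proposition \ref{prp:variance}; the only points demanding care are that the identity $P_{J,n}m=m$ on $\Psi_J$ must be verified \emph{pointwise} (hence in $\|\cdot\|_\infty$), not merely in the $L^2_n$ sense in which $P_{J,n}$ is an orthogonal projection, and that the whole argument be confined to the invertibility event so that the $O_p$ conclusions are legitimate.
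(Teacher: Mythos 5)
Your proposal is correct and follows essentially the same route as the paper: the same triangle-inequality split into $\|\hat{m}-\tilde{m}\|_\infty$ (absorbed by Proposition \ref{prp:variance}) and the bias term, the same insertion of an arbitrary $m\in\Psi_J$ via the reproducing property $P_{J,n}m=m$ to get the factor $(1+\|P_{J,n}\|_\infty)$, and the same infimum over $\Psi_J$. The only difference is that you make explicit the restriction to the invertibility event and the pointwise validity of $P_{J,n}m=m$, which the paper leaves implicit.
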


Note that the term $(1 + \|P_{J,n}\|_\infty)\inf_{m \in \Psi_J}\|m_0 - m\|_\infty$ comes from the bound of $\|\tilde{m} - m_0\|_\infty$, which can be addressed similarly to the proof of Lemma 2.4 in \cite{ChCh15}. Corollary \ref{cor:unif-rate1} shows an advantage of series estimators over kernel estimators since for kernel estimators, the boundary of $R_0$ must be excluded to establish uniform convergence rates over a compact set. This arises from the fact that the series estimator is a global nonparametric estimator, whereas kernel estimators are local nonparametric estimators. In practice, uniform estimation of the spatial trend function on $R_0$ becomes crucial when analyzing spatio-temporal data within the framework of functional data analysis. Especially in scenarios where spatial data is observed at irregularly spaced sampling sites within a region at multiple time points, estimating the spatial trend at each time point using a series estimator and employing it as surface data enables the analysis of spatio-temporal data as surface time series. Corollary \ref{cor:unif-rate1} provides a building block for the theoretical validity of this approach. We refer to \cite{MaGe20} for a survey on recent works of surface time series.

As we will see in Section \ref{Subsec:STR-opt-unif-rate}, when the target function is a smooth function with higher-order partial derivatives, some may wish to consider the uniform rate under the (weighted) Sobolev or Besov norm. However, this paper is mainly interested in the global estimation of the regression function itself, and the partial derivatives, which represent the local structure of the regression function, are not the objects of estimation. Therefore, as a criterion for evaluating the properties of the estimator, the uniform rate under the (weighted) Sobolev norm, which involves derivatives of the function, is not appropriate for our purpose. Furthermore, from the perspective of statistical applications, convergence under the $L^\infty$ norm is easier to interpret compared to the (weighted) Sobolev or Besov norm.

\subsection{Optimal uniform rates for spline and wavelet estimators}\label{Subsec:STR-opt-unif-rate}

We first introduce a H\"older space of smoothness $r$ on the domain $R_0$. A real-valued function $f$ on $R_0$ is said to satisfy a H\"older condition with exponent $0<\gamma \leq 1$ if there is a positive number $c$ such that $|f(\bm{x}_1) - f(\bm{x}_2)| \leq c\|\bm{x}_1 - \bm{x}_2\|^\gamma$ for all $\bm{x}_1,\bm{x}_2 \in R_0$. Given a $d$-tuple $\bm{\alpha} = (\alpha_1,\dots,\alpha_d)'$ of nonnegative integers, let $D^{\bm{\alpha}}$ denote the differential operator defined by $D^{\bm{\alpha}} = \partial^{\bm{\alpha}}/\partial x_1^{\alpha_1} \cdots \partial x_d^{\alpha_d}$. Let $\rho$ be a nonnegative integer and set $r = \rho + \gamma$. A real-valued function $f$ on $R_0$ is said to be $r$-smooth if it is $\rho$-times continuously differentiable on $R_0$ and $D^{\bm{\alpha}}f$ satisfies a H\"older condition with exponent $\gamma$ for all $\bm{\alpha}$ with $|\bm{\alpha}| = \rho$. 

Define $\Lambda^r(R_0)$ as the class of all $r$-smooth real-valued functions on $R_0$ and we call $\Lambda^r(R_0)$ as a H\"older space of smoothness $r$ on the domain $R_0$. Let $\text{BSpl}(J_0,[-1/2,1/2],\varrho)$ denote a B-spline sieve of degree $\varrho$ and dimension $J_0= n_0 + \varrho$ on the domain $[-1/2,1/2]$, where $n_0$ is the number of knots inside $[-1/2,1/2]$. Let $\text{Wav}(J_0,[-1/2,1/2],\varrho)$ denote a wavelet sieve basis of regularity $\varrho$ and dimension $J_0 = 2^{j_0}$ on the domain $[-1/2,1/2]$, where $j_0$ is the resolution level of the wavelet space. We refer to \cite{De01} and \cite{Sc07} for detailed construction of univariate B-spline on $[0,1]$ and refer to \cite{CoDaVi93} and \cite{Jo17} for further details on a wavelet space on $[0,1]$. 

We construct tensor product B-spline or wavelet bases for $R_0$ as follows. First, for $\bm{z}=(z_1,\dots,z_d)' \in R_0$, we construct $d$ B-spline or wavelet bases on $[-1/2,1/2]$. Next, we form the tensor-product basis by taking the product of the elements of each of the univariate bases. Then $\psi_J(\bm{z})$ may be expressed as $\psi_J(\bm{z}) = \bigotimes_{k=1}^{d}\psi_{J_{0,k}}(z_k)$ where the elements of each vector $\psi_{J_{0,k}}(z_k)$ span $\text{BSpl}(J_{0,k},[-1/2,1/2],\varrho)$ or $\text{Wav}(J_{0,k},[-1/2,1/2],\varrho)$. For example, the spline basis functions $\psi_{J_{0,k},j}(x)$ on $x \in [-1/2,1/2]$, $j=1,\dots,J_{0,k}$, $k=1,2$ are extended to the basis functions on $\bm{z}=(z_1,z_2)' \in [-1/2,1/2]^2$ by $\psi_{J_{0,1},j_1}(z_1)\times \psi_{J_{0,2},j_2}(z_2)$, $j_k=1,\dots,J_{0,k}$, $k=1,2$. Let $\text{BSpl}(J,R_0,\varrho)$ and $\text{Wav}(J,R_0,\varrho)$ denote the resulting tensor-product spaces spanned by the $J=\prod_{k=1}^dJ_{0,k}=\prod_{k=1}^d(n_{0,k}+\varrho)$ or $J=\prod_{k=1}^dJ_{0,k}=2^{\sum_{k=1}^d j_{0,k}}$ elements of $\psi_J(\bm{z})$. See Section 6 in \cite{ChCh15} for more detailed construction of tensor-product spaces.

\begin{assumption}\label{Ass:sieve2}
Recall that $g$ is a probability density function on $R_0$. 
\begin{itemize}
\item[(i)] The function $g$ is uniformly bounded away from zero and infinity on $R_0$. 
\item[(ii)] The mean function $m_0$ belongs to $\Lambda^r(R_0)$ for some $r>0$. 
\item[(iii)] The sieve $\Psi_J$ is $\text{BSpl}(J,R_0,\varrho)$ or $\text{Wav}(J,R_0,\varrho)$ with $\varrho > \max\{r,1\}$. 
\end{itemize}
\end{assumption}

Conditions (i) and (iii) imply Assumption \ref{Ass:sieve1} with $\zeta_{J} \lesssim \sqrt{J}$ and $\lambda_{J} \sim 1$. See also p.450 of \cite{ChCh15} on this point. Moreover, Assumption \ref{Ass:sieve2} implies that $\inf_{m \in \Psi_J}\|m - m_0\|_\infty \lesssim J^{-r/d}$ (see, e.g. \cite{DeLo93} and \cite{Hu98}). One can see 
that $\|P_{J,n}\|_\infty \lesssim 1$ with probability approaching one for spline and wavelet bases for equally-spaced $\beta$-mixing time series. We have extended their result to irregularly spaced $\beta$-mixing random fields to establish the following result. 


\begin{theorem}\label{thm:unif-rate1}
Suppose that Assumptions \ref{Ass:sample}, \ref{Ass:model}, \ref{Ass:ze-lam}, and \ref{Ass:sieve2} hold. Additionally, assume that $r\geq d$, $\sqrt{J^3\log n/n} \to 0$, and $\varsigma_{J,n} (\sqrt{JA_n/(\log n)} +\sqrt{n/(J\log n)})\lesssim 1$. If $J \sim \left(A_n/\log n\right)^{{d \over 2r+d}}$, then
\[
\|\hat{m} - m_0\|_\infty = O_p\left(\left({\log n \over A_n}\right)^{r \over 2r+d}\right).
\]
\end{theorem}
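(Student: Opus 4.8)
The plan is to derive the result from Corollary \ref{cor:unif-rate1} by checking that the spline/wavelet sieve meets its refined hypotheses and then balancing the resulting variance, bias, and ridge-penalty terms through the prescribed choice of $J$.

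First I would collect the structural facts supplied by Assumption \ref{Ass:sieve2}. Conditions (i) and (iii) give $\zeta_J \sim \sqrt{J}$ and $\lambda_J \sim 1$ (so in particular Assumption \ref{Ass:sieve1} holds), while (ii)--(iii) give the sup-norm approximation bound $\inf_{m \in \Psi_J}\|m_0 - m\|_\infty \lesssim J^{-r/d}$ for $r$-smooth $m_0$ on $R_0$. Since $m_0 \in \Lambda^r(R_0)$ is continuous on the compact set $R_0$, we also have $\|m_0\|_\infty < \infty$. With $\zeta_J \sim \sqrt{J}$ and $\lambda_J \sim 1$, the two side conditions of the theorem translate exactly into the preconditions of Proposition \ref{prp:variance}: the bound $\varsigma_{J,n}\sqrt{n/(J\log n)} \lesssim 1$ is equivalent to $\varsigma_{J,n} \lesssim \zeta_J\lambda_J^{-1}\sqrt{\log n/n}$, and $\sqrt{J^3\log n/n}\to 0$ is equivalent to $\zeta_J^2\lambda_J^{-1}(\zeta_J + \lambda_J^{-1})\sqrt{\log n/n}\to 0$. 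The restriction $r \geq d$ keeps $J \sim (A_n/\log n)^{d/(2r+d)}$ growing slowly enough (at most like $(A_n/\log n)^{1/3}$) for these side conditions and Assumption \ref{Ass:ze-lam} to remain compatible.

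The crux is the remaining hypothesis of the refined part of Corollary \ref{cor:unif-rate1}, namely $\|P_{J,n}\|_\infty = O_p(1)$. This is where the irregular, dependent sampling design genuinely enters. I would prove it by comparing the empirical projection $P_{J,n}$ to the deterministic $L^\infty$-bounded projection onto $\Psi_J$. For B-spline and wavelet bases the population-level projection operator is uniformly bounded in $L^\infty$ by classical approximation theory, owing to the banded/local support structure of the bases; it therefore suffices to show that the orthonormalized empirical Gram matrix concentrates, $\|\tilde{\Psi}'_{J,n}\tilde{\Psi}_{J,n}/n - I_J\| = o_p(1)$, and to transfer this to the operator norm via the locality of the basis. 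Establishing the Gram-matrix concentration under the stochastic sampling design requires controlling both the spatial $\beta$-mixing dependence and the random number of observations falling in each cell; this is handled by the blocking construction for $\beta$-mixing random fields described after Assumption \ref{Ass:ze-lam}, and the condition $\sqrt{J^3\log n/n}\to 0$ is precisely what makes the deviation negligible. I expect this step to be the main obstacle, since it is the part where the equally-spaced time-series arguments of the cited literature must be adapted to irregularly spaced, spatially dependent data.

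With $\|P_{J,n}\|_\infty = O_p(1)$ in hand, the refined conclusion of Corollary \ref{cor:unif-rate1} yields
\[
\|\hat{m} - m_0\|_\infty = O_p\left(\sqrt{J\log n/A_n} + \varsigma_{J,n}J + \inf_{m\in\Psi_J}\|m_0 - m\|_\infty\right).
\]
Substituting $\inf_{m\in\Psi_J}\|m_0 - m\|_\infty \lesssim J^{-r/d}$ and the choice $J \sim (A_n/\log n)^{d/(2r+d)}$ equates the variance term $\sqrt{J\log n/A_n}$ and the bias term $J^{-r/d}$, each of which then equals $(\log n/A_n)^{r/(2r+d)}$. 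Finally, the first part of the penalty condition, $\varsigma_{J,n}\sqrt{JA_n/\log n}\lesssim 1$, i.e.\ $\varsigma_{J,n} \lesssim (A_n/\log n)^{-(d+r)/(2r+d)}$, gives $\varsigma_{J,n}J \lesssim (\log n/A_n)^{r/(2r+d)}$, so the ridge term does not dominate. Collecting the three terms then produces the stated rate $\|\hat{m} - m_0\|_\infty = O_p\big((\log n/A_n)^{r/(2r+d)}\big)$.
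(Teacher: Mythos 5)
Your proposal is correct and follows essentially the same route as the paper: reduce to Corollary \ref{cor:unif-rate1}, note that Assumption \ref{Ass:sieve2} gives $\zeta_J \lesssim \sqrt{J}$, $\lambda_J \sim 1$ and $\inf_{m \in \Psi_J}\|m_0 - m\|_\infty \lesssim J^{-r/d}$, establish $\|P_{J,n}\|_\infty = O_p(1)$ via Gram-matrix concentration together with the local structure of the spline/wavelet bases (the paper does this through Corollary A.1 of \cite{Hu03} for splines and the argument of Theorem 5.2 of \cite{ChCh15} for wavelets), and then balance the variance, ridge, and bias terms under $J \sim (A_n/\log n)^{d/(2r+d)}$; your translation of the side conditions into the hypotheses of Proposition \ref{prp:variance} and your rate arithmetic are both correct.

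One point you misdiagnose, though it does not break the argument: the Gram-matrix concentration $\|\tilde{\Psi}'_{J,n}\tilde{\Psi}_{J,n}/n - I_J\| = o_p(1)$ does \emph{not} require the $\beta$-mixing blocking construction or any control of the random number of observations per block. The matrix $\tilde{\Psi}'_{J,n}\tilde{\Psi}_{J,n}/n$ is a sum of functions of the sampling sites $\bm{S}_i$ alone, and under Assumption \ref{Ass:sample} these are i.i.d.\ and independent of the random field $\bm{e}$; hence the matrix Bernstein inequality for independent random matrices applies directly and yields $O_p\bigl(\zeta_J\lambda_J\sqrt{\log J/n}\bigr)$ --- this is exactly the paper's Lemma \ref{lem:psi}, proved with no mixing conditions at all. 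The spatial dependence, and with it the blocking argument and the appearance of $A_n$ in place of $n$, enters only through the error terms $e_{n,i}$ in the variance bound of Proposition \ref{prp:variance}, which you invoke as a black box. So the step you single out as ``the main obstacle'' is in fact the elementary step in this theorem; the genuinely hard, dependence-sensitive work is already encapsulated in Proposition \ref{prp:variance}.
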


Theorem \ref{thm:unif-rate1} states that the spline and wavelet estimators achieve the optimal uniform convergence rates of \cite{St82} under the pure increasing domain case ($\lim_{n \to \infty} A_n/n = \kappa \in (0,\infty)$).

\subsection{$L^2$ convergence rates}\label{Subsec:STR-L2}

In this section, we discuss $L^2$ convergence rates of series estimators. The next result provides a sharp upper bound of the $L^2$ convergence rates of both variance and bias terms of general series estimators. Note that mixing conditions for the random field $\bm{e}$ are not necessary to establish the result. 

\begin{proposition}\label{prp:L^2-rate1} Suppose that Assumptions \ref{Ass:sample}, \ref{Ass:model}(i), (iii), (iv), and \ref{Ass:sieve1}(iii) hold. Additionally, assume that $\|m_0\|_\infty<\infty$, $\varsigma_{J,n} \lesssim \zeta_J\lambda_J^{-1}\sqrt{\log n/n}$, and $\zeta_J^3 \lambda_J^{-1}\sqrt{\log n/n} \to 0$ as $n,J \to \infty$. Then
\begin{align*}
\|\hat{m} - \tilde{m}\|_{L^2(g)} &= O_p\left({\zeta_J\lambda_J \over \sqrt{A_n}} + \varsigma_{J,n} \zeta_J\lambda_J^{-1}\right),\ \|\tilde{m} - m_0\|_{L^2(g)} = O_p\left(\|m_0 - m_{0,J}\|_{L^2(g)}\right),
\end{align*}
where $m_{0,J}$ is the $L^2(g)$ orthogonal projection of $m_0$ onto $\Psi_J$. 
\end{proposition}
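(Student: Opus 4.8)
The plan is to pass to the orthonormalized basis $\tilde\psi_J$ and turn every $L^2(g)$ norm into a Euclidean norm of a coefficient vector. Writing $\Phi_J=\E[\psi_J(\bm{S}_1/A_n)\psi_J(\bm{S}_1/A_n)']$, the change of variables $\bm{z}=\bm{s}/A_n$ shows $\Phi_J=\int_{R_0}\psi_J(\bm{z})\psi_J(\bm{z})'g(\bm{z})\,d\bm{z}$, so that $\|\tilde\psi_J'\bm{c}\|_{L^2(g)}=\|\bm{c}\|$ for every $\bm{c}$; this identity converts all the norms in the statement into vector norms. In these coordinates $\hat m(\bm{z})=\tilde\psi_J(\bm{z})'(\tilde\Phi_{J,n}+\varsigma_{J,n}\Phi_J^{-1})^{-1}\tilde\Psi'_{J,n}\bm{Y}/n$ with $\tilde\Phi_{J,n}=\tilde\Psi'_{J,n}\tilde\Psi_{J,n}/n$, and I split $\hat m-\tilde m=\check m+\dot m$ as in the text. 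Since the sites $\{\bm{S}_i\}$ are i.i.d., $\tilde\Phi_{J,n}$ is an average of i.i.d.\ rank-one matrices with $\E[\tilde\Phi_{J,n}]=I_J$ and $\sup_{\bm{z}}\|\tilde\psi_J(\bm{z})\|\le\zeta_J\lambda_J$; a matrix concentration bound together with $\zeta_J^3\lambda_J^{-1}\sqrt{\log n/n}\to0$ gives $\|\tilde\Phi_{J,n}-I_J\|=o_p(1)$, hence $\|\tilde\Phi_{J,n}^{-1}\|=O_p(1)$ and, since $\varsigma_{J,n}\Phi_J^{-1}\succeq0$, also $\|(\tilde\Phi_{J,n}+\varsigma_{J,n}\Phi_J^{-1})^{-1}\|=O_p(1)$. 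No mixing of $\bm{e}$ enters here, which is why the proposition dispenses with Assumption \ref{Ass:model}(ii).

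For the stochastic term $\check m=\hat m-\bar m$, putting $\bm{U}_n=\bm{Y}-M_0$ gives $\check m(\bm{z})=\tilde\psi_J(\bm{z})'(\tilde\Phi_{J,n}+\varsigma_{J,n}\Phi_J^{-1})^{-1}\tilde\Psi'_{J,n}\bm{U}_n/n$, hence $\|\check m\|_{L^2(g)}\le\|(\tilde\Phi_{J,n}+\varsigma_{J,n}\Phi_J^{-1})^{-1}\|\,\|\tilde\Psi'_{J,n}\bm{U}_n/n\|$. I bound $\E\|\tilde\Psi'_{J,n}\bm{U}_n/n\|^2$ by expanding the double sum and conditioning on $\{\bm{S}_i\}$. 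The $\varepsilon$-contributions are diagonal, by independence and $\E[\varepsilon_i]=0$, and give $O(J/n)$; the $\bm{e}$-contributions use $\E[e(\bm{S}_i)e(\bm{S}_j)\mid\{\bm{S}_i\}]=\sigma_{\bm{e}}(\bm{S}_i-\bm{S}_j)$, whose diagonal part is again $O(J/n)$. The decisive term is the off-diagonal $\bm{e}$-part,
\[
\int_{R_0}\int_{R_0}\eta(\bm{u})\eta(\bm{v})\sigma_{\bm{e}}(\bm{A}_n(\bm{u}-\bm{v}))\,\tilde\psi_J(\bm{u})'\tilde\psi_J(\bm{v})\,g(\bm{u})g(\bm{v})\,d\bm{u}\,d\bm{v},
\]
with $\bm{A}_n=\diag(A_{n,1},\dots,A_{n,d})$. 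Using $|\tilde\psi_J(\bm{u})'\tilde\psi_J(\bm{v})|\le\tfrac12(\|\tilde\psi_J(\bm{u})\|^2+\|\tilde\psi_J(\bm{v})\|^2)$ and the anisotropic substitution $\bm{w}=\bm{A}_n(\bm{u}-\bm{v})$ (Jacobian $A_n^{-1}$), the inner integral over $\bm{v}$ is at most $\|g\|_\infty A_n^{-1}\int_{\mathbb{R}^d}|\sigma_{\bm{e}}(\bm{w})|\,d\bm{w}$, finite by Assumption \ref{Ass:model}(i); integrating the remaining $\|\tilde\psi_J(\bm{u})\|^2$ against $g$ yields $J$, and the $n(n-1)/n^2$ prefactor is $O(1)$. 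Thus $\E\|\tilde\Psi'_{J,n}\bm{U}_n/n\|^2=O(J/A_n)=O(\zeta_J^2\lambda_J^2/A_n)$ and $\|\check m\|_{L^2(g)}=O_p(\zeta_J\lambda_J/\sqrt{A_n})$. This is the crux of the proof and its main obstacle: it is precisely $\int|\sigma_{\bm{e}}|<\infty$ and the anisotropic scaling $\bm{A}_n$ that replace the i.i.d.\ rate $n^{-1/2}$ by the volume rate $A_n^{-1/2}$, and carrying out this change of variables cleanly is the delicate point.

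For the ridge term $\dot m=\bar m-\tilde m$, on the event that $\tilde\Phi_{J,n}$ is invertible the resolvent identity $B^{-1}-A^{-1}=-A^{-1}(B-A)B^{-1}$ with $A=\tilde\Phi_{J,n}$, $B=\tilde\Phi_{J,n}+\varsigma_{J,n}\Phi_J^{-1}$ gives
\[
\dot m(\bm{z})=-\varsigma_{J,n}\,\tilde\psi_J(\bm{z})'\tilde\Phi_{J,n}^{-1}\Phi_J^{-1}(\tilde\Phi_{J,n}+\varsigma_{J,n}\Phi_J^{-1})^{-1}\tilde\Psi'_{J,n}M_0/n.
\]
The trailing factor is the coefficient vector of $\bar m$, whose norm equals $\|\bar m\|_{L^2(g)}=O_p(1)$; this follows from $\|\tilde\Psi'_{J,n}M_0/n\|=O_p(1)$, since its mean is the $L^2(g)$-projection coefficient of $m_0$, of norm $\le\|m_0\|_{L^2(g)}$, and its fluctuation is $O_p(\sqrt{J/n})=o_p(1)$ under the stated conditions. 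Combining this with $\|\tilde\Phi_{J,n}^{-1}\|=O_p(1)$ and $\|\Phi_J^{-1}\|=\lambda_J^2$ yields $\|\dot m\|_{L^2(g)}=O_p(\varsigma_{J,n}\lambda_J^2)$, which is bounded by the claimed $O_p(\varsigma_{J,n}\zeta_J\lambda_J^{-1})$ in the normalized regime $\lambda_J\lesssim\zeta_J$ relevant to the spline and wavelet examples (where $\lambda_J\sim1$ and $\zeta_J\gtrsim\sqrt J$); the hypothesis $\varsigma_{J,n}\lesssim\zeta_J\lambda_J^{-1}\sqrt{\log n/n}$ keeps this contribution subordinate to the variance term.

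Finally, for the bias I use that $P_{J,n}$ is the $\|\cdot\|_{L^2,n}$-orthogonal projection onto $\Psi_J$, so $P_{J,n}m_{0,J}=m_{0,J}$, and therefore $\tilde m-m_0=P_{J,n}(m_0-m_{0,J})-(m_0-m_{0,J})=-(I-P_{J,n})r$ with $r=m_0-m_{0,J}$. Because $m_{0,J}$ is the $L^2(g)$-orthogonal projection, $r$ is $L^2(g)$-orthogonal to $\Psi_J$, so $\E[\tilde\Psi'_{J,n}\bm{r}_n/n]=\bm{0}$ where $\bm{r}_n=(r(\bm{S}_1/A_n),\dots,r(\bm{S}_n/A_n))'$. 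The resulting centered i.i.d.\ average obeys $\|P_{J,n}r\|_{L^2(g)}\le\|\tilde\Phi_{J,n}^{-1}\|\,\|\tilde\Psi'_{J,n}\bm{r}_n/n\|=O_p(\zeta_J\lambda_J\|r\|_{L^2(g)}/\sqrt{n})=o_p(\|r\|_{L^2(g)})$. Since $P_{J,n}r\in\Psi_J$ is $L^2(g)$-orthogonal to $r$, the Pythagorean identity gives $\|\tilde m-m_0\|_{L^2(g)}^2=\|r\|_{L^2(g)}^2+\|P_{J,n}r\|_{L^2(g)}^2=O_p(\|m_0-m_{0,J}\|_{L^2(g)}^2)$, which is the second assertion.
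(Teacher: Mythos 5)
Your proposal is correct in the regime where the paper's own conditions are coherent, and it follows the same architecture as the paper's proof: the identical decomposition $\hat{m}-\tilde{m}=\check{m}+\dot{m}$, the passage to the orthonormalized basis so that every $L^2(g)$ norm becomes the Euclidean norm of a coefficient vector, and the same crux, namely the second-moment bound for $\tilde{\Psi}'_{J,n}v_n/n$ in which $\int_{\mathbb{R}^d}|\sigma_{\bm{e}}(\bm{x})|d\bm{x}<\infty$ and the anisotropic change of variables turn the i.i.d.\ rate $n^{-1/2}$ into the volume rate $A_n^{-1/2}$. Three execution-level differences are worth recording. First, your trace identity $\int\|\tilde{\psi}_J(\bm{z})\|^2g(\bm{z})d\bm{z}=J$ gives $\E\|\tilde{\Psi}'_{J,n}v_n/n\|^2=O(J/A_n)$, slightly sharper than the paper's $O(\zeta_J^2\lambda_J^2/A_n)$ (they agree after $J\le\zeta_J^2\lambda_J^2$); note, though, that your inner-integral step uses $\|g\|_\infty<\infty$, while the paper's uses $\int g^2<\infty$ — neither is among the proposition's stated hypotheses. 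Second, for the ridge term you use the plain resolvent identity, positive-semidefinite monotonicity (so the regularized resolvent is controlled by $\lambda_{\min}(\tilde{\Psi}'_{J,n}\tilde{\Psi}_{J,n}/n)$ alone), and the sharper $O_p(1)$ bound on the coefficient vector of $\bar{m}$, obtaining $O_p(\varsigma_{J,n}\lambda_J^2)$; the paper instead invokes Lemma \ref{lem:matrix-inv-norm} and quotes $O_p(\varsigma_{J,n}\lambda_J^{-2})\cdot O_p(\zeta_J\lambda_J)$, but with the paper's own definition $\lambda_J=\lambda_{\min}(\check{\Psi}_J)^{-1/2}$ that lemma actually delivers $O_p(\varsigma_{J,n}\lambda_J^{2})$ for the resolvent difference — your bound — so your route both avoids the auxiliary lemma and sidesteps a power-of-$\lambda_J$ inconsistency in the paper's display. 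Third, you prove the bias bound explicitly via the $L^2(g)$-orthogonality of $r=m_0-m_{0,J}$ to $\Psi_J$ and the Pythagorean identity; this is precisely the content of Lemma 2.5 of \cite{ChCh15}, which the paper cites without reproducing, so your argument makes the proof self-contained. The one caveat — which you flag yourself — is that your final ridge bound implies the stated $O_p(\varsigma_{J,n}\zeta_J\lambda_J^{-1})$ only when $\lambda_J^3\lesssim\zeta_J$, and your deduction of $\|\tilde{\Psi}'_{J,n}\tilde{\Psi}_{J,n}/n-I_J\|=o_p(1)$ from $\zeta_J^3\lambda_J^{-1}\sqrt{\log n/n}\to0$ via Lemma \ref{lem:psi} presumes $\lambda_J\lesssim\zeta_J$; neither restriction follows from Assumption \ref{Ass:sieve1}(iii) alone, but both hold when $\lambda_J\lesssim1$ (the spline/wavelet/trigonometric bases the paper targets), and the paper's own argument under the same hypotheses needs exactly this regime, so this is not a gap relative to the paper.
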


The following result establishes that the spline and wavelet estimators attain the optimal $L^2$ convergence rates of \cite{St82} under the pure increasing domain case. 
\begin{corollary}\label{cor:L^2-rate1}
Suppose that Assumptions \ref{Ass:sample}, \ref{Ass:model}(i), (iii), (iv), and \ref{Ass:sieve2} hold. Additionally, assume that $r \geq d$, $\sqrt{J^3\log n/n} \to 0$ and $\varsigma_{J,n}(\sqrt{A_n} + \sqrt{n/(J\log n)}) \lesssim 1$ as $n,J \to \infty$. If $J \sim A_n^{{d \over 2r+d}}$, then
\[
\|\hat{m} - m_0\|_{L^2(g)} = O_p\left(A_n^{-{r \over 2r+d}}\right). 
\]
\end{corollary}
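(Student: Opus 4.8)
The plan is to treat this as a direct consequence of Proposition \ref{prp:L^2-rate1}, so the work reduces to (a) checking that the spline/wavelet hypotheses collapse the abstract rate into something explicit in $J$, and (b) balancing bias against variance under the prescribed choice of $J$. The starting observation is that, as noted after Assumption \ref{Ass:sieve2}, its conditions (i) and (iii) imply Assumption \ref{Ass:sieve1} with $\zeta_J \lesssim \sqrt{J}$ and $\lambda_J \sim 1$; in particular $\zeta_J\lambda_J \sim \sqrt{J}$ and $\zeta_J\lambda_J^{-1} \sim \sqrt{J}$. Moreover, since $m_0 \in \Lambda^r(R_0)$ on the compact domain $R_0$ we have $\|m_0\|_\infty<\infty$, and the approximation property of tensor-product spline/wavelet sieves gives $\inf_{m\in\Psi_J}\|m_0-m\|_\infty \lesssim J^{-r/d}$. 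Because $g$ is bounded on $R_0$ (Assumption \ref{Ass:sieve2}(i)) and $|R_0|=1$, this upgrades to $\|m_0-m_{0,J}\|_{L^2(g)} \le \|g\|_\infty^{1/2}\inf_{m\in\Psi_J}\|m_0-m\|_\infty \lesssim J^{-r/d}$, where $m_{0,J}$ is the $L^2(g)$ projection appearing in the bias bound of Proposition \ref{prp:L^2-rate1}.

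Next I would verify the three remaining scalar hypotheses of Proposition \ref{prp:L^2-rate1}. The boundedness $\|m_0\|_\infty<\infty$ is already noted. The condition $\varsigma_{J,n}\lesssim \zeta_J\lambda_J^{-1}\sqrt{\log n/n}$ becomes $\varsigma_{J,n}\lesssim\sqrt{J\log n/n}$, which is exactly what the assumed bound $\varsigma_{J,n}\sqrt{n/(J\log n)}\lesssim 1$ supplies. The condition $\zeta_J^3\lambda_J^{-1}\sqrt{\log n/n}\to 0$ becomes $\sqrt{J^3\log n/n}\to 0$, which is assumed directly. With these in hand, Proposition \ref{prp:L^2-rate1} together with the triangle inequality $\|\hat{m}-m_0\|_{L^2(g)}\le\|\hat{m}-\tilde{m}\|_{L^2(g)}+\|\tilde{m}-m_0\|_{L^2(g)}$ yields
\[
\|\hat{m}-m_0\|_{L^2(g)}=O_p\Big(\sqrt{J/A_n}+\varsigma_{J,n}\sqrt{J}+J^{-r/d}\Big).
\]

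Finally I would substitute $J\sim A_n^{d/(2r+d)}$ and balance the three terms. The variance term satisfies $\sqrt{J/A_n}\sim A_n^{(d/(2r+d)-1)/2}=A_n^{-r/(2r+d)}$, and the bias term satisfies $J^{-r/d}\sim A_n^{-r/(2r+d)}$, so the two leading contributions coincide, which is precisely what the choice of $J$ is engineered to achieve. For the penalty term, the assumed bound $\varsigma_{J,n}\sqrt{A_n}\lesssim 1$ gives $\varsigma_{J,n}\sqrt{J}\lesssim A_n^{-1/2}\sqrt{J}\sim A_n^{-1/2+d/(2(2r+d))}=A_n^{-r/(2r+d)}$, so the ridge contribution is of the same order and does not dominate. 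Combining the three estimates gives $\|\hat{m}-m_0\|_{L^2(g)}=O_p(A_n^{-r/(2r+d)})$. Since all of the substantive analytic content is already packaged inside Proposition \ref{prp:L^2-rate1}, there is no genuine obstacle here; the only thing requiring care is bookkeeping, namely confirming that the raw hypotheses $\sqrt{J^3\log n/n}\to 0$ and $\varsigma_{J,n}(\sqrt{A_n}+\sqrt{n/(J\log n)})\lesssim 1$ (with $r\ge d$ keeping $J$ growing slowly enough to be compatible with these constraints) remain nonvacuous under both the pure and mixed increasing domain regimes, rather than any delicate estimate.
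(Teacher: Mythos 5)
Your proof is correct and follows essentially the same route as the paper: reduce to Proposition \ref{prp:L^2-rate1}, bound the $L^2(g)$ bias term by the sup-norm approximation error $\inf_{m\in\Psi_J}\|m_0-m\|_\infty \lesssim J^{-r/d}$ using the boundedness of $g$, verify the scalar hypotheses with $\zeta_J \lesssim \sqrt{J}$, $\lambda_J \sim 1$, and balance the variance, ridge, and bias contributions under $J \sim A_n^{d/(2r+d)}$. The only cosmetic difference is that you bound $\|m_0 - m_{0,J}\|_{L^2(g)}$ directly via the best-approximation property of the $L^2(g)$ orthogonal projection, whereas the paper derives the same bound (with a factor $2$) from the triangle inequality and the contraction property of $P_J$; both rest on the same Hilbert-space fact.
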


\subsection{Asymptotic normality}\label{Subsec:STR-AN}

In this subsection, we assume that the random field $\bm{e}$ is $\alpha$-mixing and there exist a non-increasing function $\alpha_1$ with $\alpha_1(a)\to 0$ as $a \to \infty$ and a non-decreasing function $\alpha_2$ (that may be unbounded) such that $\alpha(a;b) \leq \alpha_1(a)\alpha_2(b)$. These assumptions are standard in the literature of spatial data analysis on $\mathbb{R}^d$ (see, e.g. \cite{LaZh06}, \cite{BaLaNo15}, \cite{Ku22a}, and \cite{KuMa23}). Specifically, we assume the following conditions. 

\begin{assumption}\label{Ass:asy-norm-trend}
Let $\overline{A}_n = \max_{1 \leq j \leq d}A_{n,j}$, $\underline{A}_{n1}=\min_{1 \leq j \leq d}A_{n1,j}$, $\overline{A}_{n2}=\max_{1 \leq j \leq d}A_{n2,j}$, and $q>4$ be an integer. As $n,J \to \infty$, 
\begin{itemize}
\item[(i)] $A_n (A_n^{(1)})^{-1}\alpha(\underbar{A}_{n2};A_n) \to 0$,
\item[(ii)] $A_n^{-1}(\overline{A}_{n1})^d(\zeta_J\lambda_J)^4\left(1 + \sum_{k=1}^{\overline{A}_{n1}}k^{2d-1}\alpha_1^{1-4/q}(k)\right) \to 0$,
\item[(iii)] $\left({(\overline{A}_{n1})^{d-1}\overline{A}_{n2} \over A_n^{(1)}} + {A_n^{(1)} \over A_n}\left({\overline{A}_n \over \underbar{A}_{n1}}\right)^{d-1}\right)(\zeta_J\lambda_J)^2\left(1 + \sum_{k=1}^{\overline{A}_{n1}}k^{d-1}\alpha_1^{1-2/q}(k)\right) \to 0$,
\item[(iv)] $A_n^{(1)}(\zeta_J\lambda_J)^2\left(\alpha_1^{1-2/q}(\underbar{A}_{n2}) + \sum_{k=\underbar{A}_{n1}}^{\infty}k^{d-1}\alpha_1^{1-2/q}(k)\right)\alpha_2^{1-2/q}(A_n^{(1)}) \to 0$. 
\end{itemize}
\end{assumption}

Assumption \ref{Ass:asy-norm-trend} is concerned with the large-block-small-block argument and we will see that a wide class of random fields satisfies this assumption (see Section \ref{Appendix:example} for more details). Condition (i) is concerned with approximating $\hat{m} - \bar{m}$ by a sum of independent large blocks where $\bar{m}(\bm{z}) = \psi_J(\bm{z})'(\Psi'_{J,n}\Psi_{J,n}/n + \varsigma_{J,n} I_J)^{-1} \Psi'_{J,n} M_0/n$. Condition (ii) is concerned with the asymptotic normality of the sum of independent large blocks. Conditions (iii) and (iv) are concerned with the asymptotic negligibility of a sum of small blocks. See the proof of Theorem \ref{thm:asy-norm1} for detailed definitions of large and small blocks. 

The following result establishes the asymptotic normality of the series estimator with an arbitrary series basis. 

\begin{theorem}\label{thm:asy-norm1}
Let $\bm{z}_1,\dots, \bm{z}_L \in R_0$. Suppose that Assumptions \ref{Ass:sample}, \ref{Ass:model}(i), (iii), (iv) (with $q>4$), \ref{Ass:sieve1}(i), (iii), \ref{Ass:ze-lam}(i), and \ref{Ass:asy-norm-trend} hold. Additionally, assume that $\inf_{\bm{z} \in R_0}\eta(\bm{z})>0$ and 
\begin{itemize}
\item[(a)] $\zeta_J^2(\lambda_J^2 + \lambda_J^{-2}) \lesssim \sqrt{n/(\log n)^2}$, 
\item[(b)] $\|P_{J,n}\|_\infty = O_p(1)$, $\sqrt{A_n}(\min_{1\leq \ell \leq L}\|\tilde{\psi}_J(\bm{z}_{\ell})\|)^{-1}\inf_{m \in \Psi_J}\|m_0 - m\|_\infty \to 0$, 
\item[(c)] $\|m_0\|_\infty<\infty$, $\zeta_J^3\lambda_J^{-1}\sqrt{\log n/n} \to 0$, and $\varsigma_{J,n} \zeta_J^2\sqrt{A_n}(\min_{1\leq \ell \leq L}\|\tilde{\psi}_J(\bm{z}_{\ell})\|)^{-1} \to 0$
\end{itemize}
as $n,J \to \infty$. Then we have
\begin{align*}
\sqrt{A_n}\Omega_J^{-1/2}\left({\hat{m}(\bm{z}_1) - m_0(\bm{z}_1) \over \sqrt{\Omega_J(\bm{z}_1)}},\dots,{\hat{m}(\bm{z}_L) - m_0(\bm{z}_L) \over \sqrt{\Omega_J(\bm{z}_L)}}\right)\stackrel{d}{\to} N(\bm{0},I_L),
\end{align*}
where $I_L$ is the $L$-dimensional identity matrix, 
\begin{align*}
&\Omega_J = (\Omega_J^{(\ell_1,\ell_2)})_{1 \leq \ell_1, \ell_2 \leq L},\ \Omega_J^{(\ell_1,\ell_2)} = {\Omega_J(\bm{z}_{\ell_1},\bm{z}_{\ell_2}) \over \sqrt{\Omega_J(\bm{z}_{\ell_1})}\sqrt{\Omega_J(\bm{z}_{\ell_2})}},\\
&\Omega_J(\bm{z}) = \tilde{\psi}_J(\bm{z})'G_J \tilde{\psi}_J(\bm{z}),\ \Omega_J(\bm{z}_{\ell_1},\bm{z}_{\ell_2}) = \tilde{\psi}_J(\bm{z}_{\ell_1})'G_J \tilde{\psi}_J(\bm{z}_{\ell_2}),\\
&G_J = \kappa \!\!\int \!(\eta^2(\bm{v}) + \sigma_\varepsilon^2(\bm{v}))\tilde{\psi}_J(\bm{v})\tilde{\psi}_J(\bm{v})'g(\bm{v})d\bm{v} \!+ \! \left(\int \! \eta^2(\bm{v})\tilde{\psi}_J(\bm{v})\tilde{\psi}_J(\bm{v})'g^2(\bm{v})d\bm{v}\! \right) \!\! \left(\int \sigma_{\bm{e}}(\bm{x})d\bm{x}\right).
\end{align*}
\end{theorem}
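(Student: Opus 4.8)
The plan is to reduce the $L$-dimensional statement to a scalar central limit theorem via the Cram\'er--Wold device, isolate the leading stochastic term by discarding the bias and replacing the penalized Gram matrix by the identity, and then prove asymptotic normality of the resulting linear functional of the errors through a large-block--small-block decomposition that accommodates both the $\alpha$-mixing of $\bm e$ and the randomness of the sampling sites. First I would write $\hat m(\bm z_\ell)-m_0(\bm z_\ell)=V_\ell+B_\ell$, where $B_\ell=\bar m(\bm z_\ell)-m_0(\bm z_\ell)$ is the bias and
\[
V_\ell=\psi_J(\bm z_\ell)'\Bigl(\tfrac{\Psi_{J,n}'\Psi_{J,n}}{n}+\varsigma_{J,n}I_J\Bigr)^{-1}\tfrac{\Psi_{J,n}'(\bm e_n+\bm\varepsilon_n)}{n}
\]
is the stochastic term, with $\bm e_n=(e_{n,1},\dots,e_{n,n})'$ and $\bm\varepsilon_n=(\varepsilon_{n,1},\dots,\varepsilon_{n,n})'$. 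After normalization the bias is negligible: $\sqrt{A_n}\,B_\ell/\sqrt{\Omega_J(\bm z_\ell)}=o_p(1)$ follows from condition (b) for the projection error $\tilde m-m_0$ (bounded as in the discussion of Corollary \ref{cor:unif-rate1}) and from condition (c) for the ridge perturbation $\bar m-\tilde m$, using $\sqrt{\Omega_J(\bm z_\ell)}\gtrsim\|\tilde\psi_J(\bm z_\ell)\|$, which holds because $\inf_{\bm z}\eta(\bm z)>0$.

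Next, invoking the concentration $\|\tilde\Psi_{J,n}'\tilde\Psi_{J,n}/n-I_J\|=o_p(1)$ (a consequence of condition (a) and the matrix inequalities in Lemmas \ref{lem:B} and \ref{lem:matrix-inv-norm}) together with the smallness of $\varsigma_{J,n}$, I would replace the penalized inverse by $I_J$ in $V_\ell$, so that
\[
\sqrt{A_n}\,V_\ell=\tfrac{\sqrt{A_n}}{n}\sum_{i=1}^n\tilde\psi_J(\bm z_\ell)'\tilde\psi_J(\bm S_i/A_n)\,(e_{n,i}+\varepsilon_{n,i})+o_p\bigl(\sqrt{\Omega_J(\bm z_\ell)}\bigr).
\]
By Cram\'er--Wold it suffices to show, for arbitrary fixed $\bm b\in\R^L$, that $\sum_\ell b_\ell\sqrt{A_n}(\hat m(\bm z_\ell)-m_0(\bm z_\ell))/\sqrt{\Omega_J(\bm z_\ell)}\stackrel{d}{\to}N(0,\bm b'\Omega_J\bm b)$; by the bias bound this equals $T_n+o_p(1)$, where $T_n:=\sum_\ell b_\ell\sqrt{A_n}\,V_\ell/\sqrt{\Omega_J(\bm z_\ell)}$. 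Setting $\phi_J(\bm u)=\sum_\ell (b_\ell/\sqrt{\Omega_J(\bm z_\ell)})\,\tilde\psi_J(\bm z_\ell)'\tilde\psi_J(\bm u)$, I obtain $T_n=\tfrac{\sqrt{A_n}}{n}\sum_i\phi_J(\bm S_i/A_n)(e_{n,i}+\varepsilon_{n,i})+o_p(1)$, which splits into an i.i.d.\ part $T_n^{(\varepsilon)}$ carrying $\sigma_\varepsilon\varepsilon_i$ and a spatially dependent part $T_n^{(e)}$ carrying $\eta\,e(\bm S_i)$. Because $\{\varepsilon_i\}$ and $\bm e$ are independent given the sites (Assumption \ref{Ass:sample}(iii)), the two parts are conditionally independent and hence jointly asymptotically normal once each is; conditioning on the sites, $T_n^{(\varepsilon)}$ is a weighted sum of independent mean-zero variables whose conditional variance converges by the law of large numbers to $\kappa\int\phi_J^2\sigma_\varepsilon^2 g$, and a Lindeberg argument using $\E|\varepsilon_1|^q<\infty$ with $q>4$ gives its asymptotic normality.

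For the dependent part I would partition $R_n$ into large blocks of side lengths $A_{n1,j}$ separated by corridors of width $A_{n2,j}$, giving $K_n\sim A_n/A_n^{(1)}$ large blocks, and write $T_n^{(e)}=\sum_{k=1}^{K_n}Z_{n,k}+\mathcal R_n$, where $Z_{n,k}$ aggregates the randomly many sites in the $k$-th large block and $\mathcal R_n$ collects sites lying in the corridors. I would show $\mathcal R_n=o_p(1)$ in $L^2$ via conditions (iii)--(iv), couple $\{Z_{n,k}\}$ to an independent array with total coupling error $O\bigl(A_n(A_n^{(1)})^{-1}\alpha(\underbar A_{n2};A_n)\bigr)\to0$ by condition (i), and apply a Lyapunov CLT to the independent sum, the $(2+\delta)$-moment ratio being controlled by condition (ii) through the bound $\E|e(\bm 0)|^q<\infty$, $q>4$. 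A direct second-moment computation, exploiting that the component-wise scaling $\bm S_i/A_n$ turns $\sigma_{\bm e}(\bm S_i-\bm S_j)$ into an $A_n^{-1}$-weight under the change of variables $x_k=A_{n,k}(U_{i,k}-U_{j,k})$, shows $\Var(T_n^{(e)})\to\kappa\int\phi_J^2\eta^2 g+\bigl(\int\phi_J^2\eta^2 g^2\bigr)\bigl(\int\sigma_{\bm e}\bigr)$; added to the $\varepsilon$-variance this recovers $\bm b'\Omega_J\bm b$.

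The hard part will be the blocking of $T_n^{(e)}$ under stochastic sampling. Unlike the equally spaced construction of \cite{Yu94}, each block here contains a random number of sites, so the array $(Z_{n,1},\dots,Z_{n,K_n})$ couples the randomness of the field $\bm e$ with that of the site configuration. The clean route is to condition on $\{\bm S_i\}$ first---so that the $\alpha$-mixing of $\bm e$, being independent of the sites, can be invoked on finite unions of cubes whose total volume is controlled through the class $\mathcal R(b)$---and then integrate out the sites; the delicate accounting is to balance $K_n\sim A_n/A_n^{(1)}$, the block volume $A_n^{(1)}$, and the corridor width $\underbar A_{n2}$ so that the coupling error vanishes (condition (i)) while the Lyapunov ratio vanishes (condition (ii)) and the corridors stay negligible (conditions (iii)--(iv)). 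Carrying this out while verifying the limiting variance above---so that the $\kappa$-term and the spatial long-run-variance term combine to $\bm b'\Omega_J\bm b$ even in the mixed-increasing-domain regime where $\kappa$ may be zero---is the most demanding synthesis of the argument.
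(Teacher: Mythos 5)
Your overall architecture mirrors the paper's proof: Cram\'er--Wold reduction, negligibility of $\tilde m - m_0$ via (b) and of $\bar m - \tilde m$ via (c) using $\Omega_J(\bm z)\sim\|\tilde\psi_J(\bm z)\|^2$, replacement of the penalized Gram matrix by $I_J$ (the paper does this with Lemma \ref{lem:psi} and the bound (\ref{eq:Psi-v-norm}); your citation of Lemma \ref{lem:B}, which concerns the covariate model, is a slip but not a substantive one), and then a large-block--small-block argument with a Lyapunov CLT and an unconditional variance computation recovering $G_J$. However, there is a genuine gap in your key dependence-handling step. You propose to ``couple $\{Z_{n,k}\}$ to an independent array with total coupling error $O\bigl(A_n(A_n^{(1)})^{-1}\alpha(\underbar A_{n2};A_n)\bigr)$.'' No such coupling exists under $\alpha$-mixing: Berbee/Yu-type couplings with total-variation error of order (number of blocks)$\times$(mixing coefficient) require $\beta$-mixing, and Theorem \ref{thm:asy-norm1} assumes only the $\alpha$-mixing conditions of Assumption \ref{Ass:asy-norm-trend} (Assumption \ref{Ass:model}(ii) is deliberately \emph{not} imposed here; the coupling Lemma \ref{lem:indep_block} of \cite{Yu94} is used only for the uniform-rate results under $\beta$-mixing). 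The correct tool, and the one the paper uses in (\ref{Vn1-indep-approx}), is the Volkonskii--Rozanov inequality (Proposition 2.6 of \cite{FaYa03}), which bounds the difference between the characteristic function of the large-block sum and the product of the blocks' characteristic functions by exactly the quantity you wrote; the independence approximation must be carried out at the level of characteristic functions, not via an actual coupled array.

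A second, related gap concerns your plan to condition on the sites $\{\bm S_i\}$ before blocking and then ``integrate out.'' Once you condition, the conditional Lyapunov CLT for $T_n^{(e)}$ and your recombination argument with $T_n^{(\varepsilon)}$ both require the \emph{conditional} variance
\begin{align*}
{A_n \over n^2}\sum_{i,j}\phi_J\!\left({\bm S_i \over A_n}\right)\phi_J\!\left({\bm S_j \over A_n}\right)\eta\!\left({\bm S_i \over A_n}\right)\eta\!\left({\bm S_j \over A_n}\right)\sigma_{\bm e}(\bm S_i - \bm S_j)
\end{align*}
to converge \emph{in probability} to the long-run-variance limit, which is strictly stronger than the convergence of its expectation that your ``direct second-moment computation'' delivers; it requires a concentration argument (fourth-order moments in the sites) that you do not supply. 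The paper sidesteps this entirely by working unconditionally: the blocks $k_1^{(\bm\ell;\bm\Delta)}$ carry the site, field, and $\varepsilon$ randomness jointly (so no $e$/$\varepsilon$ splitting or conditioning is needed), the random number of points per block is controlled by Lemma \ref{lem:number-summands}, and the Lyapunov condition is verified through unconditional fourth-moment bounds (Steps 3--5 of the paper's proof). Either adopt the unconditional route, or, if you keep the conditional one, you must add the in-probability convergence of the conditional block variances and a dominated-convergence passage for the conditional characteristic functions.
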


Conditions (a) and (c) are concerned with replacing $(\tilde{\Psi}'_{J,n}\tilde{\Psi}_{J,n}/n + \varsigma_{J,n} \check{\Psi}_J^{-1})^{-1}$ in the definition of $\hat{m}$ with $I_J$ where $\check{\Psi}_J = \E[\psi_J(\bm{S}_1/A_n)\psi_J(\bm{S}_1/A_n)']$. Condition (b) is required to show the asymptotic negligibility of the bias term of the series estimator. Similar to the discussion on the uniform rates, an advantage of series estimators over kernel estimators is that Theorem \ref{thm:asy-norm1} holds for points on the boundary of $R_0$. Here, we discuss the boundary effect in series estimation of regression functions and highlight its differences from kernel estimators. In the case of kernel estimators, it is necessary to exclude the boundary of the support $R_0$ of the regression function when establishing (pointwise) asymptotic normality. This is because, when evaluating the bias of the estimator, the sufficiently small neighborhood around the point of interest $\bm{x} \in R_0$, which is determined by the bandwidth of the kernel estimator, needs to be fully contained within $R_0$. In contrast, for wavelet or spline bases with compact support, it is possible to perform a uniform bias evaluation over $R_0$, including its boundary. As a result, the (pointwise) asymptotic normality of series estimators also holds at the boundary points of $R_0$. In particular, for wavelet bases with compact support, detailed explanations on their construction can be found in Section 6 of \cite{ChCh15} and \cite{CoDaVi93}, while references for uniform bias evaluation include Proposition 2.5 of \cite{Me92} and \cite{Hu98}. Existing papers on series estimators for time series usually assume that the error terms are a martingale difference sequence and hence a long-run variance does not appear in the asymptotic variance (see, e.g. \cite{ChCh15} and \cite{LiLi20}). On the other hand, the asymptotic variance in theorem \ref{thm:asy-norm1} includes the long-run variance of the random field $\bm{e}$.

Now we provide a consistent estimator of $\Omega_J(\bm{z}_1, \bm{z}_2)$ for $\bm{z}_1, \bm{z}_2 \in R_0$, which enables us to construct confidence intervals for $m_0(\bm{z})$, $\bm{z} \in R_0$. Define $\hat{\Omega}_J(\bm{z}_1,\bm{z}_2) := \psi_J(\bm{z}_1)'\hat{G}_{J}\psi_J(\bm{z}_2)$ where 
\begin{align*}
\hat{G}_J &= {A_n \over n^2}\sum_{i,j=1}^n \left({\Psi'_{J,n}\Psi_{J,n} \over n} + \varsigma_{J,n} I_J\right)^{-1}\psi_J\left({\bm{S}_i \over A_n}\right)\psi_J\left({\bm{S}_j \over A_n}\right)'\left({\Psi'_{J,n}\Psi_{J,n} \over n} + \varsigma_{J,n} I_J\right)^{-1}\\
&\quad \quad \times \left(Y(\bm{S}_i) - \hat{m}\left({\bm{S}_i \over A_n}\right)\!\right)\left(Y(\bm{S}_j) - \hat{m}\left({\bm{S}_j \over A_n}\right)\!\right)\bar{K}_b(\bm{S}_i - \bm{S}_j)
\end{align*}
where $\bar{K}(\bm{w}) :\mathbb{R}^d \to [0,1]$ is a kernel function, $\bar{K}_b(\bm{w}) = \bar{K}\left({w_1 \over b_1},\dots,{w_d \over b_d}\right)$, and $b_j$ is a sequence of positive constants such that $b_j \to \infty$ as $n \to \infty$. We assume the following conditions for $\bar{K}$: 
\begin{assumption}\label{Ass:cov-kernel}
Let $\bar{K}:\mathbb{R}^d \to [0,1]$ is a continuous function such that
\begin{itemize}
\item[(i)] $\bar{K}(\bm{0}) = 1$, $\bar{K}(\bm{w}) = 0$ for $\|\bm{w}\|>1$, 
\item[(ii)] $|1 - \bar{K}(\bm{w})| \leq \bar{C}\|\bm{w}\|$ for $\|\bm{w}\| \leq \bar{c}$ where $\bar{C}$ and  $\bar{c}$ are some positive constants.  
\end{itemize}
\end{assumption}
An example of $\bar{K}$ is the Bartlett kernel: $\bar{K}(\bm{w})=(1-\|\bm{w}\|)$ for $\|\bm{w}\| \leq 1$ and 0 for $\|\bm{w}\|>1$. 

\begin{proposition}\label{prp:var-est-trend}
Let $r \geq d$. Assume $b_j/A_{n,j} = o(J^{-2/d})$, $j=1,\dots,d$ as $n \to \infty$, and $\sqrt{J^3\log n/n} \to 0$ and $\varsigma_{J,n} (\sqrt{JA_n/(\log n)} +\sqrt{n/(J\log n)})\lesssim 1$ as $n,J \to \infty$. Suppose that 
Assumptions \ref{Ass:sample}, \ref{Ass:model}, \ref{Ass:ze-lam}, \ref{Ass:sieve2}, \ref{Ass:asy-norm-trend} (ii)-(iv), and \ref{Ass:cov-kernel} 
 hold with $q >4$ and with $\alpha$-mixing coefficients replaced by $\beta$-mixing coefficients. Additionally, suppose that Conditions (a)-(c) in Theorem \ref{thm:asy-norm1} hold and $\inf_{\bm{z} \in R_0}\eta(\bm{z})>0$. Then, for $\bm{z}_1,\bm{z}_2 \in R_0$, $\|\tilde{\psi}_J(\bm{z}_1)\|^{-1}(\hat{\Omega}_J(\bm{z}_1,\bm{z}_2) - \Omega_J(\bm{z}_1,\bm{z}_2))\|\tilde{\psi}_J(\bm{z}_2)\|^{-1} \stackrel{p}{\to} 0$ as $n, J \to \infty$. 
\end{proposition}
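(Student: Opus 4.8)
The plan is to pass to the orthonormalized basis $\tilde\psi_J = \check\Psi_J^{-1/2}\psi_J$, where $\check\Psi_J = \E[\psi_J(\bm S_1/A_n)\psi_J(\bm S_1/A_n)']$, in which the estimator simplifies. Since $(\Psi'_{J,n}\Psi_{J,n}/n + \varsigma_{J,n}I_J)^{-1} = \check\Psi_J^{-1/2}\hat Q_J^{-1}\check\Psi_J^{-1/2}$ with $\hat Q_J := \tilde\Psi'_{J,n}\tilde\Psi_{J,n}/n + \varsigma_{J,n}\check\Psi_J^{-1}$, and $\check\Psi_J^{-1/2}\psi_J = \tilde\psi_J$, we obtain
\[
\hat\Omega_J(\bm z_1,\bm z_2) = \tilde\psi_J(\bm z_1)'\hat Q_J^{-1}\hat V_J\hat Q_J^{-1}\tilde\psi_J(\bm z_2), \qquad \hat V_J := \frac{A_n}{n^2}\sum_{i,j=1}^n \tilde\psi_J\!\left(\tfrac{\bm S_i}{A_n}\right)\tilde\psi_J\!\left(\tfrac{\bm S_j}{A_n}\right)'\hat u_i\hat u_j\bar K_b(\bm S_i - \bm S_j),
\]
where $\hat u_i = Y(\bm S_i) - \hat m(\bm S_i/A_n)$. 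Writing $\bm w_\ell = \tilde\psi_J(\bm z_\ell)/\|\tilde\psi_J(\bm z_\ell)\|$, the target quantity is exactly $\bm w_1'(\hat Q_J^{-1}\hat V_J\hat Q_J^{-1} - G_J)\bm w_2$, so it suffices to show this scalar is $o_p(1)$. I would organize the argument as two reductions, a mean computation, and a variance bound.

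First I would remove $\hat Q_J^{-1}$ and the residuals. The matrix-concentration bounds already used for Theorem \ref{thm:asy-norm1} (its Conditions (a) and (c) force $\|\tilde\Psi'_{J,n}\tilde\Psi_{J,n}/n - I_J\| = o_p(1)$ and $\varsigma_{J,n}\lambda_J^2 = o(1)$) give $\|\hat Q_J^{-1} - I_J\| = o_p(1)$ on an event of probability approaching one; combined with a bound on $\|\hat V_J\|$ produced below, replacing $\hat Q_J^{-1}$ by $I_J$ costs only $o_p(1)$ in the bilinear form. Next I would replace $\hat u_i$ by the true errors $u_i = e_{n,i} + \varepsilon_{n,i}$: expanding $\hat u_i\hat u_j = u_iu_j - u_i\delta_j - \delta_iu_j + \delta_i\delta_j$ with $\delta_i = (\hat m - m_0)(\bm S_i/A_n)$, the three remainder terms are controlled via the uniform rate $\|\hat m - m_0\|_\infty = o_p(1)$ from Corollary \ref{cor:unif-rate1}/Theorem \ref{thm:unif-rate1}, using that the truncation $\bar K_b$ restricts the double sum to $O_p(n^2\prod_j b_j/A_n)$ effectively nonzero pairs. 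This leaves the infeasible statistic $\hat V_J^{\circ} := \frac{A_n}{n^2}\sum_{i,j}\tilde\psi_J(\bm S_i/A_n)\tilde\psi_J(\bm S_j/A_n)'u_iu_j\bar K_b(\bm S_i - \bm S_j)$.

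For the mean I would split $\hat V_J^{\circ}$ into its diagonal ($i=j$) and off-diagonal ($i\neq j$) parts. Using $\bar K_b(\bm 0)=1$ and $\E[u_i^2\mid \bm S_i] = \eta^2(\bm S_i/A_n) + \sigma_\varepsilon^2(\bm S_i/A_n)$ (which follows from the mutual independence in Assumption \ref{Ass:sample}(iii) and $\E[e^2]=\E[\varepsilon^2]=1$), the substitution $\bm v=\bm s/A_n$ together with $A_n/n\to\kappa$ turns the diagonal part into $\kappa\int(\eta^2+\sigma_\varepsilon^2)\tilde\psi_J\tilde\psi_J'g\,d\bm v$, the first term of $G_J$. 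For the off-diagonal part, $\E[u_iu_j\mid\bm S_i,\bm S_j] = \eta(\bm S_i/A_n)\eta(\bm S_j/A_n)\sigma_{\bm e}(\bm S_i-\bm S_j)$; changing variables to the lag $\bm x = \bm S_i - \bm S_j$ and base point $\bm v = \bm S_j/A_n$ factorizes the expectation into a spatial integral and $\int\sigma_{\bm e}(\bm x)\bar K_b(\bm x)\,d\bm x$. The latter tends to $\int\sigma_{\bm e}$ by dominated convergence, since $\bar K_b\uparrow 1$ and $\int|\sigma_{\bm e}|<\infty$ (Assumption \ref{Ass:model}(i)); the former tends to $\int\eta^2\tilde\psi_J\tilde\psi_J'g^2\,d\bm v$ after removing the shift through $\tilde\psi_J(\bm v + \bm x/A_n)\approx\tilde\psi_J(\bm v)$. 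Controlling this last approximation is where the bandwidth condition $b_j/A_{n,j} = o(J^{-2/d})$ enters: the shift is $O(b_j/A_{n,j})$ and the gradient bound $\sup\|\nabla\tilde\psi_J\|\lesssim J^{\omega}$ from Assumption \ref{Ass:sieve1}(i) renders the error negligible. This recovers the second term of $G_J$, so $\E[\hat V_J^{\circ}]\to G_J$.

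The hard part is showing $\bm w_1'(\hat V_J^{\circ} - \E\hat V_J^{\circ})\bm w_2 = o_p(1)$. This scalar is a double sum over $(i,j)$, so its variance is a quadruple sum whose summands are products $u_iu_j$ of spatially dependent, heteroscedastic variables at random locations, weighted by the truncating kernel. The plan is to bound this variance by splitting the quadruple sum according to the spatial configuration of the four indices and applying a large-block--small-block decomposition: $\bar K_b$ confines $(i,j)$ and $(k,l)$ to be within bandwidth of each other, while $\beta$-mixing controls the dependence between well-separated pairs. This is precisely why the proposition restates Assumption \ref{Ass:asy-norm-trend}(ii)--(iv) with $\beta$-mixing coefficients and imposes $q>4$, so that fourth-order covariances of the $u$'s can be handled by the coupling/blocking technique extended from \cite{Yu94}; the randomness of the number of sampling sites per block—the recurring difficulty of the stochastic design—must again be absorbed, exactly as in the proof of Proposition \ref{prp:variance}. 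I expect this variance bound, with careful bookkeeping of the $(\zeta_J\lambda_J)$ factors and the $\prod_j b_j/A_n$ pair count so everything is dominated after dividing by $\|\tilde\psi_J(\bm z_1)\|\,\|\tilde\psi_J(\bm z_2)\|$, to be the main obstacle; the mean computation and the $\hat Q_J^{-1}$/residual reductions are comparatively routine given the earlier results.
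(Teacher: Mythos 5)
Your proposal follows essentially the same route as the paper's proof: pass to the orthonormalized basis and reduce to the normalized scalar $W_{n,J}(\bm z_1,\bm z_2)$, strip out $(\tilde\Psi'_{J,n}\tilde\Psi_{J,n}/n+\varsigma_{J,n}\check\Psi_J^{-1})^{-1}$ and the estimated residuals via Theorem \ref{thm:unif-rate1} and the matrix concentration bounds, recover the two terms of $G_J$ from the diagonal and off-diagonal parts of the infeasible statistic, and control fluctuations by the $\beta$-mixing blocking machinery already built for Proposition \ref{prp:variance} and Theorem \ref{thm:asy-norm1}. The paper's split of $u_iu_j$ into $e$--$e$, $e$--$\varepsilon$, $\varepsilon$--$\varepsilon$ parts ($W_{n,1},W_{n,2},W_{n,3}$) before separating diagonal from off-diagonal terms is only cosmetically different from your diagonal/off-diagonal organization.

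The one substantive discrepancy is where the bandwidth condition $b_j/A_{n,j}=o(J^{-2/d})$ is spent. In the paper it is used exclusively in the second-moment bounds of the kernel-weighted off-diagonal sums (the pair count $n^2\prod_j b_j/A_n$ multiplied by the $J^2$ factor from the squared basis contractions, e.g.\ the terms $W_{n,32}$, $W_{n,22}$, $W_{n,24}$), whereas the mean computation needs only $b_j\to\infty$: the paper writes $\bar K_b = 1+(\bar K_b-1)$, bounds the $(\bar K_b-1)$ contribution via Assumption \ref{Ass:cov-kernel}(ii) and integrability of $\sigma_{\bm e}$ after splitting the lag integral at $M=\min_j b_j^{1/2}$, and treats the pure shift term by the argument of Theorem \ref{thm:asy-norm1}. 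Your proposal instead invokes the bandwidth condition for the shift $\tilde\psi_J(\bm v+\bm x/A_n)\approx\tilde\psi_J(\bm v)$ in the mean, justified by the sup-norm gradient bound of Assumption \ref{Ass:sieve1}(i). That step, as written, does not close in general: for splines $\omega=1/2+1/d$ and $\lambda_J\sim1$, so the gradient bound times the shift gives an error of order $o(J^{1/2+1/d-2/d})=o(J^{1/2-1/d})$, which is not $o(1)$ when $d\ge 3$; you would need the $L^2$ modulus of continuity of the sieve (Cauchy--Schwarz plus locality of the basis, giving an error $O(\|\bm h\|J^{1/d})$) or the paper's decomposition to make the mean argument rigorous. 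Since your variance plan already carries the $\prod_j b_j/A_n$ pair count, the bandwidth condition would end up being used where it is actually needed once you execute that part, so this is a repairable misallocation rather than a wrong approach, but the mean-shift step should be flagged and fixed.
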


Theorem \ref{thm:asy-norm1} and Proposition \ref{prp:var-est-trend} enable us to construct confidence intervals of $m_0(\bm{z})$. Define $\hat{\Omega}_J(\bm{z}) := \hat{\Omega}_J(\bm{z},\bm{z})$ and consider a confidence interval of the form
\begin{align*}
\hat{C}_{1-\tau}(\bm{z}) = \left[\hat{m}(\bm{z}) -  \sqrt{{\hat{\Omega}_J(\bm{z}) \over A_n}}q_{1-\tau/2}, \hat{m}(\bm{z}) +  \sqrt{{\hat{\Omega}_J(\bm{z}) \over A_n}}q_{1-\tau/2}\right],\ \tau \in (0,1),\ \bm{z} \in R_0,
\end{align*}
where $q_{1-\tau}$ is the $(1-\tau)$-quantile of the standard normal random variable. Then under assumptions in Proposition \ref{prp:var-est-trend}, we have $\Prob(m_0(\bm{z}) \in \hat{C}_{1-\tau}(\bm{z})) \to 1-\tau$ as $n,J \to \infty$.

\section{Examples}\label{Appendix:example}
In this section, we discuss examples of random fields that satisfy our assumptions regarding the dependence structure. In particular, we focus on spatial trend regression models that require more detailed conditions concerning spatial dependence than spatial regression models and show that our regularity conditions are satisfied by a wide class of L\'evy-driven moving average (MA) random fields. We should note that L\'evy-driven MA random fields include many Gaussian and non-Gaussian random fields as special cases and constitute a flexible class of models for spatial data. 

\subsection{L\'evy-driven MA random fields}\label{Sec:LevyMARF}

Let $\mathcal{B}(\mathbb{R}^d)$ denote the Borel subsets on $\mathbb{R}^d$ and let $L=\{L(A): A \in \mathcal{B}(\mathbb{R}^{d})\}$ be an infinitely divisible random measure defined on some probability space $(\Omega, \mathcal{A}, \Prob)$, i.e., a random measure such that 
\begin{itemize}
\item[1.] for each sequence $(E_{m})_{m \in \mathbb{N}}$ of disjoint sets in $\mathcal{B}(\mathbb{R}^{d})$,
\begin{itemize}
\item[(a)] $L(\cup_{m=1}^{\infty}E_{m}) = \sum_{m=1}^{\infty}L(E_{m})$ a.s. whenever $\cup_{m=1}^{\infty}E_{m} \in \mathcal{B}(\mathbb{R}^{d})$,
\item[(b)] $(L(E_{m}))_{m \in \mathbb{N}}$ is a sequence of independent random variables,
\end{itemize}
\item[2.] the random variable $L(A)$ has an infinitely divisible distribution for any $A \in \mathcal{B}(\mathbb{R}^{d})$. 
\end{itemize}
The characteristic function of $L(A)$, which will be denoted by $\varphi_{L(A)}(t)$, has a L\'evy--Khintchine representation of the form $\varphi_{L(A)}(t) = \exp\left(|A|\mathfrak{e}(t)\right)$ with 
\begin{align*}
\mathfrak{e}(t) &= \mathrm{i}t\gamma_{0} - {1 \over 2}t^{2}\sigma_{0} + \int_{\mathbb{R}}\left\{e^{\mathrm{i}tx}-1-\mathrm{i}tx1_{[-1,1]}(x)\right\}\nu_{0}(dx)
\end{align*}
where $\mathrm{i} = \sqrt{-1}$, $\gamma_{0} \in \mathbb{R}$, $0 \leq \sigma_{0} <\infty$, and $\nu_{0}$ is a L\'evy measure with $\int_{\mathbb{R}}\min\{1,x^{2}\}\nu_{0}(dx)<\infty$. If $\nu_0(dx)$ has a Lebesgue density, i.e., $\nu_0(dx) = \nu_0(x)dx$, we call $\nu_0(x)$ as the L\'evy density. The triplet $(\gamma_{0}, \sigma_{0}, \nu_{0})$ is called the L\'evy characteristic of $L$ and it uniquely determines the distribution of the random measure $L$. The following are a couple of examples of L\'evy random measures. 
\begin{itemize}
\item If $\mathfrak{e}(t) = -\sigma_0^2 t^2/2$, then $L$ is a Gaussian random measure. 

\item If $\mathfrak{e}(t) = \lambda \int_{\mathbb{R}}(e^{\mathrm{i}tx}- 1)F(dx)$, where $\lambda>0$ and $F$ is a probability distribution function with no jump at the origin, then $L$ is a compound Poisson random measure with intensity $\lambda$ and jump size distribution $F$. 
\end{itemize}

Let $\theta(\bm{x})$ be a measurable function on $\mathbb{R}^{d}$ such that $\int_{\mathbb{R}^d} |\theta(\bm{x})|d\bm{x}<\infty$ and $\sup_{\bm{x} \in \mathbb{R}^d}|\theta(\bm{x})|<\infty$. A L\'evy-driven MA random field with kernel $\theta$ driven by a L\'evy random measure $L$ is defined by
\begin{align}\label{eq:LevyMA}
e(\bm{x}) &= \int_{\mathbb{R}^{d}}\theta(\bm{x} - \bm{u})L(d\bm{u}),\ \bm{x} \in \mathbb{R}^{d}. 
\end{align}
We refer to \cite{Be96} and \cite{Sa99} for standard references on L\'evy processes, and \cite{RaRo89} and \cite{Ku22a} for details on the theory of infinitely divisible measures and fields.  

Before discussing theoretical results, we look at some examples of random fields defined by (\ref{eq:LevyMA}). Let $a_*(z) = z^{p_0} + a_{1}z^{p_0-1} + \cdots+a_{p_0} = \prod_{i=1}^{p_0}(z - \lambda_{i})$ be a polynomial of degree $p_0$ with real coefficients and distinct negative zeros $\lambda_{1},\hdots,\lambda_{p_0}$, and let $b_*(z) = b_{0} + b_{1}z + \cdots + b_{q_0}z^{q_0} = \prod_{i=1}^{q_0}(z - \xi_{i})$ be a polynomial of degree $q_0$ with real coefficients and real zeros $\xi_{1},\hdots, \xi_{q_0}$ such that $b_{q_0}=1$ and $0\leq q_0 < p_0$ and $\lambda_{i}^{2} \neq \xi_{j}^{2}$ for all $i$ and $j$. Define $a(z) = \prod_{i=1}^{p_0}(z^{2} - \lambda_{i}^{2})$ and $b(z) = \prod_{i=1}^{q_0}(z^{2} - \xi_{i}^{2})$. Then, the L\'evy-driven MA random field driven by an infinitely divisible random measure $L$ with $\theta(\bm{x}) = \sum_{i=1}^{p_0}{b(\lambda_{i}) \over a'(\lambda_{i})}e^{\lambda_{i}\|\bm{x}\|}$, where $a'$ denotes the derivative of the polynomial $a$, is called a univariate (isotropic) CARMA($p_0,q_0$) random field. We refer to \cite{BrMa17} for more details. 

Consider the following decomposition:
\begin{align*}
e(\bm{x}) &= \!\!\int_{\mathbb{R}^{d}}\!\!\!\!\theta(\bm{x} - \bm{u})\mathfrak{t}\left(\|\bm{x} - \bm{u}\| : m_{n}\right)\!L(d\bm{u}) \!+\! \int_{\mathbb{R}^{d}}\!\!\!\!\theta(\bm{x} - \bm{u})\!\!\left(1 - \mathfrak{t}\left(\|\bm{x} - \bm{u}\| : m_{n}\right)\right)\!L(d\bm{u})\\
&=: e_{1, m_{n}}(\bm{x}) + e_{2, m_{n}}(\bm{x}),
\end{align*}
where $m_{n}$ is a sequence of positive constants such that $m_{n} \to \infty$ as $n \to \infty$ and $\mathfrak{t}(\cdot:c) : \mathbb{R} \to [0,1]$ is a truncation function defined by
\begin{align*}
\mathfrak{t}(x:c) = 
\begin{cases}
1 & \text{if $|x| \leq c/4$},\\
-{4 \over c}\left(x-{c \over 2}\right) & \text{if $c/4 < |x| \leq c/2$},\\
0 & \text{if $x>c/2$}.
\end{cases}
\end{align*} 
The random field $\bm{e}_{1,m_n} = \{e_{1,m_{n}}(\bm{x})  : \bm{x} \in \mathbb{R}^d \}$ is $m_{n}$-dependent (with respect to the $\ell^{2}$-norm), i.e., $e_{1, m_n}(\bm{x}_{1})$ and $e_{1, m_n}(\bm{x}_{2})$ are independent if $\| \bm{x}_1-\bm{x}_2\| \geq m_{n}$. Also, if the tail of the kernel function $\theta(\bm{x})$ decays sufficiently fast, then we can see that the random field $\bm{e}_{2,m_n}=\{ e_{2, m_{n}}(\bm{x}) : \bm{x} \in \mathbb{R}^{d}\}$ is asymptotically negligible. In such case, we can replace the random field $\bm{e}$ with the $m_{n}$-dependent random field $\bm{e}_{1,m_n}$ and verify our conditions on the dependence structure for $\bm{e}_{1,m_n}$.

\begin{proposition}\label{prp:LevyMA-ex}
Suppose that Assumption \ref{Ass:sieve2} holds and $n^{d/2r}J^{-1} = o(1)$. Consider a L\'evy-driven MA random field $\bm{e}$ defined by (\ref{eq:LevyMA}). Assume that $\theta(\bm{x}) = r_0 e^{-r_{1} \|\bm{x}\|}$ where $|r_0| > 0$ and $r_1>0$. Additionally, assume that 
\begin{itemize}
\item[(a)] the random measure $L(\cdot)$ is Gaussian with triplet $(0, \sigma_0 , 0)$ or
\item[(b)] the random measure $L(\cdot)$ is non-Gaussian with triplet $(\gamma_0, 0 , \nu_0)$, $\E[L(A)] = 0$ for any $A \in \mathcal{B}(\mathbb{R}^d)$, and the L\'evy density $\nu_0(x)$ given by
\begin{align}\label{eq:LS-Levy} 
\nu_0(x) &= {1 \over |x|^{1 + \beta_0}}\left(C_0e^{-c_0|x|^{\alpha_0}} + {C_1 \over (1 + C_2|x|^{\beta_1})^{\alpha_1}}\right)1_{\mathbb{R} \backslash \{0\}}(x), 
\end{align}
where $\alpha_0>0$, $\alpha_1 > 0$, $\beta_0 \in [-1,2)$, $\beta_1 >0$, $\beta_0 + \alpha_1\beta_1>9$, $c_0>0$, $C_0 \geq 0$, $C_1\geq 0$, $C_2>0$, and $C_0 + C_1>0$.
\end{itemize}
Then $\bm{e}_{2,m_n}$ is asymptotically negligible, that is, we can replace $\bm{e}$ with $\bm{e}_{1,m_n}$ in the results in Section \ref{Sec:reg-trend}.
Further, $\bm{e}_{1,m_n}$ satisfies Assumptions \ref{Ass:model} (i), (ii), \ref{Ass:ze-lam}, \ref{Ass:asy-norm-trend}, and $(\zeta_J\lambda_J)^2 \lesssim \sqrt{n/(\log n)^2}$ with $A_{n,j} \sim n^{\mathfrak{c}_{0}/d}$, $A_{n1,j} = A_{n,j}^{\mathfrak{c}_1}$, $A_{n2,j} = A_{n1,j}^{\mathfrak{c}_2}$, $m_{n} = \underline{A}_{n2}^{1/2}$ where $\mathfrak{c}_{0}$, $\mathfrak{c}_{1}$, and $\mathfrak{c}_{2}$ are positive constants such that
\begin{align*}
&J = o\left(\min\{n^{\mathfrak{c}_0\left\{1-\mathfrak{c}_1(1+\mathfrak{c}_2)\right\}/2}, n^{\mathfrak{c}_0 \mathfrak{c}_1\{(1-\mathfrak{c}_2)/d - \mathfrak{c}_2/2\}}, n^{\zeta_0(1-2\mathfrak{c}_1)- {2/9}}, n^{7\mathfrak{c}_0/9}, n^{1/2}\}\right).
\end{align*}
\end{proposition}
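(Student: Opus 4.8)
The overall strategy is to exploit the truncation decomposition $e = e_{1,m_n} + e_{2,m_n}$ introduced before the statement: the field $\bm{e}_{1,m_n}$ is exactly $m_n$-dependent, which trivializes its mixing behaviour, while $\bm{e}_{2,m_n}$ only involves the kernel on the region $\{\|\cdot\| > m_n/4\}$, where the exponential decay of $\theta$ makes it negligible. The engine behind both halves is a moment inequality for stochastic integrals against the infinitely divisible random measure $L$, which I would establish from the L\'evy--Khintchine representation in the spirit of \cite{Ku22a} and \cite{RaRo89}: for $f \in L^2 \cap L^q$ one has, in the Gaussian case (a), $\E[|\int f\,dL|^q] \lesssim \|f\|_{L^2}^q$, and in the non-Gaussian case (b) a Rosenthal-type bound $\E[|\int f\,dL|^q] \lesssim \|f\|_{L^2}^q + \|f\|_{L^q}^q$ plus lower-order terms, valid provided $\int_{|x|>1}|x|^q \nu_0(dx) < \infty$. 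The first task is therefore to read off from the L\'evy density (\ref{eq:LS-Levy}) which moments of $L$ are finite: since its tail behaves like $|x|^{-(1+\beta_0+\alpha_1\beta_1)}$, moments of order strictly below $\beta_0+\alpha_1\beta_1$ are finite, so the calibration $\beta_0 + \alpha_1\beta_1 > 9$ covers the integer $q>4$ needed in Assumptions \ref{Ass:model} and \ref{Ass:asy-norm-trend} with slack for the truncation estimates, and simultaneously yields $\E[|e(\bm 0)|^q]<\infty$ as required by Assumption \ref{Ass:model}(i).

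First I would establish the negligibility of $\bm{e}_{2,m_n}$. Using the moment inequality with $f(\bm u) = \theta(\bm x - \bm u)(1 - \mathfrak{t}(\|\bm x - \bm u\|:m_n))$, whose support lies in $\{\|\bm x - \bm u\| > m_n/4\}$, the exponential kernel gives $\|f\|_{L^2}^2 \leq r_0^2\int_{\|\bm u\|>m_n/4} e^{-2r_1\|\bm u\|}\,d\bm u \lesssim m_n^{d-1}e^{-r_1 m_n/2}$, and likewise for $\|f\|_{L^q}$, so $\E[|e_{2,m_n}(\bm x)|^q]$ decays exponentially in $m_n$. Since $m_n = \underbar{A}_{n2}^{1/2}$ grows like a positive power of $n$, this exponential decay dominates every polynomial factor in $n$, so the contribution of $\bm{e}_{2,m_n}$ to the sums defining $\hat m$ — including the supremum over the finite covering set $\mathcal{S}_n$ used for the uniform rate and the quadratic forms in the CLT — is $o_p$ of the target rates. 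This justifies replacing $\bm e$ by $\bm e_{1,m_n}$ throughout Section \ref{Sec:reg-trend}, and also forces $\int \sigma_{\bm e_{1,m_n}}(\bm x)\,d\bm x \to \int \sigma_{\bm e}(\bm x)\,d\bm x$, so that $G_J$ is unchanged in the limit.

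Next I would verify the remaining conditions for $\bm e_{1,m_n}$. Stationarity and mean zero are inherited from the MA representation, the $q$-th moment is finite by the moment inequality, and the covariance $\sigma_{\bm e_{1,m_n}}$ is supported in $\{\|\cdot\| < m_n\}$, hence integrable; after the harmless renormalization making $\E[e_{1,m_n}(\bm 0)^2]=1$, Assumption \ref{Ass:model}(i) holds. The crucial simplification is that $m_n$-dependence forces both mixing coefficients to vanish at separation $m_n$: since $\underbar{A}_{n2} = m_n^2 > m_n$ for large $n$, we get $\beta(\underbar{A}_{n2};A_n) = \alpha(\underbar{A}_{n2};A_n) = 0$, so (\ref{lim:beta-coef}) and Assumption \ref{Ass:asy-norm-trend}(i) hold automatically. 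For the remaining parts of Assumption \ref{Ass:asy-norm-trend} one may take $\alpha_1(k) \leq \frac{1}{4}\mathbf{1}_{\{k<m_n\}}$, which truncates each sum $\sum_k k^{j}\alpha_1^{1-s/q}(k)$ at $k = m_n$ and bounds it by $O(m_n^{j+1})$. With the sieve of Assumption \ref{Ass:sieve2} we have $\zeta_J \lesssim \sqrt J$ and $\lambda_J \sim 1$, so all remaining requirements — Assumption \ref{Ass:ze-lam}(i)--(iii), Assumption \ref{Ass:asy-norm-trend}(ii)--(iv), and $(\zeta_J\lambda_J)^2 \lesssim \sqrt{n/(\log n)^2}$ — reduce to power-counting inequalities in $n$. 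Substituting $A_{n,j}\sim n^{\mathfrak{c}_0/d}$, $A_{n1,j} = A_{n,j}^{\mathfrak{c}_1}$, $A_{n2,j}=A_{n1,j}^{\mathfrak{c}_2}$, $m_n = \underbar{A}_{n2}^{1/2}$, so that $A_n \sim n^{\mathfrak{c}_0}$, $A_n^{(1)}\sim n^{\mathfrak{c}_0\mathfrak{c}_1}$, $\overline{A}_{n1}\sim n^{\mathfrak{c}_0\mathfrak{c}_1/d}$, and $m_n \sim n^{\mathfrak{c}_0\mathfrak{c}_1\mathfrak{c}_2/(2d)}$, each inequality becomes an upper bound on $J$, and the displayed constraint $J = o(\min\{\cdots\})$ is exactly the intersection of these bounds, with each term of the minimum matching one condition.

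The main obstacle I expect is the moment inequality and the uniform control it must deliver. Establishing the Rosenthal-type bound for $\int f\,dL$ under the precise L\'evy density (\ref{eq:LS-Levy}), and tracking exactly how $\beta_0,\alpha_0,\alpha_1,\beta_1$ enter the finiteness of moments, is the delicate probabilistic step, relying on the L\'evy--Khintchine representation and tools from the theory of infinitely divisible random measures. The secondary difficulty is that for the uniform-rate statement the negligibility of $\bm e_{2,m_n}$ must hold not pointwise but uniformly over $R_0$ through the covering set $\mathcal{S}_n$, which requires combining the exponential moment decay with a union bound and the gradient control of Assumption \ref{Ass:sieve1}(i); the exponential gain in $m_n$ is precisely what lets the union bound succeed. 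By contrast, once the moment inequality and $m_n$-dependence are in hand, the verification of the individual assumptions is a lengthy but routine matching of powers of $n$.
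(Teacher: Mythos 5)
Your proposal follows essentially the same route as the paper's proof: the same truncation decomposition with $m_n$-dependence trivializing all mixing conditions (since $\underline{A}_{n2}=m_n^2>m_n$), moment finiteness for the L\'evy integrals read off from the tail of (\ref{eq:LS-Levy}) (the paper invokes Theorem 25.3 of \cite{Sa99} to get $\int_{|x|>1}|x|^9\nu_0(x)dx<\infty$, matching your observation that $\beta_0+\alpha_1\beta_1>9$ controls moments up to order $9$), exponentially decaying moment bounds for $e_{2,m_n}$ combined with a union/maximal bound over the $n$ sampling sites, and the same power-counting that maps each verified condition to one term of the displayed minimum for $J$. The only cosmetic difference is that the paper handles uniformity in $\bm{z}$ for the $\bm{e}_{2,m_n}$ term by factoring out $\sup_{\bm{z}}\|\tilde{\psi}_J(\bm{z})\|\lesssim \zeta_J\lambda_J$ via Cauchy--Schwarz and bounding $\max_{1\leq i\leq n}|e_{2,m_n}(\bm{S}_i)|$ (via Markov and Lemma 2.2.2 of \cite{vaWe96} with a fourth-moment bound), rather than through the covering set $\mathcal{S}_n$ and gradient control as you suggest; both devices succeed for the same reason you identify, namely that the exponential gain in $m_n$ beats every polynomial factor.
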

When $d = 2$, the conditions on $\mathfrak{c}_0,\mathfrak{c}_1,\mathfrak{c}_2$ and $J$ are typically satisfied when $\mathfrak{c}_0 = 1$, $\mathfrak{c}_1 = 1/4$, $\mathfrak{c}_2 \in \left(0, 1/2\right)$, $n^{d/2r}J^{-1}= o(1)$, and $Jn^{-(1-\mathfrak{c}_2)/4} = o(1)$. Condition (b) implies that a wide class of non-Gaussian random fields including compound Poisson ($\beta_0 \in [-1,0)$), variance Gamma ($\alpha_0=1$, $\beta_0=0$, $C_1=0$), tempered stable ($\beta_0 \in (0,1)$, $C_1=0$), and normal inverse Gaussian ($\beta_0=1$ for $|x|<1$, $\beta_0=1/2$ for $|x| \geq 1$, $C_1=0$) random fields satisfies our assumptions. See also \cite{KaKu20} and \cite{KuKaSh24} for more discussion on non-Gaussian L\'evy-driven random fields.  It is straightforward to extend Proposition \ref{prp:LevyMA-ex} to the case that $\theta(\bm{x})$ is a finite sum of kernel functions with exponential decay. Therefore, our results in Section \ref{Sec:reg-trend} can be applied to CARMA($p_0$, $q_0$) random fields. Further, extending the results to anisotropic CARMA random fields (cf. \cite{BrMa17}) is also straightforward. 

\section{Application to population data}\label{Sec:real-data}

This section aims at estimating nighttime population in Tokyo by the nonparametric regression model (\ref{eq:Spatial-Trend-Reg}) with the standard errors evaluated by Theorem \ref{thm:asy-norm1}. We also construct the confidence surfaces of the human population in Tokyo to demonstrate the practical usage of Theorem \ref{thm:asy-norm1}.

NTT DoCoMo, a mobile phone company in Japan, provides a spatio-temporal dataset that counts numbers of people in 1000m meshes every one hour all over Japan since 2016. We collected the population data over the 6,100 meshes in Tokyo, which is the area of 102km $\times$ 74km rectangular including the 23 central districts of Tokyo, at 4:00 am during one month period of May, 2016 and constructed the Tokyo nighttime population dataset in May by averaging them monthly, which were 5,975 observations in total by excluding NAs. Transforming the longitudes and latitudes of the 6,100 meshes to the 2-dimensional Cartesian coordinate by the formula in Geospatial Information Authority of Japan, we regard the 5,975 observations as a kind of spatial data on the 2-dim Euclidean space.

We applied the cubic B-spline functions over the $A_n=102\times 74$ ($A_{n,1}=102, A_{n,2}=74$) rectangular to the spatial data of human population in Tokyo in May, 2016, where the cubic B-spline function with $J=900$ basis. We estimated the spatial trend $m_0$ in (\ref{eq:Spatial-Trend-Reg}) by the series ridge estimator in Section \ref{Subsec:STR-unif-rate} with $n=5,975, \varsigma_{J,n}=0.5/n$. We used the Bertlett kernel to estimate asymptotic variance in Proposition \ref{prp:var-est-trend} with $b_1=0.1A_{n,1}$, $b_2=0.1A_{n,2}$. For confidence surfaces, we first calculate the 95\% confidence interval at each mesh point and then linearly interpolate between them.

\begin{figure}
  \begin{center}
  \includegraphics[width=12cm, height=8cm]{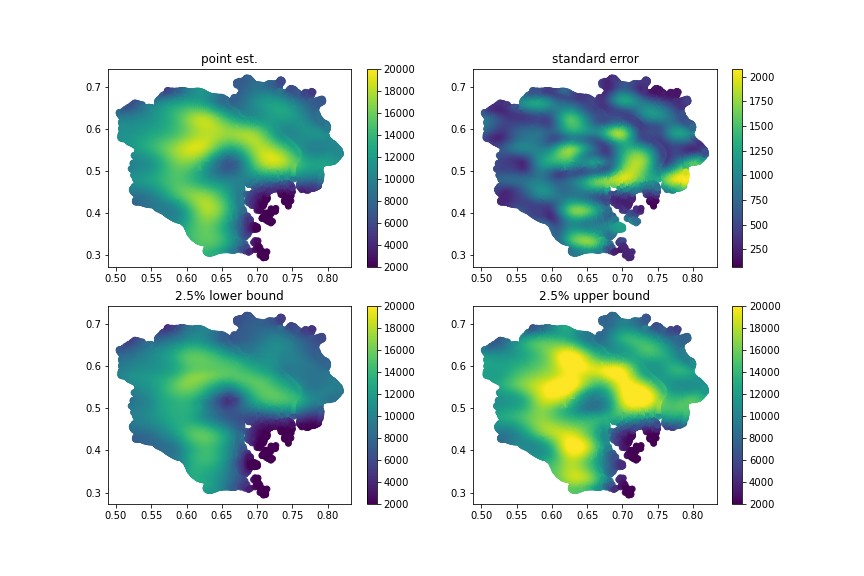}
  \end{center}
  \caption{Point estimator (upper left), standard error (upper right), 2.5\% lower bound (lower left) and 2.5\% upper bound (lower right), for the trend surface of human population in the Tokyo 23 central districts at 4:00 AM in May.}\label{fig:density}
\end{figure}

Figure \ref{fig:density} shows the point estimation for the trend surface together with the standard errors and their 95\% confidence surface estimation, which identifies the central empty area of Emperor's palace surrounded by highest dense population areas of Shibuya, Shinjuku, Ikebukuro, Asakusa and Ueno.

\section{Conclusion}\label{Sec:conclusion}

In this paper, we have advanced the statistical theory of nonparametric regression for irregularly spaced spatial data. Specifically, we considered a spatial trend regression model and a spatial regression model with spatially dependent covariates, which are defined on a sampling region $R_n \subset \mathbb{R}^d$ and established uniform and $L^2$ convergence rates and asymptotic normality of series ridge estimators for the trend and regression functions under a stochastic sampling design. We also provided examples of random fields that satisfy our dependence conditions. In particular, our assumptions are satisfied by a wide class of L\'evy-driven moving average random fields, including popular Gaussian random fields as special cases. We are hopeful that our theory can be extended to a class of series or general (nonlinear) sieve estimators that minimize penalized loss functions as discussed in \cite{Sh97} and \cite{ChKLiWa23}. We leave these topics for future investigation.

\newpage
\begin{appendix}

\section{Proofs for Section \ref{Sec:reg-trend}}

We provide proofs for Proposition \ref{prp:variance} (outline with the first and second steps of the proof), Corollary \ref{cor:unif-rate1}, Theorem \ref{thm:unif-rate1}, Proposition \ref{prp:L^2-rate1}, and Corollary \ref{cor:L^2-rate1}. 
Proofs for Proposition \ref{prp:variance} (the remaining part of the proof), Theorem \ref{thm:asy-norm1}, and Proposition \ref{prp:var-est-trend} will be provided in the supplementary material. 

\subsection{Proof of Proposition \ref{prp:variance}}

Define $\check{\Psi}_J = \E[\psi_J(\bm{S}_i/A_n)\psi_J(\bm{S}_i/A_n)']$, $v_n = (v_{n,1},\dots,v_{n,n})'$ with $v_{n,i} = e_{n,i} + \varepsilon_{n,i}$, and \begin{align*}
\bar{m}(\bm{z}) &= \psi_J(\bm{z})'(\Psi'_{J,n}\Psi_{J,n} + \varsigma_{J,n} nI_J)^{-1} \Psi'_{J,n} M_0 = \tilde{\psi}_J(\bm{z})'(\tilde{\Psi}'_{J,n}\tilde{\Psi}_{J,n} + \varsigma_{J,n} n\check{\Psi}_J^{-1})^{-1} \tilde{\Psi}'_{J,n} M_0. 
\end{align*} 
Note that
\begin{align*}
\hat{m}(\bm{z}) - \tilde{m}(\bm{z}) &= \{\hat{m}(\bm{z}) - \bar{m}(\bm{z})\} + \{\bar{m}(\bm{z}) - \tilde{m}(\bm{z})\} =: \check{m}(\bm{z}) + \dot{m}(\bm{z}).
\end{align*}

We show (\ref{eq:var-unif-rate}) in several steps. Let $\mathcal{S}_n$ be the smallest subset of $R_0$ such that for each $\bm{z} \in R_0$ there exists a $\bm{z}_n \in R_0$ with $\|\bm{z} - \bm{z}_n\| \leq \eta_2 n^{-\eta_1}$. In Step 1, we will show that for any $C \geq 1$, we have
\begin{align*}
&\Prob\left(\|\check{m}\|_\infty \geq 8C\zeta_J\lambda_J\sqrt{{\log n \over A_n}}\right)\\
&\leq \Prob\left(\max_{\bm{z}_n \in \mathcal{S}_n}\left|\tilde{\psi}_J(\bm{z}_n)'\left\{(\tilde{\Psi}'_{J,n}\tilde{\Psi}_{J,n}/n +\varsigma_{J,n} \check{\Psi}_J^{-1})^{-1} - I_J\right\} \tilde{\Psi}'_{J,n}v_n/n \right| \geq 2C\zeta_J\lambda_J\sqrt{{\log n \over A_n}}\right)\\
&\quad + \Prob\left(\max_{\bm{z}_n \in \mathcal{S}_n}\left|\tilde{\psi}_J(\bm{z}_n)'\tilde{\Psi}'_{J,n}v_n/n \right| \geq 2C\zeta_J\lambda_J\sqrt{{\log n \over A_n}}\right) + o(1) =: P_{n,1} + P_{n,2} + o(1). 
\end{align*}
In Step 2, we will show that $P_{n,1}$ can be made arbitrarily small for large enough $C \geq 1$. In Step 3, we will show $P_{n,2} = o(1)$ as $n \to \infty$. In Step 4, we will show 
\begin{align}\label{eq:ridge-bias}
\|\dot{m}\|_\infty = O_p\left(\varsigma_{J,n} \zeta_J^2\|m_0\|_\infty  \right). 
\end{align}
Proofs for Steps 3 and 4 will be provided in Section \ref{Appendix:sec2} of the supplementary material. Combining the results in Steps 1-4, we obtain the desired result.

(Step 1) By the mean value theorem, for any $\bm{z},\bm{z}^* \in R_0$ we have
\begin{align*}
|\check{m}(\bm{z}) - \check{m}(\bm{z}^*)| &= \left|(\tilde{\psi}_J(\bm{z})-\tilde{\psi}_J(\bm{z}^*))'(\tilde{\Psi}'_{J,n}\tilde{\Psi}_{J,n}/n + \varsigma_{J,n} \check{\Psi}_J^{-1})^{-1} \tilde{\Psi}'_{J,n}v_n/n\right|\\
&= \left|(\bm{z}-\bm{z}^*)'\nabla \tilde{\psi}_J(\bm{z}^{**})'(\tilde{\Psi}'_{J,n}\tilde{\Psi}_{J,n}/n + \varsigma_{J,n} \check{\Psi}_J^{-1})^{-1} \tilde{\Psi}'_{J,n}v_n/n\right|\\
&\leq \|\nabla \tilde{\psi}_J(\bm{z}^{**})\|\|\bm{z} - \bm{z}^*\|\|(\tilde{\Psi}'_{J,n}\tilde{\Psi}_{J,n}/n + \varsigma_{J,n} \check{\Psi}_J^{-1})^{-1}\| \|\tilde{\Psi}'_{J,n}v_n/n\|\\
&\leq \|\check{\Psi}_J^{-1/2}\|\|\nabla \psi_J(\bm{z}^{**})\|\|\bm{z} - \bm{z}^*\|\|(\tilde{\Psi}'_{J,n}\tilde{\Psi}_{J,n}/n + \varsigma_{J,n} \check{\Psi}_J^{-1})^{-1}\| \|\tilde{\Psi}'_{J,n}v_n/n\|\\
&= \lambda_{\text{max}}(\check{\Psi}_J^{-1/2})\|\nabla \psi_J(\bm{z}^{**})\|\|\bm{z} - \bm{z}^*\|\|(\tilde{\Psi}'_{J,n}\tilde{\Psi}_{J,n}/n + \varsigma_{J,n} \check{\Psi}_J^{-1})^{-1}\| \|\tilde{\Psi}'_{J,n}v_n/n\|\\
&\leq C_{\nabla}\lambda_J J^\omega\|\bm{z} - \bm{z}^*\|\|(\tilde{\Psi}'_{J,n}\tilde{\Psi}_{J,n}/n + \varsigma_{J,n} \check{\Psi}_J^{-1})^{-1}\| \|\tilde{\Psi}'_{J,n}v_n/n\|
\end{align*}
for some $\bm{z}^{**}$ in the segment between $\bm{z}$ and $\bm{z}^*$ and some finite constant $C_{\nabla}$ which is independent of $\bm{z},\bm{z}^*,n$, and $J$. 

Observe that
\begin{align}
\E\left[\|\tilde{\Psi}'_{J,n}v_n/n\|^2\right] &= {1 \over n^2}\sum_{i_1=1}^n\sum_{i_2=1}^n \E\left[\psi'_J\left({\bm{S}_{i_1} \over A_n}\right)\check{\Psi}_J^{-1}\psi_J\left({\bm{S}_{i_2} \over A_n}\right)v_{n,i_1}v_{n,i_2}\right] \nonumber \\
&= {1 \over n^2}\sum_{i_1=1}^n\sum_{i_2=1}^n\E\left[\psi'_J\left({\bm{S}_{i_1} \over A_n}\right)\check{\Psi}_J^{-1}\psi_J\left({\bm{S}_{i_2} \over A_n}\right)e_{n,i_1}e_{n,i_2}\right] \nonumber \\
&= {1 \over n}\E\left[\eta^2\left({\bm{S}_1 \over A_n}\right)\psi'_J\left({\bm{S}_1 \over A_n}\right)\check{\Psi}_J^{-1}\psi_J\left({\bm{S}_1 \over A_n}\right)e^2(\bm{0})\right] \nonumber \\
&\quad + {n(n-1) \over n^2}\E\left[\psi'_J\left({\bm{S}_1 \over A_n}\right)\check{\Psi}_J^{-1}\psi_J\left({\bm{S}_2 \over A_n}\right)\eta\left({\bm{S}_1 \over A_n}\right)\eta\left({\bm{S}_2 \over A_n}\right)e(\bm{S}_1)e(\bm{S}_2)\right] \nonumber \\
&= {1 \over n}\E\left[\eta^2\left({\bm{S}_1 \over A_n}\right)\psi'_J\left({\bm{S}_1 \over A_n}\right)\check{\Psi}_J^{-1}\psi_J\left({\bm{S}_1 \over A_n}\right)\right] \nonumber \\
&\quad + {n(n-1) \over n^2}\E\left[\psi'_J\left({\bm{S}_1 \over A_n}\right)\check{\Psi}_J^{-1}\psi_J\left({\bm{S}_2 \over A_n}\right)\eta\left({\bm{S}_1 \over A_n}\right)\eta\left({\bm{S}_2 \over A_n}\right)\sigma_{\bm{e}}(\bm{S}_1-\bm{S}_2)\right] \nonumber \\
&=: I_{n,1} + I_{n,2}. \label{eq: In12}
\end{align}
For $I_{n,1}$, we have
\begin{align}
I_{n,1} &\lesssim {1 \over nA_n}\int \psi'_J\left({\bm{s} \over A_n}\right)\check{\Psi}_J^{-1}\psi_J\left({\bm{s} \over A_n}\right)g\left({\bm{s} \over A_n}\right)d\bm{s} = {1 \over n}\int \psi'_J(\bm{z})\check{\Psi}_J^{-1}\psi_J(\bm{z})g(\bm{z})d\bm{z} \lesssim {\zeta_J^2\lambda_J^2 \over n}. \label{ineq: In1}
\end{align}
Define $\bar{R}_n = \{\bm{x} \in \mathbb{R}^d: \bm{x} = \bm{x}_1 - \bm{x}_2\ \text{for some}\ \bm{x}_1, \bm{x}_2 \in R_n\}$. For $I_{n,2}$, we have
\begin{align}
&|I_{n,2}| \nonumber \\ 
&\leq {n(n-1) \over n^2A_n^2}\int \left|\psi'_J\left({\bm{s}_1 \over A_n}\right)\check{\Psi}_J^{-1}\psi_J\left({\bm{s}_2 \over A_n}\right)\sigma_{\bm{e}}(\bm{s}_1-\bm{s}_2)\right|\eta\left({\bm{s}_1 \over A_n}\right)\eta\left({\bm{s}_2 \over A_n}\right)g\left({\bm{s}_1 \over A_n}\right)g\left({\bm{s}_2 \over A_n}\right)d\bm{s}_1 d\bm{s}_2 \nonumber \\
&\lesssim {n(n-1) \over n^2}\int \left|\psi'_J(\bm{z}_1)\check{\Psi}_J^{-1}\psi_J(\bm{z}_2)\sigma_{\bm{e}}(A_n(\bm{z}_1-\bm{z}_2))\right|g(\bm{z}_1)g(\bm{z}_2)d\bm{z}_1 d\bm{z}_2 \nonumber \\
&= {n(n-1) \over n^2A_n}\int_{\bar{R}_n}|\sigma_{\bm{e}}(\bm{x})|\left(\int \left|\psi'_J\left({\bm{x} \over A_n} + \bm{z}_2\right)\check{\Psi}_J^{-1}\psi_J(\bm{z}_2)\right|g\left({\bm{x} \over A_n} + \bm{z}_2\right)g(\bm{z}_2)d\bm{z}_2\right)d\bm{x} \nonumber \\
&\lesssim {\zeta_J^2\lambda_J^2 \over A_n}\left(\int |\sigma_{\bm{e}}(\bm{x})|d\bm{x}\right)\left(\int g^2(\bm{z})d\bm{z}\right). \label{ineq:In2}
\end{align}
Combining (\ref{eq: In12}), (\ref{ineq: In1}), and (\ref{ineq:In2}) and applying Markov's inequality, we obtain
\begin{align}\label{eq:Psi-v-norm}
\|\tilde{\Psi}'_{J,n}v_n/n\| &= O_p\left({\zeta_J\lambda_J \over \sqrt{ A_n}}\right). 
\end{align}
From Lemma \ref{lem:psi} and Assumptions \ref{Ass:ze-lam} (i), We have 
\begin{align}\label{ineq:Psi-norm}
\|(\tilde{\Psi}'_{J,n}\tilde{\Psi}_{J,n}/n + \varsigma_{J,n} \check{\Psi}_J^{-1})^{-1}\| &\leq {1 \over 1 - \|(\tilde{\Psi}'_{J,n}\tilde{\Psi}_{J,n}/n + \varsigma_{J,n} \check{\Psi}_J^{-1}) - I_J\|} \nonumber \\
&\leq {1 \over 1 - \|(\tilde{\Psi}'_{J,n}\tilde{\Psi}_{J,n}/n) - I_J\| - \varsigma_{J,n}\lambda_J^2} = O_p(1). 
\end{align}
Together with (\ref{eq:Psi-v-norm}) and (\ref{ineq:Psi-norm}), we have $\limsup_{n \to \infty}\Prob(C_\nabla\|(\tilde{\Psi}'_{J,n}\tilde{\Psi}_{J,n}/n + \varsigma_{J,n} \check{\Psi}_J^{-1})^{-1}\| \|\tilde{\Psi}'_{J,n}v_n/n\|>\bar{M}) = 0$ for any fixed $\bar{M}>0$ since Assumption \ref{Ass:ze-lam} (i) implies ${\zeta_J\lambda_J \over \sqrt{ A_n}} = o(1)$. Let $\mathcal{A}_n$ denote the event on which $C_\nabla\|(\tilde{\Psi}'_{J,n}\tilde{\Psi}_{J,n}/n + \varsigma_{J,n} \check{\Psi}_J^{-1})^{-1}\| \|\tilde{\Psi}'_{J,n}v_n/n\| \leq \bar{M}$ and observe that $\Prob(\mathcal{A}_n^c) = o(1)$. On $\mathcal{A}_n$, for any $C \geq 1$, a finite positive $\eta_1 = \eta_1(C)$ and $\eta_2 = \eta_2(C)$ can be chosen such that 
\begin{align*}
C_{\nabla}\lambda_J J^{\omega}\|\bm{z} - \bm{z}^*\|\|(\tilde{\Psi}'_{J,n}\tilde{\Psi}_{J,n}/n + \varsigma_{J,n} \check{\Psi}_J^{-1})^{-1}\| \|\tilde{\Psi}'_{J,n}v_n/n\| &\leq C\zeta_J\lambda_J\sqrt{{\log n \over A_n}}
\end{align*} 
whenever $\|\bm{z} - \bm{z}^*\| \leq \eta_2 n^{-\eta_1}$. Let $\mathcal{S}_n$ be the smallest subset of $R_0$ such that for each $\bm{z} \in R_0$ there exists a $\bm{z}_n \in R_0$ with $\|\bm{z} - \bm{z}_n\| \leq \eta_2 n^{-\eta_1}$. For any $\bm{z} \in R_0$ let $\bm{z}_n(\bm{z})$ denote the $\bm{z}_n \in \mathcal{S}_n$ nearest to $\bm{z}$ in Euclidean distance. Then on $\mathcal{A}_n$, we have $|\check{m}(\bm{z}) - \check{m}(\bm{z}_n(\bm{z}))| \leq C\zeta_J\lambda_J\sqrt{\log n/A_n}$ for any $\bm{z} \in R_0$. Then we have
\begin{align*}
\Prob\left(\|\check{m}\|_\infty \geq 8C\zeta_J\lambda_J\sqrt{{\log n \over A_n}}\right)
&\leq \Prob\left(\left\{\|\check{m}\|_\infty \geq 8C\zeta_J\lambda_J\sqrt{{\log n \over A_n}}\right\} \cap \mathcal{A}_n\right) + \Prob(\mathcal{A}_n^c)\\
&\leq \Prob\left(\left\{\sup_{\bm{z} \in R_0}|\check{m}(\bm{z}) - \check{m}(\bm{z}_n(\bm{z}))| \geq 4C\zeta_J\lambda_J\sqrt{{\log n \over A_n}}\right\} \cap \mathcal{A}_n\right)\\
&\quad + \Prob\left(\left\{\max_{\bm{z}_n \in \mathcal{S}_n}|\check{m}(\bm{z}_n)| \geq 4C\zeta_J\lambda_J\sqrt{{\log n \over A_n}}\right\} \cap \mathcal{A}_n\right) + \Prob(\mathcal{A}_n^c)\\
&= \Prob\left(\left\{\max_{\bm{z}_n \in \mathcal{S}_n}|\check{m}(\bm{z}_n)| \geq 4C\zeta_J\lambda_J\sqrt{{\log n \over A_n}}\right\} \cap \mathcal{A}_n\right) + o(1) =: P_n + o(1). 
\end{align*}
For $P_n$, we have
\begin{align*}
P_n &\leq \Prob\left(\max_{\bm{z}_n \in \mathcal{S}_n}|\check{m}(\bm{z}_n)| \geq 4C\zeta_J\lambda_J\sqrt{{\log n \over A_n}}\right)\\
&\leq \Prob\left(\max_{\bm{z}_n \in \mathcal{S}_n}\left|\tilde{\psi}_J(\bm{z}_n)'\left\{(\tilde{\Psi}'_{J,n}\tilde{\Psi}_{J,n}/n + \varsigma_{J,n} \check{\Psi}_J^{-1})^{-1} - I_J\right\} \tilde{\Psi}'_{J,n}v_n/n \right| \geq 2C\zeta_J\lambda_J\sqrt{{\log n \over A_n}}\right)\\
&\quad + \Prob\left(\max_{\bm{z}_n \in \mathcal{S}_n}\left|\tilde{\psi}_J(\bm{z}_n)'\tilde{\Psi}'_{J,n}v_n/n \right| \geq 2C\zeta_J\lambda_J\sqrt{{\log n \over A_n}}\right) =: P_{n,1} + P_{n,2}. 
\end{align*}

(Step 2) Now we show that $P_{n,1}$ can be made arbitrarily small for large enough $C \geq 1$. The Cauchy-Schwarz inequality yields
\begin{align*}
&\left|\tilde{\psi}_J(\bm{z}_n)'\left\{(\tilde{\Psi}'_{J,n}\tilde{\Psi}_{J,n}/n + \varsigma_{J,n} \check{\Psi}_J^{-1})^{-1} - I_J\right\} \tilde{\Psi}'_{J,n}v_n/n \right| \\
&\leq \|\tilde{\psi}_J(\bm{z}_n)\| \|(\tilde{\Psi}'_{J,n}\tilde{\Psi}_{J,n}/n + \varsigma_{J,n} \check{\Psi}_J^{-1})^{-1} - I_J\| \|\tilde{\Psi}'_{J,n}v_n/n\|\\
&\lesssim \zeta_J\lambda_J\|(\tilde{\Psi}'_{J,n}\tilde{\Psi}_{J,n}/n + \varsigma_{J,n} \check{\Psi}_J^{-1})^{-1} - I_J\| \times O_p\left(\zeta_J\lambda_J\sqrt{{1 \over A_n}}\right)
\end{align*}
uniformly for $\bm{z}_n \in \mathcal{S}_n$. Let $\mathcal{B}_n$ denote the event on which $\|(\tilde{\Psi}'_{J,n}\tilde{\Psi}_{J,n}/n + \varsigma_{J,n} \check{\Psi}_J^{-1}) - I_J\| \leq 1/2$ and observe that $\Prob(\mathcal{B}_n^c) = o(1)$. On $\mathcal{B}_n$, we have
\begin{align}\label{ineq:op-inv}
\|(\tilde{\Psi}'_{J,n}\tilde{\Psi}_{J,n}/n + \varsigma_{J,n} \check{\Psi}_J^{-1})^{-1}\| &\leq {1 \over 1 - \|(\tilde{\Psi}'_{J,n}\tilde{\Psi}_{J,n}/n + \varsigma_{J,n} \check{\Psi}_J^{-1}) - I_J\|} \leq 2. 
\end{align} 
Then we have 
\begin{align*}
\|(\tilde{\Psi}'_{J,n}\tilde{\Psi}_{J,n}/n + \varsigma_{J,n} \check{\Psi}_J^{-1})^{-1} - I_J\| &\leq \left\|(\tilde{\Psi}'_{J,n}\tilde{\Psi}_{J,n}/n + \varsigma_{J,n} \check{\Psi}_J^{-1})^{-1}\left\{(\tilde{\Psi}'_{J,n}\tilde{\Psi}_{J,n}/n + \varsigma_{J,n} \check{\Psi}_J^{-1}) - I_J\right\}\right\|\\
&\leq \|(\tilde{\Psi}'_{J,n}\tilde{\Psi}_{J,n}/n + \varsigma_{J,n} \check{\Psi}_J^{-1})^{-1}\| \|(\tilde{\Psi}'_{J,n}\tilde{\Psi}_{J,n}/n + \varsigma_{J,n} \check{\Psi}_J^{-1}) - I_J\|\\
&\leq 2 \|(\tilde{\Psi}'_{J,n}\tilde{\Psi}_{J,n}/n + \varsigma_{J,n} \check{\Psi}_J^{-1}) - I_J\|.
\end{align*}
Hence we have 
\begin{align*}
&\left|\tilde{\psi}_J(\bm{z}_n)'\left\{(\tilde{\Psi}'_{J,n}\tilde{\Psi}_{J,n}/n + \varsigma_{J,n} \check{\Psi}_J^{-1})^{-1} - I_J\right\} \tilde{\Psi}'_{J,n}v_n/n \right| \\
&\lesssim \zeta_J\lambda_J\|(\tilde{\Psi}'_{J,n}\tilde{\Psi}_{J,n}/n + \varsigma_{J,n} \check{\Psi}_J^{-1}) - I_J\| \times O_p\left(\zeta_J\lambda_J\sqrt{{1 \over A_n}}\right) = O_p\left(\zeta_J\lambda_J\sqrt{{\log n \over A_n}}\right).
\end{align*}
Thus, $P_{n,1}$ can be made arbitrarily small for large enough $C\geq 1$. 

(Step 3)(Step 4) See Section \ref{Appendix:sec2} of the supplementary material for the proofs of Steps 3 and 4.

\subsection{Proof of Corollary \ref{cor:unif-rate1}}

Note that $\tilde{m} = P_{J,n}m_0$. Therefore, for any $m \in \Psi_J$, we have
\begin{align*}
\|\hat{m} - m_0\|_\infty &= \|\hat{m} - \tilde{m} + \tilde{m} - m_0\|_\infty\\
&\leq \|\hat{m} - \tilde{m}\|_\infty + \|P_{J,n}m_0 - m + m - m_0\|_\infty\\
&=  \|\hat{m} - \tilde{m}\|_\infty + \|P_{J,n}(m_0 - m) + m - m_0\|_\infty\\
&\leq \|\hat{m} - \tilde{m}\|_\infty + \|P_{J,n}(m_0 - m)\|_\infty + \|m - m_0\|_\infty\\
&\leq \|\hat{m} - \tilde{m}\|_\infty + (1 + \|P_{J,n}\|_\infty)\|m - m_0\|_\infty.
\end{align*}
Taking the infimum over $m \in \Psi_J$ yields the desired result. 

\subsection{Proof of Theorem \ref{thm:unif-rate1}}

For the bias term, it is well-known that $\inf_{m \in \Psi_J}\|m_0 - m\|_\infty = O(J^{-r/d})$ under Assumption \ref{Ass:sieve2} (e.g. \cite{Hu98}). Therefore, it is sufficient to show 
\begin{align}\label{eq:emp-projection}
\|P_{J,n}\|_\infty &= O_p(1).
\end{align}

First, we show (\ref{eq:emp-projection}) when $\Psi_J = \text{BSpl}(J,R_0,\varrho)$. For this, we will show that
\begin{align}\label{eq:emp-true-L^2}
\sup_{\psi \in \Psi_J}\left|{n^{-1}\sum_{i=1}^{n}\psi(\bm{S}_i/A_n)^2 - \E[\psi(\bm{S}_1/A_n)^2] \over \E[\psi(\bm{S}_1/A_n)^2]}\right| = \left\|\tilde{\Psi}'_{J,n}\tilde{\Psi}_{J,n}/n - I_J\right\|^2 = o_p(1).
\end{align}
This implies that the empirical $L^2$ norm $\|\cdot\|_{L^2,n}$ and true $L^2$ norm $\|\cdot\|_{L^2(g)}$ are equivalent over $\Psi_J$ with probability approaching one. Then from Corollary A.1 in \cite{Hu03}, we obtain (\ref{eq:emp-projection}). 

Now we show (\ref{eq:emp-true-L^2}). Recall that $\check{\Psi}_J = \E[\psi_J(\bm{S}_1/A_n)\psi_J(\bm{S}_1/A_n)']$. Observe that 
\begin{align*}
\sup_{\psi \in \Psi_J, \E[\psi(\bm{S}_1/A_n)^2]=1} \left|{1 \over n}\!\sum_{i=1}^{n}\!\psi \!\left({\bm{S}_i \over A_n}\right)^2\!\!-1\right| 
&= \sup_{\bm{x} \in \mathbb{R}^J, \|\check{\Psi}_J^{1/2}\bm{x}\| = 1}\!\!\!\!\!\!\!\!\!\!\!|\bm{x}'(\Psi'_{J,n}\Psi_{J,n}/n - \check{\Psi}_J)\bm{x}|\\
&= \sup_{\bm{x} \in \mathbb{R}^J, \|\check{\Psi}_J^{1/2}\bm{x}\| = 1}\!\!\!\!\!\!\!\!\!\!\!|\bm{x}'\check{\Psi}_J^{1/2}(\check{\Psi}_J^{-1/2}(\Psi'_{J,n}\Psi_{J,n}/n)\check{\Psi}_J^{-1/2} \!- I_J)\check{\Psi}_J^{1/2}\bm{x}|\\
&= \sup_{\bm{v} \in \mathbb{R}^J, \|\bm{v}\| = 1}|\bm{v}'(\tilde{\Psi}'_{J,n}\tilde{\Psi}_{J,n}/n) - I_J)\bm{v}|\\
&= \|\tilde{\Psi}'_{J,n}\tilde{\Psi}_{J,n}/n - I_J\|^2.
\end{align*}

Next, we show (\ref{eq:emp-projection}) when $\Psi_J = \text{Wav}(J,R_0,\varrho)$. This follows from almost the same argument in the proof of Theorem 5.2 in \cite{ChCh15}.

\subsection{Proof of Proposition \ref{prp:L^2-rate1}}

Observe that 
\begin{align*}
\|\hat{m} - \tilde{m}\|_{L^2(g)}^2 &\leq 2 \|\tilde{\psi}'_J\left\{(\tilde{\Psi}'_{J,n}\tilde{\Psi}_{J,n}/n + \varsigma_{J,n} \check{\Psi}_J^{-1})^{-1} - (\tilde{\Psi}'_{J,n}\tilde{\Psi}_{J,n}/n)^-\right\} \tilde{\Psi}'_{J,n}M_0/n\|_{L^2(g)}^2\\ 
&\quad + 2\|\tilde{\psi}'_J(\tilde{\Psi}'_{J,n}\tilde{\Psi}_{J,n}/n + \varsigma_{J,n} \check{\Psi}_J^{-1})^{-1} \tilde{\Psi}'_{J,n}v_n/n\|_{L^2(g)}^2\\
&= 2 \|\left\{(\tilde{\Psi}'_{J,n}\tilde{\Psi}_{J,n}/n + \varsigma_{J,n} \check{\Psi}_J^{-1})^{-1} - (\tilde{\Psi}'_{J,n}\tilde{\Psi}_{J,n}/n)^-\right\} \tilde{\Psi}'_{J,n}M_0/n\|_{L^2(g)}^2\\ 
&\quad + 2\|(\tilde{\Psi}'_{J,n}\tilde{\Psi}_{J,n}/n + \varsigma_{J,n} \check{\Psi}_J^{-1})^{-1} \tilde{\Psi}'_{J,n}v_n/n\|_{L^2(g)}^2\\
&\leq 2\|(\tilde{\Psi}'_{J,n}\tilde{\Psi}_{J,n}/n + \varsigma_{J,n} \check{\Psi}_J^{-1})^{-1} - (\tilde{\Psi}'_{J,n}\tilde{\Psi}_{J,n}/n)^-\|^2 \|\tilde{\Psi}'_{J,n}M_0/n\|^2\\
&\quad + 2\|(\tilde{\Psi}'_{J,n}\tilde{\Psi}_{J,n}/n + \varsigma_{J,n} \check{\Psi}_J^{-1})^{-1}\|^2 \| \tilde{\Psi}'_{J,n}v_n/n\|^2.
\end{align*}
By similar arguments in the proof of Proposition \ref{prp:variance}, we have 
\[
\|(\tilde{\Psi}'_{J,n}\tilde{\Psi}_{J,n}/n + \varsigma_{J,n} \check{\Psi}_J^{-1})^{-1} - (\tilde{\Psi}'_{J,n}\tilde{\Psi}_{J,n}/n)^-\| = O_p(\varsigma_{J,n} \lambda_J^{-2}),
\]
$\| \tilde{\Psi}'_{J,n}M_0/n\| = O_p(\zeta_J\lambda_J)$, and $\| \tilde{\Psi}'_{J,n}v_n/n\| = O_p(\zeta_J\lambda_J/\sqrt{A_n})$. Then we have $\|\hat{m} - \tilde{m}\|_{L^2(g)} = O_p(\zeta_J\lambda_J/ \sqrt{A_n} + \varsigma_{J,n}\zeta_J\lambda_J^{-1})$. Further, we can show that $\|\tilde{m} - m_0\|_{L^2(g)} = O_p(\|m_0 - m_{0,J}\|_{L^2(g)})$ by the same argument in the proof of Lemma 2.5 in \cite{ChCh15}. 

\subsection{Proof of Corollary \ref{cor:L^2-rate1}}

Let $P_J$ denote the $L^2(g)$ orthogonal projection operator onto $\Psi_J$ given by
\[
P_Jm(\bm{z}) = \psi_J(\bm{z})'\E\left[\psi_J\left({\bm{S}_1 \over A_n}\right)\psi_J\left({\bm{S}_1 \over A_n}\right)'\right]^{-1}\E\left[\psi_J\left({\bm{S}_1 \over A_n}\right)m\left({\bm{S}_1 \over A_n}\right)\right].
\]
Observe that for any $\tilde{m} \in \Psi_J$, 
\begin{align*}
\|m_0 - m_{J,0}\|_{L^2(g)} &=  \|m_0 - \tilde{m} + \tilde{m} - m_{J,0}\|_{L^2(g)}\\
&\leq \|m_0 - \tilde{m}\|_{L^2(g)} + \|\tilde{m} - m_{J,0}\|_{L^2(g)}\\
&\leq \|m_0 - \tilde{m}\|_{L^2(g)} + \|P_J(\tilde{m} - m_{0})\|_{L^2(g)}\\
&\leq 2\|m_0 - \tilde{m}\|_{L^2(g)}\\
&\lesssim \|m_0 - \tilde{m}\|_\infty.
\end{align*}
For the second inequality, we used $P_Jm_0 = m_{0,J}$. For the third inequality, we used the fact that $P_J$ is an orthogonal projection on $L^2(g)$. For the last wave relation, we used Assumption \ref{Ass:sieve2} (i). The above inequality holds uniformly in $\tilde{m} \in \Psi_J$. Then by taking the $\inf$ over $\tilde{m} \in \Psi_J$, we obtain the desired result. 

\section*{Supplementary Material}
The supplementary material includes the introduction of the spatial regression model with the uniform and $L^2$ convergence rates, a multivariate CLT, and an estimator of the asymptotic variance for series ridge estimators (Section \ref{Sec:reg-covariate}), proofs for Section \ref{Sec:reg-trend} (Section \ref{Appendix:sec2}), proofs for Section \ref{Appendix:example} (Section \ref{Appendix:ex-proof}), proofs for Section \ref{Sec:reg-covariate} (Section \ref{Appendix:sec3}), and auxiliary lemmas (Section \ref{Appendix:lemmas}).

\section{Series regression estimators}\label{Sec:reg-covariate}

In this section, we establish the asymptotic properties of series ridge estimators of a spatial regression model. For this, we first introduce our model and series estimators (Sections \ref{Subsec:model2} and \ref{Subsec:SR-unif}). Then we provide uniform and $L^2$ convergence rates and a multivariate CLT of general series estimators (Sections \ref{Subsec:SR-unif}, \ref{Subsec:SR-L2}, and \ref{Subsec:SR-AN}), and establish that spline and wavelet estimators attain the optimal uniform and $L^2$ convergence rates (Sections \ref{Subsec:SR-opt-unif} and \ref{Subsec:SR-L2}).

\subsection{Spatial regression model}\label{Subsec:model2}
Consider the nonparametric regression model
\begin{align}\label{eq:Spatial-Reg}
Y(\bm{S}_i) &= \mathfrak{m}_0\left({\bm{S}_i \over A_n},\bm{X}(\bm{S}_i)\right) + \mathfrak{h}\left({\bm{S}_i \over A_n}, \bm{X}(\bm{S}_i) \right)\varepsilon_i \nonumber \\
&=: \mathfrak{m}_0\left({\bm{S}_i \over A_n},\bm{X}(\bm{S}_i)\right) + \nu_{n,i},\ i=1,\dots,n,
\end{align}
where $\{\bm{S}_i\} \subset R_n$ is a sequence of (stochastic) sampling sites with the density $A_n^{-1}g(\cdot/A_n)$, $\bm{X}:= \{\bm{X}(\bm{s}) = (X_1(\bm{s}),\dots, X_p(\bm{s}))':\bm{s} \in \mathbb{R}^d\}$ is a strictly stationary random field defined on $\mathbb{R}^d$, $\mathfrak{m}_0: [-1/2,1/2]^d \times \mathcal{X} \to \mathbb{R}$ is the spatial regression function with $\mathcal{X} \subset \mathbb{R}^p$, $\{\varepsilon_{i}\}$ is a sequence of i.i.d. random variables such that $\E[\varepsilon_{i}] = 0$ and $\E[\varepsilon_{i}^2] = 1$, $\mathfrak{h}:[-1/2,1/2]^d \times \mathcal{X} \to (0,\infty)$ is the variance function $\{\varepsilon_{i}\}$. In what follows, we assume that $\bm{X}=\{\bm{X}(\bm{s}):\bm{s} \in \mathbb{R}^d\}$, $\{\bm{S}_i\}_{i=1}^{n}$, and $\{\varepsilon_i\}_{i=1}^{n}$ are mutually independent. The model (\ref{eq:Spatial-Reg}) 
can be seen as a nonlinear extension of geographically weighted regression models (cf. \cite{LuChHaSt14} and \cite{GoLuChBrHa15}). In our spatial regression model, we assume that the spatial correlation of the response variable $Y(\bm{S}_i)$ is explained by the covariate process $\bm{X}$. Therefore, $\nu_{n,i}$ can be interpreted as an exogenous variable that represents the observation error at each location or does not affect the conditional expectation of the response variable given the covariate vector $\bm{X}(\bm{S}_i)$.

For the model (\ref{eq:Spatial-Reg}), we assume the following conditions. 

\begin{assumption}\label{Ass:model2}
For $j=1,\dots,d$, let $\{A_{n1,j}\}_{n \geq 1}$ and $\{A_{n2,j}\}_{n \geq 1}$ be sequences of positive numbers such that $\min\{A_{n2,j}, {A_{n1,j} \over A_{n2,j}}\} \to \infty$ and $A_n(A_n^{(1)}\log n)^{-1} \to \infty$ as $n \to \infty$ where $A_n^{(1)} = \prod_{j=1}^{d}A_{n1,j}$. 
\begin{itemize}
\item[(i)] The random field $\bm{X}=\{\bm{X}(\bm{s}) = (X_1(\bm{s}),\dots,X_p(\bm{s}))':\bm{s} \in \mathbb{R}^d\}$ is strictly stationary and $\mathcal{X}$ is convex and has nonempty interior. 
\item[(ii)] The random field $\bm{X}$ is $\beta$-mixing with mixing coefficients $\beta(a;b)$ such that as $n \to \infty$,  $A_n(A_n^{(1)})^{-1}\beta(\underbar{A}_{n2};A_n) \to 0$ where $\underbar{A}_{n2} = \min_{1 \leq j \leq d}A_{n2,j}$.
\item[(iii)] $\{\varepsilon_i\}_{i=1}^n$ is a sequence of i.i.d. random variables such that $\E[\varepsilon_1]=0$, $\E[\varepsilon_1^2]=1$, and $\E[|\varepsilon_1|^q]<\infty$ for some $q>2$.
\item[(iv)] The function $\mathfrak{h}$ is continuous on $R_0 \times \mathcal{X}$ and $\sup_{(\bm{z}, \bm{x}) \in R_0 \times \mathcal{X}}|\mathfrak{h}(\bm{z},\bm{x})|<\infty$. 
\end{itemize}
\end{assumption}

By a similar argument in the proof of Proposition \ref{prp:LevyMA-ex}, we can confirm that Assumption \ref{Ass:model2} is satisfied for a certain class of Lévy-driven moving average random fields.

To allow possibly unbounded support $\mathcal{X}$, we construct a series estimator of $\mathfrak{m}_0$ over $R_0 \times D_n$ that satisfies the following conditions. 
\begin{assumption}\label{Ass:domain}
Let $D_n$ be compact, convex, and have a nonempty interior. 
\begin{itemize}
\item[(i)] $D_n \subset D_{n+1} \subset \mathcal{X}$ for all $n$. 
\item[(ii)] There exist $v_1,v_2>0$ such that $N(D_n,\|\cdot\|, \delta)\lesssim n^{v_1}\delta^{-v_2}$.  
\end{itemize}
\end{assumption}

If $\mathcal{X} = \mathbb{R}^p$, one can take $D_n = \{\bm{x} \in \mathbb{R}^p: \|\bm{x}\|\leq r_n\}$ or $[-r_n,r_n]^p$ where $\{r_n\}$ is a sequence of constants such that $r_n \to \infty$ as $n \to \infty$. We can cover these $D_n$ with $(2r_n/\delta)^p$ $\ell^\infty$-balls of radius $\delta$, each of which is contained in an Euclidean ball of radius $\delta\sqrt{p}$. Then we have $N(D_n,\|\cdot\|,\delta) \leq (2\sqrt{p}r_n/\delta)^p \lesssim n^{vp}\delta^{-p}$ for some $v>0$.

\subsection{Uniform convergence rates}\label{Subsec:SR-unif}
We estimate the regression function $\mathfrak{m}_0$ by the following series ridge estimator.
\begin{align*}
\hat{\mathfrak{m}}(\bm{z},\bm{x}) &= \hat{\mathfrak{m}}_n(\bm{z},\bm{x}) := b_J^{(w)}(\bm{z},\bm{x})'\left({B_{J,n}^{(w)'}B_{J,n}^{(w)} \over n} + \varsigma_{J,n} I_J\right)^{-1}{B_{J,n}^{(w)'} \bm{Y} \over n},\\
(\bm{z}', \bm{x}')' &= (z_1,\dots,z_d,x_1,\dots,x_p)' \in R_0 \times D_n,
\end{align*}
where $\varsigma_{J,n}$ is a sequence of positive constants with $\varsigma_{J,n} \to 0$ as $n, J \to \infty$, $\bm{Y} = (Y(\bm{S}_1),\dots,Y(\bm{S}_n))'$, $b_{J,1},\dots, b_{J,J}$ are a collection of $J$ sieve basis functions, 
\begin{align*}
b_J^{(w)}(\bm{z},\bm{x}) &= (b_{J,1}(\bm{z},\bm{x})w_n(\bm{x}),\dots,b_{J,J}(\bm{z},\bm{x})w_n(\bm{x}))',\ 
w_n(\bm{x}) = 
\begin{cases}
1 & \text{if $\bm{x} \in D_n$},\\
0 & \text{otherwise},
\end{cases}\\ 
B_{J,n}^{(w)} &= \left(b_J^{(w)}\left({\bm{S}_1 \over A_n},\bm{X}(\bm{S}_1)\right),\dots,b_J^{(w)}\left({\bm{S}_n \over A_n},\bm{X}(\bm{S}_n)\right)\right)'. 
\end{align*}
In our study, we use the above weight function $w_n(\bm{x})$, but other candidates for the weight function include $w_n(\bm{x})=(1+\|\bm{x}\|^2)^{-w}$ and $w_n(\bm{x})=\exp(-\|\bm{x}\|^w)$ with some $w \geq 0$. In fact, in \cite{LeRo16} consider a (non-penalized) series estimator with $w_n(\bm{x})=(1+\|\bm{x}\|^2)^{-w}$. We leave the extension of our theoretical results when using other weight functions for future research.

In Section \ref{Subsec:SR-opt-unif}, we focus on the case where the basis functions are supported on $D_n=D_0=[-1/2,1/2]^p$. In this case, we can construct tensor product B-spline or wavelet bases supported on $R_0 \times D_0$ in the same manner as in Section \ref{Subsec:STR-opt-unif-rate}. For example, the spline basis functions $\psi_{J_{0,k},j}(v)$ on $v \in [-1/2,1/2]$, $j=1,\dots,J_{0,k}$, $k=1,\dots,d+p$ are extended to the basis functions on $(\bm{z},\bm{x})=(z_1,\dots,z_d,x_1,\dots,x_p)' \in [-1/2,1/2]^{d+p}$ by $\psi_{J_{0,1},j_1}(z_1)\times \dots \times \psi_{J_{0,d},j_d}(z_d) \times \psi_{J_{0,d+1},j_{d+1}}(x_1) \times \dots \times \psi_{J_{0,d+p},j_{d+p}}(x_p)$,
$j_k=1,\dots,J_{0,k}$, $k=1,\dots,d+p$. In the case where $D_n = [-r_n,r_n]^p$, we can construct basis functions supported on $R_0 \times D_n$ by rescaling the components corresponding to $D_0$ in basis functions supported on $R_0 \times D_0$.

Define 
\[
\zeta_{J,n} = \sup_{(\bm{z},\bm{x}) \in R_0 \times \mathbb{R}^p}\|b_J^{(w)}(\bm{z},\bm{x})\|,\ \lambda_{J,n} = \lambda_{\text{min}}\left(\E\left[b_J^{(w)}\left({\bm{S}_1 \over A_n},\bm{X}(\bm{S}_1)\right)b_J^{(w)}\left({\bm{S}_1 \over A_n},\bm{X}(\bm{S}_1)\right)'\right]\right)^{-1/2}.
\]
We assume the following conditions on the sieve basis functions. 

\begin{assumption}\label{Ass:sieve3}
Let $\nabla b_J^{(w)}(\bm{z},\bm{x}) = \left({\partial b_J^{(w)}(\bm{z},\bm{x}) \over \partial \bm{z}}, {\partial b_J^{(w)}(\bm{z},\bm{x}) \over \partial \bm{x}}\right) \in \mathbb{R}^{J \times (d+p)}$ where
\begin{align*}
{\partial b_J^{(w)}(\bm{z},\bm{x}) \over \partial \bm{z}} &\!=\! \left(\!{\partial b_{J,j}(\bm{z},\bm{x})w_n(\bm{x}) \over \partial z_k}\!\right)_{1 \leq j \leq J, 1 \leq k \leq d}, {\partial b_J^{(w)}(\bm{z},\bm{x}) \over \partial \bm{x}} \!=\! \left(\!{\partial b_{J,j}(\bm{z},\bm{x})w_n(\bm{x}) \over \partial x_k}\!\right)_{1 \leq j \leq J, 1 \leq k \leq p}. 
\end{align*}
\begin{itemize}
\item[(i)] There exist $\omega_1,\omega_2 \geq 0$ such that $\sup_{(\bm{z},\bm{x}) \in R_0 \times D_n}\|\nabla b_J^{(w)}(\bm{z},\bm{x})\| \lesssim n^{\omega_1}J^{\omega_2}$.
\item[(ii)] There exist $\varpi_1 \geq 0$, $\varpi_2>0$ such that $\zeta_{J,n} \lesssim n^{\varpi_1}J^{\varpi_2}$.
\item[(iii)] $\lambda_{\rm{min}}(\E[b_J^{(w)}(\bm{S}_1/A_n,\bm{X}(\bm{S}_1))b_J^{(w)}(\bm{S}_1/A_n,\bm{X}(\bm{S}_1))'])>0$ for each $J$ and $n$. 
\end{itemize}
\end{assumption}

Assumption \ref{Ass:sieve3} is satisfied with $\lambda_{J,n} \!\lesssim \! 1$ and $\zeta_{J,n} \!\lesssim \! \sqrt{J}$ for tensor-products of univariate polynomial spline, trigonometric polynomial or wavelet bases. See also the comments on Assumption \ref{Ass:sieve1}.

Let $\tilde{b}_J^{(w)}(\bm{z},\bm{x})$ denote the orthonormalized vector of basis functions, that is, 
\[
\tilde{b}_J^{(w)}(\bm{z},\bm{x}) = \E\left[b_J^{(w)}\left({\bm{S}_1 \over A_n},\bm{X}(\bm{S}_1)\right)b_J^{(w)}\left({\bm{S}_1 \over A_n},\bm{X}(\bm{S}_1)\right)'\right]^{-1/2}b_J^{(w)}(\bm{z},\bm{x}),
\]
and $\tilde{B}_{J,n}^{(w)} = (\tilde{b}_J^{(w)}(\bm{S}_1/A_n,\bm{X}(\bm{S}_1)),\dots,\tilde{b}_J(\bm{S}_n/A_n,\bm{X}(\bm{S}_n)))'$.

Define $B_{J}^{(w)}$ as the closed linear span of $\{b_{J,1}w_n,\dots, b_{J,J}w_n\}$ in $L^2(R_0\times D_n)$. Let $P_{J,n}^{(w)}$ be the empirical projection operator onto $B_J^{(w)}$, that is, 
\begin{align*}
P_{J,n}^{(w)}\mathfrak{m}(\bm{z},\bm{x}) &= b_J^{(w)}(\bm{z},\bm{x})'\left({B_{J,n}^{(w)'}B_{J,n}^{(w)} \over n}\right)^- {1 \over n}\sum_{i=1}^n b_J^{(w)}\left({\bm{S}_i \over A_n},\bm{X}(\bm{S}_i)\right)\mathfrak{m}\left({\bm{S}_i \over A_n},\bm{X}(\bm{S}_i)\right)\\
&= \tilde{b}_J^{(w)}(\bm{z},\bm{x})(\tilde{B}_{J,n}^{(w)'}\tilde{B}_{J,n}^{(w)})^- \tilde{B}_{J,n}^{(w)'}\mathfrak{M}
\end{align*}
where $\mathfrak{M} = (\mathfrak{m}(\bm{S}_1/A_n,\bm{X}(\bm{S}_1)),\dots,\mathfrak{m}(\bm{S}_n/A_n,\bm{X}(\bm{S}_n)))'$. The operator $P_{J,n}^{(w)}$ is well defined:

\noindent
if $L_{w,n}^2(R_0 \times \mathbb{R}^p)$ denotes the space of functions with norm $\|\cdot\|_{L^2,w,n}$ where 
\[
\|f\|_{L^2,w,n}^2 = {1 \over n}\sum_{i=1}^{n}f(\bm{S}_i/A_n,\bm{X}(\bm{S}_i))^2w_n(\bm{X}(\bm{S}_i)),
\] 
then $P_{J,n}^{(w)}: L_{w,n}^2(R_0 \times \mathbb{R}^p) \to L_{w,n}^2(R_0 \times \mathbb{R}^p)$ is an orthogonal projection onto $B_J^{(w)}$ whenever $B_{J,n}^{(w)'}B_{J,n}^{(w)}$ is invertible. 

Let $\tilde{\mathfrak{m}}$ denote the projection of $\mathfrak{m}_0$ onto $B_{J}^{(w)}$ under the empirical measure, that is, 
\begin{align*}
\tilde{\mathfrak{m}}(\bm{z},\bm{x}) &= P_{J,n}^{(w)}\mathfrak{m}_0(\bm{z},\bm{x}) = \tilde{b}_J^{(w)}(\bm{z},\bm{x})'(\tilde{B}_{J,n}^{(w)'}\tilde{B}_{J,n}^{(w)})^- \tilde{B}_{J,n}^{(w)'} \mathfrak{M}_0
\end{align*}
where $\mathfrak{M}_0 = \mathfrak{M}_{0,n} = (\mathfrak{m}_0(\bm{S}_1/A_n,\bm{X}(\bm{S}_1)),\dots,\mathfrak{m}_0(\bm{S}_n/A_n,\bm{X}(\bm{S}_n))'$. 

The following result provides uniform convergence rates of the variance term of general series estimators. 
\begin{proposition}\label{prp:variance2}
Suppose that Assumptions \ref{Ass:sample}(i), (ii), \ref{Ass:model2}, \ref{Ass:domain}, and \ref{Ass:sieve3} hold. Assume that $\bm{X}=\{\bm{X}(\bm{s}):\bm{s} \in \mathbb{R}^d\}$, $\{\bm{S}_i\}_{i=1}^{n}$, and $\{\varepsilon_i\}_{i=1}^{n}$ are mutually independent, $\max\{(\zeta_{J,n}\lambda_{J,n})^{{q \over q-2}}, \zeta_{J,n}^2(\zeta_{J,n}^2+\zeta_{J,n}\lambda_{J,n}^{-1}+\lambda_{J,n}^2) \}\lesssim \sqrt{A_n/(\log n)^2}$, and $\varsigma_{J,n} \lesssim \zeta_{J,n}^2\sqrt{\log n/A_n}$ as $n, J \to \infty$. Then
\begin{align}\label{eq:var-unif-rate2}
\|\hat{\mathfrak{m}} - \tilde{\mathfrak{m}}\|_\infty &= O_p\left(\zeta_{J,n}\lambda_{J,n}\sqrt{{\log n \over n}} + \varsigma_{J,n}\zeta_{J,n}^2\|\mathfrak{m}_0\|_{w,\infty}\right)\ \text{as $n,J \to \infty$},
\end{align}
where $\|\cdot\|_{w,\infty}$ denote the weighted sup norm, that is, $\|f\|_{w,\infty} = \sup_{(\bm{z},\bm{x}) \in R_0 \times \mathbb{R}^p}|f(\bm{z},\bm{x})w_n(\bm{x})| = \sup_{(\bm{z},\bm{x}) \in R_0 \times D_n}|f(\bm{z},\bm{x})|$.
\end{proposition}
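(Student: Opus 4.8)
The plan is to follow the architecture of the proof of Proposition~\ref{prp:variance}, exploiting the structural difference between the two models: here the errors $\nu_{n,i} = \mathfrak{h}(\bm{S}_i/A_n, \bm{X}(\bm{S}_i))\varepsilon_i$ are \emph{conditionally independent} given $(\bm{X}, \{\bm{S}_i\}_{i=1}^n)$, since $\{\varepsilon_i\}$ is i.i.d.\ and independent of the design and covariates. Writing $\check{\Psi}_J = \E[b_J^{(w)}(\bm{S}_1/A_n, \bm{X}(\bm{S}_1))b_J^{(w)}(\bm{S}_1/A_n, \bm{X}(\bm{S}_1))']$, $\nu_n = (\nu_{n,1},\dots,\nu_{n,n})'$, and
\[
\bar{\mathfrak{m}}(\bm{z},\bm{x}) = b_J^{(w)}(\bm{z},\bm{x})'(B_{J,n}^{(w)'}B_{J,n}^{(w)} + \varsigma_{J,n}nI_J)^{-1}B_{J,n}^{(w)'}\mathfrak{M}_0,
\]
I would split $\hat{\mathfrak{m}} - \tilde{\mathfrak{m}} = \check{\mathfrak{m}} + \dot{\mathfrak{m}}$ with $\check{\mathfrak{m}} = \hat{\mathfrak{m}} - \bar{\mathfrak{m}}$ and $\dot{\mathfrak{m}} = \bar{\mathfrak{m}} - \tilde{\mathfrak{m}}$, bounding the variance term $\check{\mathfrak{m}}$ and the ridge-penalty term $\dot{\mathfrak{m}}$ separately and targeting $\|\check{\mathfrak{m}}\|_\infty = O_p(\zeta_{J,n}\lambda_{J,n}\sqrt{\log n/n})$ and $\|\dot{\mathfrak{m}}\|_\infty = O_p(\varsigma_{J,n}\zeta_{J,n}^2\|\mathfrak{m}_0\|_{w,\infty})$.

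For the variance term I would first discretize. By Assumption~\ref{Ass:sieve3}(i) and the mean value theorem, $\check{\mathfrak{m}}$ is Lipschitz on $R_0 \times D_n$ with a constant controlled by $n^{\omega_1}J^{\omega_2}\|(\tilde{B}_{J,n}^{(w)'}\tilde{B}_{J,n}^{(w)}/n + \varsigma_{J,n}\check{\Psi}_J^{-1})^{-1}\|\,\|\tilde{B}_{J,n}^{(w)'}\nu_n/n\|$, which is bounded by a fixed power of $n$ on a high-probability event. Since $D_n$ has polynomial covering number (Assumption~\ref{Ass:domain}(ii)) and $\zeta_{J,n}$ is polynomial in $n$ (Assumption~\ref{Ass:sieve3}(ii)), I can choose a grid $\mathcal{S}_n \subset R_0 \times D_n$ with $\log[\![\mathcal{S}_n]\!] = O(\log n)$ for which the discretization error is within the target order, reducing the problem to the maximum over $\mathcal{S}_n$ of $|\tilde{b}_J^{(w)}(\bm{z}_n,\bm{x}_n)'(\tilde{B}_{J,n}^{(w)'}\tilde{B}_{J,n}^{(w)}/n + \varsigma_{J,n}\check{\Psi}_J^{-1})^{-1}\tilde{B}_{J,n}^{(w)'}\nu_n/n|$. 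Because the values $\tilde{b}_J^{(w)}(\bm{S}_i/A_n,\bm{X}(\bm{S}_i))$ inherit the spatial dependence of $\bm{X}$, the sample second-moment matrix is no longer a sum of i.i.d.\ matrices as in the trend model; to establish $\|\tilde{B}_{J,n}^{(w)'}\tilde{B}_{J,n}^{(w)}/n - I_J\| = o_p(1)$ (and hence that the penalized inverse is $O_p(1)$ and differs from $I_J$ by the same order, exactly as in Steps~1--2 of Proposition~\ref{prp:variance}) I would invoke Lemma~\ref{lem:B}, the extension of the matrix exponential inequality of \cite{Tr12} to spatially dependent data, using the $\beta$-mixing hypothesis in Assumption~\ref{Ass:model2}(ii) together with the rate bound $\zeta_{J,n}^2(\zeta_{J,n}^2 + \zeta_{J,n}\lambda_{J,n}^{-1} + \lambda_{J,n}^2) \lesssim \sqrt{A_n/(\log n)^2}$. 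This replaces the role of Lemma~\ref{lem:psi} and~\eqref{ineq:Psi-norm} in the trend model.

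The core of the argument, and the point where the regression model is genuinely easier than the trend model, is the grid maximum of
\[
\tilde{b}_J^{(w)}(\bm{z}_n,\bm{x}_n)'\tilde{B}_{J,n}^{(w)'}\nu_n/n = {1 \over n}\sum_{i=1}^n \tilde{b}_J^{(w)}(\bm{z}_n,\bm{x}_n)'\tilde{b}_J^{(w)}(\bm{S}_i/A_n,\bm{X}(\bm{S}_i))\,\mathfrak{h}(\bm{S}_i/A_n,\bm{X}(\bm{S}_i))\,\varepsilon_i.
\]
Conditional on $(\bm{X},\{\bm{S}_i\})$ this is a sum of \emph{independent} mean-zero summands, so \emph{no blocking argument is required} and the entire $\beta$-mixing machinery driving the trend-model proof is bypassed; this is precisely why the variance rate is governed by $n$ rather than the volume $A_n$. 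Since $\mathfrak{h}$ is bounded (Assumption~\ref{Ass:model2}(iv)), the conditional variance of the sum is of order $n^{-1}\|\tilde{b}_J^{(w)}(\bm{z}_n,\bm{x}_n)\|^2\,\|\tilde{B}_{J,n}^{(w)'}\tilde{B}_{J,n}^{(w)}/n\| = O_p(\zeta_{J,n}^2\lambda_{J,n}^2/n)$ on the good Gram-matrix event. As $\varepsilon_i$ has only $q$ finite moments, I would truncate at a level calibrated by the moment bound $(\zeta_{J,n}\lambda_{J,n})^{q/(q-2)} \lesssim \sqrt{A_n/(\log n)^2}$ and apply a Bernstein-type inequality conditionally, the truncation remainder being controlled by Markov's inequality; a union bound over the grid (of cardinality polynomial in $n$) then supplies the $\sqrt{\log n}$ factor and yields $\|\check{\mathfrak{m}}\|_\infty = O_p(\zeta_{J,n}\lambda_{J,n}\sqrt{\log n/n})$.

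For the ridge-penalty term I would write
\[
\dot{\mathfrak{m}}(\bm{z},\bm{x}) = \tilde{b}_J^{(w)}(\bm{z},\bm{x})'\left\{(\tilde{B}_{J,n}^{(w)'}\tilde{B}_{J,n}^{(w)}/n + \varsigma_{J,n}\check{\Psi}_J^{-1})^{-1} - (\tilde{B}_{J,n}^{(w)'}\tilde{B}_{J,n}^{(w)}/n)^-\right\}\tilde{B}_{J,n}^{(w)'}\mathfrak{M}_0/n
\]
and bound the spectral norm of the matrix difference by $O_p(\varsigma_{J,n}\lambda_{J,n}^{-2})$ via Lemma~\ref{lem:matrix-inv-norm}; combined with $\sup\|\tilde{b}_J^{(w)}\| \le \zeta_{J,n}\lambda_{J,n}$ and $\|\tilde{B}_{J,n}^{(w)'}\mathfrak{M}_0/n\| = O_p(\zeta_{J,n}\lambda_{J,n}\|\mathfrak{m}_0\|_{w,\infty})$ this gives $\|\dot{\mathfrak{m}}\|_\infty = O_p(\varsigma_{J,n}\zeta_{J,n}^2\|\mathfrak{m}_0\|_{w,\infty})$, as in Step~4 of Proposition~\ref{prp:variance}. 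Adding the two bounds yields~\eqref{eq:var-unif-rate2}. I expect the main obstacle to be the Gram-matrix concentration under the spatial dependence of the covariates, i.e.\ Lemma~\ref{lem:B}: the pairs $(\bm{S}_i,\bm{X}(\bm{S}_i))$ are dependent even though the sites $\bm{S}_i$ are i.i.d., so the clean matrix-Bernstein bound available in the trend model must be upgraded to accommodate $\beta$-mixing. Controlling the unbounded covariate support through the weight $w_n$ and the covering bound of Assumption~\ref{Ass:domain} is the secondary technical point.
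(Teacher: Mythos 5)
Your proposal is correct and follows essentially the same route as the paper's proof: the same decomposition through $\bar{\mathfrak{m}}$, the same discretization of $R_0 \times D_n$, Lemma \ref{lem:B} for the Gram-matrix concentration under the $\beta$-mixing covariates, and Lemma \ref{lem:matrix-inv-norm} for the ridge-penalty term. Your explicit observation that the error-term grid maximum requires no blocking---because $\nu_{n,i}$ is conditionally independent given $(\bm{X},\{\bm{S}_i\}_{i=1}^n)$, so that term concentrates at rate $\sqrt{\log n/n}$ while only the Gram matrix is governed by $A_n$---is precisely what the paper leaves implicit when it invokes ``almost the same argument'' as Steps 1--4 of Proposition \ref{prp:variance}.
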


Note that $\|\hat{\mathfrak{m}} - \tilde{\mathfrak{m}}\|_\infty = \|\hat{\mathfrak{m}} - \tilde{\mathfrak{m}}\|_{w,\infty}$ since both $\hat{\mathfrak{m}}$ and $\tilde{\mathfrak{m}}$ have support $R_0 \times D_n$. The convergence rate of the first term in (\ref{eq:var-unif-rate2}) is the same as the case of i.i.d. data and this is attributed to the fact that the error terms are conditionally independent in the model (\ref{eq:Spatial-Reg}). 

Now we give uniform convergence rates of $\hat{\mathfrak{m}}$. Let $L_{w,n}^\infty(R_0 \times \mathbb{R}^p)$ denote the space of functions of which $\|f\|_{w,\infty}<\infty$ and let
\begin{align*}
\|P_{J,n}^{(w)}\|_{w,\infty} &= \sup_{\mathfrak{m} \in L_{w,n}^\infty(R_0 \times \mathbb{R}^p), \|\mathfrak{m}\|_{w,\infty} \neq 0}{\|P_{J,n}^{(w)}\mathfrak{m}\|_{w,\infty} \over \|\mathfrak{m}\|_{w,\infty}}
\end{align*}
denote the (weighted sup) operator norm of $P_{J,n}^{(w)}$. 

The following result establishes uniform convergence rates of general series estimators. 

\begin{corollary}\label{cor:unif-rate2}
Suppose that assumptions in Proposition \ref{prp:variance2} hold. Then: (1)
\begin{align*}
\|\hat{\mathfrak{m}} - \mathfrak{m}_0\|_{w,\infty} &= O_p\!\left(\!\zeta_{J,n}\lambda_{J,n}\sqrt{{\log n \over n}} + \varsigma_{J,n}\zeta_{J,n}^2\|\mathfrak{m}_0\|_{w,\infty}\!\right) \! + \! (1 \!+\! \|P_{J,n}^{(w)}\|_{w,\infty})\inf_{\mathfrak{m} \in B_J^{(w)}}\!\|\mathfrak{m}_0 - \mathfrak{m}\|_{w,\infty}.
\end{align*}
(2) Further, if the linear sieve satisfies $\zeta_{J,n} \lesssim \sqrt{J}$, $\lambda_{J,n} \lesssim 1$, and $\|P_{J,n}^{(w)}\|_{w,\infty} = O_p(1)$, then
\[
\|\hat{\mathfrak{m}} - \mathfrak{m}_0\|_{w,\infty} = O_p\left(\sqrt{{J\log n \over n}} + \varsigma_{J,n}J\|\mathfrak{m}_0\|_{w,\infty} + \inf_{\mathfrak{m} \in B_J^{(w)}}\|\mathfrak{m}_0 - \mathfrak{m}\|_{w,\infty}\right).
\]
\end{corollary}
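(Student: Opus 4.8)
The plan is to mirror the proof of Corollary \ref{cor:unif-rate1}, splitting the total error into a variance (estimation) part and a bias (approximation) part via the triangle inequality
\[
\|\hat{\mathfrak{m}} - \mathfrak{m}_0\|_{w,\infty} \leq \|\hat{\mathfrak{m}} - \tilde{\mathfrak{m}}\|_{w,\infty} + \|\tilde{\mathfrak{m}} - \mathfrak{m}_0\|_{w,\infty},
\]
and then bounding each piece separately. For the first term, I would invoke the remark immediately following Proposition \ref{prp:variance2}: since both $\hat{\mathfrak{m}}$ and $\tilde{\mathfrak{m}}$ are supported on $R_0 \times D_n$, the weighted sup norm coincides with the ordinary sup norm, so $\|\hat{\mathfrak{m}} - \tilde{\mathfrak{m}}\|_{w,\infty} = \|\hat{\mathfrak{m}} - \tilde{\mathfrak{m}}\|_{\infty}$, and Proposition \ref{prp:variance2} directly yields the rate $O_p(\zeta_{J,n}\lambda_{J,n}\sqrt{\log n/n} + \varsigma_{J,n}\zeta_{J,n}^2\|\mathfrak{m}_0\|_{w,\infty})$ under the stated assumptions. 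This disposes of the variance contribution with no additional work.

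For the bias term I would exploit that $\tilde{\mathfrak{m}} = P_{J,n}^{(w)}\mathfrak{m}_0$ and that $P_{J,n}^{(w)}$ reproduces every element of $B_J^{(w)}$ (it is an orthogonal projection onto $B_J^{(w)}$ with respect to $\|\cdot\|_{L^2,w,n}$ whenever $B_{J,n}^{(w)'}B_{J,n}^{(w)}$ is invertible). Hence for any $\mathfrak{m} \in B_J^{(w)}$ one has $P_{J,n}^{(w)}\mathfrak{m} = \mathfrak{m}$, so
\[
\tilde{\mathfrak{m}} - \mathfrak{m}_0 = P_{J,n}^{(w)}(\mathfrak{m}_0 - \mathfrak{m}) + (\mathfrak{m} - \mathfrak{m}_0),
\]
which gives $\|\tilde{\mathfrak{m}} - \mathfrak{m}_0\|_{w,\infty} \leq (1 + \|P_{J,n}^{(w)}\|_{w,\infty})\|\mathfrak{m} - \mathfrak{m}_0\|_{w,\infty}$ by the definition of the weighted operator norm. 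Taking the infimum over $\mathfrak{m} \in B_J^{(w)}$ and combining with the variance bound establishes part (1). Part (2) then follows by plugging the simplified sieve rates $\zeta_{J,n} \lesssim \sqrt{J}$, $\lambda_{J,n} \lesssim 1$, and $\|P_{J,n}^{(w)}\|_{w,\infty} = O_p(1)$ into the statement of (1), so that the variance term becomes $O_p(\sqrt{J\log n/n})$, the penalty term becomes $O_p(\varsigma_{J,n}J\|\mathfrak{m}_0\|_{w,\infty})$, and the approximation term collapses to $\inf_{\mathfrak{m} \in B_J^{(w)}}\|\mathfrak{m}_0 - \mathfrak{m}\|_{w,\infty}$.

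The argument is genuinely routine once Proposition \ref{prp:variance2} is in hand, so the only point requiring care is the bookkeeping for the weighted sup norm and the associated weight function $w_n$: I would verify that the projection identity and the triangle-inequality decomposition remain valid when measured in $\|\cdot\|_{w,\infty}$ rather than $\|\cdot\|_\infty$, and that the reduction $\|\hat{\mathfrak{m}} - \tilde{\mathfrak{m}}\|_{w,\infty} = \|\hat{\mathfrak{m}} - \tilde{\mathfrak{m}}\|_\infty$ is legitimate because both estimators vanish outside $R_0 \times D_n$. Thus the only genuine obstacle has already been absorbed into Proposition \ref{prp:variance2}, and the corollary itself is a clean consequence.
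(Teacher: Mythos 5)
Your proposal is correct and follows essentially the same route as the paper: the paper's proof of Corollary \ref{cor:unif-rate2} is stated to be analogous to that of Corollary \ref{cor:unif-rate1}, which is exactly your argument --- triangle inequality, Proposition \ref{prp:variance2} for the variance term, the reproducing property $P_{J,n}^{(w)}\mathfrak{m} = \mathfrak{m}$ for $\mathfrak{m} \in B_J^{(w)}$ to get the $(1+\|P_{J,n}^{(w)}\|_{w,\infty})$ bias bound, and then taking the infimum. Your extra care about the weighted norm bookkeeping (in particular that $\|\hat{\mathfrak{m}}-\tilde{\mathfrak{m}}\|_{w,\infty} = \|\hat{\mathfrak{m}}-\tilde{\mathfrak{m}}\|_\infty$ because both functions are supported on $R_0 \times D_n$) is consistent with the remark the paper makes after Proposition \ref{prp:variance2}.
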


Note that the term $(1 \!+\! \|P_{J,n}^{(w)}\|_{w,\infty})\inf_{\mathfrak{m} \in B_J^{(w)}}\!\|\mathfrak{m}_0 - \mathfrak{m}\|_{w,\infty}$ comes from the bound of $\|\tilde{\mathfrak{m}} - \mathfrak{m}_0\|_\infty$, which can be addressed in a similar way to the proof of Lemma 2.4 in \cite{ChCh15}. The main difference from Lemma 2.4 in \cite{ChCh15} arises from the stochastic term associated with the evaluation of $\|\hat{\mathfrak{m}} - \tilde{\mathfrak{m}}_0\|_\infty$. In particular, the term $\varsigma_{J,n}\zeta_{J,n}^2\|\mathfrak{m}_0\|_{w,\infty}$, which is due to the ridge penalty, has emerged as an additional term. The evaluation of this term requires a new inequality concerning the spectral norm of the difference between the inverses of two non-singular matrices (Lemma \ref{lem:matrix-inv-norm}).

\subsection{Optimal uniform rates for spline and wavelet estimators}\label{Subsec:SR-opt-unif}

In this subsection, we establish that the spline and wavelet estimators attain the optimal uniform convergence rate when the regression function belongs to a H\"older space. 

\begin{assumption}\label{Ass:sieve4}
Assume that $\bm{X}(\bm{0})$ has the density function $f_{\bm{X}}$.  
\begin{itemize}
\item[(i)] $D_n = D_0 = [-1/2,1/2]^p \subset \mathcal{X}$ for all $n$. 
\item[(ii)] The density functions $g$ and $f_{\bm{X}}$ are uniformly bounded away from zero and infinity on $R_0$ and $D_0$, respectively. 
\item[(iii)] The restriction of $\mathfrak{m}_0$ to $R_0 \times D_0$ belongs to $\Lambda^r(R_0 \times D_0)$ for some $r>0$. 
\item[(iv)] Let $B_J$ be the closed linear span of $\{b_{J,1},\dots,b_{J,J}\}$. The sieve $B_J$ is $\text{BSpl}(J,R_0 \times D_0,\varrho)$ or $\text{Wav}(J,R_0 \times D_0,\varrho)$ with $\varrho > \max\{r,1\}$. 
\end{itemize}
\end{assumption}

One can verify that Condition (i) implies Assumption \ref{Ass:domain} and Conditions (ii) and (iv) imply Assumption \ref{Ass:sieve3} with $\zeta_{J,n} \lesssim \sqrt{J}$ and $\lambda_{J,n} \sim 1$. Note that given the spline or wavelet basis functions on $[-1/2,1/2]$, we can construct basis functions on $R_0$ by taking the product of the elements of each of the univariate bases. See also the discussion before Assumption \ref{Ass:sieve2} for the construction of multivariate basis functions.

The following result establishes that the spline and wavelet estimators achieve the optimal uniform convergence rates of \cite{St82}. 

\begin{theorem}\label{thm:unif-rate2}
Suppose that Assumptions \ref{Ass:sample}(i), (ii), \ref{Ass:model2}, \ref{Ass:domain}, and \ref{Ass:sieve4} hold. Assume that $\bm{X}=\{\bm{X}(\bm{s}):\bm{s} \in \mathbb{R}^d\}$, $\{\bm{S}_i\}_{i=1}^{n}$, and $\{\varepsilon_i\}_{i=1}^{n}$ are mutually independent, $2r \geq 3d+p$, $\max\{J^{{q \over 2(q-2)}}, J^2 \}\lesssim \sqrt{A_n/(\log n)^2}$, and $\varsigma_{J,n}(\|\mathfrak{m}_0\|_{w,\infty}\sqrt{J^2n/(\log n)} + \sqrt{A_n/(J^2\log n)})\lesssim 1$ as $n, J \to \infty$.\\ 
If $J \sim \left(n/\log n\right)^{{d+p \over 2r+d+p}}$, then
\[
\|\hat{\mathfrak{m}} - \mathfrak{m}_0\|_{w,\infty} = O_p\left(\left({\log n \over n}\right)^{r \over 2r+d+p}\right).
\]
\end{theorem}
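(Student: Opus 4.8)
The plan is to derive Theorem \ref{thm:unif-rate2} from the general uniform bound in Corollary \ref{cor:unif-rate2}(2) by choosing $J$ to balance the stochastic and approximation errors, exactly as Theorem \ref{thm:unif-rate1} was obtained from Corollary \ref{cor:unif-rate1}. First I would record, using the remark following Assumption \ref{Ass:sieve4}, that its Condition (i) yields Assumption \ref{Ass:domain} while Conditions (ii) and (iv) yield Assumption \ref{Ass:sieve3} with $\zeta_{J,n}\lesssim\sqrt{J}$ and $\lambda_{J,n}\sim1$. With these the hypotheses of Proposition \ref{prp:variance2} reduce to the stated ones, since $\max\{(\zeta_{J,n}\lambda_{J,n})^{q/(q-2)},\zeta_{J,n}^2(\zeta_{J,n}^2+\zeta_{J,n}\lambda_{J,n}^{-1}+\lambda_{J,n}^2)\}\sim\max\{J^{q/(2(q-2))},J^2\}\lesssim\sqrt{A_n/(\log n)^2}$, and the required $\varsigma_{J,n}\lesssim\zeta_{J,n}^2\sqrt{\log n/A_n}\sim J\sqrt{\log n/A_n}$ is exactly what the summand $\varsigma_{J,n}\sqrt{A_n/(J^2\log n)}\lesssim1$ of the penalty condition supplies. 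Hence Corollary \ref{cor:unif-rate2}(2) applies and gives
\begin{equation*}
\|\hat{\mathfrak{m}} - \mathfrak{m}_0\|_{w,\infty} = O_p\left(\sqrt{\frac{J\log n}{n}} + \varsigma_{J,n} J\|\mathfrak{m}_0\|_{w,\infty} + \inf_{\mathfrak{m} \in B_J^{(w)}}\|\mathfrak{m}_0 - \mathfrak{m}\|_{w,\infty}\right),
\end{equation*}
once the operator-norm bound $\|P_{J,n}^{(w)}\|_{w,\infty}=O_p(1)$ is verified.

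For the approximation error, since $\mathfrak{m}_0\in\Lambda^r(R_0\times D_0)$ on the $(d+p)$-dimensional cube $R_0\times D_0$ and $\varrho>\max\{r,1\}$, standard tensor-product spline/wavelet theory (e.g. \cite{DeLo93}, \cite{Hu98}) gives $\inf_{\mathfrak{m}\in B_J^{(w)}}\|\mathfrak{m}_0-\mathfrak{m}\|_{w,\infty}\lesssim J^{-r/(d+p)}$; here $\|\cdot\|_{w,\infty}$ is simply the sup norm over $R_0\times D_0$ because $D_n=D_0$ is fixed. As in the proof of Theorem \ref{thm:unif-rate1}, the operator-norm bound reduces to the concentration statement $\|\tilde{B}_{J,n}^{(w)'}\tilde{B}_{J,n}^{(w)}/n-I_J\|=o_p(1)$, the analog of (\ref{eq:emp-true-L^2}); this establishes the equivalence of the empirical and population weighted $L^2$ norms over $B_J^{(w)}$ with probability approaching one, after which Corollary A.1 of \cite{Hu03} (spline case) and the argument in the proof of Theorem 5.2 of \cite{ChCh15} (wavelet case) deliver $\|P_{J,n}^{(w)}\|_{w,\infty}=O_p(1)$.

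The main obstacle is precisely this matrix concentration, which is harder than in the trend model because the summands $\tilde{b}_J^{(w)}(\bm{S}_i/A_n,\bm{X}(\bm{S}_i))\tilde{b}_J^{(w)}(\bm{S}_i/A_n,\bm{X}(\bm{S}_i))'$ are no longer i.i.d.: they inherit the spatial dependence of the $\beta$-mixing field $\bm{X}$. I would handle it through the decomposition
\begin{equation*}
\frac{\tilde{B}_{J,n}^{(w)'}\tilde{B}_{J,n}^{(w)}}{n} - I_J = \left(\frac{\tilde{B}_{J,n}^{(w)'}\tilde{B}_{J,n}^{(w)}}{n} - \Sigma_n(\bm{X})\right) + \left(\Sigma_n(\bm{X}) - I_J\right),
\end{equation*}
where $\Sigma_n(\bm{X}) = A_n^{-1}\int_{R_n}\tilde{b}_J^{(w)}(\bm{s}/A_n,\bm{X}(\bm{s}))\tilde{b}_J^{(w)}(\bm{s}/A_n,\bm{X}(\bm{s}))'g(\bm{s}/A_n)\,d\bm{s}$ is the second-moment matrix conditional on $\bm{X}$. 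Conditionally on $\bm{X}$, the map $\bm{s}\mapsto\tilde{b}_J^{(w)}(\bm{s}/A_n,\bm{X}(\bm{s}))$ is deterministic and the i.i.d. sampling sites $\{\bm{S}_i\}$ render the first bracket a normalized sum of i.i.d. matrices, which \cite{Tr12} controls at rate $\sim\zeta_{J,n}^2\sqrt{\log J/n}\sim J\sqrt{\log n/n}$; this is $o_p(1)$ under $J^2\lesssim\sqrt{A_n/(\log n)^2}=O(\sqrt{n}/\log n)$, since $A_n=O(n)$. The second bracket is the spatial average over $R_n$ of the centred stationary field $\bm{X}$, whose fluctuation is governed not by $n$ but by the volume $A_n$; here I would invoke the matrix concentration for spatially dependent data (Lemma \ref{lem:B}, an extension of \cite{Tr12} built on the analogs of Theorem 4.2 and Corollary 4.2 of \cite{ChCh15}) together with the blocking argument, with $\max\{J^{q/(2(q-2))},J^2\}\lesssim\sqrt{A_n/(\log n)^2}$ forcing this term to $o_p(1)$ as well.

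Finally I would substitute $J\sim(n/\log n)^{(d+p)/(2r+d+p)}$ into the three contributions. A direct computation gives $\sqrt{J\log n/n}\sim(\log n/n)^{r/(2r+d+p)}$ and $J^{-r/(d+p)}\sim(\log n/n)^{r/(2r+d+p)}$, so the variance and bias terms are of the claimed order, while the penalty term obeys $\varsigma_{J,n}J\|\mathfrak{m}_0\|_{w,\infty}\lesssim\sqrt{\log n/n}=(\log n/n)^{1/2}$ by the summand $\varsigma_{J,n}\|\mathfrak{m}_0\|_{w,\infty}\sqrt{J^2n/(\log n)}\lesssim1$ of the penalty condition, which is negligible because $1/2\geq r/(2r+d+p)$. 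The hypotheses $2r\geq3d+p$ and $\max\{J^{q/(2(q-2))},J^2\}\lesssim\sqrt{A_n/(\log n)^2}$ are what guarantee that this optimal $J$ is small enough for both concentration budgets in the previous paragraph to close simultaneously; collecting the three $O_p\big((\log n/n)^{r/(2r+d+p)}\big)$ terms completes the proof.
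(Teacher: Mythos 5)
Your proposal is correct and follows essentially the same route as the paper, whose proof of Theorem \ref{thm:unif-rate2} is exactly the analogue of Theorem \ref{thm:unif-rate1}: apply Corollary \ref{cor:unif-rate2}(2) with $\zeta_{J,n}\lesssim\sqrt{J}$, $\lambda_{J,n}\sim 1$, bound the bias by $J^{-r/(d+p)}$, obtain $\|P_{J,n}^{(w)}\|_{w,\infty}=O_p(1)$ from the empirical/true norm equivalence via Huang's Corollary A.1 (splines) and the Chen--Christensen argument (wavelets), and balance $J$; your rate arithmetic, including the treatment of the ridge term via $\varsigma_{J,n}\|\mathfrak{m}_0\|_{w,\infty}\sqrt{J^2n/\log n}\lesssim 1$ and $1/2\geq r/(2r+d+p)$, is the same as the paper's.

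One remark on the concentration step: the paper does not need your intermediate decomposition through $\Sigma_n(\bm{X})=A_n^{-1}\int_{R_n}\tilde{b}_J^{(w)}(\bm{s}/A_n,\bm{X}(\bm{s}))\tilde{b}_J^{(w)}(\bm{s}/A_n,\bm{X}(\bm{s}))'g(\bm{s}/A_n)\,d\bm{s}$, because Lemma \ref{lem:B} already bounds the full empirical Gram deviation $\|\tilde{B}_{J,n}^{(w)'}\tilde{B}_{J,n}^{(w)}/n-I_J\|=O_p\bigl(\zeta_{J,n}^2\lambda_{J,n}^2\sqrt{\log J/A_n}\bigr)$ in one shot, blocking jointly over the randomness of $\{\bm{S}_i\}$ and $\bm{X}$ (Yu's independent-block construction plus Tropp applied to the blocked sums). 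Your split is conceptually fine — conditionally on $\bm{X}$ the first bracket is an i.i.d. matrix average handled by Tropp at rate $O_p(J\sqrt{\log n/n})$, and the second bracket carries the spatial dependence at the slower rate governed by $A_n$ — but note that Lemma \ref{lem:B} as stated applies to the empirical sum, not to the integral functional $\Sigma_n(\bm{X})-I_J$, so invoking it for your second bracket would require a (routine but nonzero) adaptation: discretize $R_n$ into unit cells, treat the cell integrals as bounded random matrices, and rerun the large-block/small-block argument. Citing Lemma \ref{lem:B} directly for the whole deviation, as the paper does, avoids that extra work and is what makes the hypothesis $\max\{J^{q/(2(q-2))},J^2\}\lesssim\sqrt{A_n/(\log n)^2}$ suffice with no further bookkeeping.
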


\subsection{$L^2$ convergence rates}\label{Subsec:SR-L2}

Let $L^p(\bm{S},\bm{X})$ denote the function space consisting of all (equivalence class) of measurable functions $f$ for which the $L^p(\bm{S},\bm{X})$ norm $\|f\|_{L^p(\bm{S},\bm{X})} \!=\! \E\left[|f(\bm{S}_1/A_n, \bm{X}(\bm{S}_1))|^p\right]^{1/p}$ is finite. 

The next result provides a sharp upper bound of the $L^2$ convergence rates of general series estimators for the spatial regression model. Note that mixing conditions for the random field $\bm{X}$ are not necessary to establish the result. 

\begin{proposition}\label{prp:L^2-rate2} Suppose that Assumptions \ref{Ass:sample}(i), (ii), \ref{Ass:model2}, and \ref{Ass:sieve3}(iii) hold. Additionally, assume that $\bm{X}=\{\bm{X}(\bm{s}):\bm{s} \in \mathbb{R}^d\}$, $\{\bm{S}_i\}_{i=1}^{n}$, and $\{\varepsilon_i\}_{i=1}^{n}$ are mutually independent, $\varsigma_{J,n}\lesssim \zeta_{J,n}^2\sqrt{\log n/n}$, and $\zeta_{J,n}^3(\zeta_{J,n} + \lambda_{J,n}^{-1}) \lesssim \sqrt{A_n/(\log n)^2}$. Then
\begin{align*}
\|\hat{\mathfrak{m}} - \tilde{\mathfrak{m}}\|_{L^2(\bm{S},\bm{X})} &= O_p\left({\zeta_{J,n}\lambda_{J,n} \over \sqrt{n}} + \varsigma_{J,n} \zeta_{J,n}\lambda_{J,n}^{-1}\|\mathfrak{m}_0\|_{L^2(\bm{S},\bm{X})}\right),\\ \|\tilde{\mathfrak{m}} - \mathfrak{m}_0\|_{L^2(\bm{S},\bm{X})} &= O_p\left(\|\mathfrak{m}_0 - \mathfrak{m}_{0,J}\|_{L^2(\bm{S},\bm{X})}\right),
\end{align*}
where $\mathfrak{m}_{0,J}$ is the $L^2(\bm{S},\bm{X})$ orthogonal projection of $\mathfrak{m}_0$ onto $B_J^{(w)}$. 
\end{proposition}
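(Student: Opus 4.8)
The plan is to follow the template of the proof of Proposition~\ref{prp:L^2-rate1}, the one new feature being that the design vectors $b_J^{(w)}(\bm{S}_i/A_n,\bm{X}(\bm{S}_i))$ are now spatially dependent through the covariate field $\bm{X}$, whereas in the trend model the design $\psi_J(\bm{S}_i/A_n)$ was genuinely i.i.d. Writing $\check{B}_J=\E[b_J^{(w)}(\bm{S}_1/A_n,\bm{X}(\bm{S}_1))b_J^{(w)}(\bm{S}_1/A_n,\bm{X}(\bm{S}_1))']$, $\hat{\Sigma}=\tilde{B}_{J,n}^{(w)\prime}\tilde{B}_{J,n}^{(w)}/n$ and $\nu_n=(\nu_{n,1},\dots,\nu_{n,n})'$, I would first record the identity, as functions of $(\bm{z},\bm{x})$,
\[
\hat{\mathfrak{m}}-\tilde{\mathfrak{m}} = \tilde{b}_J^{(w)}(\bm{z},\bm{x})'\big\{(\hat{\Sigma}+\varsigma_{J,n}\check{B}_J^{-1})^{-1}-\hat{\Sigma}^-\big\}\tilde{B}_{J,n}^{(w)\prime}\mathfrak{M}_0/n + \tilde{b}_J^{(w)}(\bm{z},\bm{x})'(\hat{\Sigma}+\varsigma_{J,n}\check{B}_J^{-1})^{-1}\tilde{B}_{J,n}^{(w)\prime}\nu_n/n .
\]
Because the orthonormalization gives $\|\tilde{b}_J^{(w)}(\cdot)'a\|_{L^2(\bm{S},\bm{X})}=\|a\|$ for every (sample-dependent) coefficient vector $a$, the $L^2(\bm{S},\bm{X})$ norm of each summand equals the Euclidean norm of its coefficient vector, so Cauchy--Schwarz reduces the claim to spectral-norm estimates exactly as in Proposition~\ref{prp:L^2-rate1}. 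The point to keep in mind is that the statistically dominant term will require only the independence of $\{\varepsilon_i\}$ from the design.

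The cleanest and statistically leading step is the noise term. Since $\{\varepsilon_i\}$ is i.i.d., mean zero, and independent of $(\{\bm{S}_i\},\bm{X})$, while $\nu_{n,i}=\mathfrak{h}(\bm{S}_i/A_n,\bm{X}(\bm{S}_i))\varepsilon_i$, all off-diagonal terms of $\E\|\tilde{B}_{J,n}^{(w)\prime}\nu_n/n\|^2$ vanish: for $i\neq j$ the factor $\E[\varepsilon_i\varepsilon_j]=0$ factors out after conditioning on the design. Hence $\E\|\tilde{B}_{J,n}^{(w)\prime}\nu_n/n\|^2 = n^{-1}\E[\|\tilde{b}_J^{(w)}(\bm{S}_1/A_n,\bm{X}(\bm{S}_1))\|^2\mathfrak{h}^2(\bm{S}_1/A_n,\bm{X}(\bm{S}_1))]\lesssim \zeta_{J,n}^2\lambda_{J,n}^2/n$, using $\|\tilde{b}_J^{(w)}\|\le\zeta_{J,n}\lambda_{J,n}$ pointwise and the boundedness of $\mathfrak{h}$ (Assumption~\ref{Ass:model2}(iv)). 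Combined with $\|(\hat{\Sigma}+\varsigma_{J,n}\check{B}_J^{-1})^{-1}\|=O_p(1)$ (established below), Markov's inequality yields a noise contribution of order $\zeta_{J,n}\lambda_{J,n}/\sqrt{n}$. The denominator is $\sqrt{n}$, not $\sqrt{A_n}$, precisely because the vanishing of the cross terms removes the spatial-covariance term that produced the $\sqrt{A_n}$ rate in the trend model; this is also the sense in which no mixing condition on $\bm{X}$ is needed for the leading term.

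For the ridge-perturbation term I would assemble three ingredients. First, the Gram-matrix concentration $\|\hat{\Sigma}-I_J\|=o_p(1)$, which I would obtain from the spatially dependent matrix-concentration inequality Lemma~\ref{lem:B} (the extension of \cite{Tr12} tailored to the regression model), the growth condition $\zeta_{J,n}^3(\zeta_{J,n}+\lambda_{J,n}^{-1})\lesssim\sqrt{A_n/(\log n)^2}$ being exactly what forces this to be $o_p(1)$; this gives $\|(\hat{\Sigma}+\varsigma_{J,n}\check{B}_J^{-1})^{-1}\|=O_p(1)$ and $\|\hat{\Sigma}^-\|=O_p(1)$. Second, Lemma~\ref{lem:matrix-inv-norm}, applied to $\hat{\Sigma}$ and $\hat{\Sigma}+\varsigma_{J,n}\check{B}_J^{-1}$, gives $\|(\hat{\Sigma}+\varsigma_{J,n}\check{B}_J^{-1})^{-1}-\hat{\Sigma}^-\|=O_p(\varsigma_{J,n}\lambda_{J,n}^{-2})$, just as in Proposition~\ref{prp:L^2-rate1}. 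Third, the moment bound $\|\tilde{B}_{J,n}^{(w)\prime}\mathfrak{M}_0/n\|=O_p(\zeta_{J,n}\lambda_{J,n}\|\mathfrak{m}_0\|_{L^2(\bm{S},\bm{X})})$, obtained by the same computation as in the trend case via $\|\tilde{B}_{J,n}^{(w)}\tilde{B}_{J,n}^{(w)\prime}\|=n\|\hat{\Sigma}\|=O_p(n)$ and $\E\|\mathfrak{M}_0\|^2=n\|\mathfrak{m}_0\|_{L^2(\bm{S},\bm{X})}^2$. Multiplying the second and third bounds produces the ridge contribution $O_p(\varsigma_{J,n}\zeta_{J,n}\lambda_{J,n}^{-1}\|\mathfrak{m}_0\|_{L^2(\bm{S},\bm{X})})$, which together with the noise term gives the stated bound on $\|\hat{\mathfrak{m}}-\tilde{\mathfrak{m}}\|_{L^2(\bm{S},\bm{X})}$. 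Finally, $\|\tilde{\mathfrak{m}}-\mathfrak{m}_0\|_{L^2(\bm{S},\bm{X})}=O_p(\|\mathfrak{m}_0-\mathfrak{m}_{0,J}\|_{L^2(\bm{S},\bm{X})})$ follows by adapting the argument of Lemma~2.5 in \cite{ChCh15}, since the $L^2(\bm{S},\bm{X})$ orthogonal projection onto $B_J^{(w)}$ is a contraction.

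The main obstacle is the Gram-matrix concentration $\|\hat{\Sigma}-I_J\|=o_p(1)$. In the trend model the summands $\psi_J(\bm{S}_i/A_n)\psi_J(\bm{S}_i/A_n)'$ are i.i.d., so this step is immediate; here the summands $\tilde{b}_J^{(w)}(\bm{S}_i/A_n,\bm{X}(\bm{S}_i))\tilde{b}_J^{(w)}(\bm{S}_i/A_n,\bm{X}(\bm{S}_i))'$ inherit the spatial dependence of $\bm{X}$. Conditioning on $\bm{X}$ restores i.i.d.\ summands (each a function of the i.i.d.\ site $\bm{S}_i$ alone), which is convenient for the moment computations above, but the genuinely delicate task is to show that the \emph{unconditional} fluctuation of $\hat{\Sigma}$ about $I_J$ is still negligible; I would handle this through Lemma~\ref{lem:B} under the stated growth condition and then propagate the control across the ridge-perturbed inverse by means of Lemma~\ref{lem:matrix-inv-norm}. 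By contrast, the leading noise term needs only the conditional independence of $\{\varepsilon_i\}$, so it presents no comparable difficulty.
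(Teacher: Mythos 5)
Your proposal is correct and takes essentially the same approach as the paper: the paper's entire proof of this proposition is the one-line remark that it is "analogous to the proof of Proposition \ref{prp:L^2-rate1}," and your argument is exactly that analogy spelled out — the same decomposition into a noise term and a ridge-perturbation term, the orthonormalization identity reducing $L^2(\bm{S},\bm{X})$ norms of $\tilde{b}_J^{(w)\prime}a$ to Euclidean norms of $a$, Lemma \ref{lem:matrix-inv-norm} combined with the Gram-matrix bound for the ridge term, and the projection/contraction argument from Lemma 2.5 of \cite{ChCh15} for the bias. The two adaptations you single out (conditional independence of $\{\varepsilon_i\}$ killing the off-diagonal terms and yielding the $1/\sqrt{n}$ rather than $1/\sqrt{A_n}$ rate, and Lemma \ref{lem:B} replacing the i.i.d. matrix concentration of Lemma \ref{lem:psi} for the spatially dependent design) are precisely the substitutions the paper's cross-reference implicitly requires.
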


The following result establishes that the spline and wavelet estimators attain the optimal $L^2$ convergence rates of \cite{St82}.

\begin{corollary}\label{cor:L^2-rate2}
Suppose that Assumptions \ref{Ass:sample}(i), (ii), \ref{Ass:model2}, and \ref{Ass:sieve4} hold. Additionally, assume that $\bm{X}=\{\bm{X}(\bm{s}):\bm{s} \in \mathbb{R}^d\}$, $\{\bm{S}_i\}_{i=1}^{n}$, and $\{\varepsilon_i\}_{i=1}^{n}$ are mutually independent, $2r \geq 3d+p$, $\|\mathfrak{m}_0\|_{L^2(\bm{S},\bm{X})}<\infty$, $\varsigma_{J,n}(\sqrt{n}\lambda_{J,n}^{-2} + \sqrt{n/(J^2\log n)})\lesssim 1$, and $J^2 \lesssim \sqrt{A_n/(\log n)^2}$. If $J \sim n^{{d+p \over 2r+d+p}}$, then
\[
\|\hat{\mathfrak{m}} - \mathfrak{m}_0\|_{L^2(\bm{S},\bm{X})} = O_p\left(n^{-{r \over 2r+d+p}}\right).
\] 
\end{corollary}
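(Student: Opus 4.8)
The plan is to deduce this exactly as Corollary \ref{cor:L^2-rate1} was deduced from Proposition \ref{prp:L^2-rate1} in the trend case, namely by splitting
\[
\|\hat{\mathfrak{m}} - \mathfrak{m}_0\|_{L^2(\bm{S},\bm{X})} \leq \|\hat{\mathfrak{m}} - \tilde{\mathfrak{m}}\|_{L^2(\bm{S},\bm{X})} + \|\tilde{\mathfrak{m}} - \mathfrak{m}_0\|_{L^2(\bm{S},\bm{X})}
\]
and applying Proposition \ref{prp:L^2-rate2} to the two pieces. First I would record that under Assumption \ref{Ass:sieve4} the tensor-product spline/wavelet sieve satisfies $\zeta_{J,n} \lesssim \sqrt{J}$ and $\lambda_{J,n} \sim 1$ (as noted right after that assumption), and that Assumption \ref{Ass:sieve4} supplies Assumption \ref{Ass:sieve3}(iii). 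Then I would verify that the hypotheses of Proposition \ref{prp:L^2-rate2} follow from those assumed here: with $\zeta_{J,n}^2 \sim J$ the factor $\varsigma_{J,n}\sqrt{n/(J^2\log n)} \lesssim 1$ is precisely $\varsigma_{J,n} \lesssim \zeta_{J,n}^2\sqrt{\log n/n}$, while $\zeta_{J,n}^3(\zeta_{J,n}+\lambda_{J,n}^{-1}) \sim J^2 \lesssim \sqrt{A_n/(\log n)^2}$ is assumed directly. The mutual independence hypothesis and Assumptions \ref{Ass:sample}(i),(ii), \ref{Ass:model2} are shared.

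Applying Proposition \ref{prp:L^2-rate2} and inserting $\zeta_{J,n} \lesssim \sqrt{J}$, $\lambda_{J,n} \sim 1$ gives
\[
\|\hat{\mathfrak{m}} - \tilde{\mathfrak{m}}\|_{L^2(\bm{S},\bm{X})} = O_p\!\left(\sqrt{J/n} + \varsigma_{J,n}\sqrt{J}\,\|\mathfrak{m}_0\|_{L^2(\bm{S},\bm{X})}\right) = O_p\!\left(\sqrt{J/n}\right),
\]
where the second equality uses $\|\mathfrak{m}_0\|_{L^2(\bm{S},\bm{X})}<\infty$ together with the fact that $\varsigma_{J,n}\sqrt{n}\lambda_{J,n}^{-2}\lesssim 1$ and $\lambda_{J,n}\sim 1$ force $\varsigma_{J,n}\lesssim n^{-1/2}$, hence $\varsigma_{J,n}\sqrt{J}\lesssim \sqrt{J/n}$, so the ridge contribution never dominates the variance term. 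The same proposition reduces the bias piece to the best-approximation error, $\|\tilde{\mathfrak{m}} - \mathfrak{m}_0\|_{L^2(\bm{S},\bm{X})} = O_p(\|\mathfrak{m}_0 - \mathfrak{m}_{0,J}\|_{L^2(\bm{S},\bm{X})})$, where $\mathfrak{m}_{0,J}$ is the $L^2(\bm{S},\bm{X})$ orthogonal projection of $\mathfrak{m}_0$ onto $B_J^{(w)}$.

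The one genuinely substantive step is bounding this approximation error, and I would handle it in parallel with Corollary \ref{cor:L^2-rate1}. Since $\mathfrak{m}_{0,J}$ is an orthogonal projection in $L^2(\bm{S},\bm{X})$, for every $\mathfrak{m}\in B_J^{(w)}$ one has $\|\mathfrak{m}_0 - \mathfrak{m}_{0,J}\|_{L^2(\bm{S},\bm{X})}\leq 2\|\mathfrak{m}_0 - \mathfrak{m}\|_{L^2(\bm{S},\bm{X})}$; because $\bm{S}$ and $\bm{X}$ are independent with densities bounded above on $R_0$ and $D_0$ by Assumption \ref{Ass:sieve4}(ii), the joint density is bounded, so $\|\cdot\|_{L^2(\bm{S},\bm{X})}\lesssim \|\cdot\|_\infty$ on $R_0\times D_0$. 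Taking the infimum over $\mathfrak{m}$ and invoking the standard tensor-product spline/wavelet bound on the $(d+p)$-dimensional domain (valid under Assumption \ref{Ass:sieve4}(iii),(iv) with $\varrho>\max\{r,1\}$; cf. \cite{DeLo93} and \cite{Hu98}) yields $\inf_{\mathfrak{m}\in B_J^{(w)}}\|\mathfrak{m}_0-\mathfrak{m}\|_\infty\lesssim J^{-r/(d+p)}$, whence $\|\tilde{\mathfrak{m}} - \mathfrak{m}_0\|_{L^2(\bm{S},\bm{X})} = O_p(J^{-r/(d+p)})$. Combining the two bounds gives $\|\hat{\mathfrak{m}}-\mathfrak{m}_0\|_{L^2(\bm{S},\bm{X})} = O_p(\sqrt{J/n} + J^{-r/(d+p)})$, and substituting $J\sim n^{(d+p)/(2r+d+p)}$ makes both terms equal to $n^{-r/(2r+d+p)}$, the claimed rate. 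I do not anticipate a serious obstacle: the proof is a routine corollary of Proposition \ref{prp:L^2-rate2}, and the only points needing care are the density-boundedness conversion between the $L^2(\bm{S},\bm{X})$ and sup norms in the approximation step and the bookkeeping that the growth conditions $2r\geq 3d+p$ and $J^2\lesssim\sqrt{A_n/(\log n)^2}$ are exactly what make the hypotheses of Proposition \ref{prp:L^2-rate2} hold at the bias-optimal choice of $J$.
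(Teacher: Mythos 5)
Your proposal is correct and is essentially the paper's own proof: the paper disposes of this corollary with ``Analogous to the proof of Corollary \ref{cor:L^2-rate1}'', and your argument---applying Proposition \ref{prp:L^2-rate2} after checking its hypotheses from $\zeta_{J,n}\lesssim\sqrt{J}$, $\lambda_{J,n}\sim 1$, noting $\varsigma_{J,n}\lesssim n^{-1/2}$ kills the ridge term, bounding the projection bias by $2\inf_{\mathfrak{m}\in B_J^{(w)}}\|\mathfrak{m}_0-\mathfrak{m}\|_{L^2(\bm{S},\bm{X})}\lesssim \inf_{\mathfrak{m}\in B_J^{(w)}}\|\mathfrak{m}_0-\mathfrak{m}\|_{\infty}\lesssim J^{-r/(d+p)}$ via the projection-contraction and bounded-density arguments, and balancing at $J\sim n^{(d+p)/(2r+d+p)}$---is exactly the intended adaptation of the Corollary \ref{cor:L^2-rate1} proof to the regression setting.
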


\subsection{Asymptotic normality}\label{Subsec:SR-AN}

The following result establishes the asymptotic normality of the series estimator with an arbitrary basis. 

\begin{theorem}\label{thm:asy-norm2}
Let $\{(\bm{z}_\ell, \bm{x}_\ell)\}_{1 \leq \ell \leq L}$ be a set of points such that $\bm{z}_\ell \in R_0$ and $\bm{x}_\ell$ are any interior points of $\mathcal{X}$.
Suppose that Assumptions \ref{Ass:sample}(i), (ii), \ref{Ass:model2}, and \ref{Ass:sieve3}(iii) hold. Additionally, assume that $\bm{X}=\{\bm{X}(\bm{s}):\bm{s} \in \mathbb{R}^d\}$, $\{\bm{S}_i\}_{i=1}^{n}$, and $\{\varepsilon_i\}_{i=1}^{n}$ are mutually independent, $\inf_{(\bm{z},\bm{x}) \in R_0 \times \mathcal{X}}\mathfrak{h}(\bm{z},\bm{x})>0$, and
\begin{itemize}
\item[(a)] $\max\{(\zeta_{J,n}\lambda_{J,n})^{{2q \over q-2}}, (\zeta_{J,n}\lambda_{J,n})^4, \zeta_{J,n}^3\lambda_{J,n}^{-1}\} \lesssim \sqrt{A_n/(\log n)^2}$, 
\item[(b)] $\|P_{J,n}^{(w)}\|_{w,\infty} = O_p(1)$, $\sqrt{n}(\min_{1 \leq \ell \leq L}\|\tilde{b}_J^{(w)}(\bm{z}_\ell,\bm{x}_\ell)\|)^{-1}\inf_{\mathfrak{m} \in B_J^{(w)}}\|\mathfrak{m}_0 - \mathfrak{m}\|_\infty\to 0$,
\item[(c)] $\varsigma_{J,n} \lesssim \zeta_{J,n}^2\sqrt{\log n/A_n}$, $\varsigma_{J,n}\zeta_{J,n}^2\sqrt{n}\|\mathfrak{m}_0\|_{w,\infty}(\min_{1 \leq \ell \leq L}\|\tilde{b}_J^{(w)}(\bm{z}_\ell,\bm{x}_\ell)\|)^{-1} \to 0$
\end{itemize}
as $n,J \to \infty$. Then we have
\[
\sqrt{n}V_J^{-{1 \over 2}}\left({\hat{\mathfrak{m}}(\bm{z}_1,\bm{x}_1) - \mathfrak{m}_0(\bm{z}_1,\bm{x}_1) \over \sqrt{V_J(\bm{z}_1,\bm{x}_1)}},\dots, {\hat{\mathfrak{m}}(\bm{z}_L,\bm{x}_L) - \mathfrak{m}_0(\bm{z}_L,\bm{x}_L) \over \sqrt{V_J(\bm{z}_L,\bm{x}_L)}}\right) \stackrel{d}{\to} N(\bm{0},I_L)\ \text{as $n,J \to \infty$},
\] 
where 
\begin{align*}
V_J &= (V_J^{(\ell_1,\ell_2)})_{1 \leq \ell_1,\ell_2 \leq L},\ V_J^{(\ell_1,\ell_2)} = {V_J(\bm{z}_{\ell_1},\bm{x}_{\ell_1},\bm{z}_{\ell_2},\bm{x}_{\ell_2}) \over \sqrt{V_J(\bm{z}_{\ell_1},\bm{x}_{\ell_1})}\sqrt{V_J(\bm{z}_{\ell_2},\bm{x}_{\ell_2})}},\\
V_J(\bm{z},\bm{x}) &= \tilde{b}_J^{(w)}(\bm{z},\bm{x})'H_J\tilde{b}_J^{(w)}(\bm{z},\bm{x}),\ V_J(\bm{z}_{\ell_1},\bm{x}_{\ell_1},\bm{z}_{\ell_2},\bm{x}_{\ell_2}) = \tilde{b}_J^{(w)}(\bm{z}_{\ell_1},\bm{x}_{\ell_1})'H_J\tilde{b}_J^{(w)}(\bm{z}_{\ell_2},\bm{x}_{\ell_2}),\\
H_J &= \E\left[\mathfrak{h}\left({\bm{S}_1 \over A_n},\bm{X}(\bm{S}_1)\right)^2  \tilde{b}_J^{(w)}\left({\bm{S}_1 \over A_n},\bm{X}(\bm{S}_1)\right)\tilde{b}_J^{(w)}\left({\bm{S}_1 \over A_n},\bm{X}(\bm{S}_1)\right)'\right].
\end{align*}
\end{theorem}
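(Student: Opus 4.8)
The plan is to split $\hat{\mathfrak m}(\bm z_\ell,\bm x_\ell)-\mathfrak m_0(\bm z_\ell,\bm x_\ell)$ into a leading stochastic term and two bias terms, to show that after normalization by $\sqrt n/\sqrt{V_J(\bm z_\ell,\bm x_\ell)}$ the bias terms vanish, and to prove a multivariate CLT for the stochastic term by the Cram\'er--Wold device. Set $\check B_J=\E[b_J^{(w)}(\bm S_1/A_n,\bm X(\bm S_1))b_J^{(w)}(\bm S_1/A_n,\bm X(\bm S_1))']$, let $\nu_n=(\nu_{n,1},\dots,\nu_{n,n})'$, and define $\bar{\mathfrak m}(\bm z,\bm x)=\tilde b_J^{(w)}(\bm z,\bm x)'(\tilde B_{J,n}^{(w)'}\tilde B_{J,n}^{(w)}/n+\varsigma_{J,n}\check B_J^{-1})^{-1}\tilde B_{J,n}^{(w)'}\mathfrak M_0/n$, so that $\hat{\mathfrak m}-\mathfrak m_0=(\hat{\mathfrak m}-\bar{\mathfrak m})+(\bar{\mathfrak m}-\tilde{\mathfrak m})+(\tilde{\mathfrak m}-\mathfrak m_0)$ with stochastic term $\hat{\mathfrak m}-\bar{\mathfrak m}=\tilde b_J^{(w)}(\cdot)'(\tilde B_{J,n}^{(w)'}\tilde B_{J,n}^{(w)}/n+\varsigma_{J,n}\check B_J^{-1})^{-1}\tilde B_{J,n}^{(w)'}\nu_n/n$. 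The approximation bias $\tilde{\mathfrak m}-\mathfrak m_0$ is handled as in Corollary \ref{cor:unif-rate2}: with $\|P_{J,n}^{(w)}\|_{w,\infty}=O_p(1)$ we get $\sqrt n\,\|\tilde{\mathfrak m}-\mathfrak m_0\|_\infty/\min_\ell\|\tilde b_J^{(w)}(\bm z_\ell,\bm x_\ell)\|\to0$ by condition (b), while the ridge bias $\bar{\mathfrak m}-\tilde{\mathfrak m}=O_p(\varsigma_{J,n}\zeta_{J,n}^2\|\mathfrak m_0\|_{w,\infty})$ (from the argument of Proposition \ref{prp:variance2}) vanishes after normalization by the second part of (c). Throughout I use that $\inf_{(\bm z,\bm x)}\mathfrak h>0$ and $\sup\mathfrak h<\infty$, together with $\E[\tilde b_J^{(w)}\tilde b_J^{(w)'}]=I_J$, force $(\inf\mathfrak h^2)I_J\preceq H_J\preceq(\sup\mathfrak h^2)I_J$, so $V_J(\bm z_\ell,\bm x_\ell)\asymp\|\tilde b_J^{(w)}(\bm z_\ell,\bm x_\ell)\|^2$.

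Next I would reduce the stochastic term to its leading linear form by replacing the sandwiched inverse with $I_J$. Writing $(\tilde B_{J,n}^{(w)'}\tilde B_{J,n}^{(w)}/n+\varsigma_{J,n}\check B_J^{-1})^{-1}=I_J+R_n$, the matrix concentration bound $\|\tilde B_{J,n}^{(w)'}\tilde B_{J,n}^{(w)}/n-I_J\|=o_p(1)$ with its rate, established in the proof of Proposition \ref{prp:variance2} (which relies on the spatial extension of the matrix exponential inequality, Lemma \ref{lem:B}), together with $\varsigma_{J,n}\|\check B_J^{-1}\|=\varsigma_{J,n}\lambda_{J,n}^2\to0$ gives $\|R_n\|\lesssim\|\tilde B_{J,n}^{(w)'}\tilde B_{J,n}^{(w)}/n-I_J\|+\varsigma_{J,n}\lambda_{J,n}^2$, while the conditional independence of the errors yields $\E\|\tilde B_{J,n}^{(w)'}\nu_n/n\|^2\le n^{-1}(\zeta_{J,n}\lambda_{J,n})^2\sup\mathfrak h^2$, i.e. $\|\tilde B_{J,n}^{(w)'}\nu_n/n\|=O_p(\zeta_{J,n}\lambda_{J,n}/\sqrt n)$. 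The normalized remainder is therefore $O_p(\|R_n\|\zeta_{J,n}\lambda_{J,n})$, which is $o_p(1)$ under (a) and the first part of (c). It then remains to handle $S_n:=n^{-1/2}\sum_{i=1}^n\tilde b_J^{(w)}(\bm S_i/A_n,\bm X(\bm S_i))\nu_{n,i}$ through its normalized projections $a_\ell'S_n$ with $a_\ell=\tilde b_J^{(w)}(\bm z_\ell,\bm x_\ell)/\sqrt{V_J(\bm z_\ell,\bm x_\ell)}$.

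For the CLT I would condition on $\mathcal G=\sigma(\bm X,\{\bm S_i\}_{i=1}^n)$. Fix a unit vector $\bm c\in\mathbb R^L$ and set $\alpha_n=\sum_\ell(V_J^{-1/2}\bm c)_\ell a_\ell$; by Cram\'er--Wold it suffices to prove $\alpha_n'S_n\stackrel{d}{\to}N(0,1)$, and since the target variance equals $\alpha_n'H_J\alpha_n=\bm c'V_J^{-1/2}(AH_JA')V_J^{-1/2}\bm c=\|\bm c\|^2=1$ (where $A$ has rows $a_\ell'$ and $AH_JA'=V_J$) this is the correct normalization. Because $\varepsilon_i$ are i.i.d., mean zero, and independent of $\mathcal G$, the summands $g_{n,i}=n^{-1/2}(\alpha_n'\tilde b_J^{(w)}(\bm S_i/A_n,\bm X(\bm S_i)))\,\mathfrak h(\bm S_i/A_n,\bm X(\bm S_i))\,\varepsilon_i$ are conditionally independent with mean zero, and $\|\alpha_n\|\le(\inf\mathfrak h)^{-1}\lesssim1$ follows from $H_J\succeq(\inf\mathfrak h^2)I_J$ and $\alpha_n'H_J\alpha_n=1$. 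I would then verify a conditional Lyapunov condition: using $|\alpha_n'\tilde b_{J,i}|\le\|\alpha_n\|\zeta_{J,n}\lambda_{J,n}$ and $\E|\varepsilon_1|^q<\infty$, the Lyapunov ratio is of order $n^{1-q/2}(\zeta_{J,n}\lambda_{J,n})^q$, which tends to zero under (a) since $(\zeta_{J,n}\lambda_{J,n})^{2q/(q-2)}\lesssim\sqrt{A_n}/\log n\lesssim n$. Convergence of the conditional characteristic function to $e^{-t^2/2}$ in probability, followed by bounded convergence applied to $\E[e^{\mathrm it\alpha_n'S_n}]=\E[\E[e^{\mathrm it\alpha_n'S_n}\mid\mathcal G]]$, then delivers the unconditional limit.

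The main obstacle is the accompanying law of large numbers $\sigma_n^2(\mathcal G):=n^{-1}\sum_{i=1}^n(\alpha_n'\tilde b_{J,i})^2\mathfrak h_i^2\stackrel{p}{\to}\alpha_n'H_J\alpha_n=1$, which is the only place the spatial dependence of $\bm X$ enters. The diagonal contribution to its variance is $n^{-1}\Var((\alpha_n'\tilde b_{J,1})^2\mathfrak h_1^2)\lesssim(\zeta_{J,n}\lambda_{J,n})^4/n$, which vanishes under (a). For the off-diagonal sum $n^{-2}\sum_{i\neq j}\Cov((\alpha_n'\tilde b_{J,i})^2\mathfrak h_i^2,(\alpha_n'\tilde b_{J,j})^2\mathfrak h_j^2)$ I would exploit that the $\bm S_i$ are i.i.d.\ over a region of volume $A_n$, so two sites lie within a fixed distance only with probability of order $A_n^{-1}$, and bound the covariance through the $\beta$-mixing coefficients of $\bm X$ in Assumption \ref{Ass:model2}(ii); this is the covariate analogue of the bound on $I_{n,2}$ in the proof of Proposition \ref{prp:variance} and yields a contribution of order $(\zeta_{J,n}\lambda_{J,n})^4/A_n$, again negligible under (a). The delicate point is that this estimate must hold uniformly over the growing dimension $J$ and over the data-dependent direction $\alpha_n$; once it is secured, the remaining steps are routine.
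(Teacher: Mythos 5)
Your proposal is correct and its skeleton coincides with the paper's proof: the same three-term decomposition $\hat{\mathfrak m}-\mathfrak m_0=(\hat{\mathfrak m}-\bar{\mathfrak m})+(\bar{\mathfrak m}-\tilde{\mathfrak m})+(\tilde{\mathfrak m}-\mathfrak m_0)$, the same use of conditions (b) and (c) to kill the approximation and ridge biases, and the same reduction of the stochastic term via $\|(\tilde B_{J,n}^{(w)'}\tilde B_{J,n}^{(w)}/n+\varsigma_{J,n}\check B_J^{-1})^{-1}-I_J\|$ (Lemma \ref{lem:B}) combined with $\|\tilde B_{J,n}^{(w)'}\nu_n/n\|=O_p(\zeta_{J,n}\lambda_{J,n}/\sqrt n)$. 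Where you genuinely diverge is the final CLT step. The paper casts $T_{n,1}=n^{-1/2}\sum_i u_J(\bm S_i/A_n,\bm X(\bm S_i))\varepsilon_i$ as a martingale difference array with respect to $\mathcal F_t=\sigma(\{\bm S_i,\varepsilon_i\}_{i\le t}\cup\bm X)$ and invokes McLeish's martingale CLT, whose condition (B) is the sum-of-squares law of large numbers $n^{-1}\sum_i u_J^2\varepsilon_i^2\stackrel{p}{\to}1$; this is established in \emph{operator norm} via Lemma \ref{lem:cov-matrix} (truncation of $\varepsilon_i$, spatial blocking, matrix Bernstein). You instead condition on $\mathcal G=\sigma(\bm X,\{\bm S_i\}_{i=1}^n)$, verify a conditional Lyapunov condition, and integrate the conditional characteristic function by dominated convergence. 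Both routes exploit exactly the same conditional independence of the errors, but yours buys two simplifications: (i) the conditional variance $n^{-1}\sum_i(\alpha_n'\tilde b_{J,i})^2\mathfrak h_i^2$ does not contain $\varepsilon_i^2$, so no truncation of the errors is needed in the LLN (the $q$-th moment of $\varepsilon$ enters only through Lyapunov); and (ii) you only need the LLN for the scalar quadratic form in the single direction $\alpha_n$, so a Chebyshev second-moment bound with the mixing covariance inequality suffices. On point (ii), note that your stated ``delicate point'' is not actually delicate: $\alpha_n$ is built from $\tilde b_J^{(w)}(\bm z_\ell,\bm x_\ell)$ and $V_J$, which are population quantities, so the direction is deterministic and no uniformity over directions (or over $J$) is required. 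The paper's stronger operator-norm lemma is not wasted, however: it is reused verbatim for the consistency of the variance estimator in Proposition \ref{prp:var-est-reg}, which your local argument would not cover. Two further remarks: your explicit Cram\'er--Wold construction, with $\alpha_n'H_J\alpha_n=\|\bm c\|^2=1$ and $\|\alpha_n\|\le(\inf\mathfrak h)^{-1}$ from $H_J\succeq(\inf\mathfrak h^2)I_J$, spells out what the paper leaves implicit; and your off-diagonal covariance bound of order $(\zeta_{J,n}\lambda_{J,n})^4/A_n$ tacitly assumes the mixing coefficients of $\bm X$ are integrable over $\mathbb R^d$, which goes slightly beyond the literal blocking condition of Assumption \ref{Ass:model2}(ii) --- but the paper's own cross-term bounds in Lemmas \ref{lem:B} and \ref{lem:cov-matrix} rest on the same implicit decay, so you are at parity in rigor on this point.
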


Conditions (a) and (c) are concerned with replacing $(\tilde{B}^{(w)'}_{J,n}\tilde{B}^{(w)}_{J,n}/n + \varsigma_{J,n}(\check{B}_{J}^{(w)})^{-1})^{-1}$ in the definition of $\hat{\mathfrak{m}}$ with $I_J$ where $\check{B}_{J}^{(w)} = \E[b_J^{(w)}(\bm{S}_1/A_n, \bm{X}(\bm{S}_1))b_J^{(w)}(\bm{S}_1/A_n, \bm{X}(\bm{S}_1))']$. Condition (b) is required to show that the bias term of the series estimator is asymptotically negligible.

Now we provide a consistent estimator of $V_J(\bm{z}_1, \bm{x}_1, \bm{z}_2, \bm{x}_2)$ for $(\bm{z}_1, \bm{x}_1), (\bm{z}_2,\bm{x}_2) \in R_0 \times D_0$, which enables us to construct confidence intervals for $\mathfrak{m}_0(\bm{z},\bm{x})$. As with the spatial trend regression model, it should be noted that in the spatial regression model as well, the series estimator is not affected by boundary estimation of $R_0$ on the support of the regression function, unlike the kernel estimator. For a more detailed discussion, refer to the discussion following Theorem \ref{thm:asy-norm1}.

Define $\hat{V}_J(\bm{z}_1, \bm{x}_1, \bm{z}_2, \bm{x}_2) := b_J^{(w)}(\bm{z}_1,\bm{x}_1)'\hat{H}_{J}b_J^{(w)}(\bm{z}_2,\bm{x}_2)$ where 
\begin{align*}
\hat{H}_J &= {1 \over n}\sum_{i=1}^n \left({B_{J,n}^{(w)'}B_{J,n}^{(w)} \over n} + \varsigma_{J,n} I_J\right)^{-1}\!\!\!\!\!b_J^{(w)}\!\!\left({\bm{S}_i \over A_n},\bm{X}(\bm{S}_i)\right)\!b_J^{(w)}\!\!\left({\bm{S}_i \over A_n},\bm{X}(\bm{S}_i)\right)'\!\!\left({B_{J,n}^{(w)'}B_{J,n}^{(w)} \over n} + \varsigma_{J,n} I_J\right)^{-1}\\
&\quad \quad  \times \left(Y(\bm{S}_i) - \hat{\mathfrak{m}}\left({\bm{S}_i \over A_n},\bm{X}(\bm{S}_i)\right)\!\right)^2.
\end{align*}

\begin{proposition}\label{prp:var-est-reg}
Let $2r \geq 3d + p$. Assume that $\bm{X}=\{\bm{X}(\bm{s}):\bm{s} \in \mathbb{R}^d\}$, $\{\bm{S}_i\}_{i=1}^{n}$, and $\{\varepsilon_i\}_{i=1}^{n}$ are mutually independent and $\varsigma_{J,n} (\|\mathfrak{m}_0\|_{w,\infty}\sqrt{J^2n/(\log n)} +\sqrt{A_n/(J^2\log n)})\lesssim 1$ as $n,J \to \infty$. Suppose that 
Assumptions \ref{Ass:sample} (i), (ii), \ref{Ass:model2}, \ref{Ass:sieve3} (iii), and \ref{Ass:sieve4} hold. Additionally, suppose that Conditions (a)-(c) in Theorem \ref{thm:asy-norm2} hold and $\inf_{(\bm{z},\bm{x}) \in R_0 \times D_0}\mathfrak{h}(\bm{z},\bm{x})>0$. Then, for $(\bm{z}_j,\bm{x}_j), j=1,2 \in R_0 \times D_0$,  $\|\tilde{b}_J^{(w)}(\bm{z}_1,\bm{x}_1)\|^{-1}(\hat{V}_J(\bm{z}_1,\bm{x}_1,\bm{z}_2,\bm{x}_2) - V_J(\bm{z}_1,\bm{x}_1,\bm{z}_2,\bm{x}_2))\|\tilde{b}_J^{(w)}(\bm{z}_2,\bm{x}_2)\|^{-1} \stackrel{p}{\to} 0$ as $n, J \to \infty$. 
\end{proposition}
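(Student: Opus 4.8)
The plan is to reduce the claim to a spectral-norm bound on a single $J\times J$ random matrix and then to isolate a dominant term from the estimation-error terms. Write $\check{B}_J^{(w)} = \E[b_J^{(w)}(\bm{S}_1/A_n,\bm{X}(\bm{S}_1))b_J^{(w)}(\bm{S}_1/A_n,\bm{X}(\bm{S}_1))']$ and $\hat{B}_J^{(w)} = B_{J,n}^{(w)'}B_{J,n}^{(w)}/n$, and factor $(\hat{B}_J^{(w)} + \varsigma_{J,n}I_J)^{-1} = (\check{B}_J^{(w)})^{-1/2}D^{-1}(\check{B}_J^{(w)})^{-1/2}$ with $D = \tilde{B}_{J,n}^{(w)'}\tilde{B}_{J,n}^{(w)}/n + \varsigma_{J,n}(\check{B}_J^{(w)})^{-1}$. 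Setting $\tilde{\Sigma}_J = n^{-1}\sum_{i=1}^n \tilde{b}_J^{(w)}(\bm{S}_i/A_n,\bm{X}(\bm{S}_i))\tilde{b}_J^{(w)}(\bm{S}_i/A_n,\bm{X}(\bm{S}_i))'(Y(\bm{S}_i) - \hat{\mathfrak{m}}(\bm{S}_i/A_n,\bm{X}(\bm{S}_i)))^2$ and using $b_J^{(w)}(\bm{z},\bm{x})'(\check{B}_J^{(w)})^{-1/2} = \tilde{b}_J^{(w)}(\bm{z},\bm{x})'$, one obtains $\hat{V}_J(\bm{z}_1,\bm{x}_1,\bm{z}_2,\bm{x}_2) = \tilde{b}_J^{(w)}(\bm{z}_1,\bm{x}_1)'D^{-1}\tilde{\Sigma}_J D^{-1}\tilde{b}_J^{(w)}(\bm{z}_2,\bm{x}_2)$, while $V_J$ is the same quadratic form with kernel $H_J$. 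Dividing by $\|\tilde{b}_J^{(w)}(\bm{z}_1,\bm{x}_1)\|$ and $\|\tilde{b}_J^{(w)}(\bm{z}_2,\bm{x}_2)\|$ turns the target into a quadratic form in two unit vectors, hence bounded in absolute value by $\|D^{-1}\tilde{\Sigma}_J D^{-1} - H_J\|$; it therefore suffices to show this spectral norm is $o_p(1)$.

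For the sandwiching matrices I would first record $\|D^{-1}\| = O_p(1)$ and $\|D^{-1} - I_J\| = o_p(1)$. Both follow from $\|\tilde{B}_{J,n}^{(w)'}\tilde{B}_{J,n}^{(w)}/n - I_J\| = o_p(1)$, established exactly as in the proof of Proposition \ref{prp:variance2}, together with $\varsigma_{J,n}\lambda_{J,n}^2 = o(1)$, which is implied by condition (c) of Theorem \ref{thm:asy-norm2} and $(\zeta_{J,n}\lambda_{J,n})^4 \lesssim \sqrt{A_n/(\log n)^2}$ (condition (a)). Boundedness of $\mathfrak{h}$ (Assumption \ref{Ass:model2}(iv)) yields $\|H_J\| = O(1)$, so the decomposition $D^{-1}\tilde{\Sigma}_J D^{-1} - H_J = D^{-1}(\tilde{\Sigma}_J - H_J)D^{-1} + (D^{-1}-I_J)H_J D^{-1} + H_J(D^{-1}-I_J)$ reduces the whole problem to $\|\tilde{\Sigma}_J - H_J\| = o_p(1)$.

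Next I would split the residual. Writing $Y(\bm{S}_i) - \hat{\mathfrak{m}}(\cdot) = \nu_{n,i} + (\mathfrak{m}_0 - \hat{\mathfrak{m}})(\bm{S}_i/A_n,\bm{X}(\bm{S}_i))$ with $\nu_{n,i} = \mathfrak{h}(\bm{S}_i/A_n,\bm{X}(\bm{S}_i))\varepsilon_i$ and expanding the square gives $\tilde{\Sigma}_J = T_1 + T_2 + T_3$, where $T_1$ carries $\nu_{n,i}^2$, $T_3$ carries $(\mathfrak{m}_0-\hat{\mathfrak{m}})^2$, and $T_2$ is the cross term. Since $\tilde{b}_J^{(w)}\tilde{b}_J^{(w)'}\succeq 0$ and $|(\mathfrak{m}_0-\hat{\mathfrak{m}})(\cdot)| \le \|\hat{\mathfrak{m}}-\mathfrak{m}_0\|_{w,\infty}$ on the support of $b_J^{(w)}$, we get $\|T_3\| \le \|\hat{\mathfrak{m}}-\mathfrak{m}_0\|_{w,\infty}^2\,\|\tilde{B}_{J,n}^{(w)'}\tilde{B}_{J,n}^{(w)}/n\| = o_p(1)$ by the uniform rate of Theorem \ref{thm:unif-rate2} (applicable under these hypotheses) and $\|\tilde{B}_{J,n}^{(w)'}\tilde{B}_{J,n}^{(w)}/n\| = O_p(1)$. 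The quadratic-form Cauchy--Schwarz inequality $|u'T_2u| \le 2(u'T_1u)^{1/2}(u'T_3u)^{1/2}$ then yields $\|T_2\| \le 2\|T_1\|^{1/2}\|T_3\|^{1/2} = o_p(1)$ once $\|T_1\| = O_p(1)$ is in hand.

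The crux is $\|T_1 - H_J\| = o_p(1)$. I would write $\varepsilon_i^2 = 1 + (\varepsilon_i^2-1)$, so $T_1 = n^{-1}\sum_i \mathfrak{h}^2 \tilde{b}_J^{(w)}\tilde{b}_J^{(w)'} + n^{-1}\sum_i \mathfrak{h}^2(\varepsilon_i^2-1)\tilde{b}_J^{(w)}\tilde{b}_J^{(w)'}$. Conditioning on the entire field $\bm{X}$, the pairs $(\bm{S}_i,\bm{X}(\bm{S}_i))$ are i.i.d.\ and the $\varepsilon_i$ remain i.i.d.\ and independent of them, so both pieces are sums of conditionally independent matrices; Tropp's matrix exponential inequality in the conditionally independent form underlying Lemma \ref{lem:B}, with size parameters $\|\tilde{b}_J^{(w)}\|^2 \le (\zeta_{J,n}\lambda_{J,n})^2$ and the moment bound $\E|\varepsilon_1|^q<\infty$, controls the first piece around its conditional mean $H_J^{\bm{X}} := \E[\mathfrak{h}^2\tilde{b}_J^{(w)}\tilde{b}_J^{(w)'}\mid\bm{X}]$ and the second around $0$, both $o_p(1)$ under the rate conditions in (a). The remaining and hardest piece is the spatial fluctuation $\|H_J^{\bm{X}} - H_J\| = o_p(1)$, where $H_J^{\bm{X}}$ is an integral over $R_0$ of $\mathfrak{h}^2\tilde{b}_J^{(w)}\tilde{b}_J^{(w)'}$ evaluated along $\bm{X}(A_n\cdot)$, i.e.\ a matrix whose entries are spatial averages of a weakly dependent field; I would control its deviation from the mean $H_J$ through the $\beta$-mixing of $\bm{X}$ (Assumption \ref{Ass:model2}(ii)) and the expansion $A_n\to\infty$, again via Lemma \ref{lem:B} and the blocking construction for $\beta$-mixing random fields rather than a crude entrywise Frobenius bound (which would forfeit the $J^2$ factor). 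I expect this step — genuine spectral-norm control of the $J\times J$ matrix $H_J^{\bm{X}}-H_J$ from the mixing of a single realization of $\bm{X}$ over the expanding region — to be the main obstacle; the remainder parallels the proof of Proposition \ref{prp:var-est-trend}.
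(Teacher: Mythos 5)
Your reduction is exactly the paper's: factoring out $(\check{B}_J^{(w)})^{-1/2}$ to write $\hat{V}_J$ as a quadratic form in $D^{-1}\tilde{\Sigma}_J D^{-1}$, controlling the sandwich via $\|\tilde{B}_{J,n}^{(w)'}\tilde{B}_{J,n}^{(w)}/n - I_J\| = o_p(1)$ (Lemma \ref{lem:B}) and $\varsigma_{J,n}\lambda_{J,n}^2 = o(1)$, and then removing the squared-bias term $T_3$ and the cross term $T_2$ by the uniform rate of Theorem \ref{thm:unif-rate2} plus Cauchy--Schwarz. The divergence, and the genuine gap, is in your treatment of $T_1$. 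By conditioning on the field $\bm{X}$ you split $T_1 - H_J$ into (i) the deviation of $T_1$ from its conditional mean $H_J^{\bm{X}}$, which is indeed a sum of conditionally i.i.d.\ matrices amenable to Tropp's inequality, and (ii) the field fluctuation $\|H_J^{\bm{X}} - H_J\|$. Piece (ii) is a spectral-norm concentration statement for an \emph{integral} functional of a single realization of the mixing field over the expanding region; you flag it as the main obstacle and defer to ``Lemma \ref{lem:B} and the blocking construction,'' but Lemma \ref{lem:B} is a statement about sums over the sampling sites $\bm{S}_i$, not about integrals of matrix-valued functionals of $\bm{X}$, so it does not apply as cited. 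Making this step rigorous would require redoing the block-coupling and matrix-Bernstein argument for block integrals, including a variance computation sharp enough that the resulting rate (which involves $A_n^{(1)}$, not just $A_n$) vanishes under the stated assumptions --- none of which you carry out. In short, the hardest part of the proof is left unproven.

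The paper never faces this sub-problem because it does not condition on $\bm{X}$: its Lemma \ref{lem:cov-matrix} proves $\|\hat{\Sigma} - \Sigma\| = o_p(1)$ directly, where $\Sigma$ is the \emph{unconditional} expectation (which equals $H_J$), by (1) truncating the unbounded factor $\varepsilon_i^2$ at $L_n = (\zeta_{J,n}\lambda_{J,n})^{q/(q-2)}$ and disposing of the tail via Markov's inequality and $\E[|\varepsilon_1|^q]<\infty$, and (2) applying Yu's independent-block coupling (Lemma \ref{lem:indep_block}) over the spatial partition $\Gamma_n(\bm{\ell};\bm{\Delta})$ followed by Tropp's inequality (Lemma \ref{lem:tr12}) to the truncated, blocked sum, yielding $\|\hat{\Sigma}_1\| = O_p\bigl(L_n^2\sqrt{\log J/A_n}\bigr)$ under the hypothesis $(\zeta_{J,n}\lambda_{J,n})^{2q/(q-2)} \lesssim \sqrt{A_n/(\log n)^2}$ (your condition (a)). This treats the dependence coming from $\bm{X}$ and the sampling randomness jointly in one blocking argument, so no separate control of $H_J^{\bm{X}} - H_J$ is ever needed. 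Relatedly, your handling of the piece carrying $(\varepsilon_i^2 - 1)$ invokes Tropp with only a $q$-th moment bound; Tropp requires almost-sure boundedness, so the truncation step of Lemma \ref{lem:cov-matrix} is not optional there either. If you replace your conditional decomposition of $T_1$ with a direct appeal to (or reproof of) Lemma \ref{lem:cov-matrix}, the rest of your argument goes through and coincides with the paper's proof.
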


Theorem \ref{thm:asy-norm2} and Proposition \ref{prp:var-est-reg} enable us to construct confidence intervals of $\mathfrak{m}_0(\bm{z},\bm{x})$. Define $\hat{V}_J(\bm{z},\bm{x}) := \hat{V}_J(\bm{z}, \bm{x}, \bm{z}, \bm{x})$ and consider a confidence interval of the form
\begin{align*}
\hat{C}_{1-\tau}(\bm{z},\bm{x}) = \left[\hat{\mathfrak{m}}(\bm{z},\bm{x}) -  \sqrt{{\hat{V}_J(\bm{z},\bm{x}) \over n}}q_{1-\tau/2}, \hat{\mathfrak{m}}(\bm{z},\bm{x}) +  \sqrt{{\hat{V}_J(\bm{z},\bm{x}) \over n}}q_{1-\tau/2}\right]
\end{align*}
for $\tau \in (0,1)$, $(\bm{z},\bm{x}) \in R_0 \times D_0$ where $q_{1-\tau}$ is the $(1-\tau)$-quantile of the standard normal random variable. Then under assumptions in Proposition \ref{prp:var-est-reg}, we have $\Prob(\mathfrak{m}_0(\bm{z},\bm{x}) \in \hat{C}_{1-\tau}(\bm{z},\bm{x})) \to 1-\tau$ as $n,J \to \infty$. 

\section{Proofs for Section \ref{Sec:reg-trend}}\label{Appendix:sec2}

\subsection{The remaining proof of Proposition \ref{prp:variance}}

Now we provide proofs for Steps 3 and 4 of Proposition \ref{prp:variance}.

(Step 3) Now we show that $P_{n,2} = o(1)$ as $n \to \infty$. Define $e_n = (e_{n,1},\dots, e_{n,n})'$ and $\varepsilon_n = (\varepsilon_{n,1},\dots,\varepsilon_{n,n})'$. Observe that 
\begin{align*}
P_{n,2} &\leq \!\Prob\left(\!\max_{\bm{z}_n \in \mathcal{S}_n}\!\!\left|\tilde{\psi}_J(\bm{z}_n)'\tilde{\Psi}'_{J,n}e_n/n \right| \!\geq \! C\zeta_J\lambda_J\!\sqrt{{\log n \over A_n}}\right) \!+ \Prob \left(\!\max_{\bm{z}_n \in \mathcal{S}_n}\!\!\left|\tilde{\psi}_J(\bm{z}_n)'\tilde{\Psi}'_{J,n}\varepsilon_n/n \right| \!\geq \! C\zeta_J\lambda_J\!\sqrt{{\log n \over A_n}}\right)\\
&=: P_{n,21} + P_{n,22}. 
\end{align*}

Now we show that $P_{n,21} = o(1)$ as $n,J \to \infty$. We can show that $P_{n,22} = o(1)$ from almost the same argument in the proof of Lemma 3.2 in \cite{ChCh15}. Let $\tau_n = n^{1/q}(\log n)^\iota$ for some $\iota>0$. Define $e_{n,i}^{(1)} = \eta(\bm{S}_i/A_n)e(\bm{s}_i)1\{|e(\bm{S}_i)| \leq \tau_n\}$, $e_{n,i}^{(2)} = \eta(\bm{S}_i/A_n)e(\bm{s}_i) - e_{n,i}^{(1)}$, $e_n^{(1)} = (e_{n,1}^{(1)},\dots,e_{n,n}^{(1)})'$, $e_n^{(2)} = (e_{n,1}^{(2)},\dots,e_{n,n}^{(2)})'$.
Further, define $\Phi_{n,j}(\bm{z}) = \tilde{\psi}_J(\bm{z})'\tilde{\Psi}'_{J,n}e_n^{(j)}/n - \E[\tilde{\psi}_J(\bm{z})'\tilde{\Psi}'_{J,n}e_n^{(j)}/n]$, $j=1,2$. Then we have
\begin{align*}
P_{n,21} &\leq \Prob\left(\max_{\bm{z}_n \in \mathcal{S}_n}\left|\Phi_{n,1}(\bm{z}_n) \right| \geq {C \over 2}\zeta_J\lambda_J\sqrt{{\log n \over A_n}}\right) + \Prob\left(\max_{\bm{z}_n \in \mathcal{S}_n}\left|\Phi_{n,2}(\bm{z}_n) \right| \geq {C \over 2}\zeta_J\lambda_J\sqrt{{\log n \over A_n}}\right)\\
&=: P_{n,211} + P_{n,212}. 
\end{align*}

First, we control $P_{n,212}$. Note that
\begin{align*}
P_{n,212} &\leq \Prob\left(\max_{\bm{z}_n \in \mathcal{S}_n}\left|\tilde{\psi}_J(\bm{z}_n)'\tilde{\Psi}'_{J,n}e_n^{(2)}/n \right| \geq {C \over 4}\zeta_J\lambda_J\sqrt{{\log n \over A_n}}\right)\\
&\quad + \Prob\left(\max_{\bm{z}_n \in \mathcal{S}_n}\left|\E\left[\tilde{\psi}_J(\bm{z}_n)'\tilde{\Psi}'_{J,n}e_n^{(2)}/n\right] \right| \geq {C \over 4}\zeta_J\lambda_J\sqrt{{\log n \over A_n}}\right).
\end{align*}
Observe that
\begin{align}
\Prob\left(\max_{\bm{z}_n \in \mathcal{S}_n}\left|\tilde{\psi}_J(\bm{z}_n)'\tilde{\Psi}'_{J,n}e_n^{(2)}/n \right| \geq {C \over 4}\zeta_J\lambda_J\sqrt{{\log n \over A_n}}\right) &\leq \Prob\left(|e(\bm{S}_i)|>\tau_n\ \text{for some $i = 1,\dots,n$}\right) \nonumber \\
&\leq \tau_n^{-q}\sum_{i=1}^{n}\E[|e(\bm{S}_i)|^q] = n\tau_n^{-q} = (\log n)^{-q\iota} \to 0. \label{ineq:P_n212_1}
\end{align}
Moreover, we have
\begin{align}
&\E\left[\left|\tilde{\psi}_J(\bm{z})'\tilde{\Psi}'_{J,n}e_n^{(2)}/n\right|\right] \nonumber \\
&\leq {1 \over n}\sum_{i=1}^{n}\E\left[\left|\psi_J(\bm{z})'\check{\Psi}_J^{-1}\psi_J\left({\bm{S}_i \over A_n}\right)e_{n,i}^{(2)}\right|\right] \lesssim \E\left[\left|\psi_J(\bm{z})'\check{\Psi}_J^{-1}\psi_J\left({\bm{S}_i \over A_n}\right)\right| |e(\bm{S}_i)|1\{|e(\bm{S}_i)|>\tau_n\}\right] \nonumber \\
&\lesssim {\zeta_J^2\lambda_J^2 \over \tau_n^{q-1}}\E\left[\E\left[|e(\bm{S}_1)|^q|\bm{S}_1\right]\right] \lesssim {\zeta_J^2\lambda_J^2 \over \tau_n^{q-1}} \lesssim \zeta_J\lambda_J\sqrt{{\log n \over A_n}} \label{ineq:P_n212_2}
\end{align}
uniformly over $\bm{z} \in R_0$. For the last wave relation, we used Assumption \ref{Ass:ze-lam} (ii). Then (\ref{ineq:P_n212_1}) and (\ref{ineq:P_n212_2}) yield that $P_{n,212}$ can be made arbitrarily small for large enough $C \geq 1$. 

Next, we control $P_{n,211}$. For this, we introduce some notations. Define $\Gamma_n(\bm{\ell};\bm{0}) = \prod_{j=1}^{d}((\ell_j-1/2)A_{n3,j}, (\ell_j+1/2)A_{n3,j}]$ with $A_{n3,j} = A_{n1,j} + A_{n2,j}$, and the following hypercubes, $\Gamma_n(\bm{\ell};\bm{\Delta}) = \prod_{j=1}^{d}I_j(\Delta_j)$, $\bm{\Delta} = (\Delta_1,\dots, \Delta_d)' \in \{1,2\}^d$, where 
\begin{align*}
I_j(\Delta_j) &= 
\begin{cases}
((\ell_j-1/2)A_{n3,j}, (\ell_j-1/2)A_{n3,j} + A_{n1,j}] & \text{if $\Delta_j = 1$},\\
((\ell_j-1/2)A_{n3,j} + A_{n1,j}, (\ell_j+1/2)A_{n3,j}] & \text{if $\Delta_j = 2$}. 
\end{cases}
\end{align*}
Let $\bm{\Delta}_{0} = (1,\dots, 1)'$. The partitions $\Gamma_n(\bm{\ell};\bm{\Delta}_{0})$ correspond to ``large blocks'' and the partitions $\Gamma_n(\bm{\ell};\bm{\Delta})$ for $\bm{\Delta} \neq \bm{\Delta}_{0}$ correspond to ``small blocks''. 
Let $L_{n1} = \{\bm{\ell} \in \mathbb{Z}^{d}: \Gamma_n(\bm{\ell};\bm{0}) \subset R_n \}$ denote the index set of all hypercubes $\Gamma_n(\bm{\ell};\bm{0})$ that are contained in $R_n$, and let $L_{n2} = \{\bm{\ell} \in \mathbb{Z}^{d}:  \Gamma_n(\bm{\ell};\bm{0}) \cap R_n \neq \emptyset,  \Gamma_{n}(\bm{\ell};\bm{0}) \cap R_n^c \neq \emptyset \}$ be the index set of boundary hypercubes. 

Define $\phi_{n,i}(\bm{z}) = \tilde{\psi}_J(\bm{z})'\tilde{\psi}_J\left({\bm{S}_i \over A_n}\right)e_{n,i}^{(1)} - \E\left[\tilde{\psi}_J(\bm{z})'\tilde{\psi}_J\left({\bm{S}_i \over A_n}\right)e_{n,i}^{(1)}\right]$. 
Note that 
\begin{align*}
n\Phi_{n,1}(\bm{z}) &= \sum_{i=1}^{n}\phi_{n,i}(\bm{z}) = \sum_{\bm{\ell} \in L_{n1}}\!\!\!\!\phi_1^{(\bm{\ell};\bm{\Delta}_0)}(\bm{z}) \!+\! \sum_{\bm{\Delta} \neq \bm{\Delta}_0}\sum_{\bm{\ell} \in L_{n1}}\!\!\!\!\phi_1^{(\bm{\ell};\bm{\Delta})}(\bm{z}) \!+ \!\sum_{\bm{\Delta} \in \{1,2\}^d}\sum_{\bm{\ell} \in L_{n2}}\!\!\!\!\phi_1^{(\bm{\ell};\bm{\Delta})}(\bm{z}),
\end{align*}
where $\phi_1^{(\bm{\ell};\bm{\Delta})}(\bm{z}) = \sum_{i=1}^{n}\phi_{n,i}(\bm{z})1\{\bm{S}_i \in \Gamma_n(\bm{\ell};\bm{\Delta}) \cap R_n\}$. 

For $\bm{\Delta} \in \{1,2\}^d$, let $\{\tilde{\phi}_1^{(\bm{\ell};\bm{\Delta})}(\bm{z})\}_{\bm{\ell} \in L_{n1} \cup L_{n2}}$ be independent random variables such that $\phi_1^{(\bm{\ell};\bm{\Delta})}(\bm{z})$ and $\tilde{\phi}_1^{(\bm{\ell};\bm{\Delta})}(\bm{z})$ have the same distribution. Applying Lemma \ref{lem:indep_block} with $M_h = 1$, $m \sim A_n(A_n^{(1)})^{-1}$, and $\tau \sim \beta(\underline{A}_{n2}; A_n)$, we have that for $\bm{\Delta} \in \{1,2\}^d$, 
\begin{align}
&\sup_{t >0}\left|\Prob\left(\left|\sum_{\bm{\ell} \in L_{n1}}\phi_1^{(\bm{\ell};\bm{\Delta})}(\bm{z})\right|>t\right) - \Prob\left(\left|\sum_{\bm{\ell} \in L_{n1}}\tilde{\phi}_1^{(\bm{\ell};\bm{\Delta})}(\bm{z})\right|>t\right)\right| \lesssim \left({A_n \over A_n^{(1)}}\right)\beta(\underline{A}_{n2}; A_n) = o(1), \label{ineq:phi1-beta-block1}\\
&\sup_{t >0}\left|\Prob\left(\left|\sum_{\bm{\ell} \in L_{n2}}\phi_1^{(\bm{\ell};\bm{\Delta})}(\bm{z})\right|>t\right) - \Prob\left(\left|\sum_{\bm{\ell} \in L_{n2}}\tilde{\phi}_1^{(\bm{\ell};\bm{\Delta})}(\bm{z})\right|>t\right)\right| \lesssim \left({A_n  \over A_n^{(1)}}\right)\beta(\underline{A}_{n2}; A_n ) = o(1) \label{ineq:phi1-beta-block2}.
\end{align}
These results imply that
\begin{align*}
\sum_{\bm{\ell} \in L_{n1}}\phi_1^{(\bm{\ell};\bm{\Delta})}(\bm{z}) &= O_p\left(\sum_{\bm{\ell} \in L_{n1}}\tilde{\phi}_1^{(\bm{\ell};\bm{\Delta})}(\bm{z})\right),\ \sum_{\bm{\ell} \in L_{n2}}\phi_1^{(\bm{\ell};\bm{\Delta})}(\bm{z}) = O_p\left(\sum_{\bm{\ell} \in L_{n2}}\tilde{\phi}_1^{(\bm{\ell};\bm{\Delta})}(\bm{z})\right). 
\end{align*}
From (\ref{ineq:phi1-beta-block1}) and (\ref{ineq:phi1-beta-block2}), we have
\begin{align*}
&\Prob\left(\max_{\bm{z}_n \in \mathcal{S}_n}\left|\Phi_{n,1}(\bm{z}_n)\right|>{C \over 2}\zeta_J\lambda_J\sqrt{{\log n \over A_n}}\right)\\ 
&\leq [\![\mathcal{S}_n]\!]\max_{\bm{z}_n \in \mathcal{S}_n}\Prob\left(\left|\Phi_{n,1}(\bm{z}_n)\right|>{C \over 2}\zeta_J\lambda_J\sqrt{{\log n \over A_n}}\right)\\
&\leq \sum_{\bm{\Delta} \in \{1,2\}^{d}}\hat{Q}_{n1}(\bm{\Delta}) + \sum_{\bm{\Delta} \in \{1,2\}^{d}}\hat{Q}_{n2}(\bm{\Delta}) + 2^{d+1}[\![\mathcal{S}_n]\!]\left({A_n \over A_n^{(1)}}\right)\beta(\underline{A}_{n2};A_n)\\
&\lesssim \sum_{\bm{\Delta} \in \{1,2\}^{d}}\hat{Q}_{n1}(\bm{\Delta}) + \sum_{\bm{\Delta} \in \{1,2\}^{d}}\hat{Q}_{n2}(\bm{\Delta}) + \underbrace{\left({n^{d\eta_1}A_n \over A_n^{(1)}}\right)\beta(\underline{A}_{n2};A_n)}_{=o(1)},
\end{align*}
where 
\begin{align*}
\hat{Q}_{nj}(\bm{\Delta}) &= [\![\mathcal{S}_n]\!]\max_{\bm{z}_n \in \mathcal{S}_n}\Prob\left(\left|\sum_{\bm{\ell} \in L_{nj}}\tilde{\phi}_1^{(\bm{\ell};\bm{\Delta})}(\bm{z}_n)\right|>{C \over 2^{d+2}}n\zeta_J\lambda_J\sqrt{{\log n \over A_n}}\right),\ j=1,2.
\end{align*}

Now we restrict our attention to show that $\hat{Q}_{n1}(\bm{\Delta}) = o(1)$ for $\bm{\Delta} \neq \bm{\Delta}_{0}$. The proofs for other cases are similar. Note that
\begin{align*}
\Prob\left(\left|\sum_{\bm{\ell} \in L_{n1}}\tilde{\phi}_1^{(\bm{\ell};\bm{\Delta})}(\bm{z})\right|>{C \over 2^{d+2}}n\zeta_J\lambda_J\sqrt{{\log n \over A_n}}\right)
&\leq 2\Prob\left(\sum_{\bm{\ell} \in L_{n1}}\tilde{\phi}_1^{(\bm{\ell};\bm{\Delta})}(\bm{z})>{C \over 2^{d+2}}n\zeta_J\lambda_J\sqrt{{\log n \over A_n}}\right).
\end{align*}
Observe that $\tilde{\phi}_1^{(\bm{\ell};\bm{\Delta})}(\bm{z})$ are zero-mean independent random variables, 
\begin{align}\label{ineq:Bernstein-1}
\left|\tilde{\phi}_1^{(\bm{\ell};\bm{\Delta})}(\bm{z})\right| &\leq C_{\phi_{11}}\zeta_J^2\lambda_J^2(\overline{A}_{n1})^{d-1}\overline{A}_{n2}nA_n^{-1}\tau_{n},\ a.s.\ (\text{from Lemma \ref{lem:number-summands}})
\end{align}
for some $C_{\phi_{11}}>0$ and 
\begin{align*}
&\sum_{\bm{\ell} \in L_{n1}}\E\left[\left(\tilde{\phi}_1^{(\bm{\ell};\bm{\Delta})}(\bm{z})\right)^{2}\right] \nonumber \\ 
&\lesssim \sum_{\bm{\ell} \in L_{n1}}n\E\left[\left|\psi_J(\bm{z})'\check{\Psi}_J^{-1}\psi_J\left({\bm{S}_1 \over A_n}\right)\psi_J\left({\bm{S}_1 \over A_n}\right)'\check{\Psi}_J^{-1}\psi_J(\bm{z})\right|1\{\bm{S}_1 \in \Gamma_n(\bm{\ell};\bm{\Delta}) \cap R_n \}\right] \nonumber \\
&\quad + \sum_{\bm{\ell} \in L_{n1}}\sum_{i_1 \neq i_2}\E\left[\left|\psi_J(\bm{z})'\check{\Psi}_J^{-1}\psi_J\left({\bm{S}_{i_1} \over A_n}\right)\psi_J\left({\bm{S}_{i_2} \over A_n}\right)'\check{\Psi}_J^{-1}\psi_J(\bm{z})\right| \right. \nonumber \\
&\left. \quad \quad \times \left|\sigma_{\bm{e}}(\bm{S}_{i_1} - \bm{S}_{i_2})\right|1\{\bm{S}_{i_1}, \bm{S}_{i_2} \in \Gamma_n(\bm{\ell};\bm{\Delta}) \cap R_n \}\right] \nonumber \\
&= n\E\left[\left|\psi_J(\bm{z})'\check{\Psi}_J^{-1}\psi_J\left({\bm{S}_1 \over A_n}\right)\psi_J\left({\bm{S}_1 \over A_n}\right)'\check{\Psi}_J^{-1}\psi_J(\bm{z})\right|1\{\bm{S}_1 \in \cup_{\bm{\ell} \in L_{n1}}\Gamma_n(\bm{\ell};\bm{\Delta}) \cap R_n \}\right] \nonumber \\
&\quad + \sum_{i_1 \neq i_2}\E\left[\left|\psi_J(\bm{z})'\check{\Psi}_J^{-1}\psi_J\left({\bm{S}_{i_1} \over A_n}\right)\psi_J\left({\bm{S}_{i_2} \over A_n}\right)'\check{\Psi}_J^{-1}\psi_J(\bm{z})\right| \right. \nonumber \\
&\left. \quad \quad \times \left|\sigma_{\bm{e}}(\bm{S}_{i_1} - \bm{S}_{i_2})\right|1\{\bm{S}_{i_1}, \bm{S}_{i_2} \in \left(\cup_{\bm{\ell} \in L_{n1}}\Gamma_n(\bm{\ell};\bm{\Delta})\right) \cap R_n \}\right] \nonumber \\
&\leq n\E\left[\left|\psi_J(\bm{z})'\check{\Psi}_J^{-1}\psi_J\left({\bm{S}_1 \over A_n}\right)\psi_J\left({\bm{S}_1 \over A_n}\right)'\check{\Psi}_J^{-1}\psi_J(\bm{z})\right|\right] \nonumber \\
&\quad + n(n-1)\E\left[\left|\psi_J(\bm{z})'\check{\Psi}_J^{-1}\psi_J\left({\bm{S}_1 \over A_n}\right)\psi_J\left({\bm{S}_2 \over A_n}\right)'\check{\Psi}_J^{-1}\psi_J(\bm{z})\right| \left|\sigma_{\bm{e}}(\bm{S}_1 - \bm{S}_2)\right|\right] =: V_{n,1} + V_{n,2}.
\end{align*}
For $V_{n,1}$, we have 
\begin{align*}
V_{n,1} &= n\psi_J(\bm{z})'\check{\Psi}_J^{-1}\E\left[\psi_J\left({\bm{S}_1 \over A_n}\right)\psi_J\left({\bm{S}_1 \over A_n}\right)'\right]\check{\Psi}_J^{-1}\psi_J(\bm{z}) =n\|\psi_J(\bm{z})\|^2 \lesssim n\zeta_J^2 \lambda_J^2. 
\end{align*}
For $V_{n,2}$, we have
\begin{align*}
V_{n,2}&\leq {n^2 \over A_n^2}\int |\sigma_{\bm{e}}(\bm{s}_1 - \bm{s}_2)|\left|\psi_J(\bm{z})'\check{\Psi}_J^{-1}\psi_J\left({\bm{s}_1 \over A_n}\right)\psi_J\left({\bm{s}_2 \over A_n}\right)'\check{\Psi}_J^{-1}\psi_J(\bm{z})\right|g\left({\bm{s}_1 \over A_n}\right)g\left({\bm{s}_2 \over A_n}\right)d\bm{s}_1 d\bm{s}_2\\
&= n^2\int |\sigma_{\bm{e}}(A_n(\bm{w}_1 - \bm{w}_2))|\left|\psi_J(\bm{z})'\check{\Psi}_J^{-1}\psi_J(\bm{w}_1)\psi_J(\bm{w}_2)'\check{\Psi}_J^{-1}\psi_J(\bm{z})\right|g(\bm{w}_1)g(\bm{w}_2)d\bm{w}_1 d\bm{w}_2\\
&= {n^2 \over A_n}\!\int_{\bar{R}_n} \!\! |\sigma_{\bm{e}}(\bm{x})|\!\left(\int \left|\psi_J(\bm{z})'\check{\Psi}_J^{-1}\psi_J\!\left(\!{\bm{x} \over A_n} \!+ \! \bm{w}_2 \!\right)\!\psi_J(\bm{w}_2)'\check{\Psi}_J^{-1}\psi_J(\bm{z})\right|g\!\left(\!{\bm{x} \over A_n} \!+\! \bm{w}_2 \!\right)g(\bm{w}_2)d\bm{w}_2 \! \right)d\bm{x}\\
&\lesssim {n^2 \over A_n}\left(\int |\sigma_{\bm{e}}(\bm{x})|d\bm{x}\right)\left(\int \left|\psi_J(\bm{z})'\check{\Psi}_J^{-1}\psi_J\left( \bm{w}_2\right)\psi_J(\bm{w}_2)'\check{\Psi}_J^{-1}\psi_J(\bm{z})\right|g(\bm{w}_2)d\bm{w}_2\right) \\
&\leq {n^2 \zeta_J^2 \lambda_J^2 \over A_n}\left(\int |\sigma_{\bm{e}}(\bm{x})|d\bm{x}\right). 
\end{align*}
Therefore, we have
\begin{align}
\sum_{\bm{\ell} \in L_{n1}} \E\left[\left(\tilde{\phi}_1^{(\bm{\ell};\bm{\Delta})}(\bm{z})\right)^{2}\right] &\leq C_{\phi_{12}}\zeta_J^2 \lambda_J^2n^2A_n^{-1} \label{ineq:Bernstein-2}
\end{align}
for some $C_{\phi_{12}}>0$. Then Lemma \ref{lem:Bernstein}, (\ref{ineq:Bernstein-1}), and (\ref{ineq:Bernstein-2}) yield that 
\begin{align*}
\Prob\left(\sum_{\bm{\ell} \in L_{n1}}\tilde{\phi}_1^{(\bm{\ell};\bm{\Delta})}(\bm{z}_n)>{C \over 2^{d+2}}n\zeta_J\lambda_J\sqrt{{\log n \over A_n}}\right) &\leq \exp \left(-{E_{n0} \over E_{n1} + E_{n2} }\right),
\end{align*}
where 
\[
E_{n0} \!=\! {C^2\zeta_J^2\lambda_J^2 n^2 \!  \log n \over 2^{2d+5}A_n},\ E_{n1} \!=\!  {C_{\phi_{11}}\zeta_J^2 \lambda_J^2 n^2\over A_n},\ E_{n2} \!=\! {CC_{\phi_{12}}\zeta_J^3\lambda_J^3(\overline{A}_{n1})^{d-1}\overline{A}_{n2}n^2\!\tau_{n}(\log n)^{1/2} \over 3\cdot 2^{d+2}A_n^{3/2}}.
\]
From Assumption \ref{Ass:ze-lam} (iii), we have $E_{n0}/E_{n1} \gtrsim \log n$ and $E_{n0}/E_{n2} \gtrsim \log n$. This yields $\hat{Q}_{n1}(\bm{\Delta}) = o(1)$ for large enough $C \geq 1$. Likewise, we can show that $\hat{Q}_{nj}(\bm{\Delta}) = o(1)$, $\bm{\Delta} \in \{1,2\}^d$, $j=1,2$. Therefore, we obtain $P_{n,211} = o(1)$ for large enough $C \geq 1$.

(Step 4) Now we show (\ref{eq:ridge-bias}). By the mean value theorem, for any $\bm{z},\bm{z}^* \in R_0$ we have
\begin{align*}
&|\dot{m}(\bm{z}) - \dot{m}(\bm{z}^*)|\\ 
&= \left|(\tilde{\psi}_J(\bm{z})-\tilde{\psi}_J(\bm{z}^*))'\left\{(\tilde{\Psi}'_{J,n}\tilde{\Psi}_{J,n}/n + \varsigma_{J,n} \check{\Psi}_J^{-1})^{-1} - (\tilde{\Psi}'_{J,n}\tilde{\Psi}_{J,n}/n)^- \right\} \tilde{\Psi}'_{J,n}M_0/n\right|\\
&= \left|(\bm{z}-\bm{z}^*)'\nabla \tilde{\psi}_J(\bm{z}^{**})'\left\{(\tilde{\Psi}'_{J,n}\tilde{\Psi}_{J,n}/n + \varsigma_{J,n} \check{\Psi}_J^{-1})^{-1} - (\tilde{\Psi}'_{J,n}\tilde{\Psi}_{J,n}/n)^- \right\} \tilde{\Psi}'_{J,n}M_0/n\right|\\
&\leq \|\nabla \tilde{\psi}_J(\bm{z}^{**})\|\|\bm{z} - \bm{z}^*\|\|\left\{(\tilde{\Psi}'_{J,n}\tilde{\Psi}_{J,n}/n + \varsigma_{J,n} \check{\Psi}_J^{-1})^{-1} - (\tilde{\Psi}'_{J,n}\tilde{\Psi}_{J,n}/n)^- \right\}| \|\tilde{\Psi}'_{J,n}M_0/n\|\\
&\leq \|\check{\Psi}_J^{-1/2}\|\|\nabla \psi_J(\bm{z}^{**})\|\|\bm{z} - \bm{z}^*\|\|\left\{(\tilde{\Psi}'_{J,n}\tilde{\Psi}_{J,n}/n + \varsigma_{J,n} \check{\Psi}_J^{-1})^{-1} - (\tilde{\Psi}'_{J,n}\tilde{\Psi}_{J,n}/n)^- \right\}\| \|\tilde{\Psi}'_{J,n}M_0/n\|\\
&= \lambda_{\text{max}}(\check{\Psi}_J^{-1/2})\|\nabla \psi_J(\bm{z}^{**})\|\|\bm{z} - \bm{z}^*\|\|\left\{(\tilde{\Psi}'_{J,n}\tilde{\Psi}_{J,n}/n + \varsigma_{J,n} \check{\Psi}_J^{-1})^{-1} - (\tilde{\Psi}'_{J,n}\tilde{\Psi}_{J,n}/n)^- \right\}\| \|\tilde{\Psi}'_{J,n}M_0/n\|\\
&\leq C_{\nabla}\lambda_J J^\omega\|\bm{z} - \bm{z}^*\|\|\left\{(\tilde{\Psi}'_{J,n}\tilde{\Psi}_{J,n}/n + \varsigma_{J,n} \check{\Psi}_J^{-1})^{-1} - (\tilde{\Psi}'_{J,n}\tilde{\Psi}_{J,n}/n)^- \right\}\| \|\tilde{\Psi}'_{J,n}M_0/n\|
\end{align*}
for some $\bm{z}^{**}$ in the segment between $\bm{z}$ and $\bm{z}^*$ and some finite constant $C_{\nabla}$ which is independent of $\bm{z},\bm{z}^*,n$, and $J$. 

Observe that
\begin{align*}
\E\left[\left\|\tilde{\Psi}'_{J,n}M_0/n\right\|^2\right] &= {1 \over n^2}\!\sum_{i_1=1}^n\sum_{i_2=1}^n \!\E\!\left[\psi'_J\!\left({\bm{S}_{i_1} \over A_n}\right)\check{\Psi}_J^{-1}\psi_J\!\left({\bm{S}_{i_2} \over A_n}\right)m_0\!\left({\bm{S}_{i_1} \over A_n}\right)m_0\!\left({\bm{S}_{i_2} \over A_n}\right)\right] \leq \left({1 \over n} \!+\! 1\right){\zeta_J^2 \lambda_J^2\|m_0\|_\infty^2}. 
\end{align*}
Then, applying Markov's inequality, we have
\begin{align}\label{eq:ridge-Psi-M-norm}
\|\tilde{\Psi}'_{J,n}M_0/n\| = O_p\left(\zeta_J\lambda_J\|m_0\|_\infty\right). 
\end{align}
From Lemmas \ref{lem:psi} and \ref{lem:matrix-inv-norm} and Assumptions \ref{Ass:ze-lam} (i), We have 
\begin{align}\label{ineq:ridge-Psi-norm}
&\|(\tilde{\Psi}'_{J,n}\tilde{\Psi}_{J,n}/n + \varsigma_{J,n} \check{\Psi}_J^{-1})^{-1} - (\tilde{\Psi}'_{J,n}\tilde{\Psi}_{J,n}/n)^{-1}\| \nonumber \\ 
&\lesssim {1 \over \varsigma_{J,n}^{-1}\lambda_{\text{min}}(\check{\Psi}_J) - \varsigma_{J,n}^{-1}\lambda_{\text{max}}(\check{\Psi}_J)\left\{\|(\tilde{\Psi}'_{J,n}\tilde{\Psi}_{J,n}/n + \varsigma_{J,n} \check{\Psi}_J^{-1}) - I_J\| + \|\tilde{\Psi}'_{J,n}\tilde{\Psi}_{J,n}/n  - I_J\|\right\} } \nonumber \\
&= O_p\left({1 \over \varsigma_{J,n}^{-1}\lambda_J^2 - \varsigma_{J,n}^{-1}\zeta_J^2(\zeta_J\lambda_J\sqrt{\log J/n})}\right) = O_p(\varsigma_{J,n}\lambda_J^{-2}). 
\end{align}
Together with (\ref{eq:ridge-Psi-M-norm}) and (\ref{ineq:ridge-Psi-norm}), we have 
\begin{align*}
\limsup_{n \to \infty}\Prob \left(\!C_\nabla \!\left\|\left\{\!\!\left({\tilde{\Psi}'_{J,n}\tilde{\Psi}_{J,n} \over n} \!+ \varsigma_{J,n} \check{\Psi}_J^{-1}\!\! \right)^{-1} \!\!\!-\! \left({\tilde{\Psi}'_{J,n}\tilde{\Psi}_{J,n} \over n}\!\right)^-\!\right\}\right\| \! \left\|{\tilde{\Psi}'_{J,n}M_0 \over n} \right\|\!> \bar{M}_1\right)\! = 0
\end{align*}
for any fixed $\bar{M}_1>0$ since $\varsigma_{J,n}\zeta_J\lambda_J^{-1} \lesssim \zeta_J^2 \lambda_J^{-2}\sqrt{\log n/n} = o(1)$. Let $\mathcal{C}_n$ denote the event on which $C_\nabla\|(\tilde{\Psi}'_{J,n}\tilde{\Psi}_{J,n}/n \!+ \varsigma_{J,n} \check{\Psi}_J^{-1}/n)^{-1} \!- (\tilde{\Psi}'_{J,n}\tilde{\Psi}_{J,n}/n)^-\| \|\tilde{\Psi}'_{J,n}M_0/n\| \leq \bar{M}_1$ and observe that $\Prob(\mathcal{C}_n^c) = o(1)$. On $\mathcal{C}_n$, for any $C \geq 1$, a finite positive $\eta_3 = \eta_3(C)$ and $\eta_4 = \eta_4(C)$ can be chosen such that 
\begin{align*}
C_{\nabla}\lambda_J J^{\omega}\|\bm{z} - \bm{z}^*\|\|(\tilde{\Psi}'_{J,n}\tilde{\Psi}_{J,n}/n + \varsigma_{J,n} \check{\Psi}_J^{-1}/n)^{-1} - (\tilde{\Psi}'_{J,n}\tilde{\Psi}_{J,n}/n)^-\| \|\tilde{\Psi}'_{J,n}M_0/n\| \leq C\varsigma_{J,n} \zeta_J^2\|m_0\|_\infty 
\end{align*} 
whenever $\|\bm{z} - \bm{z}^*\| \leq \eta_4 n^{-\eta_3}$. Let $\mathcal{T}_n$ be the smallest subset of $R_0$ such that for each $\bm{z} \in R_0$ there exists a $\bm{z}_n \in R_0$ with $\|\bm{z} - \bm{z}_n\| \leq \eta_4 n^{-\eta_3}$. For any $\bm{z} \in R_0$ let $\bm{z}_n(\bm{z})$ denote the $\bm{z}_n \in \mathcal{T}_n$ nearest to $\bm{z}$ in Euclidean distance. Then on $\mathcal{C}_n$, we have $|\dot{m}(\bm{z}) - \dot{m}(\bm{z}_n(\bm{z}))| \leq C\varsigma_{J,n} \zeta_J ^2\|m_0\|_\infty$ for any $\bm{z} \in R_0$. Then we have
\begin{align*}
\Prob\left(\|\dot{m}\|_\infty \geq 4C\varsigma_{J,n} \zeta_J^2\|m_0\|_\infty \right) &\leq \Prob\left(\left\{\|\dot{m}\|_\infty \geq 4C\varsigma_{J,n} \zeta_J^2|m_0\|_\infty \right\} \cap \mathcal{C}_n\right) + \Prob(\mathcal{C}_n^c)\\
&\leq \Prob\left(\left\{\sup_{\bm{z} \in R_0}|\dot{m}(\bm{z}) - \dot{m}(\bm{z}_n(\bm{z}))| \geq 2C\varsigma_{J,n} \zeta_J^2\|m_0\|_\infty \right\} \cap \mathcal{C}_n\right)\\
&\quad + \Prob\left(\left\{\max_{\bm{z}_n \in \mathcal{T}_n}|\dot{m}(\bm{z}_n)| \geq 2C\varsigma_{J,n} \zeta_J^2\|m_0\|_\infty \right\} \cap \mathcal{C}_n\right) + \Prob(\mathcal{C}_n^c)\\
&= \Prob\left(\left\{\max_{\bm{z}_n \in \mathcal{T}_n}|\dot{m}(\bm{z}_n)| \geq 2C\varsigma_{J,n} \zeta_J^2\|m_0\|_\infty \right\} \cap \mathcal{C}_n\right) + o(1) =: P_{n,3} + o(1). 
\end{align*}
Since
\begin{align*}
&\left|\tilde{\psi}_J(\bm{z}_n)'\left\{(\tilde{\Psi}'_{J,n}\tilde{\Psi}_{J,n}/n + \varsigma_{J,n} \check{\Psi}_J^{-1}/n)^{-1} - (\tilde{\Psi}'_{J,n}\tilde{\Psi}_{J,n}/n)^-\right\} \tilde{\Psi}'_{J,n}M_0/n \right|\\ 
&\lesssim \zeta_J\lambda_J\|(\tilde{\Psi}'_{J,n}\tilde{\Psi}_{J,n}/n + \varsigma_{J,n} \check{\Psi}_J^{-1}/n)^{-1} - (\tilde{\Psi}'_{J,n}\tilde{\Psi}_{J,n}/n)^-\| \|\tilde{\Psi}'_{J,n}M_0/n\|\\
&= O_p\left(\varsigma_{J,n} \zeta_J^2 \|m_0\|_\infty \right),
\end{align*}
$P_{n,3}$ can be made arbitrarily small for large enough $C\geq 1$. Therefore, we complete the proof. 

\subsection{Proof of Theorem \ref{thm:asy-norm1}}

Let $\bm{z}=\bm{z}_1$. We prove 
\begin{align*}
{\sqrt{A_n}(\hat{m}(\bm{z}) - m_0(\bm{z})) \over \sqrt{\Omega_J(\bm{z})}} &\stackrel{d}{\to} N(0,1).
\end{align*}
For the multivariate case (i.e., $L \geq 2$), the desired result follows from the Cam\'er-Wold device. 

Note that 
\begin{align*}
{\sqrt{A_n}(\hat{m}(\bm{z}) - m_0(\bm{z})) \over \sqrt{\Omega_J(\bm{z})}} &= {\sqrt{A_n}(\hat{m}(\bm{z}) - \bar{m}(\bm{z})) \over \sqrt{\Omega_J(\bm{z})}} + {\sqrt{A_n}(\bar{m}(\bm{z}) - \tilde{m}(\bm{z})) \over \sqrt{\Omega_J(\bm{z})}} + {\sqrt{A_n}(\tilde{m}(\bm{z}) - m_0(\bm{z})) \over \sqrt{\Omega_J(\bm{z})}}.
\end{align*}

Observe that
\begin{align*}
\Omega_J(\bm{z}) &=  \tilde{\psi}_J(\bm{z})'\left\{\kappa\int (\eta^2(\bm{v}) + \sigma_\varepsilon^2(\bm{v}))\tilde{\psi}_J(\bm{v})\tilde{\psi}_J(\bm{v})'g(\bm{v})d\bm{v} \right.\\
&\left. \quad + \left(\int \eta^2(\bm{v})\tilde{\psi}_J(\bm{v})\tilde{\psi}_J(\bm{v})'g^2(\bm{v})d\bm{v}\right)\left(\int \sigma_{\bm{e}}(\bm{x})d\bm{x}\right)\right\}\tilde{\psi}_J(\bm{z})\\
&\sim \tilde{\psi}_J(\bm{z})'\left\{\int \tilde{\psi}_J(\bm{v})\tilde{\psi}_J(\bm{v})'g(\bm{v})d\bm{v}\right\}\tilde{\psi}_J(\bm{z}) = \|\tilde{\psi}_J(\bm{z})\|^2.
\end{align*}
Condition (b) yields that 
\[
\left|{\sqrt{A_n}(\tilde{m}(\bm{z}) - m_0(\bm{z})) \over \sqrt{\Omega_J(\bm{z})}}\right| = O_p\left(\sqrt{A_n}\|\tilde{\psi}_J(\bm{z})\|^{-1}\inf_{m \in \Psi_J}\|m_0 - m\|_\infty\right) = o_p(1). 
\]
Condition (c) yields that 
\begin{align*}
\left|{\sqrt{A_n}(\bar{m}(\bm{z}) - \tilde{m}(\bm{z})) \over \sqrt{\Omega_J(\bm{z})}}\right| &= O_p\left(\sqrt{A_n}\|\tilde{\psi}_J(\bm{z})\|^{-1}\|\bar{m} - \tilde{m}\|_\infty\right) = O_p(\sqrt{A_n}\|\tilde{\psi}_J(\bm{z})\|^{-1}\varsigma_{J,n} \zeta_J^2) = o_p(1). 
\end{align*}
Then it is sufficient to show that 
\begin{align}\label{eq:asy-norm-db1}
{\sqrt{A_n}(\hat{m}(\bm{z}) - \bar{m}(\bm{z})) \over \sqrt{\Omega_J(\bm{z})}} \stackrel{d}{\to} N(0,1). 
\end{align}

We show (\ref{eq:asy-norm-db1}) in several steps. 

(Step 1) In this step, we give an overview of the proof of (\ref{eq:asy-norm-db1}). Recall that 
\begin{align*}
&{\sqrt{A_n}\left(\hat{m}(\bm{z}) - \bar{m}(\bm{z})\right) \over \sqrt{\Omega_J(\bm{z}})} \\
&= \sqrt{{A_n \over  \Omega_J(\bm{z})}}\tilde{\psi}_J(\bm{z})'(\tilde{\Psi}'_{J,n}\tilde{\Psi}_{J,n}/n + \varsigma_{J,n} \check{\Psi}_J^{-1})^{-1} \left(\tilde{\Psi}'_{J,n}v_n \over n \right)\\
&= \sqrt{{A_n \over  \Omega_J(\bm{z})}}\tilde{\psi}_J(\bm{z})'\left(\tilde{\Psi}'_{J,n}v_n \over n \right) + \sqrt{{A_n \over  \Omega_J(\bm{z})}}\tilde{\psi}_J(\bm{z})'\left\{(\tilde{\Psi}'_{J,n}\tilde{\Psi}_{J,n}/n + \varsigma_{J,n} \check{\Psi}_J^{-1})^{-1} - I_J\right\}\left(\tilde{\Psi}'_{J,n}v_n \over n \right)\\
&=: K_{n,1} + K_{n,2}.
\end{align*}
For $K_{n,2}$, from Lemma \ref{lem:psi} and (\ref{eq:Psi-v-norm}), we have
\begin{align}\label{eq:K_n2-asy-neg}
|K_{n,2}| &\leq \sqrt{A_n}\left\|{\tilde{\psi}_J(\bm{z}) \over \sqrt{\Omega_J(\bm{z})}}\right\|\|(\tilde{\Psi}'_{J,n}\tilde{\Psi}_{J,n}/n + \varsigma_{J,n} \check{\Psi}_J^{-1})^{-1} - I_J\| \left\|{\tilde{\Psi}'_{J,n}v_n \over n }\right\| \nonumber \\
&= \sqrt{A_n}\times O_p\left(\zeta_J \lambda_J\sqrt{\log J \over n} + \varsigma_{J,n} \lambda_J^2\right)\times O_p\left({\zeta_J\lambda_J \over \sqrt{A_n}}\right) = o_p(1). 
\end{align}
For $K_{n,1}$, note that 
\begin{align*}
\tilde{K}_{n,1} &:= {\sqrt{n^2A_n^{-1}\Omega_J(\bm{z})} \over \|\tilde{\psi}_J(\bm{z})\|}K_{n,1} = \sum_{i=1}^{n}\left({\tilde{\psi}_J(\bm{z}) \over \|\tilde{\psi}_J(\bm{z})\|}\right)'\tilde{\psi}_J\left({\bm{S}_i \over A_n}\right)(e_{n,i}+\varepsilon_{n,i}) =: \sum_{i=1}^{n}k_J\left({\bm{S}_i \over A_n}\right)(e_{n,i}+\varepsilon_{n,i})
\end{align*}
and $\|k_J\|_\infty \leq \zeta_J \lambda_J$. Decompose
\begin{align*}
\tilde{K}_{n,1} &= 
 \sum_{\bm{\ell} \in L_{n1}}\!\!\!\!k_1^{(\bm{\ell};\bm{\Delta}_0)} \!+\! \sum_{\bm{\Delta} \neq \bm{\Delta}_0}\sum_{\bm{\ell} \in L_{n1}}\!\!\!\!k_1^{(\bm{\ell};\bm{\Delta})} \!+ \!\sum_{\bm{\Delta} \in \{1,2\}^d}\sum_{\bm{\ell} \in L_{n2}}\!\!\!\!k_1^{(\bm{\ell};\bm{\Delta})} =: K_{n,11} + K_{n,12} + K_{n,13},
\end{align*}
where $k_1^{(\bm{\ell};\bm{\Delta})} = \sum_{i=1}^{n}k_J(\bm{S}_i/A_n)(e_{n,i}+\varepsilon_{n,i})1\{\bm{S}_i \in \Gamma_n(\bm{\ell};\bm{\Delta}) \cap R_n \}$. 

Note that for $\bm{\ell}_1, \bm{\ell}_2 \in L_{n1}$ with $\bm{\ell}_1 \neq \bm{\ell}_2$, 
\begin{align}\label{ineq:Ga12-dist}
d\left(\Gamma_n(\bm{\ell}_1;\bm{\Delta}_0), \Gamma_n(\bm{\ell}_2;\bm{\Delta}_0)\right) &\geq \max\{|\bm{\ell}_1 - \bm{\ell}_2|-d,0\}\underline{A}_{n3} + \underline{A}_{n2},
\end{align}
where $\underline{A}_{n3} = \min_{1 \leq j \leq d}A_{n3,j}$, $A_{n3,j}=A_{n1,j} + A_{n2,j}$, and $\underline{A}_{n2} = \min_{1 \leq j \leq d}A_{n2,j}$. 

Hence, by the Volkonskii-Rozanov inequality (cf. Proposition 2.6 in \cite{FaYa03}), we have
\begin{align}\label{Vn1-indep-approx}
\left|\E[\exp (\mathrm{i}u K_{n,11})] - \prod_{\ell \in L_{n1}}\E[\exp(\mathrm{i}u k_1^{(\bm{\ell};\bm{\Delta}_0)})]\right| &\lesssim {A_n \over A_n^{(1)}}\alpha(\underline{A}_{n2};A_n). 
\end{align}

From Lyapounov's central limit theorem, it is sufficient to verify the following conditions to show (\ref{eq:asy-norm-db1}): As $n,J \to \infty$, 
\begin{align}
&{A_n \over n^2}\E[\tilde{K}_{n,1}^2] =  \left({\tilde{\psi}_J(\bm{z}) \over \|\tilde{\psi}_J(\bm{z})\|}\right)'G_J\left({\tilde{\psi}_J(\bm{z}) \over \|\tilde{\psi}_J(\bm{z})\|}\right)(1 + o(1)), \label{eq:tilde_K_n1-var}\\
&\sum_{\bm{\ell} \in L_{n1}}\E[(k_1^{(\bm{\ell}; \bm{\Delta}_0)})^2] - \E[\tilde{K}_{n,1}^2] = o\left(n^2 A_n^{-1}\right), \label{eq:Kn-var-diff}\\
&\sum_{\bm{\ell} \in L_{n1}}\E[(k_1^{(\bm{\ell}; \bm{\Delta}_0)})^4] = o\left(n^4 A_n^{-2}\right), \label{eq:Kn-4}\\
&\Var(K_{n,12}) = o\left(n^2A_n^{-1}\right), \label{eq:Kn12-var}\\
&\Var(K_{n,13}) = o\left(n^2A_n^{-1}\right). \label{eq:Kn13-var}
\end{align}

We show (\ref{eq:tilde_K_n1-var}) in Step 2, (\ref{eq:Kn-4}) in Step 3, (\ref{eq:Kn12-var}) and (\ref{eq:Kn13-var}) in Step 4, and (\ref{eq:Kn-var-diff}) in Step 5.

(Step 2) Now we show (\ref{eq:tilde_K_n1-var}). Observe that 
\begin{align*}
\E[\tilde{K}_{n,1}^2] &= n\E\left[k_J^2\left({\bm{S}_1 \over A_n}\right)(e_{n,1}^2+\varepsilon_{n,1}^2)\right]  + n(n-1)\E\left[k_J\left({\bm{S}_1 \over A_n}\right)k_J\left({\bm{S}_2 \over A_n}\right)e_{n,1}e_{n,2}\right]\\
&= {n \over A_n}\int k_J^2\left({\bm{s} \over A_n}\right)\left\{\eta^2\left({\bm{s} \over A_n}\right) + \sigma_\varepsilon^2\left({\bm{s} \over A_n}\right)\right\}g\left({\bm{s} \over A_n}\right)d\bm{s}\\
&\quad + {n(n-1) \over A_n^2}\!\!\int \! k_J\!\left({\bm{s}_1 \over A_n}\right)k_J\!\left({\bm{s}_2 \over A_n}\right)\eta \!\left({\bm{s}_1 \over A_n}\right)\eta \!\left({\bm{s}_2 \over A_n}\right)\sigma_{\bm{e}}(\bm{s}_1 - \bm{s}_2)g \!\left({\bm{s}_1 \over A_n}\right)g \!\left({\bm{s}_2 \over A_n}\right)d\bm{s}_1 d\bm{s}_2\\
&=: \tilde{K}_{n,11} + \tilde{K}_{n,12}.
\end{align*}
For $\tilde{K}_{n,11}$, we have
\begin{align}\label{eq:tilde-Kn11}
\tilde{K}_{n,11} &= n\int k_J^2(\bm{w})\left\{\eta^2(\bm{w}) + \sigma_\varepsilon^2(\bm{w})\right\}g(\bm{w})d\bm{w} \nonumber \\
&= n\left({\tilde{\psi}_J(\bm{z}) \over \|\tilde{\psi}_J(\bm{z})\|}\right)'\left(\int \left\{\eta^2(\bm{w}) + \sigma_\varepsilon^2(\bm{w})\right\}\tilde{\psi}_J(\bm{w})\tilde{\psi}_J(\bm{w})'g(\bm{w})d\bm{w}\right)\left({\tilde{\psi}_J(\bm{z}) \over \|\tilde{\psi}_J(\bm{z})\|}\right).
\end{align}
For $K_{n,12}$, we have
\begin{align*}
\tilde{K}_{n,12} &= n(n-1)\int k_J(\bm{w}_1)k_J(\bm{w}_2)\eta(\bm{w}_1)\eta(\bm{w}_2)\sigma_{\bm{e}}(A_n(\bm{w}_1 - \bm{w}_2))g(\bm{w}_1)g(\bm{w}_2)d\bm{w}_1 d\bm{w}_2\\
&= {n(n-1) \over A_n}\!\!\int_{\bar{R}_n}\!\!\!\sigma_{\bm{e}}(\bm{x})\!\!\left(\int \!\!k_J\!\!\left({\bm{x} \over A_n} \!+\! \bm{w}_2\right)\! k_J(\bm{w}_2)\eta \!\left({\bm{x} \over A_n} \!+\! \bm{w}_2\right)\eta(\bm{w}_2)g\!\left({\bm{x} \over A_n} \!+\! \bm{w}_2\right)\! g(\bm{w}_2)d\bm{w}_2\right)d\bm{x}
\end{align*}
where $\bar{R}_n = \{\bm{x} = \bm{x}_1 - \bm{x}_2: \bm{x}_1, \bm{x}_2 \in R_n\}$. Then as $n,J \to \infty$, we obtain
\begin{align}
\tilde{K}_{n,12} &= {n^2 \over A_n}\left(\int k_J^2(\bm{w})\eta^2(\bm{w})g^2(\bm{w})d\bm{w}\right)\left(\int \sigma_{\bm{e}}(\bm{x})d\bm{x}\right)(1 + o(1)) \nonumber \\
&= {n^2 \over A_n}\left({\tilde{\psi}_J(\bm{z}) \over \|\tilde{\psi}_J(\bm{z})\|}\right)'\left(\int \eta^2(\bm{w})\tilde{\psi}_J(\bm{w})\tilde{\psi}_J(\bm{w})'g^2(\bm{w})d\bm{w}\right)\left(\int \sigma_{\bm{e}}(\bm{x})d\bm{x}\right)\left({\tilde{\psi}_J(\bm{z}) \over \|\tilde{\psi}_J(\bm{z})\|}\right)(1 + o(1)). \label{eq:tilde-Kn12}
\end{align}
Then (\ref{eq:tilde-Kn11}) and (\ref{eq:tilde-Kn12}) yield (\ref{eq:tilde_K_n1-var}).

(Step 3) Now we show (\ref{eq:Kn-4}). Define $I_{n}(\bm{\ell}) \!=\! \{\bm{i} \! \in \! \mathbb{Z}^d \!: \bm{i} \!+\! (-1/2,1/2]^d \! \subset \! \Gamma_n(\bm{\ell}; \bm{\Delta}_0)\}$ for $\bm{\ell} \in L_{n1}$ and $\tilde{K}_n(\bm{i}) = \sum_{i=1}^{n}k_J(\bm{S}_1/A_n)(e_{n,i} + \varepsilon_{n,i})1\{\bm{S}_i \in [\bm{i} + (-1/2,1/2]^d] \cap R_n\}$. 

Observe that 
\begin{align*}
&\E[(k_1^{(\bm{\ell}; \bm{\Delta}_0)})^4] = \E\left[\left(\sum_{\bm{i} \in I_n(\bm{\ell})}\tilde{K}_n(\bm{i})\right)^4\right]\\ 
&= \sum_{\bm{i} \in I_n(\bm{\ell})}\E\left[\tilde{K}_n^4(\bm{i})\right] + \sum_{\bm{i}, \bm{j} \in I_n(\bm{\ell}), \bm{i} \neq \bm{j}}\E\left[\tilde{K}_n^3(\bm{i})\tilde{K}_n(\bm{j})\right] + \sum_{\bm{i}, \bm{j} \in I_n(\bm{\ell}), \bm{i} \neq \bm{j}}\E\left[\tilde{K}_n^2(\bm{i})\tilde{K}_n^2(\bm{j})\right]\\
&\quad + \sum_{\bm{i}, \bm{j}, \bm{k} \in I_n(\bm{\ell}), \bm{i} \neq \bm{j} \neq \bm{k}}\!\!\!\!\!\!\!\!\!\!\!\!\E \! \left[\tilde{K}_n^2(\bm{i})\tilde{K}_n(\bm{j})\tilde{K}_n(\bm{k})\right] + \sum_{\bm{i}, \bm{j}, \bm{k}, \bm{p} \in I_n(\bm{\ell}), \bm{i} \neq \bm{j} \neq \bm{k} \neq \bm{p}}\!\!\!\!\!\!\!\!\!\!\!\!\E \! \left[\tilde{K}_n(\bm{i})\tilde{K}_n(\bm{j})\tilde{K}_n(\bm{k})\tilde{K}_n(\bm{p})\right]\\
& =: Q_{n1} + Q_{n2} + Q_{n3} + Q_{n4} + Q_{n5}. 
\end{align*}
For $Q_{n1}$, we have
\begin{align*}
&\E[\tilde{K}_n^4(\bm{i})] \\
&= \sum_{j_1,j_2,j_3,j_4 =1}^{n}\E\left[\prod_{k=1}^{4}k_J\left({\bm{S}_{j_k} \over A_n}\right)1\{\bm{S}_{j_k} \in [\bm{i} + (-1/2,1/2]^d] \cap R_n\} (e_{n,j_k} + \varepsilon_{n,j_k})\right]\\
&\lesssim \sum_{j_1,j_2,j_3,j_4 =1}^{n}\!\!\!\!\!\!\!\E\left[\prod_{k=1}^{4}k_J\left({\bm{S}_{j_k} \over A_n}\right)1\{\bm{S}_{j_k}\!\! \in \! [\bm{i} \!+\! (-1/2,1/2]^d] \cap R_n\}\eta\!\left(\!{\bm{S}_{j_k} \over A_n}\!\right)e(\bm{S}_{j_k})\!\right]\\
&\quad + \sum_{j_1,j_2,j_3,j_4 =1}^{n}\!\!\!\!\!\! \E\left[\prod_{k=1}^{4}k_J\left({\bm{S}_{j_k} \over A_n}\right)1\{\bm{S}_{j_k} \!\! \in \!  [\bm{i} \!+\! (-1/2,1/2]^d] \!\cap\! R_n\}\sigma_{\varepsilon}\!\left(\!{\bm{S}_{j_k} \over A_n}\!\right)\varepsilon_{j_k}\!\right] =: Q_{n11} + Q_{n12}. 
\end{align*}
For $Q_{n11}$, we have
\begin{align*}
Q_{n11} &\lesssim n\E\left[\left|k_J\left({\bm{S}_1 \over A_n}\right)\right|^4 1\{\bm{S}_{1} \in [\bm{i} + (-1/2,1/2]^d] \cap R_n\}\eta^4(\bm{S}_{1}/A_n)\right] \\
&\quad + n^2\E\left[\left|k_J\left({\bm{S}_1 \over A_n}\right)\right|^3 1\{\bm{S}_{1} \in [\bm{i} + (-1/2,1/2]^d] \cap R_n\} \right. \\
&\quad \left.  \times \left|k_J\left({\bm{S}_{j_k} \over A_n}\right)\right|1\{\bm{S}_{2} \in [\bm{i} + (-1/2,1/2]^d] \cap R_n\} \eta^3(\bm{S}_{1}/A_n)\eta(\bm{S}_{2}/A_n)\right] \\
&\quad + n^2\E\left[\prod_{t=1}^{2}\left|k_J\left({\bm{S}_{t} \over A_n}\right)\right|^2 \!\! 1\{\bm{S}_t \in [\bm{i} + (-1/2,1/2]^d] \cap R_n\} \eta^2(\bm{S}_t/A_n)\right]\\
&\quad + n^3\E\left[\left|k_J\left({\bm{S}_1 \over A_n}\right)\right|^21\{\bm{S}_1 \in [\bm{i} + (-1/2,1/2]^d] \cap R_n\}\eta^2(\bm{S}_1/A_n) \right. \\
&\quad \left.  \times \prod_{t = 2}^{3}\left|k_J\left({\bm{S}_t \over A_n}\right)\right|1\{\bm{S}_t \in [\bm{i} + (-1/2,1/2]^d] \cap R_n\}\eta(\bm{S}_t/A_n)\right]\\
&\quad + n^4\E\left[\prod_{t=1}^{4}\left|k_J\left({\bm{S}_t \over A_n}\right)\right|1\{\bm{S}_t \in [\bm{i} + (-1/2,1/2]^d] \cap R_n\} \eta(\bm{S}_t/A_n)\right]\\
&=: Q_{n111} + Q_{n112} + Q_{n113} + Q_{n114}. 
\end{align*}
For $Q_{n111}$, we have
\begin{align*}
Q_{n111} &= nA_n^{-1}\int \left|k_J\left({\bm{s}\over A_n}\right)\right|^4 1\{\bm{s} \in [\bm{i} + (-1/2,1/2]^d] \cap R_n\}\eta^4(\bm{s}/A_n)g(\bm{s}/A_n)d\bm{s}\\
&\leq nA_n^{-1}\zeta_J^4\lambda_J^4 \int 1\{\bm{s} \in R_n\}\eta^4(\bm{s}/A_n)g(\bm{s}/A_n)d\bm{s} = O\left(nA_n^{-1}\zeta_J^4 \lambda_J^4\right). 
\end{align*}
Likewise, $Q_{n112} = O(n^2A_n^{-2}\zeta_J^4 \lambda_J^4)$, $Q_{n113} = O(n^3A_n^{-3}\zeta_J^4\lambda_J^4)$, and $Q_{n114} = O(n^4A_n^{-4}\zeta_J^4 \lambda_J^4)$. Then we have $Q_{n11} = O(n^4A_n^{-4}\zeta_J^4 \lambda_J^4)$. We can also show that $Q_{n12} = O(n^4A_n^{-4}\zeta_J^4 \lambda_J^4)$.
Therefore, we have
\begin{align}\label{ineq:Qn1}
Q_{n1} &\lesssim  [\![I_n(\bm{\ell})]\!]n^4A_n^{-4}\zeta_J^4 \lambda_J^4 \lesssim A_n^{(1)}(nA_n^{-1})^4\zeta_J^4 \lambda_J^4.
\end{align}
For $Q_{n2}$,  by the $\alpha$-mixing property of $\bm{e}$ and Proposition 2.5 in \cite{FaYa03}, we have
\begin{align}
Q_{n2} &\lesssim \sum_{k=1}^{\overline{A}_{n1}} \sum_{\bm{i}, \bm{j} \in I_n(\bm{\ell}), |\bm{i} - \bm{j}| = k}\alpha^{1-4/q}(\max\{k-d,0\};1)\E[|\tilde{K}_n(\bm{i})|^q]^{3/q}\E[|\tilde{K}_n(\bm{j})|^q]^{1/q} \nonumber \\
&\lesssim A_n^{(1)}(nA_n^{-1})^4\zeta_J^4 \lambda_J^4\left(1 + \sum_{k=1}^{\overline{A}_{n1}}k^{d-1}\alpha_1^{1-4/q}(k)\right).  \label{ineq:Qn2}
\end{align}
where $\overline{A}_{n1} = \max_{1 \leq j \leq d}A_{n1,j}$. Likewise, 
\begin{align}
Q_{n3} &\lesssim A_n^{(1)}(nA_n^{-1})^4\zeta_J^4 \lambda_J^4\left(1 + \sum_{k=1}^{\overline{A}_{n1}}k^{d-1}\alpha_1^{1-4/q}(k)\right).  \label{ineq:Qn3}
\end{align}

Now we evaluate $Q_{n4}$ and $Q_{n5}$. For distinct indices $\bm{i}, \bm{j}, \bm{k}, \bm{p} \in I_n(\bm{\ell})$, let 
\begin{align*}
d_1(\bm{i}, \bm{j}, \bm{k}) &= \max\{d(\{\bm{i}\}, \{\bm{j}, \bm{k}\}),d(\{\bm{k}\}, \{\bm{i}, \bm{j}\})\},\\
d_2(\bm{i}, \bm{j}, \bm{k}, \bm{p}) &= \max\{d(F, \{\bm{i}, \bm{j}, \bm{k}, \bm{p}\}): F \subset \{\bm{i}, \bm{j}, \bm{k}, \bm{p}\}, [\![ F]\!] = 1\},\\
d_3(\bm{i}, \bm{j}, \bm{k}, \bm{p}) &= \max\{d(F, \{\bm{i}, \bm{j}, \bm{k}, \bm{p}\}): F \subset \{\bm{i}, \bm{j}, \bm{k}, \bm{p}\}, [\![ F]\!] = 2\}. 
\end{align*}
Here, $d_1$ denotes the maximal gap in the set of integer-indices $\{\bm{i},\bm{j},\bm{k}\}$ from either $\bm{j}$ or $\bm{k}$ which corresponds to $\E\left[\tilde{K}_n^2(\bm{i})\tilde{K}_n(\bm{j})\tilde{K}_n(\bm{k})\right]$. Similarly, $d_2$ and $d_3$ are the maximal gap in the index set $\{\bm{i}, \bm{j}, \bm{k}, \bm{p}\}$ from any of its single index-subsets or two-index subsets, respectively. Applying the argument in the proof of Lemma 4.1 of \cite{La99}, for any given values $1 \leq d_{01}, d_{02}, d_{03} < [\![I_{n}(\bm{\ell})]\!]$, we have 
\begin{align}
&[\![\{(\bm{i}, \bm{j},\bm{k}) \in I_n^3(\bm{\ell}): \bm{i} \neq \bm{j} \neq \bm{k}\ \text{and}\ d_1(\bm{i},\bm{j},\bm{k})=d_{01}\}]\!] \lesssim d_{01}^{2d-1}[\![I_n(\bm{\ell})]\!],  \label{ineq:card-d01}\\
&[\![\{(\bm{i}, \bm{j},\bm{k}, \bm{p}) \in I_n^4(\bm{\ell}): \bm{i} \neq \bm{j} \neq \bm{k} \neq \bm{p},\ d_2(\bm{i},\bm{j},\bm{k}, \bm{p})=d_{02},\ \text{and}\ d_3(\bm{i},\bm{j},\bm{k}, \bm{p})=d_{03}\}]\!] \nonumber \\ 
&\quad \lesssim (d_{02} + d_{03})^{3d-1}[\![I_n(\bm{\ell})]\!].  \label{ineq:card-d023}
\end{align}
For $Q_{n4}$, by (\ref{ineq:card-d01}) and applying the same argument to show (\ref{ineq:Qn2}), we have
\begin{align}
Q_{n4} &\lesssim A_n^{(1)}\sum_{k=1}^{\overline{A}_{n1}} k^{2d-1}\alpha^{1-4/q}(\max\{k-d,0\};2)\E[|\tilde{K}_n(\bm{i})|^q]^{2/q}\E[|\tilde{K}_n(\bm{j})|^q]^{1/q}\E[|\tilde{K}_n(\bm{k})|^q]^{1/q} \nonumber \\
&\lesssim A_n^{(1)}(nA_n^{-1})^4\zeta_J^4 \lambda_J^4\left(1 + \sum_{k=1}^{\overline{A}_{n1}}k^{2d-1}\alpha_1^{1-4/q}(k)\right).  \label{ineq:Qn4}
\end{align}
Define
\begin{align*}
I_{n1}(\bm{\ell}) &= \{(\bm{i}, \bm{j},\bm{k}, \bm{p}) \in I_n^4(\bm{\ell}): \bm{i} \neq \bm{j} \neq \bm{k} \neq \bm{p},\ d_2(\bm{i},\bm{j},\bm{k}, \bm{p}) \geq d_3(\bm{i},\bm{j},\bm{k}, \bm{p})\},\\
I_{n2}(\bm{\ell}) &= \{(\bm{i}, \bm{j},\bm{k}, \bm{p}) \in I_n^4(\bm{\ell}): \bm{i} \neq \bm{j} \neq \bm{k} \neq \bm{p},\ d_2(\bm{i},\bm{j},\bm{k}, \bm{p}) < d_3(\bm{i},\bm{j},\bm{k}, \bm{p})\}.
\end{align*}
For $Q_{n5}$, by (\ref{ineq:card-d023}) and applying the same argument to show (\ref{ineq:Qn2}), we have
\begin{align}
Q_{n5} &= \sum_{(\bm{i}, \bm{j},\bm{k}, \bm{p}) \in I_{n1}(\bm{\ell})}\!\!\!\!\!\!\E\left[\tilde{K}_n(\bm{i})\tilde{K}_n(\bm{j})\tilde{K}_n(\bm{k})\tilde{K}_n(\bm{p})\right] + \sum_{(\bm{i}, \bm{j},\bm{k}, \bm{p}) \in I_{n2}(\bm{\ell})}\!\!\!\!\!\!\E\left[\tilde{K}_n(\bm{i})\tilde{K}_n(\bm{j})\tilde{K}_n(\bm{k})\tilde{K}_n(\bm{p})\right] \nonumber \\
&\lesssim A_n^{(1)}\sum_{k=1}^{\overline{A}_{n1}} k^{3d-1}\alpha^{1-4/q}(\max\{k-d,0\};3) \nonumber \\
&\quad \times \E[|\tilde{K}_n(\bm{i})|^q]^{1/q}\E[|\tilde{K}_n(\bm{j})|^q]^{1/q}\E[|\tilde{K}_n(\bm{k})|^q]^{1/q}\E[|\tilde{K}_n(\bm{p})|^q]^{1/q} \nonumber \\
&\quad + \left(\sum_{\bm{i}, \bm{j} \in I_n(\bm{\ell}), \bm{i} \neq \bm{j}}\left|\E[\tilde{K}_n(\bm{i})\tilde{K}_n(\bm{j})]\right|\right)^2 + A_n^{(1)}\sum_{k=1}^{\overline{A}_{n1}} k^{3d-1}\alpha^{1-4/q}(\max\{k-d,0\};2) \nonumber \\
&\quad \times \E[|\tilde{K}_n(\bm{i})|^q]^{1/q}\E[|\tilde{K}_n(\bm{j})|^q]^{1/q}\E[|\tilde{K}_n(\bm{k})|^q]^{1/q}\E[|\tilde{K}_n(\bm{p})|^q]^{1/q} \nonumber \\
&\lesssim A_n^{(1)}(\overline{A}_{n1})^d(nA_n^{-1})^4 \zeta_J^4 \lambda_J^4\left(1 + \sum_{k=1}^{\overline{A}_{n1}}k^{2d-1}\alpha_1^{1-4/q}(k)\right).  \label{ineq:Qn5}
\end{align}
Combining (\ref{ineq:Qn1}), (\ref{ineq:Qn2}), (\ref{ineq:Qn3}), (\ref{ineq:Qn4}), and (\ref{ineq:Qn5}), we have
\begin{align*}
&\sum_{\bm{\ell} \in L_{n1}}\E[\tilde{K}_n^4(\bm{\ell}; \bm{\Delta}_0)]\\ 
&= \sum_{\bm{\ell} \in L_{n1}}\E\left[\left(\sum_{\bm{i} \in I_n(\bm{\ell})}\tilde{K}_n(\bm{i})\right)^4\right] \lesssim [\![L_{n1}]\!]A_n^{(1)}(\overline{A}_{n1})^d(nA_n^{-1})^4\zeta_J^4 \lambda_J^4\left(1 + \sum_{k=1}^{\overline{A}_{n1}}k^{2d-1}\alpha_1^{1-4/q}(k)\right)\\
&\lesssim {A_n \over A_n^{(1)}} A_n^{(1)}(\overline{A}_{n1})^d(nA_n^{-1})^4 \zeta_J^4 \lambda_J^4\left(1 + \sum_{k=1}^{\overline{A}_{n1}}k^{2d-1}\alpha_1^{1-4/q}(k)\right) = o\left(n^4A_n^{-2}\right). 
\end{align*}

(Step 4) Now we show (\ref{eq:Kn12-var}) and (\ref{eq:Kn13-var}). Define 
\begin{align*}
J_n &= \{\bm{i} \in \mathbb{Z}^d: (\bm{i} + (-1/2,1/2]^d) \cap R_n \neq \emptyset\},\ J_{n1} = \cup_{\bm{\ell} \in L_{n1}}I_{n}(\bm{\ell}),\\
J_{n2} &= \{\bm{i} \in J_n \!:\! \bm{i} + (-1/2,1/2]^d \!\subset \! \Gamma_n(\bm{\ell};\bm{\Delta})\ \text{for some}\ \bm{\ell} \in L_{n1}, \bm{\Delta} \!\neq \! \bm{\Delta}_0\},\ J_{n3} = J_n \backslash (J_{n1} \!\cup \! J_{n2}). 
\end{align*}
Note that $[\![J_{n2}]\!] \lesssim (\overline{A}_{n1})^{d-1}\overline{A}_{n2}A_n(A_n^{(1)})^{-1}$ and $[\![J_{n3}]\!] \lesssim A_n^{(1)}\left({  \overline{A}_n \over \underline{A}_{n1}}\right)^{d-1}$. Then, applying the same argument to show (\ref{ineq:Qn2}), we have
\begin{align*}
\Var(K_{n,12}) &\lesssim [\![J_{n2}]\!](nA_n^{-1})^2\zeta_J^2\lambda_J^2\left(1 + \sum_{k=1}^{\overline{A}_{n1}}k^{d-1}\alpha_1^{1-2/q}(k)\right)\\
&\lesssim {(\overline{A}_{n1})^{d-1}\overline{A}_{n2} \over A_n^{(1)}}n^2A_n^{-1}\zeta_J^2\lambda_J^2\left(1 + \sum_{k=1}^{\overline{A}_{n1}}k^{d-1}\alpha_1^{1-2/q}(k)\right) = o\left( n^2A_n^{-1}\right). \\
\Var(K_{n,13}) &\lesssim [\![J_{n3}]\!](nA_n^{-1})^2 \zeta_J^2\lambda_J^2\left(1 + \sum_{k=1}^{\overline{A}_{n1}}k^{d-1}\alpha_1^{1-2/q}(k)\right)\\
&\lesssim\!  {A_n^{(1)} \over A_n}\left({\overline{A}_n \over \underline{A}_{n1}}\right)^{d-1}n^2A_n^{-1}\zeta_J^2\lambda_J^2 \!\left(\!\! 1 \!+\! \sum_{k=1}^{\overline{A}_{n1}}k^{d-1}\alpha_1^{1-2/q}(k) \!\! \right) = o\left( n^2A_n^{-1}\right).  
\end{align*}

(Step 5) Now we show (\ref{eq:Kn-var-diff}). By (\ref{eq:Kn12-var}) and (\ref{eq:Kn13-var}), we have for sufficiently large $n$, 
\begin{align*}
\E[K_{n,11}^2] &= \E[\{\tilde{K}_{n,1} - (K_{n,12} + K_{n,13})\}^2] \leq 2\left(\E[\tilde{K}_{n,1}^2] + \E[(\tilde{K}_{n,12} + \tilde{K}_{n,13})^2]\right) \leq 4\E[\tilde{K}_{n,1}^2]. 
\end{align*}
Thus, by (\ref{ineq:Ga12-dist}), (\ref{eq:Kn12-var}), and (\ref{eq:Kn13-var}), we have
\begin{align*}
&\left|\sum_{\bm{\ell} \in L_{n1}}\E[(k_1^{(\bm{\ell}; \bm{\Delta}_0)})^2] - \E[\tilde{K}_{n,1}^2]\right|\\
&\leq  \left|\sum_{\bm{\ell} \in L_{n1}}\E[(k_1^{(\bm{\ell}; \bm{\Delta}_0)})^2] - \E[K_{n,11}^2]\right| + 2\E[(K_{n,12} + K_{n,13})^2]^{1/2}\E[K_{n,11}^2]^{1/2} + \E[(K_{n,12} + K_{n,13})^2]\\
&\lesssim \left(A_n^{(1)}nA_n^{-1}\right)^2 \zeta_J^2\lambda_J^2 \!\!\sum_{\bm{\ell}_1 \neq \bm{\ell}_2} \!\! \alpha^{1-2/q}(\max\{|\bm{\ell}_1 \!-\! \bm{\ell}_2|-d, 0\}\underline{A}_{n3} \!+\! \underline{A}_{n2};A_n^{(1)}) \!+\! o\left(n^2A_n^{-1}\right)\\
&\lesssim \left(A_n^{(1)}nA_n^{-1}\right)^2\zeta_J^2\lambda_J^2 A_n (A_n^{(1)})^{-1}  \\ 
&\quad \times \left(\alpha^{1-2/q}(\underline{A}_{n2};A_n^{(1)}) + \sum_{k=1}^{\overline{A}_{n}/\underline{A}_{n1}}k^{d-1}\alpha^{1-2/q}(\max\{k-d, 0\}\underline{A}_{n3} + \underline{A}_{n2};A_n^{(1)})\right) + o\left(n^2A_n^{-1}\right)\\
& = o\left(n^2A_n^{-1}\right).
\end{align*}

\subsection{Proof of Proposition \ref{prp:var-est-trend}}

Note that $\hat{\Omega}_J(\bm{z}_1,\bm{z}_2) = \tilde{\psi}_J(\bm{z}_1)'\tilde{G}_J\tilde{\psi}_J(\bm{z}_2)$ where 
\begin{align*}
\tilde{G}_J &= {A_n \over n^2}\sum_{i,j=1}^n \left({\tilde{\Psi}'_{J,n}\tilde{\Psi}_{J,n} \over n} + \varsigma_{J,n} \check{\Psi}_J^{-1}\right)^{-1}\tilde{\psi}_J\left({\bm{S}_i \over A_n}\right)\tilde{\psi}_J\left({\bm{S}_j \over A_n}\right)'\left({\tilde{\Psi}'_{J,n}\tilde{\Psi}_{J,n} \over n} + \varsigma_{J,n}\check{\Psi}_J^{-1} \right)^{-1}\\
&\quad \quad \times \left(Y(\bm{S}_i) - \hat{m}\left({\bm{S}_i \over A_n}\right)\!\right)\left(Y(\bm{S}_j) - \hat{m}\left({\bm{S}_j \over A_n}\right)\!\right)\bar{K}_b(\bm{S}_i - \bm{S}_j).
\end{align*}
Then it suffices to show that as $n,J \to \infty$, 
\begin{align}\label{eq:W_nJ-conv}
W_{n,J}(\bm{z}_1,\bm{z}_2) &:= \left({\tilde{\psi}_J(\bm{z}_1) \over \|\tilde{\psi}_J(\bm{z}_1)\|}\right)'\tilde{G}_J\left({\tilde{\psi}_J(\bm{z}_2) \over \|\tilde{\psi}_J(\bm{z}_2)\|}\right) = \left({\tilde{\psi}_J(\bm{z}_1) \over \|\tilde{\psi}_J(\bm{z}_1)\|}\right)'G_J\left({\tilde{\psi}_J(\bm{z}_2) \over \|\tilde{\psi}_J(\bm{z}_2)\|}\right) + o_p(1). 
\end{align}
Now we restrict our attention to the case $\bm{z}_1 = \bm{z}_2 = \bm{z}$. The proofs for other cases are similar. Define $W_{n,J}(\bm{z}):= W_{n,J}(\bm{z},\bm{z})$. Applying Theorem \ref{thm:unif-rate1}, we have 
\begin{align*}
W_{n,J}(\bm{z}) 
&= {A_n \over n^2}\sum_{i,j=1}^{n}k_J\left({\bm{S}_i \over A_n}\right)k_J\left({\bm{S}_j \over A_n}\right)\bar{K}_b(\bm{S}_i - \bm{S}_j)(e_{n,i} + \varepsilon_{n,i})(e_{n,j} + \varepsilon_{n,j}) + o_p(1)\\
&= {A_n \over n^2}\sum_{i,j=1}^{n}k_J\left({\bm{S}_i \over A_n}\right)k_J\left({\bm{S}_j \over A_n}\right)\bar{K}_b(\bm{S}_i - \bm{S}_j)e_{n,i}e_{n,j}\\
&\quad + {2A_n \over n^2}\sum_{i,j=1}^{n}k_J\left({\bm{S}_i \over A_n}\right)k_J\left({\bm{S}_j \over A_n}\right)\bar{K}_b(\bm{S}_i - \bm{S}_j)e_{n,i}\varepsilon_{n,j}\\
&\quad + {A_n \over n^2}\sum_{i,j=1}^{n}k_J\!\left({\bm{S}_i \over A_n}\right)k_J\!\left({\bm{S}_j \over A_n}\right)\bar{K}_b(\bm{S}_i - \bm{S}_j)\varepsilon_{n,i}\varepsilon_{n,j} + o_p(1) =: W_{n,1} + W_{n,2} + W_{n,3} + o_p(1).
\end{align*}

For $W_{n,3}$, observe that 
\begin{align*}
W_{n,3} &= {A_n \over n}\sum_{i=1}^{n}k_J^2\left({\bm{S}_i \over A_n}\right)\varepsilon_{n,i}^2 + {A_n \over n^2}\sum_{i \neq j}^{n}k_J\left({\bm{S}_i \over A_n}\right)k_J\left({\bm{S}_j \over A_n}\right)\bar{K}_b(\bm{S}_i - \bm{S}_j)\varepsilon_{n,i}\varepsilon_{n,j} =: W_{n,31} + W_{n,32}. 
\end{align*}

For $W_{n,31}$, we have 
\begin{align*}
\E[W_{n,31}] &= {A_n \over n}\int k_J^2(\bm{x})\sigma_{\varepsilon}^2(\bm{x}/A_n)A_n^{-1}g(\bm{x}/A_n)d\bm{x} = {A_n \over n}\int k_J^2(\bm{v})\sigma_{\varepsilon}^2(\bm{v} )g(\bm{v})d\bm{v}\\
&= \kappa \left(\tilde{\psi}_J(\bm{z}) \over \|\tilde{\psi}_J(\bm{z})\|\right)'\left(\int \sigma_{\varepsilon}^2(\bm{v})\tilde{\psi}_J(\bm{v})\tilde{\psi}_J(\bm{v})'g(\bm{v})d\bm{v}\right)\left(\tilde{\psi}_J(\bm{z}) \over \|\tilde{\psi}_J(\bm{z})\|\right)'(1 + o(1)),\\ 
\Var(W_{n,31}) &= \left({A_n \over n^2}\right)^2n\Var(k_J^2(\bm{S}_1/A_n)\varepsilon_{n,1}^2) \leq \left({A_n \over n^2}\right)^2nE[k_J^4(\bm{S}_1/A_n)\varepsilon_{n,1}^4] \lesssim {A_n^2 \over n^2}{J^2 \over n} = o(1). 
\end{align*}
Then we have 
\begin{align*}
W_{n,31} = \kappa \left(\tilde{\psi}_J(\bm{z}) \over \|\tilde{\psi}_J(\bm{z})\|\right)'\left(\int \sigma_{\varepsilon}^2(\bm{v})\tilde{\psi}_J(\bm{v})\tilde{\psi}_J(\bm{v})'g(\bm{v})d\bm{v}\right)\left(\tilde{\psi}_J(\bm{z}) \over \|\tilde{\psi}_J(\bm{z})\|\right)' + o_p(1).
\end{align*}

For $W_{n,32}$, applying similar arguments in the proof of Theorem \ref{thm:asy-norm1}, 
we have $\E[W_{n,32}] = 0$ and 
\begin{align}
&\left({A_n \over n^2}\right)^{-2}\E[W_{n,32}^2] \nonumber \\
&= \sum_{i \neq j, k \neq \ell}\!\!\!\!\!\E\!\left[k_J(\bm{S}_i/A_n)k_J(\bm{S}_j/A_n)\bar{K}_b(\bm{S}_i - \bm{S}_j)k_J(\bm{S}_k/A_n)k_J(\bm{S}_\ell/A_n)\bar{K}_b(\bm{S}_k - \bm{S}_\ell)\varepsilon_{n,i}\varepsilon_{n,j}\varepsilon_{n,k}\varepsilon_{n,\ell}\right] \nonumber \\
&= \sum_{i \neq j}\E\left[k_J^2(\bm{S}_i/A_n)k_J^2(\bm{S}_j/A_n)\bar{K}_b^2(\bm{S}_i - \bm{S}_j)\varepsilon_{n,i}^2\varepsilon_{n,j}^2\right] \nonumber \\
&= n(n-1)\int_{R_0^2} \bar{K}_b(A_n(\bm{z}_1 - \bm{z}_2) )k_J^2(\bm{z}_1)k_J^2(\bm{z}_2)\sigma_{\varepsilon}^2(\bm{z}_1)\sigma_{\varepsilon}^2(\bm{z}_2)A_n^{-2}g(\bm{z}_1 )g(\bm{z}_2 )d\bm{z}_1 d\bm{z}_2 \nonumber \\
&= n(n-1)A_n^{-1}b_1 \dots b_d\int_{\bar{R}_n/\bm{b}}\bar{K}^2(\bm{v})\left(\int k_J^2\left(\bm{z}_2 + {\bm{v} \circ \bm{b} \over A_n}\right)k_J^2(\bm{z}_2) \right. \nonumber \\
&\left. \quad \times \sigma_{\varepsilon}^2\left(\bm{z}_2 + {\bm{v} \circ \bm{b} \over A_n}\right)\sigma_{\varepsilon}^2\left(\bm{z}_2 \right)g\left(\bm{z}_2  + {\bm{v} \circ \bm{b} \over A_n}\right)g\left(\bm{z}_2\right)d\bm{z}_2 \right)d\bm{v} \lesssim n(n-1)A_n^{-1}b_1 \dots b_d J^2. \label{Vn32-new}
\end{align}
Then we have $\E[W_{n,32}] = O\left({A_n \over n}{b_1 \dots b_d J^2 \over n}\right) = o(1)$ and this yields $W_{n,32} = o_p(1)$. The results on $W_{n,31}$ and $W_{n,32}$ yield
\begin{align}\label{Vn3-new}
W_{n,3} &\stackrel{p}{\to} \kappa \left(\tilde{\psi}_J(\bm{z}) \over \|\tilde{\psi}_J(\bm{z})\|\right)'\left(\int \sigma_{\varepsilon}^2(\bm{v})\tilde{\psi}_J(\bm{v})\tilde{\psi}_J(\bm{v})'g(\bm{v})d\bm{v}\right)\left(\tilde{\psi}_J(\bm{z}) \over \|\tilde{\psi}_J(\bm{z})\|\right)'.
\end{align}

For $W_{n,2}$, observe that
\begin{align*}
&\left({2A_n \over n^2}\right)^{-2}\E_{\cdot|\bm{S}}[W_{n,2}^2]\\ 
&= \sum_{i=1}^{n}k_J^4(\bm{S}_i/A_n)\eta^2(\bm{S}_i/A_n)\sigma_{\varepsilon}^2(\bm{S}_j/A_n)\\
&\quad + \sum_{i\neq j}^{n}k_J^2(\bm{S}_i/A_n)k_J^2(\bm{S}_j/A_n)\bar{K}_b(\bm{S}_i - \bm{S}_j)\eta(\bm{S}_i/A_n)\eta(\bm{S}_j/A_n)\sigma_{\varepsilon}^2(\bm{S}_j/A_n)\\
&\quad + \sum_{i \neq j}^{n}k_J(\bm{S}_i/A_n)k_J^3(\bm{S}_j/A_n)\bar{K}_b(\bm{S}_i - \bm{S}_j)\eta(\bm{S}_i/A_n)\eta(\bm{S}_j/A_n)\sigma_{\bm{e}}(\bm{S}_i - \bm{S}_j)\sigma_{\varepsilon}^2(\bm{S}_j/A_n)\\
&\quad + \sum_{i \neq j \neq  \ell}^{n}k_J(\bm{S}_i/A_n)k_J^2(\bm{S}_j/A_n)k_J(\bm{S}_\ell)\bar{K}_b(\bm{S}_i - \bm{S}_j)\bar{K}_b(\bm{S}_\ell - \bm{S}_j)\\
&\quad \quad \times \eta(\bm{S}_i/A_n)\eta(\bm{S}_j/A_n)\sigma_{\bm{e}}(\bm{S}_i - \bm{S}_\ell)\sigma_{\varepsilon}^2(\bm{S}_j/A_n) =: W_{n,21} + W_{n,22} + W_{n,23} + W_{n,24}
\end{align*}
where $\E_{\cdot|\bm{S}}[\cdot]$ denote the conditional expectation given the $\sigma$-field generated by $\{\bm{S}_i\}_{i \geq 1}$. For $W_{n,21}$, we have $\E[W_{n,21}] = O(nJ^2)$ and this yields $\left({2A_n \over n^2}\right)^2\!\E[W_{n,21}] = o(1)$. For $W_{n,22}$, applying similar arguments to show (\ref{Vn32-new}), we have $\E[W_{n,22}] \lesssim n^2A_n^{-1}b_1 \dots b_dJ^2$. This yields $\left({2A_n \over n^2}\right)^2\E[W_{n,22}] = o(1)$. For $W_{n,23}$, applying similar arguments in the proof of Theorem \ref{thm:asy-norm1}, we have
\begin{align*}
\E[W_{n,23}] &\lesssim \sum_{i \neq j}^{n}\E\left[k_J(\bm{S}_i/A_n)k_J^3(\bm{S}_j/A_n)\eta(\bm{X}_i/A_n)\eta(\bm{X}_j/A_n)\sigma_{\bm{e}}(\bm{X}_i - \bm{X}_j)\sigma_{\varepsilon}^2(\bm{X}_j/A_n)\right]\\
&= O\left(n^2A_n^{-1}J^2 \right).
\end{align*}
This yields $\left({2A_n \over n^2}\right)^2\E[W_{n,23}] = O\left({A_n \over  n}{J^2 \over n}\right) = o(1)$.

For $W_{n,24}$, applying similar arguments in the proof of Theorem \ref{thm:asy-norm1}, we have
\begin{align*}
\E[W_{n,24}] 
&= n(n-1)(n-2)\int_{R_0^3} \bar{K}_b(A_n(\bm{z}_1 - \bm{z}_2))\bar{K}_b(A_n(\bm{z}_3 - \bm{z}_2))\\
&\quad \times \sigma_{\bm{e}}(A_n(\bm{z}_1 - \bm{z}_3))k_J(\bm{z}_1)k_J^2(\bm{z}_2)k_J(\bm{z}_3)\eta(\bm{z}_1)\eta(\bm{z}_2)\sigma_{\varepsilon}^2\left(\bm{z}_2 \right) \\
&\quad \times A_n^{-3}g(\bm{z}_1 )g(\bm{z}_2 )g(\bm{z}_3 )d\bm{z}_1 d\bm{z}_2d\bm{z}_3\\
&= n(n-1)(n-2)A_n^{-1}\int_{R_0}k_J^2(\bm{z}_2)\sigma_{\varepsilon}^2(\bm{z}_2 )g(\bm{z}_2)\\
&\quad \left\{\int_{\bar{R}_n}\sigma_{\bm{e}}(\bm{v})\left(\int \bar{K}\left({\bm{v} + A_n(\bm{z}_3-\bm{z}_2) \over \bm{b}}\right)\bar{K}\left({A_n(\bm{z}_3-\bm{z}_2)  \over \bm{b}}\right) \right. \right.  \\
&\left. \left .\quad \times k_J\!\!\left(\!\bm{z}_3 + {\bm{v}  \over A_n}\!\right)\!k_J(\bm{z}_3)\eta \! \left(\!\bm{z}_3\!  + {\bm{v} \over A_n}\!\right)\!\eta\left(\bm{z}_3\! \right)\!g\!\left(\bm{z}_3\! + {\bm{v}  \over A_n}\right)\!g\left(\bm{z}_3\right)d\bm{z}_3\! \right)\!d\bm{v}\! \right\}\! d\bm{z}_2\\
&= n(n-1)(n-2)A_n^{-2}b_1 \dots b_d\int_{R_0}k_J^2(\bm{z}_2)\sigma_{\varepsilon}^2(\bm{z}_2)g(\bm{z}_2)\\
&\quad \left\{\int_{\bar{R}_n}\sigma_{\bm{e}}(\bm{v})\left(\int \bar{K}\left({\bm{v} \over \bm{b}} + \bm{w} - {A_n\bm{z}_2  \over \bm{b}}\right)\bar{K}\left(\bm{w} - {A_n\bm{z}_2  \over \bm{b}}\right) \right. \right.  \\
&\left. \left .\quad \times k_J\!\!\left(\!{\bm{w}\circ \bm{b} \over A_n} + {\bm{v}  \over A_n}\!\right)\!k_J\!\left({\bm{w}\! \circ\! \bm{b} \over A_n}\right)\!\eta \! \left(\!{\bm{w}\! \circ \! \bm{b} \over A_n} + {\bm{v} \over A_n}\!\right)\!\eta\!\left({\bm{w} \! \circ \! \bm{b} \over A_n}\right) \right. \right. \\
&\left. \left. \quad \times g\!\left({\bm{w} \! \circ \! \bm{b} \over A_n} + {\bm{v}  \over A_n}\right)\!g\! \left({\bm{w} \! \circ \! \bm{b} \over A_n}\right)\!d\bm{w}\! \right)\!d\bm{v}\! \right\}\! d\bm{z}_2 = O\left(n^3A_n^{-2}b_1 \dots b_d J^2\right).
\end{align*}
This yields $\left({2A_n \over n^2}\right)^2\E[W_{n,24}] = O\left({b_1 \dots b_d J^2 \over n}\right) = o(1)$. The results on $W_{n,21}$ $W_{n,22}$, $W_{n,23}$, and $W_{n,24}$ yield
\begin{align}\label{Vn2-new}
W_{n,2} &\stackrel{p}{\to} 0.
\end{align}

For $W_{n,1}$, applying similar arguments to show (\ref{Vn32-new}), we have
\begin{align*}
\left({A_n \over n^2}\right)^{-1}\E[W_{n,1}] 
&= n\int k_J^2(\bm{v})\eta^2(\bm{v})g(\bm{v})d\bm{v} + n(n-1)A_n^{-1}\int_{\bar{R}_n}\!\!\! \!\sigma_{\bm{e}}(\bm{v})\bar{K}_b(\bm{v}) \left(\int\!\!k_J\!\! \left(\!\bm{z}_2  \!+\! {\bm{v}\over A_n}\!\right)\!\! k_J(\bm{z}_2)  \right. \\
&\left. \quad  \times \eta\! \left(\!\bm{z}_2  \!+\! {\bm{v} \over A_n}\!\right)\!\eta(\bm{z}_2 )g\! \left(\! \bm{z}_2 \!+\! {\bm{v} \over A_n}\!\right)g(\bm{z}_2)d\bm{z}_2 \!\right)d\bm{v} =: W_{n,11} + W_{n,12}. 
\end{align*}

For $W_{n,12}$, we have
\begin{align*}
W_{n,12} &= {n(n-1) \over A_n}\int_{\bar{R}_n}\!\! \!\sigma_{\bm{e}}(\bm{v}) \left(\int  k_J\!\! \left(\!\bm{z}_2  \!+\! {\bm{v} \over A_n}\!\right)\!\! k_J(\bm{z}_2) \eta\! \left(\!\bm{z}_2  \!+\! {\bm{v} \over A_n}\!\right)\!\eta(\bm{z}_2 )g\! \left(\! \bm{z}_2  \!+\! {\bm{v} \over A_n}\!\right)g(\bm{z}_2 )d\bm{z}_2 \!\right)d\bm{v}\\
&\quad + {n(n-1) \over A_n}\int_{\bar{R}_n}\!\! \!\sigma_{\bm{e}}(\bm{v})(\bar{K}_b(\bm{v})-1) \left(\int k_J\!\! \left(\!\bm{z}_2  \!+\! {\bm{v} \over A_n}\!\right)\!\! k_J(\bm{z}_2)  \right. \\
&\left. \quad  \times \eta\! \left(\!\bm{z}_2 \!+\! {\bm{v} \over A_n}\!\right)\!\eta(\bm{z}_2)g\! \left(\! \bm{z}_2 \!+\! {\bm{v} \over A_n}\!\right)g(\bm{z}_2)d\bm{z}_2 \!\right)d\bm{v} =: W_{n,121} + W_{n,122}. 
\end{align*}

For $W_{n,121}$, from the proof of Theorem \ref{thm:asy-norm1}, we have
\begin{align*}
W_{n,121} &= {n^2 \over A_n}\left({\tilde{\psi}_J(\bm{z}) \over \|\tilde{\psi}_J(\bm{z})\|}\right)'\left(\int \eta^2(\bm{v})\tilde{\psi}_J(\bm{v})\tilde{\psi}_J(\bm{v})'g^2(\bm{v})d\bm{v}\right)\left(\int \sigma_{\bm{e}}(\bm{x})d\bm{x}\right)\left({\tilde{\psi}_J(\bm{z}) \over \|\tilde{\psi}_J(\bm{z})\|}\right)(1 + o(1)).
\end{align*}

For $W_{n,122}$, observe that  for any $M>0$, 
\begin{align*}
W_{n,122} 
&= n(n-1)A_n^{-1}\int_{\bar{R}_n \cap \{\|\bm{v}\| \leq M\}}\!\!\!\!\!\!\!\!\!\!\! \!\!\!\!\!\sigma_{\bm{e}}(\bm{v})(\bar{K}_b(\bm{v})-1) \left(\int \!\!k_J\!\! \left(\!\bm{z}_2  \!+\! {\bm{v}\over A_n}\!\right)\!\! k_J(\bm{z}_2) \eta\! \left(\!\bm{z}_2  \!+\! {\bm{v} \over A_n}\!\right)\!\eta(\bm{z}_2) \right. \\
&\left. \quad \times g\! \left(\! \bm{z}_2  \!+\! {\bm{v} \over A_n}\!\right)g(\bm{z}_2 )d\bm{z}_2 \!\right)d\bm{v}\\
& + n(n-1)A_n^{-1}
\int_{\bar{R}_n \cap \{\|\bm{v}\| > M\}}\!\!\!\!\!\!\!\!\!\!\! \!\!\!\!\!\sigma_{\bm{e}}(\bm{v})(\bar{K}_b(\bm{v})-1) \left(\int \!\!k_J\!\! \left(\!\bm{z}_2  \!+\! {\bm{v} \over A_n}\!\right)\!\! k_J(\bm{z}_2)\eta\! \left(\!\bm{z}_2 \!+\! {\bm{v} \over A_n}\!\right)\!\eta(\bm{z}_2 )  \right. \\
&\left. \quad  \times g\! \left(\! \bm{z}_2  \!+\! {\bm{v} \over A_n}\!\right)g(\bm{z}_2 )d\bm{z}_2 \!\right)d\bm{v} =: W_{n,1221} + W_{n,1222}. 
\end{align*} 
Observe that
\begin{align*}
|W_{n,1221}| &\lesssim n^2A_n^{-1}{M \over \min_{1 \leq j \leq d}b_j},\ |W_{n,1222}| \lesssim n^2A_n^{-1}\int_{\|\bm{v}\|>M}|\sigma_{\bm{e}}(\bm{v})|d\bm{v}. 
\end{align*}
Then by taking $M = \min_{1 \leq j \leq d}b_j^{1/2}$, we have 
\begin{align*}
W_{n,1221} &= O\left(n^2A_n^{-1}(\min_{1 \leq j \leq d}b_j)^{-1/2}\right),\ W_{n,1222} = o\left(n^2A_n^{-1}\right). 
\end{align*}
The results on $W_{n,121}$, $W_{n,1221}$, and $W_{n,1222}$ yield
\[
W_{n,12} = {n^2 \over A_n}\left({\tilde{\psi}_J(\bm{z}) \over \|\tilde{\psi}_J(\bm{z})\|}\right)'\left(\int \eta^2(\bm{v})\tilde{\psi}_J(\bm{v})\tilde{\psi}_J(\bm{v})'g^2(\bm{v})d\bm{v}\right)\left(\int \sigma_{\bm{e}}(\bm{x})d\bm{x}\right)\left({\tilde{\psi}_J(\bm{z}) \over \|\tilde{\psi}_J(\bm{z})\|}\right)(1 + o(1)).
\]
This yields
\begin{align}\label{Vn1-new}
\E[W_{n,1}] &= \left({\tilde{\psi}_J(\bm{z}) \over \|\tilde{\psi}_J(\bm{z})\|}\right)'\left\{\kappa \int \eta^2(\bm{v})\tilde{\psi}_J(\bm{v})\tilde{\psi}_J(\bm{v})'g(\bm{v})d\bm{v} \right. \nonumber \\
&\left. \quad + \left(\int \eta^2(\bm{v})\tilde{\psi}_J(\bm{v})\tilde{\psi}_J(\bm{v})'g^2(\bm{v})d\bm{v}\right)\left(\int \sigma_{\bm{e}}(\bm{x})d\bm{x}\right)\right\}\left({\tilde{\psi}_J(\bm{z}) \over \|\tilde{\psi}_J(\bm{z})\|}\right)(1 + o(1)).
\end{align}

Combining (\ref{Vn3-new}), (\ref{Vn2-new}), (\ref{Vn1-new}) and the results in the proof of Theorem \ref{thm:asy-norm1}, we obtain (\ref{eq:W_nJ-conv})
and this yields the desired result.

\section{Proof for Section \ref{Appendix:example}}\label{Appendix:ex-proof}

\subsection{Proof of Proposition \ref{prp:LevyMA-ex}}

Note that
\begin{align*}
{\tilde{\psi}_J(\bm{z})'\tilde{\Psi}_{J,n}'e_n \over n} = {1 \over n}\!\sum_{i=1}^{n}\tilde{\psi}_J(z)'\tilde{\psi}_J\!\left({\bm{S}_i \over A_n}\right)\eta\!\left({\bm{S}_i \over A_n}\right)\!e_{1,m_n}\!(\bm{S}_i) + {1 \over n}\!\sum_{i=1}^{n}\tilde{\psi}_J(z)'\tilde{\psi}_J\!\left({\bm{S}_i \over A_n}\right)\eta\!\left({\bm{S}_i \over A_n}\right)e_{2,m_n}\!(\bm{S}_i).
\end{align*}
To verify the asymptotic negligibility of the random field $\bm{e}_{2,m_n}$, it suffices to show that 
\begin{align}\label{eq:e2m-asy-neg}
\max_{1 \leq i \leq n}|e_{2,m_{n}}(\bm{S}_i)| = O_p\left(\exp\left(-{r_1 \over 8}n^{{\mathfrak{c}_0\mathfrak{c}_1\mathfrak{c}_2 \over 2d}}\right)\right),\ n \to \infty.
\end{align}
Indeed, (\ref{eq:e2m-asy-neg}) yields that
\begin{align*}
\sup_{\bm{z} \in R_0}\left|{1 \over n}\sum_{i=1}^{n}\tilde{\psi}_J(z)'\tilde{\psi}_J\left({\bm{S}_i \over A_n}\right)\eta\left({\bm{S}_i \over A_n}\right)e_{2,m_n}(\bm{S}_i)\right| &\leq \sup_{\bm{z} \in R_0}\|\tilde{\psi}_J(\bm{z})\|{1 \over n}\sum_{i=1}^{n}\left\|\tilde{\psi}_J\left({\bm{S}_i \over A_n}\right)\right\|\left|\eta\left({\bm{S}_i \over A_n}\right)\right||e_{2,m_n}(\bm{S}_i)|\\
&\lesssim (\zeta_J \lambda_J)^2\max_{1 \leq i \leq n}|e_{2,m_{n}}(\bm{S}_i)| \lesssim \exp\left(-{r_1 \over 16}n^{{\mathfrak{c}_0 \mathfrak{c}_1 \mathfrak{c}_2 \over 2d}}\right),
\end{align*}
which implies that $\bm{e}_{2,m_n}$ is asymptotically negligible. Hence we can replace $\bm{e}$ with $\bm{e}_{1,m_n}$ in the results in Section \ref{Sec:reg-trend}. 

Now we show (\ref{eq:e2m-asy-neg}). Note that under Condition (a), we have $E[|e(\bm{0})|^9]<\infty$ since $\bm{e}$ is Gaussian. Under Condition (b), we also have $E[|L([0,1]^d)|^9] < \infty$ since $\int_{|x|>1}|x|^9\nu_0(x)dx <\infty$ (cf. Theorem 25.3 in \cite{Sa99}). Define $\sigma_{\bm{e}_{1,m_n}}(\bm{x}) = E[e_{1,m_n}(\bm{0})e_{1,m_n}(\bm{x})]$. 
Then we have that 
\begin{align*}
E[|e_{1,m_n}(\bm{0})|^9] & \leq E[|e(\bm{0})|^9] \lesssim \int e^{-9r_1\|\bm{u}\|}d\bm{u}<\infty, \\ 
|\sigma_{\bm{e}_{1,m_n}}\!(\bm{x})| & \lesssim \!|E[e(\bm{0})e(\bm{x})]| \!\lesssim \!\int \!\!\!e^{-r_1\|\bm{u}\|}e^{-r_1\|\bm{x}-\bm{u}\|}d\bm{u} \leq \!\int \!\!\!e^{-r_1\|\bm{u}\|}e^{-{r_1 \over 2}(\|\bm{x}\|-\|\bm{u}\|)}d\bm{u} \lesssim  e^{-{r_1 \over 2}\|\bm{x}\|}. 
\end{align*}
The latter implies that $\int |\sigma_{\bm{e}_{1,m_n}}(\bm{v})|d\bm{v}<\infty$. Likewise, 
\begin{align*}
E[(e_{2,m_n}(\bm{0}))^4] &\lesssim \int_{\mathbb{R}^{d}}\!\!\! e^{-4r_1\|\bm{u}\|}\left(1- \mathfrak{t}\left(\|\bm{u}\| : m_{n}\right)\right)^{4}d\bm{u} \lesssim  \int_{\|\bm{u}\| \geq m_{n}/4}\!\!\!\!\!\!\!\!e^{-4r_1\|\bm{u}\|}\left|1 + {4 \over m_{n}}\left(\|\bm{u}\| - {m_{n} \over 2}\right)\right|^{4}d\bm{u}\\
& \lesssim \int_{\|\bm{u}\| \geq m_{n}/4}\!\!\!\!\!\!\!\!e^{-4r_1\|\bm{u}\|}\left|1 + {4\|\bm{u}\| \over m_{n}}\right|^{4}d\bm{u} \leq 2^{q-1}  \int_{\|\bm{u}\| \geq m_{n}/4}\!\!\!\!\!\!\!\!e^{-4r_1\|\bm{u}\|}\left(1 + {4^{4}\|\bm{u}\|^{4} \over m_{n}^{4}}\right)d\bm{u}\\
&\lesssim  \int_{m_{n}/4}^{\infty}e^{-4r_1t}\left(1 + {4^{4}t^{4} \over m_{n}^{4}}\right)t^{d-1}dt \lesssim  m_{n}^{d-1}e^{-r_1m_{n}}.
\end{align*}
By Markov's inequality and Lemma 2.2.2 in \cite{vaWe96}, we have 
\begin{align*}
\Prob\left(\max_{1 \leq i \leq n}\left|e_{2,m_{n}}(\bm{S}_i)\right| > \varrho \right) &\leq \varrho^{-1}\E\!\left[\max_{1 \leq i \leq n}\left|e_{2,m_{n}}(\bm{S}_i)\right|\right] \leq \varrho^{-1}n^{1/4}\max_{1 \leq i \leq n}\left(\!\E\left[\left|e_{2,m_n}(\bm{0})\right|^{4}\right]\right)^{1/4}\\
&\lesssim \varrho^{-1}n^{1/4}m_n^{(d-1)/4}e^{-r_1m_n/4}. 
\end{align*}
Letting $\varrho = \exp\left(-{r_1 \over 8}n^{{\mathfrak{c}_0 \mathfrak{c}_1 \mathfrak{c}_2 \over 2d}}\right)$, we have (\ref{eq:e2m-asy-neg}). 

Next, we verify that the random field $\bm{e}_{1,m_n}$ satisfies our regularity conditions. Let $\alpha_{\bm{e}_1}(a;b)$ be the $\alpha$-mixing coefficients of $\bm{e}_{1,m_n}$. Note that $\alpha_{\bm{e}_1}(a;b) \leq \alpha(a;b)$. Since $\bm{e}_{1,m_n}$ is $m_n$-dependent, we have $\alpha_2(\underline{A}_{n2}) = 0$, which yields
\begin{align}
& A_n (A_n^{(1)})^{-1}\alpha_1(\underline{A}_{n2})\alpha_2(A_n) = 0, \label{eq:Ass26-1}\\
&A_n^{(1)}(\zeta_J \lambda_J)^2 \left(\alpha_1^{1-2/q}(\underline{A}_{n2}) + \sum_{k=\underline{A}_{n1}}^{\infty}k^{d-1}\alpha_1^{1-2/q}(k)\right)\alpha_2^{1-2/q}(A_n^{(1)}) = 0. \label{eq:Ass26-4} 
\end{align}
Moreover, 
\begin{align}
A_n^{-1}(\bar{A}_{n1})^d (\zeta_J \lambda_J)^4\sum_{k=1}^{\overline{A}_{n1}}k^{2d-1}\alpha_1^{1-4/q}(k) 
& \lesssim A_n^{-1}(\bar{A}_{n1})^d J^2\sum_{k=1}^{m_n}k^{2d-1} \leq A_n^{-1}(\bar{A}_{n1})^d J^2m_n^{2d} \nonumber \\
&\lesssim n^{-\mathfrak{c}_0\left\{1-\mathfrak{c}_1(1+\mathfrak{c}_2)\right\}}J^2 = o(1). \label{eq:Ass26-2}
\end{align}
\begin{align}
\left({(\overline{A}_{n1})^{d-1}\overline{A}_{n2} \over A_n^{(1)}} + {A_n^{(1)} \over A_n}\left({\overline{A}_n \over \underbar{A}_{n1}}\right)^{d-1}\right)(\zeta_J\lambda_J)^2\sum_{k=1}^{\overline{A}_{n1}}k^{d-1}\alpha_1^{1-2/q}(k) &\lesssim \left\{ n^{{\mathfrak{c}_0 \mathfrak{c}_1 \mathfrak{c}_2 \over d} - {\mathfrak{c}_0 \mathfrak{c}_1 \over d}} + n^{{\mathfrak{c}_0 \mathfrak{c}_1 \over d} - {\mathfrak{c}_0 \over d}}\right\}Jm_n^{d} \nonumber \\ 
&\lesssim n^{-\mathfrak{c}_0 \mathfrak{c}_1\{{(1-\mathfrak{c}_2) \over d} - {\mathfrak{c}_2 \over 2}\}}J = o(1). \label{eq:Ass26-3}
\end{align}
Then Assumption \ref{Ass:asy-norm-trend} holds from (\ref{eq:Ass26-1}), (\ref{eq:Ass26-4}), (\ref{eq:Ass26-2}), and (\ref{eq:Ass26-3}). We can also verify Assumption \ref{Ass:ze-lam} and $A_n^{1/2}J^{-r/d} =o(1)$, which implies $A_n^{1/2}(\min_{1 \leq \ell \leq L}\|\tilde{\psi}_J(\bm{z}_\ell)\|)^{-1}\inf_{m \in \Psi_J}\|m_0 - m\|_\infty = o(1)$. Therefore, we obtain the desired result.

\section{Proofs for Section \ref{Sec:reg-covariate}}\label{Appendix:sec3}

\subsection{Proof of Proposition \ref{prp:variance2}}

Define $\check{B}_{J}^{(w)} = \E[b_J^{(w)}(\bm{S}_1/A_n, \bm{X}(\bm{S}_1))b_J^{(w)}(\bm{S}_1/A_n, \bm{X}(\bm{S}_1))']$ and 
\[
\bar{\mathfrak{m}}(\bm{z},\bm{x}) = \tilde{b}_J^{(w)}(\bm{z},\bm{x})'(\tilde{B}_{J,n}^{(w)'}\tilde{B}_{J,n}^{(w)}/n + \varsigma_{J,n} (\check{B}_{J}^{(w)})^{-1})^{-1} \tilde{B}_{J,n}^{(w)'}\mathfrak{M}_0/n.
\]
Note that 
\begin{align*}
\hat{\mathfrak{m}}(\bm{z},\bm{x}) - \tilde{\mathfrak{m}}(\bm{z},\bm{x}) &= \tilde{b}_J^{(w)}(\bm{z},\bm{x})'(\tilde{B}_{J,n}^{(w)'}\tilde{B}_{J,n}^{(w)}/n + \varsigma_{J,n} (\check{B}_{J}^{(w)})^{-1})^{-1} \tilde{B}_{J,n}^{(w)'}\nu_n/n\\
&\quad + \tilde{b}_J^{(w)}\!(\bm{z},\bm{x})'\!\left\{(\tilde{B}_{J,n}^{(w)'}\tilde{B}_{J,n}^{(w)}/n \!+\! \varsigma_{J,n} (\check{B}_{J}^{(w)})^{-1})^{-1} \!-  (\tilde{B}_{J,n}^{(w)'}\tilde{B}_{J,n}^{(w)}/n)^-\right\}\!\tilde{B}_{J,n}^{(w)'}\mathfrak{M}_0/n,
\end{align*}
where $\nu_n = (\nu_{n,1},\dots,\nu_{n,n})'$. Let $\mathcal{U}_n$ be the smallest subset of $R_0$ such that for each $\bm{z} \in R_0$ there exists a $\bm{z}_n \in R_0$ with $\|\bm{z} - \bm{z}_n\| \leq \eta_6 n^{-\eta_5}$ for some $\eta_5,\eta_6>0$. Applying Lemma \ref{lem:B} and almost the same argument in (Step 1) of the proof of Proposition \ref{prp:variance}, we have
\begin{align*}
&\Prob\left(\|\hat{\mathfrak{m}} - \bar{\mathfrak{m}}\|_\infty \geq 8C\zeta_{J,n}\lambda_{J,n}\sqrt{{\log n \over n}}\right)\\
&\leq \! \Prob\left(\!\max_{(\bm{z}_n,\bm{x}_n) \in \mathcal{U}_n \times D_n}\!\left|\tilde{b}_J^{(w)}\!(\bm{z}_n,\bm{x}_n)'\!\left\{\!\left(\!{\tilde{B}_{J,n}^{(w)'}\tilde{B}_{J,n}^{(w)} \over n} \!+ \varsigma_{J,n} (\check{B}_{J}^{(w)})^{-1}\!\!\right)^{-1} \!\!\!\!\!\!-\! I_J \!\right\} \!{\tilde{B}_{J,n}^{(w)}\nu_n \over n} \right| \! \geq \! 2C\zeta_{J,n}\lambda_{J,n}\!\sqrt{{\log n \over n}}\right)\\
&\quad + \Prob\left(\max_{(\bm{z}_n,\bm{x}_n) \in \mathcal{U}_n \times D_n}\left|\tilde{b}_J^{(w)}(\bm{z}_n,\bm{x}_n)'\tilde{B}_{J,n}^{(w)'}\nu_n/n \right| \geq 2C\zeta_{J,n}\lambda_{J,n}\sqrt{{\log n \over n}}\right) + o(1)\\
&=: \Upsilon_{n,1} + \Upsilon_{n,2} + o(1). 
\end{align*}
Applying almost the same argument to control $P_{n,1}$ and $P_{n,2}$ in the proof of Proposition \ref{prp:variance}, we have $\Upsilon_{n,1} = o(1)$ and $\Upsilon_{n,2} = o(1)$. Further, let $\mathcal{V}_n$ be the smallest subset of $R_0$ such that for each $\bm{z} \in R_0$ there exists a $\bm{z}_n \in R_0$ with $\|\bm{z} - \bm{z}_n\| \leq \eta_8 n^{-\eta_7}$ for some $\eta_7,\eta_8>0$. Applying Lemma \ref{lem:B} and almost the same argument in (Step 4) of the proof of Proposition \ref{prp:variance}, we have
\begin{align*}
\Prob\left(\|\bar{\mathfrak{m}} - \tilde{\mathfrak{m}}\|_\infty \geq 4C\varsigma_{J,n} \zeta_{J,n}^2\|\mathfrak{m}_0\|_{w,\infty} \right) &= \Prob\left(\max_{\bm{z}_n \in \mathcal{V}_n}|\bar{\mathfrak{m}}(\bm{z}_n) - \tilde{\mathfrak{m}}(\bm{z}_n)| \geq 2C\varsigma_{J,n} \zeta_{J,n}^2\|\mathfrak{m}_0\|_{w,\infty}  \right) + o(1)\\
&=: \Upsilon_{n,3} + o(1). 
\end{align*}
Applying almost the same argument to control $P_{n,3}$ in the proof of Proposition \ref{prp:variance}, we have $\Upsilon_{n,3} = o(1)$.

\subsection{Proof of Corollary \ref{cor:unif-rate2}}

Analogous to the proof of Corollary \ref{cor:unif-rate1}. 

\subsection{Proof of Theorem \ref{thm:unif-rate2}}

Analogous to the proof of Theorem \ref{thm:unif-rate1}.

\subsection{Proof of Proposition \ref{prp:L^2-rate2}}

Analogous to the proof of Proposition \ref{prp:L^2-rate1}. 

\subsection{Proof of Corollary \ref{cor:L^2-rate2}}

Analogous to the proof of Corollary \ref{cor:L^2-rate1}.

\subsection{Proof of Theorem \ref{thm:asy-norm2}}
Let $(\bm{z},\bm{x})=(\bm{z}_1,\bm{x}_1)$. We prove 
\begin{align*}
{\sqrt{n}(\hat{\mathfrak{m}}(\bm{z},\bm{x}) - \mathfrak{m}_0(\bm{z},\bm{x})) \over \sqrt{V_J(\bm{z},\bm{x})}} &\stackrel{d}{\to} N(0,1).
\end{align*}
For the multivariate case (i.e., $L \geq 2$), the desired result follows from the Cam\'er-Wold device. 

Note that 
\begin{align*}
&{\sqrt{n}(\hat{\mathfrak{m}}(\bm{z},\bm{x}) - \mathfrak{m}_0(\bm{z},\bm{x})) \over \sqrt{V_J(\bm{z},\bm{x})}}\\ 
&\quad= {\sqrt{n}(\hat{\mathfrak{m}}(\bm{z},\bm{x}) - \bar{\mathfrak{m}}(\bm{z},\bm{x})) \over \sqrt{V_J(\bm{z},\bm{x})}} + {\sqrt{n}(\bar{\mathfrak{m}}(\bm{z},\bm{x}) - \tilde{\mathfrak{m}}(\bm{z},\bm{x})) \over \sqrt{V_J(\bm{z},\bm{x})}} + {\sqrt{n}(\tilde{\mathfrak{m}}(\bm{z},\bm{x}) - \mathfrak{m}_0(\bm{z},\bm{x})) \over \sqrt{V_J(\bm{z},\bm{x})}}.
\end{align*}

Observe that 
\begin{align}\label{eq:V_J}
V_J(\bm{z},\bm{x}) &\sim \tilde{b}_J^{(w)}(\bm{z},\bm{x})'\E\left[\tilde{b}_J^{(w)}\left({\bm{S}_1 \over A_n},\bm{X}(\bm{S}_1)\right)\tilde{b}_J^{(w)}\left({\bm{S}_1 \over A_n},\bm{X}(\bm{S}_1)\right)'\right]\tilde{b}_J^{(w)}(\bm{z},\bm{x}) \nonumber \\
&= \tilde{b}_J^{(w)}(\bm{z},\bm{x})'\tilde{b}_J^{(w)}(\bm{z},\bm{x}) = \|\tilde{b}_J^{(w)}(\bm{z},\bm{x})\|^2.
\end{align}
Condition (b) yields that 
\[
\left|{\sqrt{n}(\tilde{\mathfrak{m}}(\bm{z},\bm{x}) - \mathfrak{m}_0(\bm{z},\bm{x})) \over \sqrt{V_J(\bm{z},\bm{x})}}\right| = O_p\left(\sqrt{n}\|\tilde{b}_J^{(w)}(\bm{z},\bm{x})\|^{-1}\inf_{\mathfrak{m} \in B_J^{(w)}}\|\mathfrak{m}_0 - \mathfrak{m}\|_\infty\right) = o_p(1). 
\]
Condition (c) yields that 
\[
\left|{\sqrt{n}(\bar{\mathfrak{m}}(\bm{z},\bm{x}) - \tilde{\mathfrak{m}}(\bm{z},\bm{x})) \over \sqrt{V_J(\bm{z},\bm{x})}}\right| = O_p\left(\|\tilde{b}_J^{(w)}(\bm{z},\bm{x})\|^{-1}\varsigma_{J,n} \zeta_{J,n}^2\|\mathfrak{m}_0\|_{w,\infty}\sqrt{n}\right) = o_p(1). 
\]
Then it is sufficient to show that 
\begin{align}\label{eq:asy-norm-db2}
{\sqrt{n}(\hat{\mathfrak{m}}(\bm{z},\bm{x}) - \bar{\mathfrak{m}}(\bm{z},\bm{x})) \over \sqrt{V_J(\bm{z},\bm{x})}} \stackrel{d}{\to} N(0,1). 
\end{align}

Now we show (\ref{eq:asy-norm-db2}). Recall that 
\[
\hat{\mathfrak{m}}(\bm{z},\bm{x}) - \bar{\mathfrak{m}}(\bm{z},\bm{x}) = \tilde{b}_J^{(w)}(\bm{z},\bm{x})'(\tilde{B}_{J,n}^{(w)'}\tilde{B}_{J,n}^{(w)}/n + \varsigma_{J,n} (\check{B}_J^{(w)})^{-1})^{-1} \tilde{B}_{J,n}^{(w)'}\nu_n/n.
\]
Decompose 
\begin{align*}
{\sqrt{n}(\hat{\mathfrak{m}}(\bm{z},\bm{x}) - \bar{\mathfrak{m}}(\bm{z},\bm{x})) \over \sqrt{V_J(\bm{z},\bm{x})}} &= {1 \over \sqrt{nV_J(\bm{z},\bm{x})}}\tilde{b}_J^{(w)}(\bm{z},\bm{x})'\tilde{B}_{J,n}^{(w)'}\nu_n \\
&\quad + {1 \over \sqrt{nV_J(\bm{z},\bm{x})}}\tilde{b}_J^{(w)}(\bm{z},\bm{x})'\!\left\{(\tilde{B}_{J,n}^{(w)'}\tilde{B}_{J,n}^{(w)}/n + \varsigma_{J,n} (\check{B}_J^{(w)})^{-1})^{-1} \!-I_J\right\}\!\tilde{B}_{J,n}^{(w)'}\nu_n\\
&=: T_{n,1} + T_{n,2}. 
\end{align*}

For $T_{n,2}$, observe that 
\begin{align}\label{ineq:T2-bound}
|T_{n,2}| &\leq \sqrt{{n \over V_J(\bm{z},\bm{x})}}\|\tilde{b}_J^{(w)}(\bm{z},\bm{x})\| \|(\tilde{B}_{J,n}^{(w)'}\tilde{B}_{J,n}^{(w)}/n + \varsigma_{J,n} (\check{B}_J^{(w)})^{-1})^{-1} -I_J\| \|\tilde{B}_{J,n}^{(w)'}\nu_n/n\|.
\end{align}
From Lemma \ref{lem:B} and a similar argument in Steps 1 and 2 of the proof of Proposition \ref{prp:variance}, we can show that 
\begin{align}
\|(\tilde{B}_{J,n}^{(w)'}\tilde{B}_{J,n}^{(w)}/n  + \varsigma_{J,n} (\check{B}_J^{(w)})^{-1})^{-1} -I_J\| &= O_p\left(\|(\tilde{B}_{J,n}^{(w)'}\tilde{B}_{J,n}^{(w)}/n + \varsigma_{J,n} (\check{B}_J^{(w)})^{-1}) -I_J\|\right) \nonumber \\ 
&= O_p\left(\zeta_{J,n}^2 \lambda_{J,n}^2 \sqrt{{\log J \over A_n}}\right) = o_p((\zeta_{J,n} \lambda_{J,n})^{-1}), \label{eq:BB'-approx-rate}\\
\|\tilde{B}_{J,n}^{(w)'}\nu_n/n\| &= O_p\left({\zeta_{J,n}\lambda_{J,n} \over \sqrt{n}}\right). \label{eq:Bv-rate}
\end{align}
Combining (\ref{eq:V_J}), (\ref{ineq:T2-bound}), (\ref{eq:BB'-approx-rate}), and (\ref{eq:Bv-rate}), we have
\begin{align}\label{eq:T2-asy-neg}
|T_{n,2}| &= o_p(1). 
\end{align}
For $T_{n,1}$, observe that 
\begin{align*}
T_{n,1} = {1 \over \sqrt{n}}\sum_{i=1}^{n}u_J\left({\bm{S}_i \over A_n},\bm{X}(\bm{S}_i)\right)\varepsilon_i,
\end{align*}
where $u_J(\bm{S}_i/A_n,\bm{X}(\bm{S}_i)) =  V_J(\bm{z},\bm{x})^{-1/2}\tilde{b}_J^{(w)}(\bm{z},\bm{x})'\tilde{b}_J^{(w)}(\bm{S}_i/A_n,\bm{X}(\bm{S}_i))\eta(\bm{S}_i/A_n,\bm{X}(\bm{S}_i))$. 
Note that $|u_J(\bm{S}_i/A_n,\bm{X}(\bm{S}_i))| \leq C_\mathfrak{h}\zeta_{J,n}\lambda_{J,n}$, where $C_\mathfrak{h} = \sup_{(\bm{z},\bm{x}) \in R_0 \times \mathbb{R}^p}\mathfrak{h}(\bm{z},\bm{x})$. Let $(\Omega,\mathcal{F},\Prob)$ denote the probability space that $\{\bm{S}_i\}_{i=1}^n$, $\{\varepsilon_i\}_{i=1}^n$, and $\bm{X} = \{\bm{X}(\bm{s}):\bm{s} \in \mathbb{R}^d\}$ are defined. For $t \geq 1$, let $\mathcal{F}_t = \sigma(\{\bm{S}_i,\varepsilon_i\}_{i=1}^{t} \cup \bm{X})$, the $\sigma$-field generated by $\{\bm{S}_i,\varepsilon_i\}_{i=1}^{t}$ and $\bm{X}$, and for $t=0$, let $\mathcal{F}_0 = \sigma(\bm{X})$. Then we can see that $\{u_J\left({\bm{S}_i \over A_n},\bm{X}(\bm{S}_i)\right)\varepsilon_i\}$ is a martingale difference sequence with respect to $\{\mathcal{F}_t\}$. Therefore, to show (\ref{eq:asy-norm-db2}), we use a martingale CLT (Corollary 2.8 in \cite{Mc74}). This requires to verify
\begin{itemize}
\item[(A)] $\max_{1 \leq i \leq n}|u_J(\bm{S}_i/A_n,\bm{X}(\bm{S}_i))\varepsilon_i|/\sqrt{n}=o_p(1)$,
\item[(B)] ${1 \over n}\sum_{i=1}^n u_J(\bm{S}_i/A_n,\bm{X}(\bm{S}_i))^2\varepsilon_i^2 \stackrel{p}{\to} 1$. 
\end{itemize}

Now we verify Condition (A). Let $\eta>0$ be arbitrary. Then we have
\begin{align*}
&\Prob\left(\max_{1 \leq i \leq n}\left|u_J\left({\bm{S}_i \over A_n},\bm{X}(\bm{S}_i)\right)\varepsilon_i\right|/\sqrt{n} > \eta\right)\\ 
&\leq n\Prob\left(\left|u_J\left({\bm{S}_1 \over A_n},\bm{X}(\bm{S}_1)\right)\varepsilon_1\right|/\sqrt{n} > \eta\right)\\
&\leq {n \over n\eta^2}\E\left[u_J\left({\bm{S}_1 \over A_n},\bm{X}(\bm{S}_1)\right)^2\varepsilon_1^21\left\{\left|u_J\left({\bm{S}_1 \over A_n},\bm{X}(\bm{S}_1)\right)\varepsilon_1\right|/\sqrt{n} > \eta\right\}\right]\\
&\leq {C_\mathfrak{h}^2\zeta_{J,n}^2 \lambda_{J,n}^2 \over \eta}\E\left[\varepsilon_1^2 1\left\{|\varepsilon_1| > \eta\sqrt{n}/(C_\mathfrak{h}\zeta_{J,n}\lambda_{J,n})\right\}\right]\\
&\leq {(C_\mathfrak{h}\zeta_{J,n} \lambda_{J,n})^q \over \eta^q n^{{q-2 \over 2}}}\E\left[|\varepsilon_1|^q 1\left\{|\varepsilon_1| > \eta\sqrt{n}/(C_\mathfrak{h}\zeta_{J,n}\lambda_{J,n})\right\}\right] = o(1). 
\end{align*}

Now we verify Condition (B). Applying Lemma \ref{lem:cov-matrix}, we have that 
\begin{align*}
\left|{1 \over n}\sum_{i=1}^n u_J\left({\bm{S}_i \over A_n},\bm{X}(\bm{S}_i)\right)^2\varepsilon_i^2-1\right| &= \left|\left({\tilde{b}_J^{(w)}(\bm{z},\bm{x}) \over \sqrt{V_J(\bm{z},\bm{x})}}\right)'(\hat{\Sigma} - \Sigma)\left({\tilde{b}_J^{(w)}(\bm{z},\bm{x}) \over \sqrt{V_J(\bm{z},\bm{x})}}\right)\right|\\
&\leq \left\|{\tilde{b}_J^{(w)}(\bm{z},\bm{x}) \over \sqrt{V_J(\bm{z},\bm{x})}}\right\|^2 \|\hat{\Sigma} - \Sigma\| \lesssim \|\hat{\Sigma} - \Sigma\| = o_p(1). 
\end{align*}
Therefore, we obtain (\ref{eq:asy-norm-db2}). 

\subsection{Proof of Proposition \ref{prp:var-est-reg}}

Note that $\hat{V}(\bm{z}_1,\bm{x}_1,\bm{z}_2,\bm{x}_2) = \tilde{b}_J^{(w)}(\bm{z}_1,\bm{x}_1)'\tilde{H}_J\tilde{b}_J^{(w)}(\bm{z}_2,\bm{x}_2)$ where 
\begin{align*}
\tilde{H}_J &\!=\! \left(\!{\tilde{B}_{J,n}^{(w)'}\tilde{B}_{J,n}^{(w)} \over n} + \varsigma_{J,n} (\check{B}_{J}^{(w)})^{-1}\!\!\right)^{-1}\left\{{1 \over n}\sum_{i=1}^n \tilde{b}_J^{(w)}\!\!\left({\bm{S}_i \over A_n},\bm{X}(\bm{S}_i)\!\right)\tilde{b}_J^{(w)}\!\!\left({\bm{S}_i \over A_n},\bm{X}(\bm{S}_i)\!\right)'\!\! \right.\\
&\left.\quad \quad \times \left(Y(\bm{S}_i) - \hat{\mathfrak{m}}\left({\bm{S}_i \over A_n},\bm{X}(\bm{S}_i)\right)\!\right)^2\right\}\left(\!{\tilde{B}_{J,n}^{(w)'}\tilde{B}_{J,n}^{(w)} \over n} + \varsigma_{J,n} (\check{B}_{J}^{(w)})^{-1}\!\!\right)^{-1}.
\end{align*}
Then it suffices to show that as $n,J \to \infty$, 
\begin{align*}
&\Pi_{n,J}(\bm{z}_1,\bm{x}_1,\bm{z}_2,\bm{x}_2)\\
&:= \left({\tilde{b}_J^{(w)}(\bm{z}_1,\bm{x}_1) \over \|\tilde{b}_J^{(w)}(\bm{z}_1,\bm{x}_1)\|}\right)'\tilde{H}_J\left({\tilde{b}_J^{(w)}(\bm{z}_2,\bm{x}_2) \over \|\tilde{b}_J^{(w)}(\bm{z}_2,\bm{x}_2)\|}\right) = \left({\tilde{b}_J^{(w)}(\bm{z}_1,\bm{x}_1) \over \|\tilde{b}_J^{(w)}(\bm{z}_1,\bm{x}_1)\|}\right)'H_J\left({\tilde{b}_J^{(w)}(\bm{z}_2,\bm{x}_2) \over \|\tilde{b}_J^{(w)}(\bm{z}_2,\bm{x}_2)\|}\right) + o_p(1). 
\end{align*}
Now we restrict our attention to the case $(\bm{z}_1,\bm{x}_1) = (\bm{z}_2,\bm{x}_2) = (\bm{z},\bm{x})$. The proofs for other cases are similar. Define $\Pi_{n,J}(\bm{z},\bm{x}):= \Pi_{n,J}(\bm{z},\bm{x},\bm{z},\bm{x})$. Applying Theorem \ref{thm:unif-rate2}, we have
\begin{align*}
\Pi_{n,J}(\bm{z},\bm{x}) &\!=\! \left({\tilde{b}_J^{(w)}(\bm{z},\bm{x}) \over \|\tilde{b}_J^{(w)}(\bm{z},\bm{x})\|}\right)'\!\!\!\left({1 \over n}\sum_{i=1}^{n}\tilde{b}_J^{(w)}\!\!\left({\bm{S}_i \over A_n},\bm{X}(\bm{S}_i)\!\right)\!\tilde{b}_J^{(w)}\!\!\left({\bm{S}_i \over A_n},\bm{X}(\bm{S}_i)\!\right)'\!\!\varepsilon_i^2\right)\!\!\left({\tilde{b}_J^{(w)}(\bm{z},\bm{x}) \over \|\tilde{b}_J^{(w)}(\bm{z},\bm{x})\|}\right)' \!\!+ o_p(1).
\end{align*}
Then Lemma \ref{lem:cov-matrix} yields the desired result. 

\section{Auxiliary lemmas}\label{Appendix:lemmas}

\begin{lemma}[\cite{Tr12}]\label{lem:tr12}
Let $\{\Theta_{n,i}\}_{i=1}^n$ be a finite sequence of independent random matrices with dimensions $d_1 \times d_2$. Assume $\E[\Theta_{n,i}]=0$ for each $i$ and $\max_{1\leq i \leq n}\|\Theta_{n,i}\|\leq M_n$, and define
\[
\sigma_n^2 = \max\left\{\left\|\sum_{i=1}^n\E[\Theta_{n,i} \Theta'_{n,i}]\right\|, \left\|\sum_{i=1}^n\E[\Theta'_{n,i} \Theta_{n,i}]\right\|\right\}.
\]
Then for all $t \geq 0$,
\[
\Prob\left(\left\|\sum_{i=1}^n \Theta_{n,i}\right\| \geq t\right) \leq (d_1 +d_2)\exp\left({-t^2/2 \over \sigma_n^2 + M_n t/3}\right).
\]
\end{lemma}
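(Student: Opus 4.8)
The result is the rectangular matrix Bernstein inequality and is quoted verbatim from \cite{Tr12}, so in the paper it suffices to cite that reference; below I outline the argument one would reproduce. The plan is to first reduce the rectangular case to the Hermitian case by dilation, and then to run the matrix Laplace transform method and optimize a scalar parameter.

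First I would pass to the Hermitian dilation. For each $i$ set
\[
\mathcal{D}(\Theta_{n,i}) = \begin{pmatrix} 0 & \Theta_{n,i} \\ \Theta_{n,i}' & 0 \end{pmatrix},
\]
a $(d_1+d_2)\times(d_1+d_2)$ symmetric random matrix. The map $\mathcal{D}$ is linear, so $\sum_i \mathcal{D}(\Theta_{n,i}) = \mathcal{D}(\sum_i \Theta_{n,i})$, and one checks the identities $\|\mathcal{D}(A)\| = \|A\|$ and $\mathcal{D}(A)^2 = \mathrm{diag}(AA', A'A)$. Consequently $\|\sum_i \Theta_{n,i}\| = \lambda_{\max}(\sum_i \mathcal{D}(\Theta_{n,i}))$, each $\mathcal{D}(\Theta_{n,i})$ is centered with $\|\mathcal{D}(\Theta_{n,i})\| \le M_n$, and $\|\sum_i \E[\mathcal{D}(\Theta_{n,i})^2]\| = \sigma_n^2$. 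Thus it suffices to prove the one-sided bound $\Prob(\lambda_{\max}(\sum_i X_i) \ge t) \le (d_1+d_2)\exp(-(t^2/2)/(\sigma_n^2 + M_n t/3))$ for independent centered symmetric $X_i$ with $\|X_i\| \le M_n$ and $\|\sum_i \E X_i^2\| \le \sigma_n^2$.

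Next I would apply the matrix Laplace transform method. For any $\theta > 0$, Markov's inequality applied to the scalar bound $e^{\theta \lambda_{\max}(\sum_i X_i)} \le \mathrm{tr}\, e^{\theta \sum_i X_i}$ gives
\[
\Prob\Big(\lambda_{\max}\Big(\sum_i X_i\Big) \ge t\Big) \le e^{-\theta t}\, \E\Big[\mathrm{tr}\, e^{\theta \sum_i X_i}\Big].
\]
The key non-elementary step is the subadditivity of the matrix cumulant generating function: by Lieb's concavity theorem (concavity of $A \mapsto \mathrm{tr}\exp(H + \log A)$ on positive definite $A$) and Jensen's inequality one obtains $\E\,\mathrm{tr}\, e^{\theta \sum_i X_i} \le \mathrm{tr}\exp(\sum_i \log \E e^{\theta X_i})$. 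Then, using the scalar inequality $e^{\theta x} \le 1 + \theta x + (e^{\theta M_n} - 1 - \theta M_n)\,x^2/M_n^2$ for $|x| \le M_n$ together with the transfer rule for matrix functions and $\E X_i = 0$, each summand is controlled by $\E e^{\theta X_i} \preceq \exp(f(\theta)\,\E X_i^2)$ with $f(\theta) = (e^{\theta M_n}-1-\theta M_n)/M_n^2$. Monotonicity of $\log$ and of the trace exponential then yields $\E\,\mathrm{tr}\, e^{\theta\sum_i X_i} \le (d_1+d_2)\exp(f(\theta)\sigma_n^2)$.

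Finally I would optimize. Combining the two displays gives the bound $(d_1+d_2)\exp(f(\theta)\sigma_n^2 - \theta t)$ for every $\theta > 0$. Choosing $\theta = M_n^{-1}\log(1 + M_n t/\sigma_n^2)$ turns the exponent into $-(\sigma_n^2/M_n^2)\,h(M_n t/\sigma_n^2)$ with $h(u) = (1+u)\log(1+u) - u$, and the elementary bound $h(u) \ge u^2/(2 + 2u/3)$ produces $f(\theta)\sigma_n^2 - \theta t \le -(t^2/2)/(\sigma_n^2 + M_n t/3)$, which is the stated inequality. The main obstacle is the cumulant subadditivity step, since it rests on Lieb's concavity theorem rather than on scalar manipulations; everything else is a direct transcription of the scalar Bernstein argument into the matrix setting via the transfer rule and the dilation reduction.
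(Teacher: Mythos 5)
Your proposal is correct and aligns with the paper's treatment: the paper gives no proof of this lemma at all, simply citing \cite{Tr12}, which is exactly what you note suffices. Your sketch of Tropp's argument (Hermitian dilation, the Laplace transform method with Lieb's concavity theorem for subadditivity of the matrix cumulant generating function, the moment-generating-function bound for uniformly bounded matrices, and the Bennett-to-Bernstein optimization) is an accurate reproduction of the cited proof, so there is nothing to correct.
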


\begin{corollary}\label{cor:tr12}
Under the conditions of Lemma \ref{lem:tr12}, if $M_n \sqrt{\log(d_1+d_2)} = o(\sigma_n)$ then
\[
\left\|\sum_{i=1}^n \Theta_{n,i}\right\| = O_p\left(\sigma_n \sqrt{\log(d_1 + d_2)}\right).
\]
\end{corollary}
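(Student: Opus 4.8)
The plan is to apply the matrix Bernstein bound of Lemma \ref{lem:tr12} at the single threshold $t = C\sigma_n\sqrt{\log(d_1+d_2)}$ and to show that, by taking the constant $C$ large enough, the resulting tail probability can be made uniformly small for all large $n$. Throughout I would write $L_n = \log(d_1+d_2)$ for brevity, noting that the matrices are $d_1 \times d_2$ with $d_1,d_2 \geq 1$, so $d_1+d_2 \geq 2$ and hence $L_n \geq \log 2 > 0$.

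First I would substitute $t = C\sigma_n\sqrt{L_n}$ into the exponent in Lemma \ref{lem:tr12}. Dividing numerator and denominator by $\sigma_n^2$ gives
\[
{t^2/2 \over \sigma_n^2 + M_n t/3} = {C^2 L_n/2 \over 1 + (C/3)\,(M_n\sqrt{L_n}/\sigma_n)}.
\]
The hypothesis $M_n\sqrt{L_n} = o(\sigma_n)$ forces $M_n\sqrt{L_n}/\sigma_n \to 0$, so for each fixed $C$ there is an $N=N(C)$ such that $(C/3)(M_n\sqrt{L_n}/\sigma_n) \leq 1$ whenever $n \geq N$; on that range the displayed quantity is at least $C^2 L_n/4$. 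This is the step where the $o(\sigma_n)$ assumption is used to tame the $M_n t/3$ contribution to the Bernstein denominator.

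Then I would combine this lower bound on the exponent with the prefactor $(d_1+d_2) = e^{L_n}$ to obtain, for $n \geq N(C)$,
\[
\Prob\left(\left\|\sum_{i=1}^n \Theta_{n,i}\right\| \geq C\sigma_n\sqrt{L_n}\right) \leq e^{L_n}\exp\left(-{C^2 L_n\over 4}\right) = \exp\left(L_n\left(1 - {C^2\over 4}\right)\right).
\]
For $C > 2$ the coefficient $1 - C^2/4$ is negative, so the right-hand side is nonincreasing in $L_n$ and therefore bounded by its value at $L_n = \log 2$, namely $2^{\,1 - C^2/4}$, which tends to $0$ as $C \to \infty$. Given any $\varepsilon>0$, I would first choose $C$ with $2^{\,1-C^2/4} < \varepsilon$ and then restrict to $n \geq N(C)$, yielding a tail probability below $\varepsilon$; since $M=C$ and the normalizing sequence is $\sigma_n\sqrt{L_n}$, this is precisely the $O_p\!\left(\sigma_n\sqrt{\log(d_1+d_2)}\right)$ conclusion.

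I expect no substantive obstacle, as the result is a routine corollary of Lemma \ref{lem:tr12}. The only point requiring genuine care is that $d_1$, $d_2$ (and hence $L_n$) are permitted to grow with $n$, so the argument must hold $C$ fixed while letting $n\to\infty$; the bound $L_n \geq \log 2$ is exactly what guarantees uniform smallness of the prefactor-adjusted exponential regardless of whether $L_n$ stays bounded or diverges, and the uniform decay $M_n\sqrt{L_n}/\sigma_n \to 0$ is what ensures the threshold $N(C)$ depends only on $C$.
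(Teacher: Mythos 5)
Your proof is correct and follows essentially the same route as the paper's: substitute $t = C\sigma_n\sqrt{\log(d_1+d_2)}$ into Lemma \ref{lem:tr12}, use the hypothesis $M_n\sqrt{\log(d_1+d_2)} = o(\sigma_n)$ to bound the Bernstein denominator by a constant, and observe that the resulting bound, a negative power of $d_1+d_2 \geq 2$, vanishes as $C \to \infty$. If anything, your writeup is slightly more careful than the paper's (you retain the factor $1/2$ from $t^2/2$, track the threshold $N(C)$ explicitly, and use $\log(d_1+d_2) \geq \log 2$ to get uniformity in $n$ whether or not the dimensions grow), but the underlying argument is the same.
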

\begin{proof}
Letting $t=C\sigma_n \sqrt{\log(d_1+d_2)}$ for sufficiently large $C>1$, we have
\begin{align*}
\Prob\left(\left\|\sum_{i=1}^n \Theta_{n,i}\right\| \geq C\sigma_n \sqrt{\log(d_1+d_2)}\right) &\leq (d_1 + d_2)\exp\left({-C^2\sigma_n^2 \log(d_1 + d_2) \over \sigma_n^2 + CM_n\sigma_n \sqrt{(d_1 + d_2)}/3}\right)\\
&= (d_1 + d_2)\exp\left({-C^2 \log(d_1 + d_2) \over 1 + CM_n\sqrt{(d_1 + d_2)}/(3\sigma_n)}\right)\\
&\lesssim (d_1 + d_2)\exp\left(-\log(d_1 + d_2)^{C^2}\right) = (d_1 + d_2)^{1-C^2},  
\end{align*}
which yields the desired result. 
\end{proof}

\begin{lemma}\label{lem:psi}
Under Assumptions \ref{Ass:sample}(ii) and \ref{Ass:sieve1}(iii), we have
\[
\|\tilde{\Psi}'_{J,n}\tilde{\Psi}_{J,n}/n - I_J\| = O_p\left(\zeta_J\lambda_J\sqrt{\log J \over n}\right).
\]
\end{lemma}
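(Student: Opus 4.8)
The plan is to recognize $\tilde{\Psi}'_{J,n}\tilde{\Psi}_{J,n}/n - I_J$ as a normalized sum of i.i.d.\ centered random matrices and to apply the matrix Bernstein inequality (Lemma \ref{lem:tr12} and Corollary \ref{cor:tr12}). Writing $\check{\Psi}_J = \E[\psi_J(\bm{S}_1/A_n)\psi_J(\bm{S}_1/A_n)']$, which is invertible by Assumption \ref{Ass:sieve1}(iii), and recalling $\tilde{\psi}_J(\bm{z}) = \check{\Psi}_J^{-1/2}\psi_J(\bm{z})$, the defining normalization gives $\E[\tilde{\psi}_J(\bm{S}_1/A_n)\tilde{\psi}_J(\bm{S}_1/A_n)'] = I_J$. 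Since $\{\bm{S}_i\}_{i=1}^n$ are i.i.d.\ by Assumption \ref{Ass:sample}(ii), setting
\[
\Theta_{n,i} = {1 \over n}\left(\tilde{\psi}_J\!\left({\bm{S}_i \over A_n}\right)\tilde{\psi}_J\!\left({\bm{S}_i \over A_n}\right)' - I_J\right)
\]
produces a finite sequence of independent, symmetric, mean-zero $J \times J$ matrices with $\sum_{i=1}^n \Theta_{n,i} = \tilde{\Psi}'_{J,n}\tilde{\Psi}_{J,n}/n - I_J$.

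It then remains to control the two quantities entering Lemma \ref{lem:tr12}. For the uniform bound, I would use $\|\tilde{\psi}_J(\bm{z})\| \leq \|\check{\Psi}_J^{-1/2}\|\,\|\psi_J(\bm{z})\| \leq \lambda_J\zeta_J$ for all $\bm{z} \in R_0$, so that $\|\tilde{\psi}_J(\bm{S}_i/A_n)\tilde{\psi}_J(\bm{S}_i/A_n)'\| = \|\tilde{\psi}_J(\bm{S}_i/A_n)\|^2 \leq \zeta_J^2\lambda_J^2$ and hence $\max_i \|\Theta_{n,i}\| \leq (\zeta_J^2\lambda_J^2 + 1)/n \lesssim \zeta_J^2\lambda_J^2/n =: M_n$, where I use that $\zeta_J^2\lambda_J^2 \geq \E[\|\tilde{\psi}_J(\bm{S}_1/A_n)\|^2] = \mathrm{tr}(I_J) = J \geq 1$. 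For the variance proxy, symmetry of the summands gives $\sigma_n^2 = \|\sum_i \E[\Theta_{n,i}^2]\|$, and the key step is the Loewner bound
\[
\E\!\left[\left\|\tilde{\psi}_J\!\left({\bm{S}_1 \over A_n}\right)\right\|^2 \tilde{\psi}_J\!\left({\bm{S}_1 \over A_n}\right)\tilde{\psi}_J\!\left({\bm{S}_1 \over A_n}\right)'\right] \preceq \zeta_J^2\lambda_J^2\, I_J,
\]
obtained by replacing the scalar factor $\|\tilde{\psi}_J\|^2$ with its uniform bound before taking the expectation. Expanding $\E[(\tilde{\psi}_J\tilde{\psi}_J' - I_J)^2] = \E[\|\tilde{\psi}_J\|^2\tilde{\psi}_J\tilde{\psi}_J'] - I_J \preceq \zeta_J^2\lambda_J^2 I_J$ and using that the summands are identically distributed then yields $\sigma_n^2 \leq \zeta_J^2\lambda_J^2/n$.

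Finally I would invoke Corollary \ref{cor:tr12} with $d_1 = d_2 = J$. Here $\sigma_n\sqrt{\log(2J)} = \zeta_J\lambda_J\sqrt{\log(2J)/n} \sim \zeta_J\lambda_J\sqrt{\log J/n}$ is exactly the target rate, while the corollary's side condition $M_n\sqrt{\log(2J)} = o(\sigma_n)$ reduces, after substituting $M_n/\sigma_n = \zeta_J\lambda_J/\sqrt{n}$, to $\zeta_J\lambda_J\sqrt{\log J/n} \to 0$; this is precisely the requirement that the asserted bound itself be $o_p(1)$ and is in force in every application of the lemma, so it may be taken as a standing condition. Concluding $\|\sum_i \Theta_{n,i}\| = O_p(\sigma_n\sqrt{\log(2J)}) = O_p(\zeta_J\lambda_J\sqrt{\log J/n})$ delivers the claim. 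I expect the genuinely delicate point to be the variance computation: one must keep the estimate at the operator-norm (Loewner) level rather than passing to a crude entrywise or trace bound, since only the inequality $\E[\|\tilde{\psi}_J\|^2\tilde{\psi}_J\tilde{\psi}_J'] \preceq \zeta_J^2\lambda_J^2 I_J$ gives $\sigma_n \sim \zeta_J\lambda_J/\sqrt{n}$ and hence the sharp factor $\zeta_J\lambda_J$ (rather than $\zeta_J^2\lambda_J^2$) in the final rate.
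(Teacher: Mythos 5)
Your proposal is correct and is essentially the paper's own proof: the paper likewise writes $\tilde{\Psi}'_{J,n}\tilde{\Psi}_{J,n}/n - I_J$ as a sum of i.i.d.\ centered matrices, bounds $\|\tilde{\psi}_J\tilde{\psi}_J' - I_J\| \leq \zeta_J^2\lambda_J^2 + 1$, computes $\E[(\tilde{\psi}_J\tilde{\psi}_J'-I_J)^2] = \E[\|\tilde{\psi}_J\|^2\tilde{\psi}_J\tilde{\psi}_J'] - I_J$ with the same operator-norm bound, and invokes Corollary \ref{cor:tr12} with $M_n$ and $\sigma_n^2$ both of order $(\zeta_J^2\lambda_J^2+1)/n$. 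Your remark about the side condition $M_n\sqrt{\log(2J)} = o(\sigma_n)$, i.e.\ $\zeta_J\lambda_J\sqrt{\log J/n}\to 0$, applies equally to the paper's proof, which also leaves it implicit and relies on it holding (via Assumption \ref{Ass:ze-lam}) wherever the lemma is used.
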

\begin{proof}
Recall $\check{\Psi}_J = \E\left[\psi_J(\bm{S}_i/A_n)\psi_J(\bm{S}_i/A_n)'\right]$. Note that $\E[\tilde{\psi}_J(\bm{S}_i/A_n)\tilde{\psi}_J(\bm{S}_i/A_n)'] = I_J$ and 
\begin{align*}
\tilde{\Psi}'_{J,n}\tilde{\Psi}_{J,n} &= \sum_{i=1}^n \tilde{\psi}_J\left({\bm{S}_i \over A_n}\right)\tilde{\psi}_J\left({\bm{S}_i \over A_n}\right)' = \sum_{i=1}^n \check{\Psi}_J^{-1/2}\psi_J\left({\bm{S}_i \over A_n}\right)\psi_J\left({\bm{S}_i \over A_n}\right)'\check{\Psi}_J^{-1/2}.
\end{align*}
Moreover,  
\begin{align*}
\left\|\tilde{\psi}_J\left({\bm{S}_i \over A_n}\right)\tilde{\psi}_J\left({\bm{S}_i \over A_n}\right)' - I_J\right\| 
&\leq \left\|\tilde{\psi}_J\left({\bm{S}_i \over A_n}\right)\tilde{\psi}_J\left({\bm{S}_i \over A_n}\right)'\right\| + \| I_J\| \leq \left\|\check{\Psi}_J^{-1/2} \psi_J\left({\bm{S}_i \over A_n}\right)\psi_J\left({\bm{S}_i \over A_n}\right)'\check{\Psi}_J^{-1/2}\right\| + 1\\
&\leq \|\check{\Psi}_J^{-1/2}\|^2\left\|\psi_J\left({\bm{S}_i \over A_n}\right)\psi_J\left({\bm{S}_i \over A_n}\right)'\right\| + 1 \leq \zeta_J^2\lambda_J^2 + 1. 
\end{align*}
Further, 
\begin{align*}
&\E\left[\left(\tilde{\psi}_J\left({\bm{S}_i \over A_n}\right)\tilde{\psi}_J\left({\bm{S}_i \over A_n}\right)' - I_J\right)\left(\tilde{\psi}_J\left({\bm{S}_i \over A_n}\right)\tilde{\psi}_J\left({\bm{S}_i \over A_n}\right)' - I_J\right)'\right]\\
&= \E\left[\tilde{\psi}_J\left({\bm{S}_i \over A_n}\right)\tilde{\psi}_J\left({\bm{S}_i \over A_n}\right)'\tilde{\psi}_J\left({\bm{S}_i \over A_n}\right)\tilde{\psi}_J\left({\bm{S}_i \over A_n}\right)'\right] - 2\E\left[\tilde{\psi}_J\left({\bm{S}_i \over A_n}\right)\tilde{\psi}_J\left({\bm{S}_i \over A_n}\right)'\right] + I_J\\
&= \E\left[\left\|\tilde{\psi}_J\left({\bm{S}_i \over A_n}\right)\right\|^2\tilde{\psi}_J\left({\bm{S}_i \over A_n}\right)\tilde{\psi}_J\left({\bm{S}_i \over A_n}\right)'\right] - I_J. 
\end{align*}
Then we have
\begin{align*}
&\left\|\E\left[\left(\tilde{\psi}_J\left({\bm{S}_i \over A_n}\right)\tilde{\psi}_J\left({\bm{S}_i \over A_n}\right)' - I_J\right)\left(\tilde{\psi}_J\left({\bm{S}_i \over A_n}\right)\tilde{\psi}_J\left({\bm{S}_i \over A_n}\right)' - I_J\right)'\right]\right\|\\
&\leq \left\|\E\left[\left\|\tilde{\psi}_J\left({\bm{S}_i \over A_n}\right)\right\|^2\tilde{\psi}_J\left({\bm{S}_i \over A_n}\right)\tilde{\psi}_J\left({\bm{S}_i \over A_n}\right)'\right]\right\| + \|I_J\|\\
&\leq \sup_{\bm{z} \in R_0}\|\tilde{\psi}_J(\bm{z})\|^2 \left\|\E\left[\tilde{\psi}_J\left({\bm{S}_i \over A_n}\right)\tilde{\psi}_J\left({\bm{S}_i \over A_n}\right)'\right]\right\| + 1 \leq \zeta_J^2 \lambda_J^2 + 1. 
\end{align*}
Therefore, applying Corollary \ref{cor:tr12} with $M_n = (\zeta_J^2\lambda_J^2 + 1)/n$ and $\sigma_n^2 = (\zeta_J^2 \lambda_J^2 + 1)/n$, we obtain the desired result. 
\end{proof}

\begin{lemma}\label{lem:B}
Suppose that Assumptions \ref{Ass:sample}(ii), \ref{Ass:model2}(i), (ii), and \ref{Ass:sieve3}(iii) hold. Assume that $\bm{X}=\{\bm{X}(\bm{s}):\bm{s} \in \mathbb{R}^d\}$ and $\{\bm{S}_i\}_{i=1}^{n}$ are mutually independent. Then
\[
\|\tilde{B}_{J,n}^{(w)'}\tilde{B}_{J,n}^{(w)}/n - I_J\| = O_p\left(\zeta_{J,n}^2\lambda_{J,n}^2\sqrt{\log J \over A_n}\right).
\]
\end{lemma}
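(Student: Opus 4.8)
The plan is to prove this matrix concentration bound by adapting the i.i.d. argument of Lemma \ref{lem:psi} — which rests on Tropp's matrix Bernstein inequality in the form of Corollary \ref{cor:tr12} — to the spatially dependent covariate field $\bm{X}$, using a large-block--small-block decomposition together with a $\beta$-mixing coupling. Writing $\tilde{b}_i = \tilde{b}_J^{(w)}(\bm{S}_i/A_n,\bm{X}(\bm{S}_i))$ and $\Theta_i = \tilde{b}_i\tilde{b}_i' - I_J$, the orthonormalization gives $\E[\Theta_i]=0$, so the claim reduces to bounding $\|\tfrac1n\sum_{i=1}^n \Theta_i\|$. Each summand satisfies $\|\Theta_i\| \leq \zeta_{J,n}^2\lambda_{J,n}^2 + 1$ exactly as in Lemma \ref{lem:psi}; the genuinely new difficulty is that the $\Theta_i$ are no longer independent, because $\bm{X}(\bm{S}_i)$ and $\bm{X}(\bm{S}_j)$ are correlated when $\bm{S}_i,\bm{S}_j$ are close.

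First I would reuse the partition of $R_n$ into large blocks $\Gamma_n(\bm{\ell};\bm{\Delta}_0)$ with side lengths $A_{n1,j}$, separated by small blocks with side lengths $A_{n2,j}$, from the proof of Proposition \ref{prp:variance}, and split $\sum_i \Theta_i$ into the large-block sums $T_{\bm{\ell}} = \sum_{i:\bm{S}_i \in \Gamma_n(\bm{\ell};\bm{\Delta}_0)}\Theta_i$, the small-block sums, and the boundary sums. Since nonadjacent large blocks are separated by at least $\underbar{A}_{n2}$, Lemma \ref{lem:indep_block} (applied as in Proposition \ref{prp:variance} with $m \sim A_n/A_n^{(1)}$ and $\tau \sim \beta(\underbar{A}_{n2};A_n)$) allows me to replace $\{T_{\bm{\ell}}\}_{\bm{\ell}\in L_{n1}}$ by independent copies $\{\tilde{T}_{\bm{\ell}}\}$ at total cost $\lesssim (A_n/A_n^{(1)})\beta(\underbar{A}_{n2};A_n) = o(1)$ under Assumption \ref{Ass:model2}(ii); the small-block and boundary families are treated identically.

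Then I would apply Corollary \ref{cor:tr12} to the independent matrices $\{\tilde{T}_{\bm{\ell}}\}_{\bm{\ell}\in L_{n1}}$, which requires two inputs. For the uniform bound $M_n$, each large block contains $O_p(nA_n^{(1)}/A_n)$ sampling sites — a random count that must be controlled with high probability by a random-field analogue of Lemma \ref{lem:number-summands} — giving $M_n \lesssim (nA_n^{(1)}/A_n)\zeta_{J,n}^2\lambda_{J,n}^2$. For the variance proxy $\sigma_n^2 = \|\sum_{\bm{\ell}}\E[\tilde{T}_{\bm{\ell}}\tilde{T}_{\bm{\ell}}']\|$, writing $\E[\tilde{T}_{\bm{\ell}}\tilde{T}_{\bm{\ell}}'] = \sum_{i,j}\E[\Theta_i\Theta_j']$, the diagonal part contributes $\lesssim n\zeta_{J,n}^2\lambda_{J,n}^2$, while the within-block off-diagonal terms are handled by bounding $\|\E[\Theta_i\Theta_j'\mid \bm{S}_i,\bm{S}_j]\|$ by a constant multiple of $\zeta_{J,n}^4\lambda_{J,n}^4$ and exploiting its decay in $\|\bm{S}_i-\bm{S}_j\|$ through the dependence of $\bm{X}$; after the change of variables $\bm{x}=\bm{s}_i-\bm{s}_j$ used in the analysis of $I_{n,2}$ in the proof of Proposition \ref{prp:variance}, the off-diagonal mass concentrates at a factor $A_n^{-1}$, so that $\sigma_n^2 \lesssim n^2\zeta_{J,n}^4\lambda_{J,n}^4/A_n$. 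This dominates the diagonal since $A_n \lesssim n$ and $\zeta_{J,n}\lambda_{J,n} \gtrsim 1$. Feeding these into Corollary \ref{cor:tr12} (whose side condition $M_n\sqrt{\log J} = o(\sigma_n)$ follows from the moment and size hypotheses), dividing by $n$, and verifying that the small-block and boundary contributions are of smaller order, I obtain $\|\tilde{B}_{J,n}^{(w)'}\tilde{B}_{J,n}^{(w)}/n - I_J\| = O_p(\zeta_{J,n}^2\lambda_{J,n}^2\sqrt{\log J/A_n})$.

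The hard part will be the variance proxy $\sigma_n^2$. Unlike the i.i.d. situation of Lemma \ref{lem:psi}, here the within-block cross-covariances $\E[\Theta_i\Theta_j']$ are nonzero and must be summed while simultaneously accounting for both the random, data-dependent number of observations in each block and the spatial decay of the dependence of $\bm{X}$ that renders the cross terms summable and produces the crucial $A_n^{-1}$ (rather than $n^{-1}$) scaling, along with the extra factor $\zeta_{J,n}\lambda_{J,n}$ relative to the independent case. Making this rigorous is exactly the extension of Tropp's independent-matrix inequality to spatially dependent data announced after Proposition \ref{prp:variance}, and it parallels the passage from Theorem 4.2 to Corollary 4.2 of \cite{ChCh15} for $\beta$-mixing sequences.
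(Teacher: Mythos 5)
Your proposal follows essentially the same route as the paper's proof: the same reduction to the centered matrices $\tilde{b}_J^{(w)}\tilde{b}_J^{(w)'}-I_J$, the same large-block/small-block partition with the coupling of Lemma \ref{lem:indep_block} (with $m \sim A_n(A_n^{(1)})^{-1}$ and $\tau \sim \beta(\underbar{A}_{n2};A_n)$), and the same application of Corollary \ref{cor:tr12} with $M_n \lesssim A_n^{-1}A_n^{(1)}(\zeta_{J,n}^2\lambda_{J,n}^2+1)$ via Lemma \ref{lem:number-summands} (which applies directly to the sampling sites, no analogue needed) and variance proxy $\sigma_n^2 \lesssim A_n^{-1}\zeta_{J,n}^4\lambda_{J,n}^4$ obtained from the same diagonal/off-diagonal split and change of variables in $\bm{S}_1-\bm{S}_2$. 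The proposal is correct, and if anything it is slightly more explicit than the paper about where the stationarity and spatial-decay of $\bm{X}$ enter the off-diagonal covariance bound.
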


\begin{proof}
Define $\check{B}_J^{(w)}=\E[b_J^{(w)}(\bm{S}_i/A_n,\bm{X}(\bm{S}_1))b_J^{(w)}(\bm{S}_i/A_n,\bm{X}(\bm{S}_1))']$. Note that 
\begin{align*}
\tilde{B}_{J,n}^{(w)'}\tilde{B}_{J,n}^{(w)} &= \sum_{i=1}^n \tilde{b}_J^{(w)}\left({\bm{S}_i \over A_n},\bm{X}(\bm{S}_i)\right)\tilde{b}_J^{(w)}\left({\bm{S}_i \over A_n},\bm{X}(\bm{S}_i)\right)'\\
&= \sum_{i=1}^n (\check{B}_J^{(w)})^{-1/2}b_J^{(w)}\left({\bm{S}_i \over A_n},\bm{X}(\bm{S}_i)\right)b_J^{(w)}\left({\bm{S}_i \over A_n},\bm{X}(\bm{S}_i)\right)'(\check{B}_J^{(w)})^{-1/2}
\end{align*}
and $\E[\tilde{b}_J^{(w)}\left(\bm{S}_i/A_n,\bm{X}(\bm{S}_1)\right)\tilde{b}_J^{(w)}\left(\bm{S}_i/A_n,\bm{X}(\bm{S}_1)\right)'] = I_J$. Observe that
\begin{align*}
\left\|\tilde{b}_J^{(w)}\left({\bm{S}_i \over A_n},\bm{X}(\bm{S}_i)\right)\tilde{b}_J^{(w)}\left({\bm{S}_i \over A_n},\bm{X}(\bm{S}_i)\right)' - I_J\right\| \lesssim \zeta_{J,n}^2\lambda_{J,n}^2 + 1.
\end{align*}
Further, we have
\begin{align*}
\tilde{B}_{J,n}^{(w)'}\tilde{B}_{J,n}^{(w)}/n - I_J &= {1 \over n}\sum_{i=1}^n \tilde{b}_J^{(w)}\left({\bm{S}_i \over A_n},\bm{X}(\bm{S}_i)\right)\tilde{b}_J^{(w)}\left({\bm{S}_i \over A_n},\bm{X}(\bm{S}_i)\right)' - I_J\\
&= \sum_{\bm{\ell} \in L_{n1}}\!\!\!\!b_1^{(\bm{\ell};\bm{\Delta}_0)} \!+\! \sum_{\bm{\Delta} \neq \bm{\Delta}_0}\sum_{\bm{\ell} \in L_{n1}}\!\!\!\!b_1^{(\bm{\ell};\bm{\Delta})} \!+ \!\sum_{\bm{\Delta} \in \{1,2\}^d}\sum_{\bm{\ell} \in L_{n2}}\!\!\!\!b_1^{(\bm{\ell};\bm{\Delta})},
\end{align*}
where 
\[
b_1^{(\bm{\ell};\bm{\Delta})} = {1 \over n}\sum_{i=1}^{n}\left\{\tilde{b}_J^{(w)}\left({\bm{S}_i \over A_n},\bm{X}(\bm{S}_i)\right)\tilde{b}_J^{(w)}\left({\bm{S}_i \over A_n},\bm{X}(\bm{S}_i)\right)' - I_J\right\}1\{\bm{S}_i \in \Gamma_n(\bm{\ell};\bm{\Delta}) \cap R_n \}. 
\]
For $\bm{\Delta} \in \{1,2\}^d$, let $\{\tilde{b}_1^{(\bm{\ell};\bm{\Delta})}\}_{\bm{\ell} \in L_{n1} \cup L_{n2}}$ be independent random variables such that $b_1^{(\bm{\ell};\bm{\Delta})}$ and $\tilde{b}_1^{(\bm{\ell};\bm{\Delta})}$ have the same distribution. Applying Lemma \ref{lem:indep_block} below with $M_h = 1$, $m \sim A_n(A_n^{(1)})^{-1}$, and $\tau \sim \beta(\underline{A}_{n2}; A_n)$, we have that for $\bm{\Delta} \in \{1,2\}^d$, 
\begin{align}
&\sup_{t >0}\left|\Prob\left(\left|\sum_{\bm{\ell} \in L_{n1}}b_1^{(\bm{\ell};\bm{\Delta})}\right|>t\right) - \Prob\left(\left|\sum_{\bm{\ell} \in L_{n1}}\tilde{b}_1^{(\bm{\ell};\bm{\Delta})}\right|>t\right)\right| \lesssim \left({A_n \over A_n^{(1)}}\right)\beta(\underline{A}_{n2}; A_n) = o(1), \label{ineq:b1-beta-block1}\\
&\sup_{t >0}\left|\Prob\left(\left|\sum_{\bm{\ell} \in L_{n2}}b_1^{(\bm{\ell};\bm{\Delta})}\right|>t\right) - \Prob\left(\left|\sum_{\bm{\ell} \in L_{n2}}\tilde{b}_1^{(\bm{\ell};\bm{\Delta})}\right|>t\right)\right| \lesssim \left({A_n  \over A_n^{(1)}}\right)\beta(\underline{A}_{n2}; A_n ) = o(1) \label{ineq:b1-beta-block2}.
\end{align}
From (\ref{ineq:b1-beta-block1}) and (\ref{ineq:b1-beta-block2}), we have
\begin{align*}
&\Prob\left(\left\|\tilde{B}_{J,n}^{(w)'}\tilde{B}_{J,n}^{(w)}/n - I_J\right\|>C2^{d+2}\zeta_{J,n}^2\lambda_{J,n}^2\sqrt{{\log n \over A_n}}\right)\\ 
&\leq \sum_{\bm{\Delta} \in \{1,2\}^{d}}\hat{\mathcal{Q}}_{n1}(\bm{\Delta}) + \sum_{\bm{\Delta} \in \{1,2\}^{d}}\hat{\mathcal{Q}}_{n2}(\bm{\Delta}) + 2^{d+1}\underbrace{\left({A_n \over A_n^{(1)}}\right)\beta(\underline{A}_{n2};A_n)}_{=o(1)},
\end{align*}
where 
\begin{align*}
\hat{\mathcal{Q}}_{nj}(\bm{\Delta}) &= \Prob\left(\left\|\sum_{\bm{\ell} \in L_{nj}}\tilde{b}_1^{(\bm{\ell};\bm{\Delta})}\right\|>Cn\zeta_{J,n}^2\lambda_{J,n}^2\sqrt{{\log n \over A_n}}\right),\ j=1,2.
\end{align*}

Now we restrict our attention to show that $\hat{\mathcal{Q}}_{n1}(\bm{\Delta}) = o(1)$ for $\bm{\Delta} \neq \bm{\Delta}_{0}$. The proofs for other cases are similar. Note that
\begin{align*}
\left\|\tilde{b}_1^{(\bm{\ell};\bm{\Delta})}\right\| &\lesssim A_n^{-1} A_n^{(1)}(\zeta_{J,n}^2\lambda_{J,n}^2+1)\ a.s.\ (\text{from Lemma \ref{lem:number-summands}}).
\end{align*}
Further, observe that
\begin{align*}
\left\|\sum_{\bm{\ell} \in L_{n1}}\E\left[\tilde{b}_1^{(\bm{\ell};\bm{\Delta})}\tilde{b}_1^{(\bm{\ell};\bm{\Delta})'}\right]\right\| 
&\lesssim {n \over n^2}\E\left[\left\|\left\{\tilde{b}_J^{(w)}\left({\bm{S}_1 \over A_n},\bm{X}(\bm{S}_1)\right)\tilde{b}_J^{(w)}\left({\bm{S}_1 \over A_n},\bm{X}(\bm{S}_1)\right)' - I_J\right\} \right. \right. \\
&\left. \left. \quad \times \left\{\tilde{b}_J^{(w)}\left({\bm{S}_1 \over A_n},\bm{X}(\bm{S}_1)\right)\tilde{b}_J^{(w)}\left({\bm{S}_1 \over A_n},\bm{X}(\bm{S}_1)\right)' - I_J\right\}\right\|\right]\\
&\quad + {n(n-1) \over n^2}\E\left[\left\|\left\{\tilde{b}_J^{(w)}\left({\bm{S}_1 \over A_n},\bm{X}(\bm{S}_1)\right)\tilde{b}_J^{(w)}\left({\bm{S}_1 \over A_n},\bm{X}(\bm{S}_1)\right)' - I_J\right\} \right. \right. \\
&\left. \left. \quad \quad \times \left\{\tilde{b}_J^{(w)}\left({\bm{S}_2 \over A_n},\bm{X}(\bm{S}_2)\right)\tilde{b}_J^{(w)}\left({\bm{S}_2 \over A_n},\bm{X}(\bm{S}_2)\right)' - I_J\right\}'\right\|\right] =: W_{n,1} + W_{n,2}. 
\end{align*}
For $W_{n,1}$, we have $W_{n,1} \lesssim (\zeta_{J,n}^4\lambda_{J,n}^4+1)/n$. For $W_{n,2}$, letting $\bm{Z}_i = A_n\bm{S}_i$, $\bm{W} = \bm{S}_1 - \bm{S}_2$, we have
\begin{align*}
W_{n,2} &\lesssim \E\left[\left\|\left\{\tilde{b}_J^{(w)}\left({\bm{S}_1 \over A_n},\bm{X}(\bm{0})\right)\tilde{b}_J^{(w)}\left({\bm{S}_1 \over A_n},\bm{X}(\bm{0})\right)' - I_J\right\} \right. \right. \\
&\left. \left. \quad \quad \times \left\{\tilde{b}_J^{(w)}\left({\bm{S}_2 \over A_n},\bm{X}(\bm{S}_1 - \bm{S}_2)\right)\tilde{b}_J^{(w)}\left({\bm{S}_2 \over A_n},\bm{X}(\bm{S}_1 - \bm{S}_2)\right)' - I_J\right\}'\right\|\right]\\
&= {1 \over A_n}\E\left[\left\|\left\{\tilde{b}_J^{(w)}\left({\bm{W} \over A_n} + \bm{Z}_2,\bm{X}(\bm{0})\right)\tilde{b}_J^{(w)}\left({\bm{W} \over A_n} + \bm{Z}_2,\bm{X}(\bm{0})\right)' - I_J\right\} \right. \right. \\
&\left. \left. \quad \quad \times \left\{\tilde{b}_J^{(w)}\left(\bm{Z}_2,\bm{X}(\bm{W})\right)\tilde{b}_J^{(w)}\left(\bm{Z}_2,\bm{X}(\bm{W})\right)' - I_J\right\}'\right\|\right] \lesssim {\zeta_{J,n}^4 \lambda_{J,n}^4 \over A_n}. 
\end{align*}
Then applying Corollary \ref{cor:tr12} with $M_n = C_1A_n^{-1}A_n^{(1)}(\zeta_{J,n}^2\lambda_{J,n}^2 + 1)$ and $\sigma_n^2 = C_2A_n^{-1}\zeta_{J,n}^4 \lambda_{J,n}^4$ for some constants $C_1, C_2>0$ independent of $n$, we obtain the desired result. 
\end{proof}

Define 
\begin{align*}
\Sigma &= \E\left[\tilde{b}_J^{(w)}\left({\bm{S}_1 \over A_n}, \bm{X}(\bm{S}_1)\right)\tilde{b}_J^{(w)}\left({\bm{S}_1 \over A_n}, \bm{X}(\bm{S}_1)\right)'\mathfrak{h}\left({\bm{S}_1 \over A_n},\bm{X}(\bm{S}_1)\right)^2 \varepsilon_1^2\right],\\
\hat{\Sigma} &= {1 \over n}\sum_{i=1}^{n}\tilde{b}_J^{(w)}\left({\bm{S}_i \over A_n}, \bm{X}(\bm{S}_i)\right)\tilde{b}_J^{(w)}\left({\bm{S}_i \over A_n}, \bm{X}(\bm{S}_i)\right)'\mathfrak{h}\left({\bm{S}_i \over A_n},\bm{X}(\bm{S}_i)\right)^2 \varepsilon_i^2.
\end{align*}

\begin{lemma}\label{lem:cov-matrix}
Suppose that Assumptions \ref{Ass:sample}(i), (ii), \ref{Ass:model2}, \ref{Ass:sieve3}(iii) hold. 

Additionally, assume $(\zeta_{J,n}\lambda_{J,n})^{{2q \over q-2}} \lesssim \sqrt{{A_n \over (\log n)^2}}$. Then $\|\hat{\Sigma} - \Sigma\| = o_p(1)$. 
\end{lemma}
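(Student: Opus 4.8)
The plan is to follow the template of the proof of Lemma~\ref{lem:B}, treating $\hat{\Sigma}-\Sigma$ as a normalized sum of a matrix-valued functional of the data. Writing $Z_i = \tilde{b}_J^{(w)}(\bm{S}_i/A_n,\bm{X}(\bm{S}_i))\tilde{b}_J^{(w)}(\bm{S}_i/A_n,\bm{X}(\bm{S}_i))'\mathfrak{h}(\bm{S}_i/A_n,\bm{X}(\bm{S}_i))^2\varepsilon_i^2$, we have $\hat{\Sigma}-\Sigma = n^{-1}\sum_{i=1}^n(Z_i-\E[Z_i])$, where $\Sigma=\E[Z_1]$ since $\varepsilon_1$ is independent of $(\bm{S}_1,\bm{X})$ with $\E[\varepsilon_1^2]=1$. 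The key structural observation is that, conditionally on the field $\bm{X}$, the summands $Z_i$ are i.i.d.\ functions of the i.i.d.\ pairs $(\bm{S}_i,\varepsilon_i)$; the only cross-$i$ dependence enters through the common realization of $\bm{X}$, which is $\beta$-mixing by Assumption~\ref{Ass:model2}(ii). This is precisely the dependence structure handled in Lemma~\ref{lem:B}, so I would reuse its large-block/small-block coupling.

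Relative to Lemma~\ref{lem:B}, two new factors appear in the summand. The factor $\mathfrak{h}^2$ is harmless: it is bounded by $C_{\mathfrak{h}}^2 = \sup_{(\bm{z},\bm{x})}\mathfrak{h}(\bm{z},\bm{x})^2<\infty$ (Assumption~\ref{Ass:model2}(iv)) and merely changes constants. The factor $\varepsilon_i^2$ is the genuinely new ingredient, since $\varepsilon_i$ has only $q$ finite moments with $q>2$ while matrix Bernstein (Corollary~\ref{cor:tr12}) requires an almost-sure bound on the summands. I would therefore truncate, as in Step~3 of the proof of Proposition~\ref{prp:variance}: set $\bar{Z}_i = Z_i\,1\{|\varepsilon_i|\le\tau_n\}$ with $\tau_n\sim n^{1/q}(\log n)^{\iota}$. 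The untruncated part is removed by the union bound $\Prob(\max_{1\le i\le n}|\varepsilon_i|>\tau_n)\le n\tau_n^{-q}=o(1)$, and the truncation bias is controlled by $\|\E[Z_1\,1\{|\varepsilon_1|>\tau_n\}]\| = \|\E[\tilde{b}_J^{(w)}\tilde{b}_J^{(w)\prime}\mathfrak{h}^2]\|\,\E[\varepsilon_1^2 1\{|\varepsilon_1|>\tau_n\}] \lesssim \tau_n^{-(q-2)}=o(1)$, where I used $\|\E[\tilde{b}_J^{(w)}\tilde{b}_J^{(w)\prime}]\| = \|I_J\|=1$ and the boundedness of $\mathfrak{h}$.

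For the truncated, centered sum I would partition $R_n$ into large and small blocks as in Lemma~\ref{lem:B}, write $n^{-1}\sum_i(\bar{Z}_i-\E[\bar{Z}_i])$ as a sum of block contributions, each a function of $\bm{X}$ restricted to one region and of the i.i.d.\ $(\bm{S}_i,\varepsilon_i)$ falling in it, and couple these to independent blocks via Lemma~\ref{lem:indep_block}; since $(\bm{S}_i,\varepsilon_i)$ are independent of $\bm{X}$, the coupling error is still $\lesssim (A_n/A_n^{(1)})\beta(\underline{A}_{n2};A_n)=o(1)$ under Assumption~\ref{Ass:model2}(ii). Bounding each independent block in spectral norm via Lemma~\ref{lem:number-summands} (now with the extra factor $\tau_n^2$) yields the a.s.\ bound $M_n$, and bounding the sum of block second moments yields the variance proxy $\sigma_n^2$; Corollary~\ref{cor:tr12} then controls the independent sum. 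The main obstacle is the truncation balancing: because $M_n$ is inflated by $\tau_n^2$, one must verify both the side condition $M_n\sqrt{\log J}=o(\sigma_n)$ of Corollary~\ref{cor:tr12} and that the resulting Bernstein rate is $o_p(1)$. This is exactly where the hypothesis $(\zeta_{J,n}\lambda_{J,n})^{2q/(q-2)}\lesssim\sqrt{A_n/(\log n)^2}$ is consumed; the squared exponent $2q/(q-2)$, as opposed to $q/(q-2)$ in Proposition~\ref{prp:variance2}, reflects precisely the squared noise $\varepsilon_i^2$ entering $\hat{\Sigma}$.
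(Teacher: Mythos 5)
Your overall architecture --- truncate the noise, handle the tail separately, and control the truncated centered sum by the large-block/small-block coupling plus matrix Bernstein --- is indeed the architecture of the paper's proof, and your treatment of $\mathfrak{h}^2$, of the centering mismatch, and of the coupling error all match. The difference, and the genuine gap, is the truncation itself. The paper does not truncate at $\tau_n \sim n^{1/q}(\log n)^{\iota}$: it splits the summand according to whether $\|\tilde{b}_J^{(w)}\tilde{b}_J^{(w)\prime}\varepsilon_i^2\| \leq L_n^2$ with $L_n = (\zeta_{J,n}\lambda_{J,n})^{q/(q-2)}$, i.e.\ it truncates $\zeta_{J,n}\lambda_{J,n}|\varepsilon_i|$ at $L_n$, and it disposes of the tail part $\hat{\Sigma}_2$ \emph{in expectation} rather than by a union bound: $\E\|\hat{\Sigma}_2\| \leq 2C_{\mathfrak{h}}^2(\zeta_{J,n}\lambda_{J,n})^{q}L_n^{-(q-2)}\E[|\varepsilon_1|^q 1\{|\varepsilon_1| > (\zeta_{J,n}\lambda_{J,n})^{2/(q-2)}\}] = o(1)$, the powers cancelling exactly because of the choice of $L_n$, after which Markov gives $\|\hat{\Sigma}_2\| = o_p(1)$. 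This $L^1$ treatment of the tail is what permits such a low truncation level, and the lemma's hypothesis is calibrated to it: the truncated summands are bounded by $C_{\mathfrak{h}}^2 L_n^2 = C_{\mathfrak{h}}^2(\zeta_{J,n}\lambda_{J,n})^{2q/(q-2)} \lesssim \sqrt{A_n}/\log n$, so the block-level bound is $M_n \lesssim A_n^{-1}A_n^{(1)}L_n^2$ and Bernstein yields $\|\hat{\Sigma}_1\| = O_p(L_n^2\sqrt{\log J/A_n}) = o_p(1)$.

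Your truncation level cannot be lowered below $n^{1/q}$, because your tail step relies on $\Prob(\max_i|\varepsilon_i| > \tau_n) \leq n\tau_n^{-q} = o(1)$, and this is where the proof breaks. Your block bound is $M_n \lesssim A_n^{-1}A_n^{(1)}\zeta_{J,n}^2\lambda_{J,n}^2\tau_n^2$, and for the exponential inequality (Lemma~\ref{lem:tr12} or Corollary~\ref{cor:tr12}) to produce $o_p(1)$ one needs, on top of the variance part, $M_n\log J \to 0$, i.e.\ $A_n^{(1)}\zeta_{J,n}^2\lambda_{J,n}^2\tau_n^2\log J = o(A_n)$. This does not follow from the lemma's hypotheses. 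Since $\E[\|\tilde{b}_J^{(w)}\|^2] = \mathrm{tr}(I_J) = J$ forces $(\zeta_{J,n}\lambda_{J,n})^2 \geq J \geq 1$, in the mixed increasing domain regime --- say $A_n \sim n^{1/2}$, which Assumption~\ref{Ass:sample}(i) allows --- with $2 < q < 4$ (Assumption~\ref{Ass:model2}(iii) guarantees only $q>2$) one has $\zeta_{J,n}^2\lambda_{J,n}^2\tau_n^2 \geq n^{2/q} \gg n^{1/2} \sim A_n$, so $M_n \to \infty$ for \emph{every} admissible block sequence $A_n^{(1)}$, and the Bernstein bound degenerates; meanwhile the hypothesis $(\zeta_{J,n}\lambda_{J,n})^{2q/(q-2)} \lesssim \sqrt{A_n/(\log n)^2}$ is perfectly satisfiable there (it merely caps $J$). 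Even in the pure increasing domain case $A_n \sim \kappa n$, your approach needs $A_n^{(1)} = o(n^{(q-2)/(2q)}/\mathrm{polylog})$, a restriction nowhere available; and if instead you use the crude variance proxy $(\zeta_{J,n}^2\lambda_{J,n}^2\tau_n^2)^2/A_n$ so that the side condition of Corollary~\ref{cor:tr12} holds, the resulting rate $\zeta_{J,n}^2\lambda_{J,n}^2\tau_n^2\sqrt{\log J/A_n} \gtrsim n^{2/q-1/2}$ diverges for $q<4$. So the claim that the hypothesis ``is consumed'' at the balancing step is precisely where the proposal fails: the exponent $2q/(q-2)$ is not a reflection of the squared noise interacting with your $\tau_n$; it is the square of the paper's truncation level $L_n$. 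The repair is to replace the union-bound tail step by the paper's expectation bound, which frees you to truncate $|\varepsilon_i|$ at $(\zeta_{J,n}\lambda_{J,n})^{2/(q-2)}$, after which the Bernstein step closes under exactly the stated hypothesis.
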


\begin{proof}
Define $\hat{\Sigma}_j= {1 \over n}\sum_{i=1}^{n}(\Xi_{j,i} - \E[\Xi_{j,i}])$, $j=1,2$,
where 
\begin{align*}
\Xi_{1,i} &= \tilde{b}_J^{(w)}\left({\bm{S}_i \over A_n}, \bm{X}(\bm{S}_i)\right)\tilde{b}_J^{(w)}\left({\bm{S}_i \over A_n}, \bm{X}(\bm{S}_i)\right)'\mathfrak{h}\left({\bm{S}_i \over A_n},\bm{X}(\bm{S}_i)\right)^2 \varepsilon_i^2\\
&\quad \times 1\left\{\left\| \tilde{b}_J^{(w)}\left({\bm{S}_i \over A_n}, \bm{X}(\bm{S}_i)\right)\tilde{b}_J^{(w)}\left({\bm{S}_i \over A_n}, \bm{X}(\bm{S}_i)\right)'\varepsilon_i^2\right\| \leq L_n^2\right\},\\
\Xi_{2,i} &= \tilde{b}_J^{(w)}\left({\bm{S}_i \over A_n}, \bm{X}(\bm{S}_i)\right)\tilde{b}_J^{(w)}\left({\bm{S}_i \over A_n}, \bm{X}(\bm{S}_i)\right)'\mathfrak{h}\left({\bm{S}_i \over A_n},\bm{X}(\bm{S}_i)\right)^2 \varepsilon_i^2\\
&\quad \times 1\left\{\left\| \tilde{b}_J^{(w)}\left({\bm{S}_i \over A_n}, \bm{X}(\bm{S}_i)\right)\tilde{b}_J^{(w)}\left({\bm{S}_i \over A_n}, \bm{X}(\bm{S}_i)\right)'\varepsilon_i^2\right\| > L_n^2\right\}
\end{align*}
and where $L_n = (\zeta_{J,n}\lambda_{J,n})^{{q \over q-2}}$. Note that $\hat{\Sigma} - \Sigma = \hat{\Sigma}_1 + \hat{\Sigma}_2$. Then it is sufficient to show $\|\hat{\Sigma}_j\| = o_p(1)$, $j=1,2$. 

First, we show $\|\hat{\Sigma}_1\| = o_p(1)$. Observe that $\|\Xi_{1,i}\| \leq C_\mathfrak{h}^2L_n^2$ where $C_\mathfrak{h} = \sup_{(\bm{z},\bm{x}) \in R_0 \times \mathbb{R}^p}\mathfrak{h}(\bm{z},\bm{x})$. Then we have $\|\Xi_{1,i} - \E[\Xi_{1,i}]\| \leq 2C_{\mathfrak{h}}^2 L_n^2$. Decompose 
\begin{align*}
\hat{\Sigma}_1 &= \sum_{\bm{\ell} \in L_{n1}}\!\!\Xi_1^{(\bm{\ell};\bm{\Delta}_0)} \!+\! \sum_{\bm{\Delta} \neq \bm{\Delta}_0}\sum_{\bm{\ell} \in L_{n1}}\!\!\Xi_1^{(\bm{\ell};\bm{\Delta})} \!+ \!\sum_{\bm{\Delta} \in \{1,2\}^d}\sum_{\bm{\ell} \in L_{n2}}\!\!\Xi_1^{(\bm{\ell};\bm{\Delta})},
\end{align*}
where $\Xi_1^{(\bm{\ell};\bm{\Delta})} = {1 \over n}\sum_{i=1}^{n}\left\{\Xi_{1,i} - \E[\Xi_{1,i}]\right\}1\{\bm{S}_i \in \Gamma_n(\bm{\ell};\bm{\Delta}) \cap R_n \}$. Observe that 
\begin{align}\label{ineq:Sig1-norm}
\|\Xi_1^{(\bm{\ell};\bm{\Delta})}\| &\leq {1 \over n}\sum_{i=1}^{n}\|\Xi_{1,i} - \E[\Xi_{1,i}]\|1\{\bm{S}_i \in \Gamma_n(\bm{\ell};\bm{\Delta})\} \nonumber \\
&\leq {2C_{\mathfrak{h}}^2 L_n^2 \over n} nA_n^{-1}A_n^{(1)} = {2C_{\mathfrak{h}}^2A_n^{(1)} L_n^2 \over A_n}\ a.s.\ (\text{from Lemma \ref{lem:number-summands}})
\end{align}
and 
\begin{align}\label{ineq:Sig1-var}
&\left\|\sum_{\bm{\ell} \in L_{n1}}\E\left[\Xi_1^{(\bm{\ell};\bm{\Delta})}\Xi_1^{(\bm{\ell};\bm{\Delta})'}\right]\right\| \nonumber \\
&\leq {n \over n^2}\left\|\E\left[\left(\Xi_{1,1} - \E[\Xi_{1,1}]\right)\left(\Xi_{1,1} - \E[\Xi_{1,1}]\right)'1\{\bm{S}_1 \in \cup_{\bm{\ell} \in L_{n1}} \Gamma_n(\bm{\ell};\bm{\Delta})\}\right]\right\| \nonumber \\
&\quad + {n(n-1) \over n^2}\left\|\E\left[\left(\Xi_{1,1} - \E[\Xi_{1,1}]\right)\left(\Xi_{1,2} - \E[\Xi_{1,2}]\right)'1\{\bm{S}_1,\bm{S}_2 \in \cup_{\bm{\ell} \in L_{n1}} \Gamma_n(\bm{\ell};\bm{\Delta})\}\right]\right\| \nonumber \\
&\leq {n \over n^2}\E\left[\left\|\left(\Xi_{1,1} - \E[\Xi_{1,1}]\right)\left(\Xi_{1,1} - \E[\Xi_{1,1}]\right)'\right\| \right] + {n(n-1) \over n^2}\E\left[\left\| \left(\Xi_{1,1} - \E[\Xi_{1,1}]\right)\left(\Xi_{1,2} - \E[\Xi_{1,2}]\right)'\right\| \right]\nonumber \\
&\leq {4C_{\mathfrak{h}}^4 L_n^4 \over n} + {4C_{\mathfrak{h}}^4 L_n^4 \over A_n} \lesssim {L_n^4 \over A_n}. 
\end{align}
Therefore, applying Lemma \ref{lem:tr12} and a similar argument to the proof of Lemma \ref{lem:B} with (\ref{ineq:Sig1-norm}) and (\ref{ineq:Sig1-var}), we have $\|\hat{\Sigma}_1\| = O_p\left({L_n^2 \sqrt{\log J \over A_n}}\right) = o_p(1)$. 

Next we show $\|\hat{\Sigma}_2\| = o_p(1)$. Observe that 
\begin{align*}
\|\Xi_{2,i}\| &\leq C_{\mathfrak{h}}^2\left\|\tilde{b}_J^{(w)}\left({\bm{S}_i \over A_n}, \bm{X}(\bm{S}_i)\right)\tilde{b}_J^{(w)}\left({\bm{S}_i \over A_n}, \bm{X}(\bm{S}_i)\right)'\varepsilon_i^2\right\|\\
&\quad \times 1\left\{\left\| \tilde{b}_J^{(w)}\left({\bm{S}_i \over A_n}, \bm{X}(\bm{S}_i)\right)\tilde{b}_J^{(w)}\left({\bm{S}_i \over A_n}, \bm{X}(\bm{S}_i)\right)'\varepsilon_i^2\right\| > L_n^2\right\}\\
&\leq C_{\mathfrak{h}}^2\zeta_{J,n}^2 \lambda_{J,n}^2 |\varepsilon_i|^2 1\{\zeta_{J,n}^2 \lambda_{J,n}^2 |\varepsilon_i|^2 > L_n^2\}.
\end{align*}
Then we have
\begin{align*}
\E[\|\hat{\Sigma}_2\|] &\leq 2\E[\|\Xi_{2,i}\|] \leq 2C_{\mathfrak{h}}^2\zeta_{J,n}^2 \lambda_{J,n}^2 \E[|\varepsilon_i|^2 1\{\zeta_{J,n}^2 \lambda_{J,n}^2 |\varepsilon_i|^2 > L_n^2\}]\\
&\leq {2C_{\mathfrak{h}}^2(\zeta_{J,n} \lambda_{J,n})^2 (\zeta_{J,n} \lambda_{J,n})^{q-2} \over L_n^{q-2}}\E[|\varepsilon_1|^q 1\{|\varepsilon_i|^2 > L_n^2/(\zeta_{J,n} \lambda_{J,n})^2\}] = o(1).
\end{align*}
Hence, applying Markov's inequality, we have $\|\hat{\Sigma}_2\| = o_p(1)$.
\end{proof}

\begin{lemma}\label{lem:matrix-inv-norm}
Let $B$ and $C$ be $J \times J$ matrices. Assume that $A=B+C$, $B$, and $C$ are non-singular, and $\lambda_{\text{min}}(C^{-1}) > \lambda_{\text{max}}(C^{-1})\{\|A - I_J\|(1 + \|B - I_J\|) + \|B - I_J\|\}$. Then
\begin{align*}
\|A^{-1} - B^{-1}\| &\leq {1 \over \lambda_{\text{min}}(C^{-1}) - \lambda_{\text{max}}(C^{-1})\{\|A - I_J\|(1 + \|B - I_J\|) + \|B - I_J\|\}}. 
\end{align*}
\end{lemma}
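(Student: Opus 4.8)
The plan is to reduce the desired norm bound to a lower bound on the smallest singular value of an auxiliary matrix. First I would invoke the resolvent identity $A^{-1}-B^{-1}=A^{-1}(B-A)B^{-1}$; since $A=B+C$ gives $B-A=-C$, this reads $A^{-1}-B^{-1}=-A^{-1}CB^{-1}$. Because $A$, $B$, and $C$ are all non-singular, the right-hand side is a product of invertible matrices, so $A^{-1}-B^{-1}$ is itself invertible with inverse $(A^{-1}-B^{-1})^{-1}=-BC^{-1}A$. Recalling that for any invertible matrix $M$ one has $\|M\|=\sigma_{\max}(M)=1/\sigma_{\min}(M^{-1})$, where $\sigma_{\min}$ and $\sigma_{\max}$ denote the smallest and largest singular values, it follows that
\[
\|A^{-1}-B^{-1}\|=\frac{1}{\sigma_{\min}(BC^{-1}A)}.
\]
Hence it suffices to bound $\sigma_{\min}(BC^{-1}A)$ from below by $\lambda_{\text{min}}(C^{-1})-\lambda_{\text{max}}(C^{-1})\{\|A-I_J\|(1+\|B-I_J\|)+\|B-I_J\|\}$.

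Next I would expand $BC^{-1}A$ as a perturbation of $C^{-1}$. Writing $A=I_J+(A-I_J)$ and $B=I_J+(B-I_J)$ and multiplying out, I get $BC^{-1}A=C^{-1}+R$ with
\[
R=C^{-1}(A-I_J)+(B-I_J)C^{-1}+(B-I_J)C^{-1}(A-I_J).
\]
Submultiplicativity of the spectral norm together with $\|C^{-1}\|=\lambda_{\text{max}}(C^{-1})$ then yields $\|R\|\le\lambda_{\text{max}}(C^{-1})\{\|A-I_J\|(1+\|B-I_J\|)+\|B-I_J\|\}$, which is precisely the subtracted term in the denominator.

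Finally I would control $\sigma_{\min}(C^{-1}+R)$ by the elementary one-line perturbation inequality for the smallest singular value: for any unit vector $x$, $\|(C^{-1}+R)x\|\ge\|C^{-1}x\|-\|Rx\|\ge\sigma_{\min}(C^{-1})-\|R\|$, so $\sigma_{\min}(BC^{-1}A)\ge\sigma_{\min}(C^{-1})-\|R\|$. Using $\sigma_{\min}(C^{-1})=\lambda_{\text{min}}(C^{-1})$ and the bound on $\|R\|$ from the previous step, the hypothesis $\lambda_{\text{min}}(C^{-1})>\lambda_{\text{max}}(C^{-1})\{\|A-I_J\|(1+\|B-I_J\|)+\|B-I_J\|\}$ makes this lower bound strictly positive; combining with the displayed identity for $\|A^{-1}-B^{-1}\|$ and inverting gives exactly the asserted inequality.

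The main delicate point is the passage between eigenvalues and singular values, since the reduction needs $\|C^{-1}\|\le\lambda_{\text{max}}(C^{-1})$ and $\sigma_{\min}(C^{-1})\ge\lambda_{\text{min}}(C^{-1})$. These hold (with equality) precisely when $C^{-1}$ is symmetric, which is exactly the relevant case in the application, where $C=\varsigma_{J,n}\check{\Psi}_J^{-1}$ is symmetric positive definite; for a genuinely non-normal $C$ the eigenvalues $\lambda_{\text{min}},\lambda_{\text{max}}$ need not even be real, so the statement is to be read in this symmetric setting. Everything else is a routine chain of submultiplicative norm estimates plus the one-line bound for $\sigma_{\min}$, so I expect no further obstacle beyond keeping the identity $(A^{-1}-B^{-1})^{-1}=-BC^{-1}A$ and the $\sigma$-vs-$\lambda$ bookkeeping straight.
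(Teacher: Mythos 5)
Your proof is correct and is essentially the paper's argument in different packaging: the paper obtains the same reduction, $\|A^{-1}-B^{-1}\| = 1/\inf_{\|\bm{x}\|=1}\|AC^{-1}B\bm{x}\|$, by substitutions inside the supremum rather than via the resolvent identity and singular-value duality (note $AC^{-1}B = BC^{-1}A$ when $C=A-B$), and then applies the identical triangle-inequality perturbation bound yielding the constant $\lambda_{\text{max}}(C^{-1})\{\|A-I_J\|(1+\|B-I_J\|)+\|B-I_J\|\}$. Your closing caveat about eigenvalues versus singular values is well taken: the paper's proof also silently uses $\|C^{-1}\bm{x}\| \geq \lambda_{\text{min}}(C^{-1})\|\bm{x}\|$ and $\|C^{-1}\| \leq \lambda_{\text{max}}(C^{-1})$, which require $C$ to be symmetric, as it indeed is in the application $C = \varsigma_{J,n}\check{\Psi}_J^{-1}$.
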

\begin{proof}
Observe that 
\begin{align}\label{eq:AB-inv-norm}
\|A^{-1} - B^{-1}\| &= \sup_{\|x\| \neq 0}{\|(A^{-1} - B^{-1})\bm{x}\| \over \|\bm{x}\|}= \sup_{\|A\bm{x}\| \neq 0}{\|(I_J - B^{-1}A)\bm{x}\| \over \|A\bm{x}\|} \nonumber \\
&= \sup_{\|A\bm{x}\| \neq 0}{\|B^{-1}C\bm{x}\| \over \|A\bm{x}\|} = \sup_{\|AC^{-1}B\bm{y}\| \neq 0}{\|\bm{y}\| \over \|AC^{-1}B\bm{y}\|} = {1 \over \inf_{\|x\|=1}\|AC^{-1}B\bm{x}\|}.
\end{align}
Note that 
\begin{align}\label{ineq:ACB-norm}
\|AC^{-1}B\bm{x}\| &\geq \|C^{-1}B\bm{x}\| - \|(A - I_J)C^{-1}B\bm{x}\| \nonumber \\
&\geq \|C^{-1}\bm{x}\| - \|C^{-1}(B-I_J)\bm{x}\|- \|(A - I_J)C^{-1}B\bm{x}\| \nonumber \\
&\geq \{\lambda_{\text{min}}(C^{-1}) - \|C^{-1}\|\left(\|A - I_J\|(1 + \|B - I_J\|) + \|B - I_J\|\right)\}\|\bm{x}\| \nonumber \\
&\geq \{\lambda_{\text{min}}(C^{-1}) - \lambda_{\text{max}}(C^{-1})\left(\|A - I_J\|(1 + \|B - I_J\|) + \|B - I_J\|\right)\}\|\bm{x}\|.
\end{align}
Combining (\ref{eq:AB-inv-norm}) and (\ref{ineq:ACB-norm}), we obtain the desired result.
\end{proof}

We refer to the following lemmas without those proofs. 
\begin{lemma}[(5.19) in \cite{La03b}]\label{lem:number-summands}
Under Assumption 2.2, 
we have
\[
\Prob\left(\sum_{i=1}^{n}1\{\bm{X}_{i} \in \Gamma_n(\bm{\ell}; \bm{\Delta})\} > C|\Gamma_n(\bm{\ell}; \bm{\Delta})|nA_{n}^{-1}\ \text{for some $\bm{\ell} \in L_{n1}$, i.o.}\right) = 0
\]
for any $\bm{\Delta} \in \{1,2\}^{d}$, where $C>0$ is a sufficiently large constant. 
\end{lemma}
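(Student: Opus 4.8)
The plan is to establish the result by a first Borel--Cantelli argument applied to the sequence (indexed by $n$) of ``bad'' events, where the per-stage probability is controlled by an exponential tail bound for binomial counts combined with a union bound over the blocks. Throughout, fix $\bm{\Delta} \in \{1,2\}^d$ and write $N_n(\bm{\ell}) = \sum_{i=1}^n \mathbf{1}\{\bm{S}_i \in \Gamma_n(\bm{\ell};\bm{\Delta})\}$ for the number of sampling sites (the $\bm{X}_i$ of \cite{La03b}, which are the i.i.d.\ points of Assumption \ref{Ass:sample}(ii)) falling in the block $\Gamma_n(\bm{\ell};\bm{\Delta})$. Because the sites are i.i.d.\ with density $A_n^{-1}g(\cdot/A_n)$, $N_n(\bm{\ell})$ is a binomial sum with success probability $p_n(\bm{\ell})=\int_{\Gamma_n(\bm{\ell};\bm{\Delta})}A_n^{-1}g(\bm{s}/A_n)\,d\bm{s}$, and since the design density $g$ is bounded (cf.\ Assumption 2.2 of \cite{La03b}) this satisfies $p_n(\bm{\ell})\le \|g\|_\infty |\Gamma_n(\bm{\ell};\bm{\Delta})|A_n^{-1}$. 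Writing $M_n(\bm{\ell})=|\Gamma_n(\bm{\ell};\bm{\Delta})|nA_n^{-1}$ for the nominal mean count, the expectation obeys $\mu_n(\bm{\ell}):=np_n(\bm{\ell})\le \|g\|_\infty M_n(\bm{\ell})$, so the target threshold $C M_n(\bm{\ell})$ exceeds the true mean by a factor that can be made large by taking $C$ large.

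Second, I would obtain a per-block tail bound. The centered indicators $\mathbf{1}\{\bm{S}_i\in\Gamma_n(\bm{\ell};\bm{\Delta})\}-p_n(\bm{\ell})$ are independent, bounded by $1$ in absolute value, and have variances summing to at most $\mu_n(\bm{\ell})$; Bernstein's inequality (Lemma \ref{lem:Bernstein}) therefore gives, for the excess $t=CM_n(\bm{\ell})-\mu_n(\bm{\ell})\ge (C-\|g\|_\infty)M_n(\bm{\ell})$,
\[
\Prob\bigl(N_n(\bm{\ell})>CM_n(\bm{\ell})\bigr)\le \exp\!\left(-\frac{t^2/2}{\mu_n(\bm{\ell})+t/3}\right)\le \exp\bigl(-c_C\,M_n(\bm{\ell})\bigr),
\]
where, using $\mu_n(\bm{\ell})\lesssim M_n(\bm{\ell})$ and $t\gtrsim M_n(\bm{\ell})$, the constant $c_C>0$ grows linearly in $C$ and hence can be made arbitrarily large by enlarging $C$.

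Third, I would pass to a uniform bound and invoke summability. The number of large blocks satisfies $[\![L_{n1}]\!]\lesssim A_n/A_n^{(1)}$, which is polynomial in $n$, so a union bound yields $\Prob(\max_{\bm{\ell}\in L_{n1}}N_n(\bm{\ell})>CM_n(\bm{\ell}))\le [\![L_{n1}]\!]\exp(-c_C\min_{\bm{\ell}}M_n(\bm{\ell}))$. For $\bm{\Delta}=\bm{\Delta}_0$ one has $|\Gamma_n(\bm{\ell};\bm{\Delta}_0)|=A_n^{(1)}$, hence $M_n(\bm{\ell})\sim nA_n^{(1)}/A_n\gtrsim A_n^{(1)}$ because $n/A_n$ is bounded below under Assumption \ref{Ass:sample}(i); as $A_n^{(1)}$ diverges polynomially in $n$ under the volume-growth conditions, $M_n(\bm{\ell})/\log n\to\infty$. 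Choosing $C$ large enough that $c_C M_n(\bm{\ell})\ge 2\log n+2\log[\![L_{n1}]\!]$ eventually makes $\sum_n [\![L_{n1}]\!]\exp(-c_C\min_{\bm{\ell}}M_n(\bm{\ell}))$ finite; the cases $\bm{\Delta}\neq\bm{\Delta}_0$ are identical, since their block volumes are of at least the same order and their nominal means are no smaller (a smaller $\mu_n$ only improves the Bernstein exponent). As $\{1,2\}^d$ is finite, the first Borel--Cantelli lemma then assigns probability zero to the event that some block violates the bound infinitely often, which is the assertion.

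The main obstacle is not the per-block estimate but the bookkeeping that makes the Borel--Cantelli step both legitimate and sharp: the law of $\{\bm{S}_i\}$ is a triangular array that changes with $n$ through $A_n$, so one must realize all stages on the common probability space furnished by the sampling design and then verify that the expected count per block $M_n(\bm{\ell})$ grows fast enough relative to $\log[\![L_{n1}]\!]$ for the exponential tail to dominate the union bound uniformly over $\bm{\ell}\in L_{n1}$. Once $M_n(\bm{\ell})\gg\log n$ is secured from the polynomial divergence of $A_n^{(1)}$, the freedom to enlarge $C$ (and hence $c_C$) closes the argument; this is precisely the mechanism underlying (5.19) in \cite{La03b}.
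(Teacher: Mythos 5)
This lemma is stated in the paper without proof (it is imported verbatim from (5.19) of \cite{La03b}), so your proposal must stand on its own merits; its skeleton --- a binomial/Bernstein tail bound for each block, a union bound over the polynomially many blocks indexed by $L_{n1}$, and the first Borel--Cantelli lemma in $n$ --- is indeed the standard mechanism behind Lahiri's result. Your per-block estimate is also correct: $\mu_n(\bm{\ell})\le \|g\|_\infty M_n(\bm{\ell})$, and for $C>\|g\|_\infty$, Lemma \ref{lem:Bernstein} gives $\Prob\bigl(N_n(\bm{\ell})>CM_n(\bm{\ell})\bigr)\le\exp\bigl(-c_C M_n(\bm{\ell})\bigr)$ with $c_C$ growing linearly in $C$, and $n/A_n$ is bounded below for large $n$ by Assumption \ref{Ass:sample}(i).

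The genuine gap is your dismissal of the small blocks. For $\bm{\Delta}\neq\bm{\Delta}_0$ the block $\Gamma_n(\bm{\ell};\bm{\Delta})$ has volume $\prod_{j:\Delta_j=1}A_{n1,j}\prod_{j:\Delta_j=2}A_{n2,j}$, which is of \emph{strictly smaller} order than $A_n^{(1)}$ because $A_{n2,j}/A_{n1,j}\to 0$ under Assumption \ref{Ass:model}; in the worst case $\bm{\Delta}=(2,\dots,2)'$ it equals $\prod_j A_{n2,j}$. So the claim that the small blocks have volumes ``of at least the same order'' is false, and the parenthetical ``a smaller $\mu_n$ only improves the Bernstein exponent'' does not repair it: the threshold in the lemma is $C|\Gamma_n(\bm{\ell};\bm{\Delta})|nA_n^{-1}$, i.e.\ it \emph{shrinks proportionally} with the block volume, so the Bernstein exponent for such a block is $c_C$ times that block's own nominal count, not $c_C M_n$ for a large block. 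Consequently the quantity that must dominate $\log n$ in the union-bound/Borel--Cantelli step is the minimum nominal count over \emph{all} block types, which is of order $(n/A_n)\prod_j A_{n2,j}\ge (n/A_n)\underline{A}_{n2}^{\,d}$, and nothing in Assumption \ref{Ass:model} as literally stated supplies a rate for $\underline{A}_{n2}$ beyond mere divergence. The argument closes only under the polynomial-growth design actually imposed in \cite{La03b} and used in this paper (e.g.\ $A_{n,j}\sim n^{\mathfrak{c}_0/d}$, $A_{n1,j}=A_{n,j}^{\mathfrak{c}_1}$, $A_{n2,j}=A_{n1,j}^{\mathfrak{c}_2}$ in Proposition \ref{prp:LevyMA-ex}), under which $(n/A_n)\underline{A}_{n2}^{\,d}$ grows like a positive power of $n$ and any fixed $c_C>0$ eventually beats the union bound. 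You should therefore replace the ``same order'' claim by the explicit bound through $(n/A_n)\prod_j A_{n2,j}$ and state the rate hypothesis it requires; note that the small-block case is precisely the one the paper relies on (e.g.\ when bounding $|\tilde{\phi}_1^{(\bm{\ell};\bm{\Delta})}(\bm{z})|$ via the volume $(\overline{A}_{n1})^{d-1}\overline{A}_{n2}$ in Step 3 of the proof of Proposition \ref{prp:variance}), so it cannot be waved off as identical to the large-block case.
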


\begin{lemma}[Corollary 2.7 in \cite{Yu94}]\label{lem:indep_block}
Let $m \in \mathbb{N}$ and let $Q$ be a probability measure on a product space $(\prod_{i=1}^{m}\Omega_{i}, \prod_{i=1}^{m}\Sigma_{i})$ with marginal measures $Q_{i}$ on $(\Omega_{i}, \Sigma_{i})$. Suppose that $h$ is a bounded measurable function on the product probability space such that $|h|\leq M_{h}<\infty$. For $1 \leq a\leq b \leq m$, let $Q_{a}^{b}$ be the marginal measure on $(\prod_{i=a}^{b}\Omega_{i}, \prod_{i=a}^{b}\Sigma_{i})$. For a given $\tau>0$, suppose that, for all $1 \leq k \leq m-1$,  
\begin{align}\label{prod_bound}
\|Q - Q_{1}^{k}\times Q_{k+1}^{m}\|_{TV} \leq 2\tau,
\end{align}
where $Q_{1}^{k}\times Q_{k+1}^{m}$ is a product measure and $\| \cdot \|_{TV}$ is the total variation. Then $|Qh -  Ph| \leq 2M_{h}(m-1)\tau$ where $P=\prod_{i=1}^{m}Q_{i}$, $Qh=\int hdQ$, and $Ph=\int hdP$. 
\end{lemma}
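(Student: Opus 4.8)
The plan is to prove the bound by a telescoping (successive factorization) argument, peeling off one coordinate at a time from the front and controlling each swap by a marginalized version of the hypothesis \eqref{prod_bound}. Concretely, I would introduce the family of hybrid measures
\[
T_k = \Big(\prod_{i=1}^{k} Q_i\Big) \times Q_{k+1}^m, \qquad k = 0,1,\dots,m,
\]
on the full product space, in which the first $k$ coordinates are made independent (each carrying its own marginal $Q_i$) while the last $m-k$ coordinates retain their joint law $Q_{k+1}^m$. The endpoints are $T_0 = Q_1^m = Q$ and $T_m = P$, and moreover $T_{m-1} = T_m = P$ since $Q_m^m = Q_m$, so the chain has only $m-1$ nontrivial steps. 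The triangle inequality then gives
\[
|Qh - Ph| \le \sum_{k=1}^{m-1} |T_{k-1}h - T_k h|.
\]

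Next I would bound a single telescope term. The measures $T_{k-1}$ and $T_k$ agree on the shared independent block $\prod_{i<k}Q_i$ governing coordinates $1,\dots,k-1$ and differ only in the law of the tail block on coordinates $k,\dots,m$, which is $Q_k^m$ under $T_{k-1}$ and $Q_k\times Q_{k+1}^m$ under $T_k$. Writing a point as $x=(x_{<k},x_{\ge k})$ and integrating out the shared front block against the probability measure $\prod_{i<k}Q_i$, the duality $|\int h\,d\mu|\le \|h\|_\infty\,\|\mu\|_{TV}$ (valid for any signed measure $\mu$), together with $\|h\|_\infty\le M_h$, yields
\[
|T_{k-1}h - T_k h| \le M_h\,\big\|Q_k^m - Q_k\times Q_{k+1}^m\big\|_{TV}.
\]

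Then I would relate the right-hand side to \eqref{prod_bound} by marginalization. The signed measure $Q_k^m - Q_k\times Q_{k+1}^m$ is exactly the pushforward of $Q - Q_1^k\times Q_{k+1}^m$ under the projection onto coordinates $k,\dots,m$: this projection sends $Q$ to $Q_k^m$, sends the block $Q_1^k$ to its coordinate-$k$ marginal $Q_k$, leaves $Q_{k+1}^m$ untouched, and preserves the independence of the two blocks. Since total variation does not increase under pushforwards, the hypothesis gives $\|Q_k^m - Q_k\times Q_{k+1}^m\|_{TV}\le \|Q - Q_1^k\times Q_{k+1}^m\|_{TV}\le 2\tau$ for each $k=1,\dots,m-1$, and summing the $m-1$ telescope terms produces $|Qh-Ph|\le 2M_h(m-1)\tau$, as claimed.

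The main obstacle is bookkeeping rather than conceptual: one must choose the interpolating measures so that each consecutive difference corresponds to a single block-splitting that is a genuine \emph{marginal} of the full front-versus-tail splitting appearing in \eqref{prod_bound}, and one must track the convention in which $\|\cdot\|_{TV}$ denotes the total mass of the variation measure, so that $|\int h\,d\mu|\le M_h\|\mu\|_{TV}$ and the factor $2\tau$ carries through without spurious constants. Verifying that the projection of $Q_1^k\times Q_{k+1}^m$ onto the tail coordinates is precisely $Q_k\times Q_{k+1}^m$ — i.e. that peeling coordinates from the front is compatible with the block decomposition used in the hypothesis — is the one place where care is genuinely required.
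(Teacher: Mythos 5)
Your proof is correct. Note that the paper itself does not prove this lemma at all: it is quoted verbatim from Yu (1994, Corollary 2.7) in the auxiliary-lemmas section, where the authors explicitly state that such lemmas are cited without proof. Your telescoping argument through the hybrid measures $T_k = \bigl(\prod_{i=1}^{k} Q_i\bigr) \times Q_{k+1}^m$ is essentially the standard inductive proof from Yu's original paper: each consecutive difference is controlled by Fubini and the duality $\lvert \int h \, d\mu \rvert \le M_h \lVert \mu \rVert_{TV}$, and the reduction of $\lVert Q_k^m - Q_k \times Q_{k+1}^m \rVert_{TV}$ to the hypothesis via contractivity of total variation under the projection onto coordinates $k,\dots,m$ is exactly the right bookkeeping step. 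You were also right to flag the total-variation convention: the stated constants (hypothesis $2\tau$, conclusion $2M_h(m-1)\tau$) are only consistent if $\lVert\cdot\rVert_{TV}$ denotes the total mass of the variation measure, which is the convention your proof uses and the one compatible with how the paper applies the lemma with $\tau \sim \beta(\underline{A}_{n2};A_n)$.
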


\begin{lemma}[Bernstein's inequality]\label{lem:Bernstein}
Let $X_{1},\hdots, X_{n}$ be independent zero-mean random variables. Suppose that  $\max_{1 \leq i \leq n}|X_{i}|\leq M<\infty$ a.s. Then, for all $t>0$,
\begin{align*}
\Prob\left(\sum _{i=1}^{n}X_{i}\geq t\right)\leq \exp \left(-{{t^{2} \over 2} \over \sum_{i=1}^{n}\E[X_{i}^{2}] + {Mt \over 3}}\right). 
\end{align*}
\end{lemma}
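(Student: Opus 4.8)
The plan is to prove this by the classical Cram\'er--Chernoff (exponential moment) method, so that none of the earlier results in the paper is actually needed. Write $S_n = \sum_{i=1}^n X_i$ and $v = \sum_{i=1}^n \E[X_i^2]$. For any $\lambda \in (0, 3/M)$, Markov's inequality applied to $e^{\lambda S_n}$, together with the independence of the $X_i$, gives
\[
\Prob\left(S_n \geq t\right) \leq e^{-\lambda t}\,\E[e^{\lambda S_n}] = e^{-\lambda t}\prod_{i=1}^n \E[e^{\lambda X_i}].
\]
Thus the whole problem reduces to controlling the individual moment generating functions $\E[e^{\lambda X_i}]$ and then choosing $\lambda$ optimally.

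The first key step is a moment bound on each $\E[e^{\lambda X_i}]$. Since $\E[X_i]=0$, expanding the exponential gives $\E[e^{\lambda X_i}] = 1 + \sum_{k\geq 2}\lambda^k \E[X_i^k]/k!$. Using $|X_i|\leq M$ a.s.\ we have $|\E[X_i^k]| \leq M^{k-2}\E[X_i^2]$ for $k\geq 2$, and with the elementary factorial estimate $k! \geq 2\cdot 3^{k-2}$ the tail sum is dominated by a geometric series: for $\lambda M<3$,
\[
\E[e^{\lambda X_i}] \leq 1 + \frac{\lambda^2 \E[X_i^2]}{2}\sum_{k\geq 2}\Big(\frac{\lambda M}{3}\Big)^{k-2} = 1 + \frac{\lambda^2 \E[X_i^2]/2}{1-\lambda M/3} \leq \exp\!\Big(\frac{\lambda^2 \E[X_i^2]/2}{1-\lambda M/3}\Big),
\]
where the last step uses $1+x\leq e^x$. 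Taking the product over $i$ and inserting into the Chernoff bound yields $\Prob(S_n\geq t)\leq \exp\big(-\lambda t + \tfrac{\lambda^2 v/2}{1-\lambda M/3}\big)$.

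The final step is the optimization over $\lambda$. Rather than differentiating, I would substitute the explicit minimizer $\lambda^\ast = t/(v + Mt/3)$, which lies in $(0,3/M)$, and note that $1-\lambda^\ast M/3 = v/(v+Mt/3)$; a direct computation then collapses the exponent, since $-\lambda^\ast t + \tfrac{(\lambda^\ast)^2 v/2}{1-\lambda^\ast M/3} = -\tfrac{t^2/2}{v + Mt/3}$, which gives the claimed inequality.

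The only genuinely delicate point is the moment-to-MGF passage: one must justify the factorial estimate $k!\geq 2\cdot 3^{k-2}$ (valid for all $k\geq 2$, by induction) and the resulting admissibility restriction $\lambda M<3$, since this is precisely what produces the constant $1/3$ appearing in the denominator of the stated bound. Everything else is routine: verifying that the explicit $\lambda^\ast$ is admissible and yields the stated exponent is a short algebraic check rather than a real obstacle.
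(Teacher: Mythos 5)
Your proof is correct. Note first that the paper itself offers no proof to compare against: Lemma \ref{lem:Bernstein} appears in the auxiliary-lemmas section under the explicit disclaimer that such lemmas are stated ``without those proofs,'' i.e.\ it is invoked as a classical result from the literature. Your argument supplies exactly the standard Cram\'er--Chernoff derivation that underlies it, and every step checks out: the Markov/independence reduction to $\prod_i \E[e^{\lambda X_i}]$; the moment bound $|\E[X_i^k]|\leq M^{k-2}\E[X_i^2]$ (valid since $|X_i|^k \leq M^{k-2}X_i^2$ a.s.); the factorial estimate $k!\geq 2\cdot 3^{k-2}$ (immediate by induction, since $(k+1)!\geq 3\cdot k!$ for $k\geq 2$); the geometric summation under $\lambda M<3$; the passage $1+x\leq e^x$; and the algebraic collapse at $\lambda^\ast = t/(v+Mt/3)$, where indeed
\begin{align*}
-\lambda^\ast t + \frac{(\lambda^\ast)^2 v/2}{1-\lambda^\ast M/3}
= -\frac{t^2}{v+Mt/3} + \frac{t^2/2}{v+Mt/3}
= -\frac{t^2/2}{v+Mt/3}.
\end{align*}
Two cosmetic remarks only. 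First, the term-by-term expansion of $\E[e^{\lambda X_i}]$ implicitly exchanges expectation and infinite sum; this is justified by dominated convergence since $|X_i|\leq M$ a.s.\ makes the series absolutely summable with sum bounded by $e^{\lambda M}$, and it is worth one line. Second, your claim that $\lambda^\ast\in(0,3/M)$ requires $v>0$; if $v=0$ then every $X_i=0$ a.s., the left-hand side vanishes for $t>0$, and the inequality holds trivially, so this degenerate case should simply be disposed of separately. Neither point is a gap in substance.
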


\end{appendix}

\bibliography{Spatial-sieve-Ref}
\bibliographystyle{apalike}

\end{document}